\documentclass[11pt]{amsart}
\usepackage{lmodern}
\usepackage[T1]{fontenc}
\usepackage[utf8]{inputenc}
\usepackage{amsmath,amssymb,amsthm}
\usepackage[a4paper,margin=1in]{geometry}
\usepackage{microtype}
\allowdisplaybreaks
\usepackage{xcolor}
\usepackage[colorlinks=true,linkcolor=blue!55!black,citecolor=blue!55!black,urlcolor=blue!55!black]{hyperref}
\hypersetup{pdftitle={Singularity Models of Finite-Time K\"ahler-Ricci Flows},pdfauthor={Frederick Tsz-Ho Fong, Hung Tran}}

\theoremstyle{plain}
\numberwithin{equation}{section}
\newtheorem{theorem}{Theorem}[section]
\newtheorem{proposition}[theorem]{Proposition}
\newtheorem{lemma}[theorem]{Lemma}
\newtheorem{corollary}[theorem]{Corollary}
\theoremstyle{definition}
\newtheorem{definition}[theorem]{Definition}

\theoremstyle{remark}
\newtheorem*{remarkx}{Remark}

\title{Singularity Models of Finite-Time K\"ahler-Ricci Flows}
\author{Frederick Tsz-Ho Fong}
\address{Frederick T.-H. Fong, Department of Mathematics, Hong Kong University of Science and Technology, Clear Water Bay, Kowloon, Hong Kong SAR}
\email{frederick.fong@ust.hk}
\author{Hung Tran}
\address{Hung Tran, Department of Mathematics and Statistics,
	Texas Tech University, Lubbock, TX 79409}
\email{hung.tran@ttu.edu}
\date{3 September 2026}
\newtheorem*{thmA}{Theorem A}
\newtheorem*{thmB}{Theorem B}
\newtheorem*{thmC1}{Theorem C1}
\newtheorem*{thmC2}{Theorem C2}
\newtheorem*{thmD}{Theorem D}
\newtheorem*{thmE}{Theorem E}

\begin{document}

\begin{abstract}
We study the singularity type and models of the K\"ahler--Ricci flow on compact manifolds constructed from the 1-parameter foliation of a circle-bundle over a product of K\"ahler--Einstein manifolds $N := N_1 \times \cdots \times N_r$, with metric constructed using the ansatz considered in \cite{DW2011}, \cite{WW} et. al.

In the earlier work \cite{FT} by the authors, we considered the ``two-bolt'' case where both ends of the foliation close with the ``bolt'' $N$. The compactification $\widehat{M}$ is then a $\mathbb{CP}^1$-bundle over $N$. We have already proved there that such a flow must encounter a Type I singularity.

In this article, we continue our work on the more subtle ``nut-bolt'' and ``two-nut'' cases. The former has one end of the interval closes with a nut-type collapse (i.e. $N' := N_2 \times \cdots \times N_r$) and the other with a bolt (i.e. $N$). The compactification $\widehat{M}$ is then a $\mathbb{CP}^{m+1}$-bundle over $N'$.  The ``two-nut'' case is one that both ends close with nut-type collapses, necessarily two of the $N_i$'s must be $\mathbb{CP}^{m_0}$ and $\mathbb{CP}^{m_\ell}$, and the compactification $\widehat{M}$ is a $\mathbb{CP}^{m_0+m_\ell+1}$-bundle over $\prod_{k\geq 3}N_k$. We proved that in both ``nut-bolt'' and ``two-nut'' caess the singularity must be of Type I.

Furthermore, we study the pointed Cheeger-Gromov limit of the rescaled and dilated sequence of the flow in all of the ``two-bolt'', ``nut-bolt'', and ``two-nut'' cases, and prove that the limit model must be $(\Sigma^{m+1}, g_\Sigma(t)) \times (\mathbb{C}^{k}, \textrm{flat})$ with $m, k \geq 0$, where $\Sigma$ is one of the following: $\mathbb{CP}^{m+1}$, $\textrm{Tot}(\mathcal{L}^{\oplus(m+1)})$, or a projectivization $\mathbb{P}\big(\mathcal{O}^{\oplus(m_0+1)} \oplus \mathcal{L}^{\oplus(m_\ell+1)}\big)$ with $m_0 + m_\ell = m$, and $\mathcal{L}$ is a line bundle over the product of \emph{some} of the $N_1, \cdots, N_r$ factors. The metric $g_\Sigma(t)$ is a K\"ahler-Ricci shrinker satisfying the circle-bundle ansatz.

\end{abstract}

\maketitle
\section{Introduction}\label{sec:intro}

\subsection{Background}
A solution $g ( t )$ of the Ricci flow $\partial_t g = - 2 \operatorname{Ric} ( g )$ starting from a K\"ahler metric on a compact complex manifold stays K\"ahler, and its maximal existence time is dictated by cohomology: by Tian--Zhang \cite{TZ}, the flow exists precisely until the first time $T$ at which the evolved K\"ahler class $[ \omega ( t ) ] = [ \omega_0 ] - 2 t \pi c_1$ leaves the K\"ahler cone. When $T < \infty$, the geometry at the singular time is probed by parabolic rescaling according to the blow-up rate of curvature. Letting $t_j \to T$, one would like to determine the Cheeger--Gromov convergence of the rescaled sequence $g_j(t) := K_j g(t_j + K_j^{-1}t)$ where $K_j := \sup_{\{t_j\} \times M} |\textrm{Rm}|_{g(t_j)}$. A finite-time singularity is of \emph{Type I} if $\sup_M ( T - t )\, | \mathrm{Rm} |$ stays bounded, and of \emph{Type II} otherwise. For Type I singularities, blow-up limits of $g_j(t)$ are nontrivial gradient shrinking solitons, by the theorem of Enders--M\"uller--Topping \cite{EMT} and Naber \cite{Naber}. The fundamental noncompact examples of such limit models in K\"ahler geometry are the shrinkers by Feldman--Ilmanen--Knopf \cite{FIK} on the line bundles $\mathcal{O} ( - k ) \to \mathbb{CP}^{n - 1}$ for $0 < k < n$, modelling the contraction of a divisor, together with their generalizations on bundles over a \emph{product} of K\"ahler--Einstein manifolds by Dancer--Wang \cite{DW2011}.

The Einstein or soliton metrics mentioned above belong to a classical cohomogeneity-one construction, which includes Calabi's $\mathrm{U} ( n )$-symmetric K\"ahler metrics \cite{Calabi}, B\'erard-Bergery's Einstein metrics on bundles over a K\"ahler--Einstein base \cite{BB} (including Page's metric), the Einstein metrics of Wang--Wang \cite{WW} and Wang--Ziller \cite{WZ} over \emph{products} of K\"ahler--Einstein factors, Cao's and Koiso's compact K\"ahler--Ricci solitons \cite{Cao96, Koiso}, and the cohomogeneity-one Einstein and soliton analysis of Dancer--Wang \cite{DW1998,DW2009,DW2011}. All of these can be unified into one ansatz constructed by taking a product $M_0 := I \times P$ of an open interval $I$ with a circle-bundle $P$ where $\pi : P \to N = N_1 \times \cdots \times N_r$ is a principal circle bundle over a product of compact K\"ahler--Einstein manifolds $(N_i, g_i, \omega_i)$ with Euler class $\sum_i q_i [\omega_i]$, $q_i \not= 0$, and equipping it with a metric of the multiply-warped product form
\begin{equation}
\label{eq:ansatz}\tag{*}
g \;=\; ds \otimes ds \;+\; H ( s )^2\, \theta \otimes \theta \;+\; \sum_{i=1}^r F_i ( s )^2\, \pi^* g_i \,, \qquad d\theta = \sum_{i=1}^r q_i\pi^*\omega_i,
\end{equation}
and closing the ends of $I$ by adjoining $N$ (\emph{bolt-type}), or, in the case $(N_1, g_1, p_1, q_1) = (\mathbb{CP}^m, 2g_{\textrm{FS}}, m+1, 1)$, by adjoining $N' = N_2 \times \cdots \times N_r$ (\emph{nut-type}). From now on, we will call this construction the \textbf{circle-bundle ansatz}. In our earlier paper \cite{FT}, we proved that this ansatz is preserved by the Ricci flow in both the bolt-type and nut-type closings, and no K\"ahler condition is needed there.

The circle-bundle ansatz includes an important subclass called \emph{Calabi's ansatz/symmetry}, in which the metric is determined by the complex Hessian of a $\mathrm{U}(n)$-symmetric K\"ahler potential (the conversion rule between \eqref{eq:ansatz} and the Calabi's ansatz can be found in \cite[Section 2.3]{FT}). In the case $r = 1$ and $N = \mathbb{CP}^n$ (i.e. the base is a \emph{single} K\"ahler-Einstein manifold), the compactification $\widehat{M}$ with both ends adjoined by $\mathbb{CP}^n$'s (which will be called the \emph{two-bolt} case) is a $\mathbb{CP}^1$-bundle over $\mathbb{CP}^n$. Assuming Calabi's symmetry, it was studied by Song-Weinkove in \cite{SW} where Gromov-Hausdorff convergence was established, and by Fong \cite{F}, Song \cite{S}, Guo-Song \cite{GS} who proved the singularity must be of Type I, and the singularity models are either the product $\mathbb{CP}^1 \times \mathbb{C}^n$, the Feldman-Ilmanen-Knopf shrinking K\"ahler-Ricci soliton on the total space of a line bundle $O(-k)$ over $\mathbb{CP}^n$, or in the canonical class case -- the compact Feldman-Ilmanen-Knopf shrinking K\"ahler-Ricci soliton on $\widehat{M}$.

For higher rank $\mathbb{CP}^{m+1}$-bundles over a \emph{single} K\"ahler-Einstein manifold $Z^n$ where $m, n \geq 1$ with Calabi's symmetry, it is interesting to note that it is also a subclass of the circle-bundle ansatz \eqref{eq:ansatz}. It is the special case when $r = 2$, $(N_1, g_1, p_1, q_1) = (\mathbb{CP}^m, 2g_{\textrm{FS}}, m+1, 1)$ and $N_2 = Z$, and the compactified bundle $\widehat{M}$ is obtained by adjoining $I \times P$ by one end with $Z$ (i.e. \emph{nut-type}), and the other end with $\mathbb{CP}^m \times Z$ (i.e. \emph{bolt-type}) -- we shall call such a compactification the \emph{nut-bolt closing}. In this case, Song-Yuan proved in \cite{SY} the Gromov-Hausdorff convergence; and recently in Jian-Song-Tian \cite{JST}, it was proved that the singularity is of Type I, and the singularity models are either $\mathbb{CP}^{m+1} \times \mathbb{C}^n$, the shrinking K\"ahler-Ricci soliton on the total space of a rank-$(m+1)$ complex vector bundle over $N^n$, or in the canonical class case -- the compact shrinking K\"ahler-Ricci soliton on $\widehat{M}$.

Without any symmetry assumption, it is very challenging to determine the singularity type and the Cheeger--Gromov convergence limit of the finite-time K\"ahler-Ricci flows. Conlon--Hallgren--Ma proved in \cite{CHM} that any non-collapsed finite time singularity of the Ricci flow on a compact K\"ahler surface is of Type I, and hence from Cifarelli--Conlon--Deruelle \cite{CD} the singularity is modelled on the Feldman--Ilmanen--Knopf's shrinker on $\mathcal{O}(-1)$-bundle over $\mathbb{CP}^1$. Recently, Xu--Zhang proved in \cite{XZ} that with the Type I singularity as the hypothesis, then if on the blow-up $\pi : \textrm{Bl}_p(Y) \to Y$ of a compact K\"ahler manifold $Y^n$ at a point $p \in Y$, if allow the K\"ahler-Ricci flow the K\"ahler class $[\omega(t)]$ degenerates to $\pi^*[\omega_Y]$ for some K\"ahler metric $\omega_Y$ on $Y$, then the blow-up limit is given by the Feldman--Ilmanen--Knopf's shrinker on the total space $\operatorname{Tot}\big(\mathcal{O}_{\mathbb{CP}^{n-1}}(-1)\big)$. In a very recent preprint \cite{XZ2}, Xu--Zhang also proved that any finite-time collapsing K\"ahler-Ricci flow on ruled surfaces must develop a Type I singularity, such singularity is modelled on the standard product $\mathbb{CP}^1 \times \mathbb{C}$.  Very recently, Jian--Song \cite{JS26} also considered the K\"ahler-Ricci flow on Fano bundles $\pi: X^n \to Y^m$ so that when the limiting class of the flow is $\pi^*[\omega_Y]$, i.e. the fibers collapse. They proved that the tangent flow splits as $Z = \mathbb{C}^k \times Z'$ for some $Z'$ where is some normal analytic variety. When the fiber has complex dimension one, the singularity is of Type I and every tangent flow at any fixed limiting point is $\mathbb{C}^k \times \mathbb{CP}^1$.

The authors now study the K\"ahler-Ricci flow under the circle-bundle ansatz \eqref{eq:ansatz} in full generality -- allowing the base manifold to be any product of (finitely many) K\"ahler-Einstein manifolds, and the compactification could be of \emph{two-bolt}, \emph{nut-bolt}, or \emph{two-nut} type. The two-bolt case was studied by the authors in \cite{FT}, and it was proved that in the K\"ahler case the singularity must be of Type I. The key ingredients include the splitting theorem obtained by analyzing the O'Neill's tensors, and the use of Perelman's local non-collapsing theorem to rule out Type II singularity. The classification of singularity models in this \emph{two-bolt} case was briefly discussed in \cite{FT} but not fully classified yet. We have remarked in \cite{FT} that the classification of singularity models in the general (i.e. $r \geq 2$) circle-bundle ansatz would be more diverse than the case of $\mathbb{CP}^1$-bundle over a single K\"ahler-Einstein manifold, since on the closing some but not all of the K\"ahler-Einstein factors may contract to a point.

The present paper will fully address the classification problem of singularity models described in the previous paragraph, and will further study the case with nut-bolt and two-nut closings. The former has one end closing with a nut $N' := N_2 \times \cdots \times N_r$ where $\{N_i\}_{i\geq 2}$ are K\"ahler-Einstein manifolds, while the other end closes with a bolt $N := \mathbb{CP}^m \times N'$ and the resulting manifold is a $\mathbb{CP}^{m+1}$-bundle over a \emph{product} of K\"ahler--Einstein manifolds:
\[
\widehat{M} \;\cong\; \mathbb{P} \big( \mathcal{O} \oplus E' \big) \;\longrightarrow\; N' \,, \qquad E' \;=\; \mathbb{C}^{m+1} \otimes L' \,,
\]
for a line bundle $L' \to N'$ determined by the charges $( q_2, \dots, q_r )$.  For $m = 0$ this degenerates to the two-bolt case by regarding $\mathbb{CP}^0$ as a point, so that $N = \{\textrm{pt}\} \times N' \cong N'$. Therefore, one can regard the two-bolt case as a special case of the nut-bolt case.

The remaining \emph{two-nut} closing --- both ends of $I$ closing with nut-type collapses --- admits the same analysis with the two ends playing symmetric roles. In the K\"ahler case, a two-nut closing is only possible when $N_1 = \mathbb{CP}^{m_0}$ and $N_2 = \mathbb{CP}^{m_l}$ and that $q_1, q_2$ have opposite charges. In this case, the lower end of $(0,l) \times P$ is adjoined by a copy of $Q_0 \cong \mathbb{CP}^{m_l} \times N_3 \times \cdots \times N_r$ whereas the upper end is adjoint by $Q_l \cong \mathbb{CP}^{m_0} \times N_3 \times \cdots \times N_r$. The compactification $\widehat{M}$ is then the projectivized bundle $\mathbb{P}(\mathcal{O}^{\oplus(m_0+1)} \oplus \mathcal{L}^{\oplus(m_l+1)})$ over $N'' := N_3 \times \cdots \times N_r$.

For simplicity, we will mainly present the nut-bolt case, whose results also hold in the two-bolt closing case, and we will point out in Section \ref{sec:twonut} the necessary modifications for the two-nut case, whose main results are stated as Theorem E below. Our main results are Theorems A to E below.

\subsection{Type I singularity}

\begin{thmA}[Type I; Theorem \ref{thm:nut-bolt-typeI}]
Let $\widehat{M}$ be a nut--bolt compactification of $I \times P$ with the circle-bundle ansatz \eqref{eq:ansatz}, and let $g ( t )$, $t \in [ 0, T )$, be a K\"ahler-Ricci flow of ansatz metrics with maximal existence time $T < \infty$. Then, the singularity at $T$ is of Type I.
\end{thmA}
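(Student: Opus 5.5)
We argue by contradiction, following the strategy of the two-bolt case in \cite{FT}. Suppose the singularity at $T$ is of Type II. Then we may pick a Hamilton-type sequence of points $(x_j,t_j)$ with $t_j\to T$ and
\[
K_j := |\mathrm{Rm}|(x_j,t_j), \qquad (T-t_j)K_j\to\infty,
\]
with the rescaled flows $g_j(t)=K_j g(t_j+K_j^{-1}t)$ having uniformly bounded curvature on $[-A_j,\Omega_j)$ with $A_j,\Omega_j\to\infty$. Perelman's no-local-collapsing theorem gives a uniform injectivity radius bound at $x_j$. Hamilton's compactness theorem then yields a complete pointed limit $(M_\infty,g_\infty(t),x_\infty)$ which is an eternal K\"ahler--Ricci flow, non-flat, with $|\mathrm{Rm}|\le 1$ attained at $(x_\infty,0)$, and $\kappa$-noncollapsed at all scales. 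By Hamilton's eternal-solution results (the K\"ahler analogue via Harnack requires nonnegative bisectional curvature, which we do not have), we instead aim to contradict non-flatness directly using the ansatz structure.

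The key steps are as follows.

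\textbf{Step 1: ODE control.} We first record the evolution of $H,F_1,\dots,F_r$ under the flow, together with the K\"ahler relations $H = \tfrac12 q_i (F_i^2)'$-type identities from \cite[Section 2.3]{FT}. In the K\"ahler case these reduce the system to a single potential $u(s,t)$ (the Calabi momentum profile). The curvature is then bounded by quantities such as $|H'/H|$, $|F_i'/F_i|$, $H/F_i^2$, $1/F_i^2$ and $H''/H$. For the nut end we must use the smoothness conditions at the nut, namely $F_1(0)=0$ and $H\sim F_1$, so that $F_1'(0)=1$ and $H'(0)=1$. We then prove, via the maximum principle on the Kähler potential and the cohomological formula $[\omega(t)]=[\omega_0]-2\pi t c_1$, that the bolt sizes $F_i$ for the non-collapsing factors stay bounded below, while the collapsing ones satisfy $F_i^2\ge c(T-t)$, and similarly $H^2\le C\min_i F_i^2$.

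\textbf{Step 2: scale-invariant estimates.} Next we show that curvature at a point is comparable to $\max\big(1/F_{\min}^2, \ldots\big)$. The key quantity is $\sup |\mathrm{Rm}|\,F_1^2$ near the nut, where the geometry is locally $\mathbb{C}^{m+1}\times N'$-like. The intended approach is to apply the maximum principle to scale-invariant quantities $Q=F_1^2|\mathrm{Rm}|$ or to ratios such as $H'$, $H/F_1$, and $F_1F_1''$, showing that they are bounded along the flow. Here the nut boundary conditions replace the bolt conditions used in \cite{FT}.

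\textbf{Step 3: splitting of the blow-up limit.} In the Type II limit, the factors with $F_i\sqrt{K_j}\to\infty$ split off flat $\mathbb{C}^{n_i}$ directions; this uses the O'Neill tensor splitting argument of \cite{FT}, whose O'Neill tensors are controlled by $H/F_i^2$. The remaining piece is a cohomogeneity-one eternal solution on a $\mathbb{C}^{m+1}$-bundle or line bundle with bounded scale-invariant quantities. Combining the bounds from Step 2 with $(T-t_j)K_j\to\infty$ and $F_i^2\gtrsim (T-t)$ forces all scale-invariant ratios to converge to those of flat space. This contradicts $|\mathrm{Rm}_\infty|(x_\infty,0)=1$.

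\textbf{Main obstacle.} The main obstacle is Step 2 at the nut, i.e.\ obtaining the lower bound $F_1^2\ge c(T-t)$ uniformly up to the nut, where $F_1\to0$ for geometric rather than singular reasons. I would first try the maximum principle for $H/F_1$ and for $\partial_s(F_1^2)$ (equivalently, the Calabi second derivative of the potential). I would then use the fact that $F_1^2$ is a momentum-type coordinate, which makes its evolution in $t$ linear in the class, so that the bound reduces to an affine-in-$t$ estimate.
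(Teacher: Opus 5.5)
There is a genuine gap, and it sits where the contradiction has to come from.

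Your inputs are $(T-t_j)K_j\to\infty$, the affine laws giving $F_k^2\ge c(T-t)$ for $k\ge 2$, and the Li--Yau bound $H^2\le C F_k^2$. These do flatten the $N_k$-directions and kill the O'Neill tensor of the residual submersion $\widehat{\pi}':\widehat{M}\to N'$. So the Type II limit splits as $\Phi\times\mathbb{C}^{n'}$, with $\Phi$ a limit of the Calabi-symmetric $\mathbb{CP}^{m+1}$-fibers. But they say nothing about $\Phi$ itself, and no bound on scale-invariant quantities of the kind in your Step 2 can force $\Phi$ to be flat. Cao's steady soliton on $\mathbb{C}^{m+1}$ (and likewise the one on $K_{\mathbb{CP}^m}$) is a nonflat eternal solution of exactly this ansatz form. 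On it, $H'$, $H/F_1$, $F_1F_1''$ and $F_1^2|\mathrm{Rm}|$ are all bounded: its circles have bounded length, $F_1^2$ grows linearly in $s$, and $|\mathrm{Rm}|$ decays like $1/s$.

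What excludes such models is $\kappa$-noncollapsing, since they are collapsed at large scales. You use Perelman's theorem only to extract the limit.

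Your stated main obstacle is also misdirected. $F_1^2\ge c(T-t)$ cannot hold up to the nut, because $F_1\equiv 0$ on $Q_0$ for all $t$. Its meaningful substitute, $L(t)\ge c(T-t)$, is immediate from the affine law. It does not prevent curvature from concentrating at scales much smaller than $\sqrt{T-t}$ near either stratum.

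The missing idea is that the Harnack route you set aside is in fact available. The steps are:
\begin{itemize}
\item Regard $h^2$ as a function of the momentum coordinate $f_1^2$. Then every bisectional curvature of $J$-invariant planes in the fiber block $\mathcal{V}'$ is bounded below by $-\sup (h^2)''$.
\item The quantity $(h^2)''$ obeys $\partial_t(h^2)''=\Delta(h^2)''-((h^2)'')^2+R$. The residual source satisfies $R_{\mathrm{res}}\le\beta/(T-t)^2$ wherever $(h^2)''\ge 0$. The collapsing-block source is harmless at a spatial maximum; at $Q_0$ it is absorbed by the drift $\Delta(f_1^2)|_{Q_0}=2(m+1)$.
\item A Riccati comparison gives $(h^2)''\le\gamma/(T-t)$, hence $\mathrm{bisec}\ge-\gamma/(T-t)$ on the fiber block. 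So the Type II limit has nonnegative bisectional curvature.
\item Re-base so that the scalar curvature attains its space-time supremum. Cao's theorem on eternal solutions then makes the limit a steady gradient K\"ahler--Ricci soliton.
\item The Deng--Zhu rigidity theorem for $\kappa$-noncollapsed steady K\"ahler--Ricci solitons with nonnegative bisectional curvature forces it to be flat. This contradicts $|\mathrm{Rm}|(x_\infty,0)=1$.
\end{itemize}
Alternatively, you could classify the Calabi-symmetric steady solitons and check by hand that each is $\kappa$-collapsed. Without some rigidity step of this kind, your argument does not close.
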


A special case of Theorem A was known when $m = 0$ and $r = 1$, as proved by the first-named author in \cite{F} for the fiber-collapsing case, and Song \cite{S} for the contracting divisor case. Their proofs both used de Rham splitting and Perelman's local non-collapsing theorem, but the details were slightly different. Recently, the authors have proved in \cite{FT} a more general case of Theorem A with $m = 0$ and any $r \geq 1$, also using de Rham splitting and Perelman's local non-collapsing theorem, but in a more unified way without distinguishing the fiber-collapsing and the contracting divisor cases. Furthermore, in the case $m \geq 1$ and $r = 2$ (i.e. $\mathbb{CP}^{m+1}$-bundle over a \emph{single} K\"ahler-Einstein manifold), Theorem A was proved by Jian--Song--Tian in \cite{JST} using a completely different approach from \cite{F, S, FT}. Very recently, in the case $m = 1$ while without any symmetry assumption, Jian--Song \cite{JS26} proved the K\"ahler-Ricci flow must encounter Type I singularity on any Fano bundle $X^{n+1} \to Y^n$ in the fiber-collapsing case.

Hence, Theorem A pushes the Type I singularity results of former works \cite{F, S, GS, JST, FT} to full generality under this circle-bundle ansatz. In \cite{FT} the O'Neill's tensor associated with the submersion $\pi : I \times P \to N$ was used. In this case, the submersion map extends smoothly to both closings to become a submersion $\widehat{\pi} : \widehat{M} \to N$. The $\mathbb{CP}^1$-fibers are totally geodesic, hence the O'Neill's $T$-tensor vanishes. If the singularity were of Type II, we have proved in \cite{FT} that the O'Neill's $A$-tensor would vanish in the limit model, and hence the limit model would split off a $2$-dimensional factor, which would be the cigar steady soliton according to Hamilton's classification theorem. This then violates Perelman's local non-collapsing theorem, hence proving Type I singularity in \cite{FT}. Now that the fibers are $( 2 m + 2 )$-dimensional, the projection map $\pi : I \times P \to N$ does not extend to a smooth submersion to the whole $\widehat{M}$. Instead, we need to consider the map $\pi' := \textrm{pr}_{N'} \circ \pi : I \times P \to N' := N_2 \times \cdots \times N_r$, where $\textrm{pr}_{N'} : N \to N'$ is another projection map. Such a submersion $\pi'$ would extend smoothly to a submersion $\widehat{\pi}' : \widehat{M} \to N'$ with totally geodesic fibers. The Type I singularity will be proved by a chain of results including a suitable lower estimate for the holomorphic bisectional curvature of the fiber block, and hence giving an eternal Type II limit with \emph{nonnegative bisectional curvature}. Cao's theorem on eternal solutions \cite{Cao97} then shows it is a steady gradient K\"ahler--Ricci soliton; and the rigidity theorem of Deng--Zhu \cite{DZ18, DZ20} forces such a $\kappa$-noncollapsed soliton to be flat --- giving a contradiction to the Type II singularity. The excluded solitons are precisely the higher-dimensional avatars of the cigar: Cao's steady solitons on $\mathbb{C}^{m+1}$ and on $K_{\mathbb{CP}^m}$ \cite{Cao96}, all $\kappa$-collapsed. Overall, the proof is in a similar spirit to \cite{F, FT} with more technical analysis, but the approach is very different from that in \cite{JST} where the notion of Ricci vertex and a partial Type I estimate of the scalar curvature were used.

\subsection{Singularity models}

Apart from proving the singularity is of Type I, we further give the full classification of singularity models of the circle-bundle ansatz in the present paper under the finite-time K\"ahler-Ricci flow. As in previous works on the finite-time singularity of the K\"ahler-Ricci flow, there are two major regimes: (a) volume collapsing (i.e. $\textrm{Vol}_{g(t)}(M) \to 0$ as $t \to T$), or (b) volume non-collapsing (i.e. $\inf_{M \times [0,T)} \textrm{Vol}_{g(t)}(M) > 0$). For $\mathbb{CP}^{m+1}$-bundles, Case (a) corresponds to the collapsing of $\mathbb{CP}^{m+1}$-fibers, and Case (b) corresponds to the contraction of the some K\"ahler-Einstein factors of the zero/infinity-section.

\subsubsection{Fiber-collapsing case} For Case (a), we will prove:

\begin{thmB}[fiber collapse; Theorem \ref{thm:fiber-collapse-model}]
When the $\mathbb{CP}^{m+1}$-fibers collapse at the first singular time $T < \infty$ while the volume of the closing $Q_0 \cong N' = N_2 \times \cdots \times N_r$ is bounded away from zero, the blow-up limit is the product of $\mathbb{CP}^{m+1}$ with the shrinking Fubini--Study metric, and the flat metric on $\mathbb{C}^{\dim_{\mathbb{C}}N'}$.
\end{thmB}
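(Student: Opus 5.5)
Since Theorem A tells us the singularity is of Type I, we may use $K_j \sim (T-t_j)^{-1}$ and parabolic rescalings $g_j(t) = (T-t_j)^{-1} g(t_j + (T-t_j)t)$. By Enders--M\"uller--Topping and Naber, these subconverge in the pointed Cheeger--Gromov sense to a nonflat gradient shrinking K\"ahler--Ricci soliton $(M_\infty, g_\infty(t))$. Perelman's local non-collapsing gives the injectivity radius control needed for this. The task is then to identify the limit using the structure of the submersion $\widehat{\pi}' : \widehat{M} \to N'$, whose fibers are totally geodesic copies of $\mathbb{CP}^{m+1}$.

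\textbf{Step 1: cohomology and the fiber geometry.} The class $[\omega(t)] = [\omega_0] - 2\pi t c_1$ is linear in $t$. The hypotheses are that the fiber class tends to zero while the volume of $Q_0 \cong N'$ stays bounded below. Under the ansatz these translate into the following asymptotics as $t \to T$:
\begin{itemize}
\item[] $H^2$ and $F_1^2$ (the $\mathbb{CP}^m$ factor) are $O(T-t)$, with the length $l(t)$ of the interval $I$ of order $\sqrt{T-t}$;
\item[] $F_i^2$ for $i \ge 2$ stay bounded between positive constants, uniformly in $s$.
\end{itemize}
To make this uniform in $s$, I would use the monotonicity and convexity properties of the ansatz functions proved in \cite{FT}. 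These are preserved by the flow and relate each $F_i^2$ to the K\"ahler class evaluated on the zero/infinity sections.

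\textbf{Step 2: splitting of the base directions.} After rescaling, the base metric $(T-t_j)^{-1} F_i^2 g_i$ for $i \geq 2$ becomes flat in the limit. To obtain an honest product I would analyse the O'Neill tensors of $\widehat{\pi}'$, as in \cite{FT}:
\begin{itemize}
\item[] the $T$-tensor vanishes because the fibers are totally geodesic;
\item[] the $A$-tensor involves $q_i H^2/F_i^2$ for $i\ge2$, together with the analogous mixed terms $F_1^2/F_i^2$, all scaled against $|A|$ measured in the rescaled metric.
\end{itemize}
Since $H^2/F_i^2 = O(T-t)$, the rescaled $|A|_{g_j}^2$ is of order $(T-t_j)$ and tends to $0$. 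Derivatives of $A$ are controlled by Shi-type estimates. In the limit the horizontal distribution is therefore parallel and integrable, and the de Rham splitting theorem gives $M_\infty = F_\infty \times \mathbb{C}^{k}$ with $k = \dim_{\mathbb{C}} N'$. The horizontal factor is flat because its curvature is $(T-t_j)$ times the bounded curvature of $N'$.

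\textbf{Step 3: identifying the fiber factor.} By Step 1 the rescaled fiber has bounded diameter, so $F_\infty$ is a compact shrinking K\"ahler--Ricci soliton. It is a limit of the fibers $\mathbb{CP}^{m+1}$ carrying the induced $\mathrm{U}(m+1)$-invariant (Calabi-type, cohomogeneity one) metrics, so it is diffeomorphic to $\mathbb{CP}^{m+1}$. Its K\"ahler class is the limit of $(T-t_j)^{-1}[\omega(t_j)]|_{\text{fiber}}$, which by linearity equals $2\pi c_1(\mathbb{CP}^{m+1})$. A cohomogeneity-one shrinking soliton on $\mathbb{CP}^{m+1}$ in the canonical class must be K\"ahler--Einstein, because $\mathbb{CP}^{m+1}$ has vanishing Futaki invariant and the uniqueness theorem of Tian--Zhu applies. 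Hence $F_\infty$ carries the Fubini--Study metric.

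\textbf{Main obstacle.} I expect the hardest part to be Step 1, specifically the uniform-in-$s$ statements:
\begin{itemize}
\item[] $F_1^2$ and $H^2$ are comparable to $T-t$ everywhere, including near the nut end where the $\mathbb{CP}^m$ factor collapses;
\item[] the rescaled fiber metrics do not degenerate to a lower-dimensional limit, which requires a lower bound $F_1^2, H^2 \gtrsim T-t$ away from the respective closings.
\end{itemize}
I would first try to compare the ansatz with $\omega(t)$ through the moment-map coordinate of the Calabi ansatz \cite[Section 2.3]{FT}, where the potential's Hessian is controlled by the cohomology classes on the two ends. Failing that, I would argue via the non-collapsing of the limit: Perelman's $\kappa$-non-collapsing rules out any further collapse, so a nondegenerate $2(m+1)$-dimensional fiber must survive.
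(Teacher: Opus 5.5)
Your skeleton matches the paper's: Type I together with Enders--M\"uller--Topping/Naber, decay of the O'Neill tensor $A'$ of $\widehat{\pi}'$ under rescaling, de Rham splitting with a flat $\mathcal{H}'$-factor, a compact fibre factor, and identification with Fubini--Study via Bando--Mabuchi/Tian--Zhu.

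\textbf{The gap is in Step 1,} which you flag yourself. The tools you propose do not give the estimate Step 2 needs, namely an upper bound on $H$ that is uniform in $s$.
\begin{itemize}
\item Cohomology, or the moment-map picture, only fixes $L(t)=f_1^2|_{Q_\ell}$ and the endpoint data $h^2=0$, $(h^2)'=\pm 2$ of $h^2$ regarded as a function of $f_1^2\in[0,L(t)]$. The interior size of $h^2$ is not cohomological, and there is no convexity of the profiles to exploit.
\item The missing ingredient is parabolic. In the K\"ahler case each $f_k^2$ solves $(\partial_t-\Delta)f_k^2=-2p_k$, which gives the Li--Yau-type bound $|\nabla f_k^2|^2\le C_0f_k^2$ (Proposition \ref{prop:li-yau}). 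Since $|\nabla f_k^2|=|q_k|\,h$, taking $k=1$ gives $h^2\le C_0f_1^2\le C_0L(t)=O(T-t)$ everywhere, including at the nut.
\item Combine this with the part you do have. For $k\ge 2$, $f_k^2$ is monotone in $s$ because $\partial_sf_k^2=q_kh$. So it is trapped between its values on $Q_0$ and $Q_\ell$, which are affine in $t$ and stay positive through $T$ because $T<T_k^0,T_k^\ell$.
\item Together these give all of Lemma \ref{lma:fiber-collapsing-estimates}. First, $|A'|^2_g\le C\sum_{k\ge2}q_k^2h^2/f_k^4\le C$, so $|A'|^2_{g_j}\le CK_j^{-1}\to 0$. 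Second, the $\mathcal{H}'$-sectional curvatures are bounded.
\item Your flatness argument for the horizontal factor also needs this bound on $h$. Those curvatures contain terms $q_k^2h^2/f_k^4$ and $(\partial_sf_k)^2/f_k^2$, not just $K_{g_k}/f_k^2$.
\item Minor point: $A'$ has no $\mathcal{H}_1$-component, so no $F_1^2/F_i^2$ terms appear.
\end{itemize}

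\textbf{Your second worry is unnecessary.} You do not need lower bounds $F_1^2,H^2\gtrsim T-t$ to prevent degeneration. The Type I bound, Perelman's non-collapsing and Hamilton's compactness already give a smooth limit of full dimension. The constant-rank distribution $\mathcal{V}'$, encoded by its projection tensor, converges to a parallel distribution of rank $2m+2$.

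A lower bound on $h$ would only matter for an upper bound on the arclength $\ell(t)$. Your claim $\ell(t)\sim\sqrt{T-t}$ in Step 3 does not follow from upper bounds on $h$ and $f_1$. The paper states $\mathrm{diam}\le C\sqrt{T-t}$ directly from $L(t)=2(m+2)(T-t)$. If you want to avoid bounding $\ell(t)$, aim your non-collapsing idea at compactness instead:
\begin{itemize}
\item The rescaled fibres have volume $c\,(c_0-t)^{m+1}$. This is cohomological and independent of $j$, so the limit leaf $\Sigma$ has finite volume.
\item $\Sigma\times\mathbb{C}^{n'}$ is $\kappa$-noncollapsed with bounded curvature. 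Hence so is $\Sigma$ at a fixed scale.
\item A complete manifold with these properties and finite volume is compact. Otherwise a ray would carry infinitely many disjoint balls of a fixed radius, each of volume bounded below.
\end{itemize}
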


Two ``orthogonal'' special cases of Theorem B were previously known: (i) when the base manifold is a \emph{single} K\"ahler-Einstein manifold and the fibers are $\mathbb{CP}^{m+1}$ where $m \geq 0$ -- as proved by \cite{F} for $m = 0$, and \cite{JST} for $m \geq 1$; or (ii) when $m = 0$ and $r \geq 1$ -- as proved by \cite{FT}. Now we have a complete result for all $m \geq 0$ and $r \geq 1$.

 Very recently Jian--Song proved in \cite{JS26} that when $m = 1$ in the fiber-collapsing case, Theorem B holds without any symmetry/ansatz assumption. For $m \geq 2$, they obtained that the singularity model splits as $\mathbb{C}^k \times Z^{m+1}$ for some normal analytic variety $Z^{m+1}$. It concurs with our Theorem B, which also identities $Z^{m+1}$ as $\mathbb{CP}^{m+1}$ under the our circle-bundle ansatz assumption.

\subsubsection{Contraction case}

As remarked, the interesting and challenging case is the contraction of zero/infinity-section of the bundle. In our general circle-bundle ansatz, it is possible that on the closing, some but not all of the K\"ahler-Einstein factors contract. Also, in the nut-bolt closing, the zero- and infinity-sections are also topologically different, which diversifies the types of singularity models.

\begin{thmC1}[contraction at the nut-type stratum, i.e. zero-section; Theorem \ref{thm:contracting_Q0}]
Let $\widehat{M}$ be the compactification of $(0,\ell) \times P$ such that $Q_0 := N' = N_2 \times \cdots \times N_r$ is adjoined at $\{s = 0\}$, and $Q_\ell := \mathbb{CP}^m \times N'$ is adjoined at $\{s = \ell\}$. Suppose the total volume of $\widehat{M}$ stays positive at the first singular time $T < \infty$, and the singularity occurs at $Q_0$. Let $\mathcal{I}_0$ be the subset of $\{2, \cdots, r\}$ such that $i_0 \in \mathcal{I}_0$ if and only if the $N_{i_0}$-factor of $Q_0$ contracts to a point at $T$. Then, the blow-up limit splits as $( \operatorname{Tot} ( E_{\mathcal{I}_0} ), g_{\mathrm{sol}} ( t ) ) \times \mathbb{C}^k$, where $k = \sum_{i_0 \not\in \mathcal{I}_0,\, i_0 \not= 1} \dim_{\mathbb{C}} N_{i_0}$, $E_{\mathcal{I}_0} = \mathcal{L}_{\mathcal{I}_0}^{\oplus ( m + 1 )} \to \prod_{i_0 \in \mathcal{I}_0} N_{i_0}$ is the vector bundle obtained by restricting the normal bundle of $Q_0 \subset \widehat{M}$ to $\prod_{i_0 \in \mathcal{I}_0} N_{i_0}$, with $c_1 ( \mathcal{L}_{\mathcal{I}_0} ) = - \sum_{i_0 \in \mathcal{I}_0} q_{i_0} [\omega_{i_0}]$, and $g_{\mathrm{sol}}$ is a complete gradient shrinking K\"ahler--Ricci soliton of ansatz form on $\operatorname{Tot} ( E_{\mathcal{I}_0} )$ --- i.e. a higher-rank, multi-factor generalization by Dancer--Wang in \cite{DW2011} of the Feldman--Ilmanen--Knopf shrinker in \cite{FIK}.
\end{thmC1}

Theorem C1, together with the forthcoming variants, addresses the questions asked in \cite{FT} about the singularity model in the case of contraction of some, not necessarily all, K\"ahler-Einstein factors. Special cases of Theorem C1 were proven in \cite{S, GS} when $m = 0$, $r = 2$, and $\mathcal{I}_0 = \{2\}$; and in \cite{JST} when $m \geq 1$, $r = 2$, and $\mathcal{I}_0 = \{2\}$. We will prove Theorem C1 by explicitly constructing the Cheeger-Gromov diffeomorphism sequence $\Phi_j : W_j \subset M_\infty \to U_j \subset \widehat{M}$. Among all technical steps in the whole proof, the most important step is to establish the smooth convergence of the pullback sequence $\Phi_j^* g_j(t)$, where $g_j(t) := K_jg(t_j + K_j^{-1}t)$. In this regard, we need $C^k$-estimates of each metric component of $\Phi_j^*g_j(t)$. This will be done using the uniform bounds on $|\nabla^k \textrm{Rm}|_{g_j(t)}$ obtained from Shi's derivative estimates \cite{Shi}. The key idea of this step is the relation between the sectional curvature of $g_j(t)$ (resp. its derivatives) along some directions, and the second-order (resp. higher-order) derivatives of some metric components of $\Phi_j^*g_j(t)$. After establishing $C^k$-estimates of some metric components of $\Phi_j^*g_j(t)$, the smooth convergence is hence achieved by a standard Arzel\`a--Ascoli-type argument.

By modifying the proof of Theorem C1, \emph{mutatis mutandis}, one can also obtain a similar result if the singularity occurs at the bolt closing (i.e. infinity-section), which differs from Theorem C1 only in the rank and the sign of the vector bundle.

\begin{thmC2}[contraction at the bolt-type stratum; Theorem \ref{thm:contracting_Ql}]
Let $\widehat{M}$ be as in Theorem C1, and again the total volume stays positive at the first singular time $T < \infty$. Now assume the singularity occurs at a bolt $Q_\ell := N$. Let $\mathcal{I}_\infty$ be the subset of $\{1, 2, \cdots, r\}$ such that $i_\infty \in \mathcal{I}_\infty$ if and only if the $N_{i_\infty}$-factor of $Q_\ell$ contracts to a point at the singular time $T$ of the K\"ahler-Ricci flow. Then, the blow-up limit splits as $( \operatorname{Tot} ( \mathcal{L}_{\mathcal{I}_\infty} ), g_{\mathrm{sol}} ( t ) ) \times \mathbb{C}^k$, where $k = \sum_{i_\infty \not\in \mathcal{I}_\infty,\, i_\infty \not= 1} \dim_{\mathbb{C}} N_{i_\infty}$, $\mathcal{L}_{\mathcal{I}_\infty} \to \prod_{i_\infty \in \mathcal{I}_\infty} N_{i_\infty}$ is the \emph{line} bundle obtained from restricting the normal bundle of $Q_\ell \subset \widehat{M}$ to $\prod_{i_\infty \in \mathcal{I}_\infty} N_{i_\infty}$, with $c_1 ( \mathcal{L}_{\mathcal{I}_\infty} ) = \sum_{i_\infty\in \mathcal{I}_\infty} q_{i_\infty} [\omega_{i_\infty}]$, and $g_{\mathrm{sol}}$ is a complete gradient shrinking K\"ahler--Ricci soliton of ansatz form on the line bundle total space $\operatorname{Tot} ( \mathcal{L}_{\mathcal{I}_\infty} )$ --- the Feldman--Ilmanen--Knopf shrinker in \cite{FIK}, or its multi-base generalization by Dancer--Wang in \cite{DW2011}.

\end{thmC2}

Note that Theorems C1 and C2 also contain the two-bolt case (with $m = 0$) in which the compactification $\widehat{M}$ is obtained by adjoining $N$ on both ends.

\subsubsection{Borderline cases}
With multiple K\"ahler-Einstein factors on the base manifold $N$, the borderline cases -- meaning that both the collapsing of $\mathbb{CP}^{m+1}$-fibers and the contraction of zero/infinity-section happen simultaneously at the singular time $T < \infty$ -- are more diversified than the case of a $\mathbb{CP}^{m+1}$-bundle over a \emph{single} K\"ahler-Einstein manifold. In the single-factor case, this borderline case refers to the Fano K\"ahler-Ricci flow in the canonical class, where the K\"ahler class $[\omega(t)]$ stays a constant multiple of the first Chern class $c_1(\widehat{M})$. Convergence is relatively better understood and the limit model is a compact shrinking K\"ahler-Ricci soliton. Now with multi-factors on the base $N$, it could happen that some but not all K\"ahler-Einstein factors contract to a point, and the $\mathbb{CP}^{m+1}$-fibers collapse at the same singular time $T$. It is not the canonical class case as the manifold does not become extinct. Fortunately, Theorem C1 was proved using an explicit construction of diffeomorphism sequence $\Phi_j$, so with some slight modification of the proof, we can address the limit model of these ``borderline'' cases:

\begin{thmD}
Let $\widehat{M}$ be as in Theorems C1 and C2. Suppose at the first singular time $T < \infty$ that both the $\mathbb{CP}^{m+1}$-fibers collapse, and some of the $N_i$-factors of $Q_0$ (hence also of $Q_\ell)$ contract to a point (denote the index set of such $N_i$-factors by $\mathcal{I} \subset \{2, \cdots, r\}$). Then, the blow-up limit splits as $( \mathbb{P}(\mathcal{O} \oplus E), g_{\mathrm{sol}} ( t ) ) \times (\mathbb{C}^k, \textrm{flat})$, where $E = \mathcal{L}_{\mathcal{I}}^{\oplus (m+1)} \to \prod_{i \in \mathcal{I}} N_i$ is the normal bundle of $Q \subset \widehat{M}$ restricted to $\prod_{i \in \mathcal{I}} N_i$ with $c_1(\mathcal{L}_{\mathcal{I}}) = \mp\sum_{i\in\mathcal{I}}q_i[\omega_i]$, $g_{\mathrm{sol}}(t)$ is the unique compact shrinking K\"ahler-Ricci soliton on the projectivized $\mathbb{CP}^{m+1}$-bundle $\mathbb{P}(\mathcal{O} \oplus E)$ over $\prod_{i \in \mathcal{I}} N_i$, and $k = \sum_{i \not\in \mathcal{I}} \dim_{\mathbb{C}} N_{i}$. Note that it is possible that $k = 0$, when all $N_i$-factors collapse, and it would become the Fano canonical class case and hence the limit manifold is $\widehat{M}$ itself.
\end{thmD}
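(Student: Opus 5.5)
Theorem A makes the singularity Type I, so the rescalings $g_j(t) := K_j g(t_j + K_j^{-1}t)$ with $K_j \sim (T-t_j)^{-1}$ have uniformly bounded curvature on backward intervals. By Enders--M\"uller--Topping and Naber, every pointed Cheeger--Gromov limit is a nonflat gradient shrinking soliton. The plan is to identify this limit concretely by building the diffeomorphisms $\Phi_j$ by hand, as in the proof of Theorem C1.

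\textbf{Step 1: scaling of the warping functions.} In the borderline regime, the class $[\omega(t)] = [\omega_0] - 2t\pi c_1$ degenerates linearly in $T-t$ along two kinds of directions: the fiber class (the length $\ell(t)$ of the $s$-interval and $\max H$), and the classes of the factors $N_i$ with $i \in \mathcal{I}$. The classes $[\omega_i]$ with $i \notin \mathcal{I}$ stay bounded below. From the ansatz I expect the following, uniformly in $s$, via the Type I bound and the maximum-principle estimates on $F_i$ and $H$ already used for Theorems B and C1:
\[
F_i^2(s,t) \sim T-t \quad (i\in\{1\}\cup\mathcal{I}), \qquad H^2 \lesssim T-t, \qquad \ell(t)^2 \sim T-t, \qquad F_i^2 \geq c>0 \quad (i\notin\mathcal{I}\cup\{1\}).
\]
After rescaling by $K_j$, the rescaled interval $[0, \sqrt{K_j}\,\ell(t_j)]$ stays of bounded length. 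The factors $N_i$ with $i\in\mathcal{I}$, together with the $\mathbb{CP}^m$ and circle directions, stay at unit scale. The factors with $i\notin\mathcal{I}$ blow up.

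\textbf{Step 2: construction of $\Phi_j$.} Take the limit space to be $\mathbb{P}(\mathcal{O}\oplus E)\times\mathbb{C}^k$. The first factor is the nut--bolt compactification of $(0,\ell_\infty)\times P_{\mathcal{I}}$, where $P_{\mathcal{I}}$ is the circle bundle over $\mathbb{CP}^m\times\prod_{i\in\mathcal{I}}N_i$ with charges $(1,q_i)_{i\in\mathcal{I}}$. For each $i\notin\mathcal{I}$, choose holomorphic normal coordinates of $N_i$ at the base point and dilate them by $\sqrt{K_j}$. The circle bundle restricted to a small ball in these factors is trivial, so the circle bundle over the whole base becomes $P_{\mathcal{I}}\times B$ locally. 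The connection terms $q_i\omega_i$ for $i\notin\mathcal{I}$ scale away. Compose with the rescaled $s$-coordinate to get $\Phi_j$.

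\textbf{Step 3: convergence.} The smooth convergence of $\Phi_j^*g_j$ follows exactly as in Theorem C1. Shi's estimates bound $|\nabla^k\mathrm{Rm}|$. Curvatures of the ansatz in the mixed and radial directions are expressed through derivatives of $H$ and $F_i$, which gives $C^k$ bounds on the rescaled components, and Arzel\`a--Ascoli finishes the step. A second issue is where the base point sits. It does not matter: the rescaled $s$-interval has bounded length, so every base point stays at bounded distance from both closings, and the limit is independent of where the curvature concentrates. By contrast, in Theorems C1 and C2 one end escapes to infinity.

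\textbf{Step 4: identification of the limit.} The limit is a shrinking soliton of ansatz form. It splits off the flat $\mathbb{C}^k$, since those directions are flat in the limit and the de Rham splitting from \cite{FT} applies. The remaining factor is a compact shrinking K\"ahler--Ricci soliton on the Fano manifold $\mathbb{P}(\mathcal{O}\oplus E)$ that satisfies the ansatz. By the Dancer--Wang / Koiso--Cao ODE analysis, such a soliton is unique; alternatively, uniqueness of shrinking solitons up to automorphisms (Tian--Zhu) gives the same conclusion. The sign $\mp$ in $c_1(\mathcal{L}_{\mathcal{I}})$ depends only on whether one views the bundle from $Q_0$ or from $Q_\ell$, and the two descriptions give isomorphic projectivizations. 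When $k=0$, all factors have collapsed and we are in the canonical class, so the limit is $\widehat M$ itself.

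\textbf{Main obstacle.} The hardest part is Step 1: the uniform comparability $F_i^2\sim T-t$ for $i\in\{1\}\cup\mathcal{I}$, valid on the whole interval, together with the lower bound on the remaining factors. Without it the limit could degenerate further or fail to be compact. I would first try to derive it from the cohomological formulas for the class restricted to $Q_0$ and $Q_\ell$. Then I would propagate it across $s$ using monotonicity of $F_i$ in $s$ (the Kähler condition gives $F_i^2$ affine in the moment map) combined with the Type I bound.
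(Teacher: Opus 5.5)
Your proposal is correct and follows essentially the same route as the paper's proof of Theorem \ref{thm:borderline}:
\begin{itemize}
\item two-sided bounds on $K_jf_k^2$, $k\in\mathcal{I}$, from the affine laws at both strata plus monotonicity in $s$;
\item a rescaled interval length in $[\frac1C,C]$, with the lower bound coming from the Li--Yau bound $h^2\le C_0f_1^2\le C_0L$;
\item explicit maps $\Phi_j$ on $\widehat Z\times B_{R_j}(0)$ together with the gauge and flattening lemmas;
\item $C^{m,k}$-estimates run from both ends;
\item identification of the limit via \cite{EMT, Naber} and \cite{TZhu}.
\end{itemize}

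There are two small corrections. First, $F_1^2\sim T-t$ cannot hold uniformly in $s$, because $F_1$ vanishes at the nut $Q_0$; only $F_1^2\le L(t)$ holds, and that is all you need. Second, the two signs in $\mp$ do not in general give isomorphic projectivizations when $m\ge 1$. In that case $\mathcal{N}_{Q_\ell}$ is a line bundle over $Q_\ell=\mathbb{CP}^m\times N'$, so the rank-$(m+1)$ bundle must be read off from $Q_0$, i.e.\ $c_1(\mathcal{L}_{\mathcal{I}})=-\sum_{k\in\mathcal{I}}q_k a_k$. This is exactly what the construction $\widehat Z=\widehat M|_{\prod_{k\in\mathcal{I}}N_k}$ produces anyway.
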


\subsubsection{The two-nut case}
The trichotomy of possible closings is completed by the \emph{two-nut} case, where both ends of $I$ close with nut-type collapses. The K\"ahler condition forces two of the factors to be complex projective spaces $(N_1, N_2) = (\mathbb{CP}^{m_0}, \mathbb{CP}^{m_\ell})$ with unit charges of opposite signs $q_1 = +1$ and $q_2 = -1$, so that $\widehat{M} \cong \mathbb{P}\big(\mathcal{O}^{\oplus(m_0+1)} \oplus (\mathcal{L}'')^{\oplus(m_\ell+1)}\big)$ is a $\mathbb{CP}^{m_0+m_\ell+1}$-bundle over $N''' := N_3 \times \cdots \times N_r$. All results above admit two-nut analogues, with the two ends of $I$ playing symmetric roles:

\begin{thmE}[the two-nut case; Theorems \ref{thm:twonut_typeI} and \ref{thm:twonut_models}]
Let $\widehat{M}$ be a compact two-nut closing of $I \times P$ with the circle-bundle ansatz \eqref{eq:ansatz}, and let $g(t)$, $t \in [0, T)$, be a K\"ahler-Ricci flow of ansatz metrics with maximal existence time $T < \infty$. Then, the singularity at $T$ is of Type I, and the blow-up limit splits as $(\Sigma, g_\Sigma(t)) \times (\mathbb{C}^k, \textrm{flat})$, where $(\Sigma, g_\Sigma(t))$ is one of the following:
\begin{itemize}
\item $\mathbb{CP}^{m_0+m_\ell+1}$ with the shrinking Fubini--Study metric, when the fibers collapse;
\item $\operatorname{Tot}\big(\mathcal{L}_{\mathcal{I}_0}^{\oplus(m_0+1)}\big)$, the total space of a rank-$(m_0+1)$ vector bundle over the product of the $N_k$-factors ($k \geq 3$) contracting at $Q_0$, with the Dancer--Wang complete shrinking K\"ahler--Ricci soliton --- or its mirror $\operatorname{Tot}\big(\mathcal{L}_{\mathcal{I}_\infty}^{\oplus(m_\ell+1)}\big)$ for a contraction at $Q_\ell$;
\item the projectivization $\mathbb{P}\big(\mathcal{O}^{\oplus(m_0+1)} \oplus \mathcal{L}_{\mathcal{I}}^{\oplus(m_\ell+1)}\big)$ over the product of the contracting $N_k$-factors, with its unique compact shrinking K\"ahler--Ricci soliton, in the borderline case where the fiber collapse and the contraction happen simultaneously.
\end{itemize}
\end{thmE}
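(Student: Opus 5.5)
The plan is to reduce Theorem E to the nut--bolt machinery behind Theorems A--D, exploiting the symmetry of the two ends. Near each end the two-nut closing looks locally like the nut end of a nut--bolt closing. Near $s=0$ the factor $N_1=\mathbb{CP}^{m_0}$ collapses together with the circle, since $H,F_1\to0$. Near $s=\ell$ the factor $N_2=\mathbb{CP}^{m_\ell}$ collapses, with the charge of opposite sign. The reflection $s\mapsto \ell-s$, which interchanges the roles of $N_1$ and $N_2$ and flips the orientation of $\theta$, carries one end to the other.

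For Type I, we use the submersion $\widehat{\pi}'':\widehat{M}\to N''=N_3\times\cdots\times N_r$ in place of $\widehat{\pi}'$. It extends smoothly across both nuts, and its fibers are the totally geodesic $\mathbb{CP}^{m_0+m_\ell+1}$'s, so the O'Neill $T$-tensor vanishes. We then argue by contradiction, assuming a Type II singularity.
\begin{enumerate}
\item As in \cite{FT}, the O'Neill $A$-tensor tends to zero along a Type II blow-up sequence, so the limit splits as a fiber block times a flat factor.
\item We establish a lower bound for the holomorphic bisectional curvature of the fiber block, which now carries the two warping functions $F_1,F_2$ together with $H$. The estimate must be checked at both nut ends, using the smoothness conditions $H'(0)=1$, $F_1(0)=0$, and their mirrors at $s=\ell$.
\item This bound gives an eternal Type II limit with nonnegative bisectional curvature.
\item Cao's theorem \cite{Cao97} shows this limit is a steady gradient K\"ahler--Ricci soliton.
\item Perelman's $\kappa$-noncollapsing and the rigidity theorem of Deng--Zhu \cite{DZ18, DZ20} force the soliton to be flat, contradicting the Type II assumption.
\end{enumerate}
The main obstacle is the bisectional-curvature estimate in the region where both $F_1$ and $F_2$ are small, which happens only in the fiber-collapsing regime. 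We would first try the maximum-principle argument on the ODE quantities, e.g.\ $H/F_i$ and $F_i'$, exactly as in the nut--bolt case. Since each $F_i$ vanishes at only one end, the same barrier functions should apply on each half-interval.

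For the classification we separate cases by the cohomological behaviour of $[\omega(t)]$ as $t\to T$.
\begin{itemize}
\item \textbf{Fiber collapse.} The fiber class degenerates while the $N''$-factors stay bounded. We follow Theorem B: after rescaling, the fiber metric converges to the shrinking Fubini--Study metric on $\mathbb{CP}^{m_0+m_\ell+1}$, which is the unique compact ansatz shrinker with two nut ends, and $N''$ blows up to $\mathbb{C}^k$.
\item \textbf{Contraction at one end.} Some $N_k$-factors ($k\ge3$) of $Q_0$ contract while the volume stays positive. We repeat the explicit construction of $\Phi_j$ from Theorem C1, centred at $Q_0$ with fiber $\mathbb{C}^{m_0+1}$ coming from the nut of $N_1$. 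The same Shi-estimate argument gives $C^k$ bounds on the metric components, and Dancer--Wang uniqueness identifies the limit on $\operatorname{Tot}(\mathcal{L}_{\mathcal{I}_0}^{\oplus(m_0+1)})$. The mirror case at $Q_\ell$ follows by the reflection.
\item \textbf{Borderline.} Fiber collapse and contraction occur simultaneously. We adapt Theorem D: both nut ends survive in the limit, giving the compact $\mathbb{P}(\mathcal{O}^{\oplus(m_0+1)}\oplus\mathcal{L}_{\mathcal{I}}^{\oplus(m_\ell+1)})$ with its unique compact shrinker.
\end{itemize}
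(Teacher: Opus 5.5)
Your overall architecture matches the paper's. You use the residual submersion $\widehat{\pi}'':\widehat{M}\to N''$, the decay of $A''$ and the splitting $\Phi\times\mathbb{C}^{n''}$ of a Type II limit, then Cao and Deng--Zhu. Your case analysis for the models is also the paper's Theorem~\ref{thm:twonut_models}, with one addition needed in the $Q_0$-contraction case. There the $\mathbb{CP}^{m_\ell}$-block of $Q_0$ does not contract, since $f_2^2|_{Q_0}=L(t)\ge L(T)>0$, and it must join the flattening group, exactly as the $\mathbb{CP}^m$-block does in Theorem~\ref{thm:contracting_Ql}.

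The genuine gap is your step 2, which carries the entire content of the Type I claim, and the mechanism you suggest for it would not work. Quantities such as $H/F_i$ and $F_i'=q_iH/(2F_i)$ are first order and are already controlled by Li--Yau ($h^2\le C_0f_1^2$). They cannot bound the radial curvature $K(\nu,\xi^*)=-H''/H=-\tfrac12\,\partial^2_{f_1^2}(h^2)$ from below. Nor can they bound $K(X,JX)=2/f_1^2-h^2/f_1^4$ near the nut, since that needs $h^2/f_1^2-2=O\big(f_1^2/(T-t)\big)$, which is quantitative second-order information at $f_1^2=0$. A maximum principle ``on each half-interval'' would also need boundary control at the cut point, which you do not have. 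Finally, the region you single out as the main obstacle is harmless. Because $q_1q_2=-1$, the cross-block curvatures are $K(X,W)=K(X,JW)=h^2/(4f_1^2f_2^2)\ge 0$.

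The missing idea is to work with one global second-order quantity.
\begin{enumerate}
\item Take $f_1^2\in[0,L(t)]$ as the spatial coordinate and set $y:=\partial^2_{f_1^2}(h^2)$. This $y$ is smooth on all of $\widehat{M}$, because $f_2^2=L(t)-f_1^2$ gives $\partial^2_{f_2^2}=\partial^2_{f_1^2}$.
\item Integrate from the boundary data $h^2=0$, $(h^2)'=\pm 2$ at the two nuts. This shows that $y\le K(t)$ forces every fiber-block bisectional curvature to be $\ge -K(t)$ (Lemma~\ref{lem:h2pp-controls-bisec} and its mirror in $f_2^2$).
\item Differentiate the reduced flow twice to get $\partial_t y=\Delta y-y^2+R_1+R_2+R_{\mathrm{res}}$.
\item At an interior maximum, $R_1\le 0$ and $R_2\le 0$ by the integral identities from each nut.
\item At a maximum on $Q_0$, the unfavorable term $-\tfrac{4m_0}{3}\,\partial_{f_1^2}y(0^+)$ in $R_1$ is absorbed by the drift $\Delta y|_{Q_0}=2(m_0+1)\,\partial_{f_1^2}y(0^+)$. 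Meanwhile $R_2$ is of residual type, because $f_2^2|_{Q_0}=L(t)\ge c(T-t)$. The case of a maximum on $Q_\ell$ is the mirror image.
\item Together with $R_{\mathrm{res}}\le\beta/(T-t)^2$, this gives the Riccati inequality $y'\le -y^2+\beta/(T-t)^2$. Hence $y\le\gamma/(T-t)$ as in Theorem~\ref{thm:h2pp-upper-bound}.
\end{enumerate}
This is exactly the bisectional lower bound your steps 3--5 need.
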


\subsubsection{Summary of all possible singularity models}
To summarize, the possible singularity models of the K\"ahler-Ricci flow on $\widehat{M}$ under our circle-bundle ansatz are given by $(\Sigma^{m+1}, g_\Sigma(t)) \times (\mathbb{C}^k, \textrm{flat})$ with $m, k \geq 0$ such that $m+k+1 = \dim_{\mathbb{C}}\widehat{M}$, where $(\Sigma^{m+1}, g_\Sigma(t))$ is one of the following:

\begin{itemize}
\item $\mathbb{CP}^{m+1}$ with the shrinking Fubini-Study metric, or
\item $\textrm{Tot}(E)$, the total space of a rank-$(m+1)$ vector bundle $E$ over a product of \emph{some} of the $N_1, \cdots, N_r$ factors, with $g_\Sigma(t)$ being the Feldman-Ilmanen-Knopf/Dancer-Wang's complete shrinking K\"ahler-Ricci soliton, or
\item the projectivization $\mathbb{P}(\mathcal{O} \oplus E)$ of a rank-$(m+1)$ vector bundle $E$ described as the above, with the unique compact shrinking K\"ahler-Ricci soliton --- enlarged, in the two-nut case, to the projective bundles $\mathbb{P}\big(\mathcal{O}^{\oplus(m_0+1)} \oplus \mathcal{L}^{\oplus(m_\ell+1)}\big)$ for a line bundle $\mathcal{L}$ (Theorem \ref{thm:twonut_models}).
\end{itemize}

\medskip
\noindent\textbf{Organization.} Section \ref{sec:prelim} collects the basic facts that this paper rests on: the ansatz and its closing conditions, the K\"ahler structure and Calabi symmetry, the O'Neill tensors, the curvature and radial calculus, the periods of the first Chern class, the Ricci flow equations and the preservation of the ansatz, and the two-bolt Type I theorem, Li--Yau and entropy estimates obtained from \cite{FT} and from the classical references indicated there. Section \ref{sec:typeI} sets up the nut--bolt case --- the residual bundle $\widehat{\pi}' : \widehat{M} \to N'$, the decay of the residual O'Neill tensor, and the bisectional curvature of the fiber block --- and proves Theorem A. Section \ref{sec:limits} identifies the blow-up limits: Theorem B (Theorem \ref{thm:fiber-collapse-model}) and Theorem C1 (Theorem \ref{thm:contracting_Q0}). The ideas of the proofs of Theorems C2 and D will be explained based on the proof of Theorem C1. Sections \ref{sec:prelim}--\ref{sec:limits} present mainly the nut-bolt case; Section \ref{sec:twonut} completes the two-nut closing, where the two ends are adjoined by nut-type closings. The proof is a slightly modification from theorems in the bolt-nut case, so we will sketch its proof and points out the necessary modifications for the Type I theorem (Theorem \ref{thm:twonut_typeI}) and for the classification of the singularity models (Theorem \ref{thm:twonut_models}).

\medskip
\noindent\textbf{Acknowledgements.} The first-named author is partially supported by the General Research Fund \#16305625 by the Hong Kong Research Grants Council.

\medskip

\noindent\textbf{Declaration of AI usage:} The whole train-of-thought in the proof of Theorem \ref{thm:nut-bolt-typeI} in Section \ref{sec:typeI} of using bisectional curvature $-C(T-t)$ lower bound, the use of splitting theorems, the use of Cao's Theorem on eternal solutions, and the use of Deng--Zhu's non-existence theorem of steady gradient K\"ahler-Ricci solitons to rule Type II singularity are all the \textbf{authors' original ideas}. However, some technical detail such as the observation of viewing $h$ is a function of $f_1$ in Proposition \ref{prop:reduced-flow}, and subsequently leading to the estimates about $h$ and $f_1$ in Lemma \ref{lem:h2pp-controls-bisec} to simplify the sectional curvature expression, and derivation of the upper bound of $(h^2)''$ in Theorem \ref{thm:h2pp-upper-bound} leading to the $-C(T-t)$ lower bound of the bisectional curvature, are the contributions by Claude Fable 5. All detail steps have been carefully checked, and the whole proofs were completely written by the authors.

In Sections \ref{sec:limits} and \ref{sec:twonut}, the major contribution from Claude Fable 5 is the great suggestion of using Shi's derivative estimates and the Poincar\'e lemma to establish the smooth convergence of $\Phi_j^*g_j(t)$. Based on the ideas suggested by Claude Fable 5, the detail proofs are completely written by the authors.

\section{Preliminaries}\label{sec:prelim}

This section fixes notation and recalls preliminary definitions and statements. The main references are our companion paper \cite{FT} along with \cite{DW1998, DW2011, VZ}.

\subsection{The circle-bundle ansatz and the K\"{a}hler condition}\label{subsec:ansatz}

The \emph{circle-bundle ansatz} is a metric ansatz of {multiple warped product} type. It goes back to Bérard-Bergery's construction \cite{BB} of Einstein metrics on bundles over a Kähler--Einstein base (with Page's metric \cite{Page} as the four-dimensional prototype), was extended to circle bundles over products of Kähler--Einstein factors by Wang--Wang \cite{WW}, and was employed by Dancer--Wang \cite{DW1998,DW2011} in their study of cohomogeneity-one Einstein metrics and Ricci soliton. 

The underlying manifold contains a one-parameter foliation by copies of a principal circle bundle $P$ over a product of Kähler--Einstein manifolds, together with the associated family of metrics. Specifically, we fix an integer $r \geq 1$. For each $i = 1, \dots, r$, let $(N_i, J_i, g_i, \omega_i)$ be a compact K\"{a}hler--Einstein manifold of real dimension $2n_i$, with complex structure $J_i$, Kähler metric $g_i$, and Kähler form $\omega_i = g_i(J_i \cdot\,, \cdot)$, normalized so that
\[
\operatorname{Ric}(g_i) = p_i\, g_i, \qquad p_i \in \mathbb{Z}.
\]
Write the first Chern class as $c_1(N_i, J_i) = p_i\, a_i$, where $a_i \in H^2(N_i; \mathbb{Z})$ is an {indivisible} integral class. Since the Ricci form $\rho_i = p_i\, \omega_i$ represents $2\pi\, c_1(N_i)$, the Kähler class satisfies $[\omega_i] = 2\pi\, a_i$.

Let
\[
N := N_1 \times \cdots \times N_r, \qquad \dim_{\mathbb{R}} N = 2n, \quad n := \sum_{i=1}^r n_i,
\]
equipped with the product complex structure. Each $\mathrm{pr}_i : N \to N_i$ denotes the standard projection and we write $\omega_i$ and $a_i$ also for the pullbacks $\mathrm{pr}_i^*\, \omega_i$ and $\mathrm{pr}_i^*\, a_i$ on $N$. Let $\Pi : P \to N$ be a principal $\mathrm{U}(1)$-bundle whose Euler class is
\[
e ( P ) \;=\; \sum_{i=1}^r q_i\, a_i \,, \qquad q_i \in \mathbb{Z} \,.
\]
The existence of $P$ is due to that $\big[ \sum_i q_i\, \omega_i \big] = 2 \pi \sum_i q_i\, a_i$ represents an integral cohomology class and the classification of principal $\mathrm{U}(1)$-bundles over a smooth manifold \cite{Hatcher2002}. By Chern–Weil theory \cite[Chapter~XII]{KobayashiNomizu1969}, there exists a principal connection $\theta \in \Omega^1 ( P )$ with curvature
\[
d \theta \;=\; \sum_{i=1}^r q_i \Pi^* \omega_i \,.
\]
Let $\xi$ denote the generator of the $\mathrm{U}(1)$-action, so that $\theta ( \xi ) = 1$, and let $\mathcal{H} := \ker \theta$ be the horizontal distribution, on which $\pi_*$ is an isomorphism onto $TN$ block by block: $\mathcal{H} = \mathcal{H}_1 \oplus \cdots \oplus \mathcal{H}_r$ with $\Pi_* \mathcal{H}_i = \mathrm{pr}_i^* T N_i$.

\begin{definition}[Circle-bundle ansatz]\label{def:ansatz}
	Given smooth positive functions $H, F_1, \dots, F_r : I \to (0, \infty)$, the \emph{circle-bundle ansatz} is the Riemannian metric on $M^0 = I \times P$ given by
	\begin{equation}\label{eq:ansatz-metric}
		g \;=\; ds^2 \;+\; H(s)^2\, \theta \otimes \theta \;+\; \sum_{i=1}^r F_i(s)^2\, \pi^* g_i .
	\end{equation}
\end{definition}


\noindent It follows that $\pi_s : (P_s, g_s) \to \big(N, \sum_i F_i(s)^2 g_i\big)$ is a Riemannian submersion with totally geodesic circle fibers of length $2\pi H(s)$. Naturally, one defines the horizontally conformal submersion \cite{OR93}:
\[\pi: M^0\to \big(N, \sum_i F_i(s)^2 g_i\big) \text{   by  } \pi_{\mid P_s}= \pi_s.\] 
With respect to the unit normal $\nu = \partial_s$, the shape operator $L_s$ of the leaf $P_s$ is diagonal with respect to the splitting $TP = \mathbb{R}\xi \oplus \bigoplus_i \mathcal{H}_i$ (where $\mathcal{H}_i$ is the horizontal lift of $TN_i$):

\begin{equation}\label{eq:shape-operator}
	L_s \;=\; \frac{H'(s)}{H(s)}\, \mathrm{Id}_{\mathbb{R}\xi} \;\oplus\; \bigoplus_{i=1}^r \frac{F_i'(s)}{F_i(s)}\, \mathrm{Id}_{\mathcal{H}_i},
\end{equation}


	

For the circle-bundle ansatz with metric \eqref{eq:ansatz-metric}, the complex structure $J$ is given by 
\[
J \partial_s \;=\; \frac{1}{H(s)}\, \xi, \qquad J \xi \;=\; -H(s)\, \partial_s, \qquad J X \;=\; \widetilde{J_N\, \pi_* X} \quad \text{for } X \in \mathcal{H}.
\]
The K\"{a}hler form $\omega := g(J \cdot\,, \cdot)$ is
\begin{equation}\label{eq:kahler-form}
	\omega \;=\; H(s)\, ds \wedge \theta \;+\; \sum_{i=1}^r F_i(s)^2\, \pi^* \omega_i.
\end{equation}
$(M^0, J, g)$ is a Kähler manifold precisely when $d\omega = 0$. That is, 


\begin{equation}\label{eq:kahler-condition}
	q_i\, H(s) \;=\; \frac{d}{ds} F_i(s)^2 \;=\; 2\, F_i(s)\, F_i'(s) \qquad \text{for all } i = 1, \dots, r \text{ and } s \in I.
\end{equation}

The $r = 1$ instance of the ansatz over $\mathbb{CP}^{n-1}$ is exactly Calabi's classical $\mathrm{U} ( n )$-symmetry \cite{Calabi}. Furthermore, we let \[\kappa_i := (F_i^2)' - q_i H\] denote the \emph{Kähler-defect functions}. In case (2) of Lemma \ref{lem:bolt-nut-dichotomy} with $r \geq 2$, the K\"{a}hler condition induces the following relation between the warping functions:
\begin{align}
	\label{eq:F1-int-H}
	F_1^2 ( s ) &\;=\; \int_0^s H ( \sigma )\, d\sigma \,;\\
	\label{eq:Fk-affine}
	F_k^2 &\;=\; F_k^2 \big|_{Q_0} \;+\; q_k\, F_1^2 \qquad ( k \geq 2 ) \,.
\end{align}

\subsection{Closing conditions at the boundary of $I$}\label{subsec:closing}
A compactification of $M^0$ with the equipped metric at a finite endpoint of $I$ is obtained by adjoining a suitable set to it and demanding that $g$ extend smoothly across the closing stratum. Without loss of generality, we assume the endpoint under consideration is $s = 0$ and $I = (0, \ell)$ with $0 < \ell \leq \infty$. Immediately, the compactification forces some of the warping functions to degenerate as $s \rightarrow 0^+$. 

Let $Q$ be the set adjoined to $M^0$ at $s=0$.  Following the terminology from the physics literature, $Q$ is called a \emph{bolt} when $\dim Q > 0$ and a \emph{nut} when $Q$ is a point. We first observe the dichotomy of degeneration.  

\begin{lemma}\label{lem:bolt-nut-dichotomy}
	Suppose $(M^0, g)$ closes smoothly at $s = 0$ and that $H, F_1, \dots, F_r$ extend continuously to $s = 0$. Let $J := \{\, j : F_j(0) = 0 \,\}$. Then necessarily $H(0) = 0$, and exactly one of the following holds:
	
	\begin{enumerate}
		\item $J = \varnothing$ (\emph{bolt of codimension two}): only the circle fibers collapse, and the sub-manifold $Q$ at $\{s=0\}$ is diffeomorphic to $N$.
		\item $J = \{ j_0 \}$ (\emph{nut-type collapse}): $(N_{j_0}, g_{j_0})$ is holomorphically isometric to $\mathbb{CP}^{n_{j_0}}$ with the Fubini--Study metric, $|q_{j_0}| = 1$, and $Q \cong \prod_{k \neq j_0} N_k$ is a bolt of codimension $2 n_{j_0} + 2$. When $r = 1$ this means $P \cong S^{2n+1}$ and the entire leaf collapses to a \emph{nut} $Q = \{ \mathrm{pt} \}$; for $r \geq 2$ the collapse is a nut \emph{fiberwise over} the bolt $Q$.
	\end{enumerate}
	
\end{lemma}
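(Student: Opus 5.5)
We plan to study the local geometry near $s = 0$ through the distance function to the adjoined set $Q$. Smooth closing means $\widehat{M} = M^0 \cup Q$ is a smooth manifold and $g$ extends smoothly across $Q$. In a tubular neighbourhood of $Q$, the level sets $P_s$ are the distance spheres of radius $s$ about $Q$, since $s$ is the $g$-distance from $Q$. Fix a point $x \in Q$. For small $s$, the points of $P_s$ lying over $x$ under the nearest-point projection form a round sphere $S^{k-1}_x$ of radius roughly $s$ in the normal space $\nu_x Q$, where $k = \operatorname{codim} Q$. The induced metric on $P_s$ then looks like $s^2 g_{S^{k-1}} \oplus g_Q + O(s^2)$. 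Comparing with the ansatz, each warped block $H^2\theta^2$ or $F_i^2 g_i$ must either tend to a positive limit (it is tangent to $Q$) or tend to $0$ (it is part of the collapsing sphere).

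\textbf{Step 1: $H(0)=0$.} Suppose instead that $H(0) > 0$.
\begin{itemize}
\item If also every $F_i(0) > 0$, then $P_s$ converges to $P$ with a nondegenerate metric. The closure would then be a manifold with boundary, not an interior closing, which is a contradiction.
\item If some $F_j(0) = 0$, then a collapsing sphere would be built from horizontal directions of $N_j$ while the circle direction survives. But the circle fibers $\xi$ restricted to the sphere bundle over $x$ would have to be tangent to $Q$. This is impossible because $\xi$ generates a free isometric action on $P_s$ that also rotates the normal sphere.
\end{itemize}
The cleaner version of this argument is curvature based. The O'Neill $A$-tensor gives $|A_X Y|^2 \sim q_j^2 H^2/F_j^4$ for $X, Y \in \mathcal{H}_j$. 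This quantity enters the sectional curvature of the leaves, and by Gauss's equation together with the bounded ambient curvature it must remain bounded. That is impossible if $H(0) > 0$ and $F_j(0) = 0$. Hence $H(0) = 0$.

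\textbf{Step 2: the two cases.} Since $H(0) = 0$, the circle direction always lies in the collapsing sphere. Let $J$ be the set of collapsing factors.
\begin{itemize}
\item If $J = \varnothing$, the sphere is the circle itself, so $k = 2$, $Q = P/S^1 = N$, and we are in the codimension-two bolt case.
\item If $J \neq \varnothing$, then for each $x$ the fiber $\pi'^{-1}(x)$ over $\prod_{k\notin J} N_k$ is a circle bundle over $\prod_{j\in J} N_j$. This fiber must be diffeomorphic to a round sphere $S^{k-1}$. Moreover, its metric $H^2\theta^2 + \sum_{j\in J}F_j^2 g_j$ must be asymptotic to $s^2 g_{S^{k-1}}$.
\end{itemize}

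\textbf{Step 3 (expected main obstacle): identifying the sphere.} We need to show that $J$ is a single index $j_0$ and that the fiber is the Hopf fibration. The asymptotic roundness forces $H/s \to 1$ and $F_j/s \to c_j$. The round metric on $S^{k-1}$ is then a Riemannian submersion metric over the Kähler–Einstein base $\prod_{j\in J} N_j$ with totally geodesic circle fibers, and this base has constant holomorphic sectional curvature $4$.
\begin{itemize}
\item A product of two or more positive-dimensional factors does not have constant holomorphic sectional curvature. Hence $|J| = 1$.
\item This also forces $(N_{j_0}, g_{j_0})$ to be $\mathbb{CP}^{n_{j_0}}$ with its Fubini–Study metric, up to scale.
\item The Euler class must be that of the Hopf bundle $S^{2n_{j_0}+1} \to \mathbb{CP}^{n_{j_0}}$, which is a generator. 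This gives $|q_{j_0}| = 1$; a larger $|q_{j_0}|$ would give a lens space quotient, which is not a sphere.
\end{itemize}
The codimension is then $2n_{j_0} + 2$, and $Q \cong \prod_{k\neq j_0} N_k$.

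To make the curvature-based claims in Steps 1 and 3 rigorous, I would compute the O'Neill $A$-tensor and the Gauss equation explicitly. Alternatively, one could cite the smoothness conditions in \cite{VZ} and \cite{FT}.
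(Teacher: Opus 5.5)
Your outline is correct in substance, but both key steps take a different route from the paper.

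\textbf{The step $H(0)=0$.} The paper argues by holonomy. If $H(0)>0$ and $F_j(0)=0$, horizontal lifts of short loops in $N_j$, of $g_s$-length $\le F_j(s)L_j\to0$, realize every holonomy $e^{i\phi}$ because $q_j\neq0$. Hence the extended circle action fixes $Q$ pointwise, $\widehat{\xi}$ vanishes on $Q$, and $H(0)=0$.

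Your curvature route via Corollary~\ref{cor:sectional-curvatures}(iii),
$K(X_j,JX_j)=K_{g_j}/F_j^2-\tfrac{3q_j^2H^2}{4F_j^4}-(F_j'/F_j)^2$,
is shorter and purely local. However, the reason you state is false. The quantity $|A|^2\sim q_j^2H^2/F_j^4$ does not stay bounded in a genuine closing; at a nut it grows like $s^{-2}$, and so does the Gauss term $(F_j'/F_j)^2$.

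What actually works is a comparison of orders and signs. The shape-operator term has the same nonpositive sign as the $A$-term, so bounded ambient curvature gives $\tfrac{3q_j^2H^2}{4F_j^4}\le C+C/F_j^2$. That is, $H^2\le C(F_j^2+F_j^4)\to0$. You need $q_j\neq0$ here, as the paper does. Drop the ``free action rotates the normal sphere'' heuristic; it is not an argument.

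\textbf{The Hopf structure.} The paper cites the Escobales--Ranjan classification of Riemannian submersions of round spheres with totally geodesic circle fibers. It then uses de Rham irreducibility of $\mathbb{CP}^m$ to get $|J|=1$, and the Gysin sequence to get $|q_{j_0}|=1$.

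Your assertion that the base has constant holomorphic sectional curvature $4$ is exactly what that citation supplies, so you must prove it. One way to do so:
\begin{itemize}
\item Because the fibers are totally geodesic, roundness gives $|A_X\xi^*|^2=K(X,\xi^*)=1$ for unit horizontal $X$.
\item By \eqref{eq:A-leaf}, with $H/s\to1$, this forces $c_j^2=|q_j|/2$. It also gives $A_X\xi^*=\mathcal{J}X$ with $\mathcal{J}:=\sum_{j\in J}\operatorname{sgn}(q_j)J_j$, an orthogonal complex structure.
\item O'Neill's horizontal formula then yields $K_B(X,\mathcal{J}X)=1+3|A_X\mathcal{J}X|^2=4$.
\item A mixed unit vector $X_1+X_2$ with $|X_1|=|X_2|$ in a product has holomorphic sectional curvature $2$, so $|J|=1$.
\item Simple connectivity of the base, from the long exact sequence of $S^1\to S^{c-1}\to B$, then identifies it with $\mathbb{CP}^{n_{j_0}}$.
\end{itemize}
Your lens-space argument for $|q_{j_0}|=1$ is equivalent to the paper's Gysin argument.

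\textbf{Trade-off.} Your version is self-contained and uses only curvature formulas already in the paper. The paper's version is shorter and obtains the bundle equivalence with the Hopf fibration directly from the classification.
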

 A proof of Lemma \ref{lem:bolt-nut-dichotomy} will be provided in the Appendix. The metric is of cohomogeneity one and the topology of $P$ is given, so the closing conditions are well-known (see, for example, \cite{VZ, DW1998}) and recalled below. 
\begin{proposition}[Closing conditions for a bolt $Q \cong N$]\label{prop:bolt-closing}
	In case (1) of Lemma \ref{lem:bolt-nut-dichotomy}, $(M^0, g)$ closes smoothly at $s = 0$ if and only if
	
	\begin{enumerate}
		\item $H$ extends to a smooth \textbf{odd} function of $s$ with $H'(0) = 1$; equivalently $H(s) = s\, \varphi(s^2)$ for a smooth function $\varphi$ with $\varphi(0) = 1$; and
		\item each $F_i$ extends to a smooth \textbf{even} function of $s$ with $F_i(0) > 0$.
	\end{enumerate}
\end{proposition}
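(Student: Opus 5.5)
We reduce the smoothness question near the bolt to a local model in which the statement becomes the classical smoothness criterion for rotationally symmetric metrics on $\mathbb{R}^2$. Near $Q \cong N$, a neighbourhood of $Q$ in the compactification is the disc bundle $P \times_{\mathrm{U}(1)} D \to N$ of the complex line bundle associated to $P$. Choose a local trivialisation over a coordinate chart $V \subset N$ by picking a local section of $P$. Write $\theta = d\phi + \Pi^*\alpha$ with $d\alpha = \sum_i q_i \omega_i$ and $\phi \in \mathbb{R}/2\pi\mathbb{Z}$; the generator $\xi$ has period $2\pi$ because $\theta(\xi)=1$ and $P$ is a principal $\mathrm{U}(1)$-bundle. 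Introduce polar coordinates $(s,\phi)$ on the disc fibre and Cartesian coordinates $x = s\cos\phi$, $y = s\sin\phi$. Over $V$ the metric then reads
\[
ds^2 + H(s)^2 (d\phi + \alpha)^2 + \sum_i F_i(s)^2 g_i ,
\]
so the problem reduces to deciding when this tensor is smooth in $(x,y,z)$ with $z \in V$.

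\emph{Sufficiency.} Expand the metric as
\[
ds^2 + H^2 d\phi^2 + 2H^2\, d\phi\,\alpha + H^2 \alpha^2 + \sum_i F_i^2 g_i .
\]
We treat each term separately.
\begin{itemize}
\item For the first two terms, write $H = s\varphi(s^2)$ with $\varphi(0)=1$. Then
\[
ds^2 + H^2 d\phi^2 = (dx^2+dy^2) + (\varphi(s^2)^2 - 1)\, s^2 d\phi^2 ,
\]
and $s^2 d\phi^2 = (x\,dy - y\,dx)^2/s^2$. Since $\varphi^2-1 = s^2\psi(s^2)$ for a smooth $\psi$, the correction equals $\psi(x^2+y^2)(x\,dy-y\,dx)^2$, which is smooth.
\item The cross term is $H^2 d\phi = \varphi(s^2)^2(x\,dy - y\,dx)$, which is smooth.
\item In the remaining terms, $H^2 = s^2\varphi^2$ and $F_i^2$ are even functions of $s$, hence smooth functions of $s^2 = x^2+y^2$ by Whitney's theorem on even functions.
\end{itemize}
It remains to check positive definiteness at $s=0$: there the metric equals $dx^2+dy^2 + \sum_i F_i(0)^2 g_i$, which is positive definite because $F_i(0)>0$.

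\emph{Necessity.} Suppose $g$ extends smoothly across $Q$.
\begin{itemize}
\item \textbf{Warping functions $F_i$.} For each $i$, $F_i(s)^2 = g(\tilde X,\tilde X)$ for a fixed horizontal lift $\tilde X$ of a unit vector in $TN_i$. This lift is a smooth vector field on the neighbourhood, and it is invariant under the $\mathrm{U}(1)$-action, which acts by rotation $\phi \mapsto \phi + c$. Evaluating along the line $\{y=0\}$, the function $x \mapsto F_i(|x|)^2$ is smooth in $x$ and even, so $F_i^2$ is smooth and even. Positivity gives $F_i(0)>0$, hence $F_i$ itself is smooth and even.
\item \textbf{Warping function $H$.} We have $H(s)^2 = g(\xi,\xi)$, where $\xi = x\partial_y - y\partial_x$ is smooth. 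Hence $H^2$ is an even smooth function of $x$ vanishing at $0$. The no-cone-angle condition comes from the $(x,y)$-block at $s=0$. The length of the circle of radius $s$ is $2\pi H(s)$, while smoothness and nondegeneracy at the origin force this length to be $2\pi s(1+O(s^2))$. Thus $H^2 = s^2\varphi(s^2)^2$ with $\varphi(0)=1$, so $H$ is odd and $H'(0)=1$.
\end{itemize}

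The main obstacle is the parity and regularity step for $H$. The delicate point is to extract the smooth odd function $H$, not merely $H^2$, with the correct normalisation. To handle this, we will apply the same argument to the component $g(\xi, \partial_y)$ along $\{y=0\}$, which equals $xH(|x|)^2/|x|^2 \cdot x$-type expressions. From these we read off that $H(s)/s$ is smooth and even, which gives the odd extension of $H$.
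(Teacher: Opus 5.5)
The paper does not prove this proposition. It calls the conditions ``well-known'' for cohomogeneity-one metrics and cites Verdiani--Ziller and Dancer--Wang, so it relies on the general smoothness theory for metrics near a singular orbit.

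Your route is different and self-contained. A local section of $P$ over a chart $V\subset N$ puts the metric in the form $ds^2+H^2(d\phi+\alpha)^2+\sum_iF_i^2g_i$, and you check smoothness component by component in $(x,y,z)$ using Whitney's theorem on even functions. This uses only the free $\mathrm{U}(1)$-action and the disc-bundle model, so nothing depends on the $N_i$ being homogeneous; they are merely K\"ahler--Einstein, so ``cohomogeneity one'' is not literally available. The cited route buys generality at the price of a black box.

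The following parts are correct:
\begin{itemize}
\item your sufficiency computation, using $s^2d\phi^2=(x\,dy-y\,dx)^2/s^2$ and $H^2d\phi=\varphi^2(x\,dy-y\,dx)$, together with positivity at $s=0$;
\item the necessity argument for the $F_i$, via the smooth horizontal lift $\tilde X=X-\alpha(X)\,\xi$.
\end{itemize}
In the necessity direction, add one line explaining why $x+iy=se^{i\phi}$ are smooth coordinates on the \emph{given} compactification. The reason is that $s$ is the distance to $Q$, so these are normal exponential coordinates of $Q$ up to a smooth $z$-dependent rotation.

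The one step not actually proved is the necessity of $H'(0)=1$, and your proposed repair in the last paragraph goes astray. On $\{y=0\}$ one has $\xi=x\partial_y$, so $g(\xi,\partial_y)=H(|x|)^2/x$, not the expression you wrote; and this detour is not needed anyway. The reason you give for the circle length being $2\pi s(1+O(s^2))$ is also too vague. A cone $H=cs$ with $c\neq1$ is nondegenerate off the axis; what rules it out is \emph{continuity} of $\widehat g$ at the axis, comparing radial and angular directions.

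Concretely, the argument runs as follows.
\begin{itemize}
\item On $\{y=0\}$ one has $\partial_\phi=x\partial_y$ and $g(\partial_\phi,\partial_\phi)=H^2$. Hence $H(|x|)^2=x^2\,g_{yy}(x,0,z)$, so $G(s):=H(s)^2/s^2=g_{yy}(s,0,z)$ extends to a smooth even function.
\item On $\{x=0,\ y>0\}$ one has $\partial_y=\partial_s$, so $g_{yy}(0,y,z)=g(\partial_s,\partial_s)=1$. Continuity at the origin gives $G(0)=1$.
\item Then $H(s)=s\sqrt{G(s)}$, with $\sqrt{G}$ smooth and even near $0$. This is exactly $H=s\varphi(s^2)$ with $\varphi(0)=1$.
\end{itemize}
With this in place of the circle-length heuristic, your proof is complete.
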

\noindent In this case $\widehat{M}$ is, near $Q$, diffeomorphic to a neighbourhood of the zero section of the complex line bundle $L := P \times_{\mathrm{U}(1)} \mathbb{C}$ over $N$, with $Q$ the zero section.\\

In case (2) of Lemma \ref{lem:bolt-nut-dichotomy}, for convenience, we consistently relabel so that $j_0 = 1$ and set $m := n_1$.
\begin{proposition}[Closing conditions for a nut-type collapse]\label{prop:nut-closing}
	In case (2) of Lemma \ref{lem:bolt-nut-dichotomy}, a smooth closing at $s = 0$ requires:
	
	\begin{enumerate}
		\item $H$ and $F_1$ extend to smooth \textbf{odd} functions of $s$, and each $F_k$ with $k \geq 2$ extends to a smooth \textbf{even} function with $F_k(0) > 0$;
		\item $H'(0) = 1$ and $F_1'(0) = \tfrac{1}{\sqrt{2}}$.
	\end{enumerate}

\end{proposition}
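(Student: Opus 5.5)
The plan is to reduce to a local model near the closing stratum and then apply the standard smoothness criterion for cohomogeneity-one metrics near a singular orbit (Eschenburg--Wang, Verdiani--Ziller \cite{VZ}).

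\emph{Step 1: local model.} In case (2) of Lemma \ref{lem:bolt-nut-dichotomy} we have $N_1=\mathbb{CP}^m$ and $q_1=\pm1$; after reversing orientation of $\theta$ if necessary, take $q_1=1$. Then the restriction of $P$ to each slice $\mathbb{CP}^m\times\{y\}$, $y\in N'$, is the Hopf bundle $S^{2m+1}\to\mathbb{CP}^m$. Hence a neighbourhood of $Q\cong N'$ in $\widehat M$ is the total space of the rank-$(m+1)$ bundle $\mathbb{C}^{m+1}\otimes L'\to N'$. Here $L'$ is the line bundle with connection whose curvature is $\sum_{k\ge2}q_k\omega_k$. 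The coordinate $s$ is the fibre radius, and the circle generated by $\xi$ acts by scalar multiplication in the fibres. Smoothness of $g$ is a local question over small open sets $U\subset N'$, on which we trivialize $L'$ by a unitary frame. We then work on $U\times\mathbb{C}^{m+1}$ with linear coordinates $x\in\mathbb{C}^{m+1}$, $s=|x|$. In these coordinates $\theta=\theta_{\mathrm{Hopf}}+\pi^*\alpha$, where $\alpha$ is a local $1$-form on $U$ with $d\alpha=\sum_{k\ge2}q_k\omega_k$.

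\emph{Step 2: rewrite $g$ in Euclidean terms.} Use the identities
\[
s^2\,ds^2=(\mathrm{Re}\langle x,dx\rangle)^2,\qquad s^2\theta_{\mathrm{Hopf}}=\mathrm{Im}\langle x,dx\rangle,\qquad |dx|^2=ds^2+s^2\theta_{\mathrm{Hopf}}^2+\tfrac{s^2}{2}\,g_1 .
\]
The factor $\tfrac12$ comes from the normalization $\mathrm{Ric}(g_1)=(m+1)g_1$: this makes $g_1$ twice the Fubini--Study metric of holomorphic sectional curvature $4$, and the latter is the base of the Hopf submersion from the unit sphere. Substituting, the fibre part of $g$ becomes
\[
\frac{2F_1^2}{s^2}\,|dx|^2+\Big(1-\frac{2F_1^2}{s^2}\Big)\frac{(\mathrm{Re}\langle x,dx\rangle)^2}{s^2}+\Big(\frac{H^2-2F_1^2}{s^2}\Big)\frac{(\mathrm{Im}\langle x,dx\rangle)^2}{s^2},
\]
together with cross terms $2H^2\theta_{\mathrm{Hopf}}\,\pi^*\alpha$ and base terms $H^2\alpha^2+\sum_{k\ge2}F_k^2g_k$.

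\emph{Step 3: read off the conditions.} The coefficient functions must be smooth functions of $x$. Their sufficiency and necessity follow from the Verdiani--Ziller criterion for the isotropy representation of $\mathrm{U}(m+1)$ on $\mathbb{C}^{m+1}$. Alternatively, a function of $s$ alone is smooth on $\mathbb{C}^{m+1}$ iff it is a smooth function of $s^2$, and the quadratic forms $(\mathrm{Re}\langle x,dx\rangle)^2/s^2$ and $(\mathrm{Im}\langle x,dx\rangle)^2/s^2$ need coefficients vanishing to order $s^2$. This yields the following.
\begin{itemize}
\item Nondegeneracy and smoothness at the origin of $2F_1^2/s^2$ force $F_1(s)/s$ to be smooth and even, with value $1$ at $s=0$; that is, $F_1$ is odd and $F_1'(0)=1/\sqrt2$.
\item The other two coefficients must vanish at $s=0$ and be functions of $s^2$. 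This forces $H^2/s^2\to1$ with $H/s$ even, so $H$ is odd and $H'(0)=1$.
\item The cross term $H^2\theta_{\mathrm{Hopf}}\alpha=(H^2/s^2)\,\mathrm{Im}\langle x,dx\rangle\,\alpha$ is then smooth.
\item The base terms require $F_k^2$, $k\ge2$, to be smooth in $s^2$ with $F_k(0)>0$ (a bolt, not a collapse), so $F_k$ is even.
\end{itemize}

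\emph{Main obstacle.} The delicate point is necessity to all orders: one must show that oddness and evenness are forced, not merely the first-order conditions. For this I would invoke the criterion of \cite{VZ} rather than reprove it. The twisting by $L'$ does not affect the argument, since it only enters through the smooth local form $\alpha$ on $U$ and the fibre coordinates transform unitarily between trivializations, which preserves each of the expressions above.
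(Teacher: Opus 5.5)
Your proposal is correct and takes essentially the paper's route: the paper gives no separate proof and simply recalls these as the standard cohomogeneity-one smoothness conditions of \cite{VZ, DW1998}, and your Step~2 rewriting in linear fibre coordinates is a concrete instance of that criterion. Two small corrections. First, in your first bullet it is $\sqrt{2}\,F_1/s$ (equivalently $2F_1^2/s^2$), not $F_1/s$, that tends to $1$, and this normalization comes from the vanishing of the $(\mathrm{Re}\langle x,dx\rangle)^2/s^2$ coefficient rather than from nondegeneracy. Second, the all-orders necessity you defer to \cite{VZ} is elementary: at $x=re_1$ the components $g(e_2,e_2)=2F_1^2/r^2$ and $g(ie_1,ie_1)=H^2/r^2$ of the smooth tensor are smooth even functions of $r$, hence smooth in $r^2$ by Whitney; and since $g(e_1,e_1)=1$ at $x=re_1$ while $g(e_1,e_1)=g(ie_1,ie_1)=2F_1^2/r^2$ at $x=re_2$, continuity at $x=0$ forces both limits to equal $1$.
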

In this case, if $r = 1$, $\widehat{M}$ near $Q$ is a ball $B^{2n+2}$, and adjoining the point yields $M^0 \cup Q \cong \mathbb{C}^{n+1}$ topologically. For $r \geq 2$, the map $P \to Q = \prod_{k \geq 2} N_k$ is a fiber bundle with fiber $S^{2m+1}$ (the circle bundle over $\mathbb{CP}^m$ with $|q_1| = 1$), and $\widehat{M}$ near $Q$ is the total space of the associated rank-$(2m+2)$ vector bundle obtained by coning off the sphere fibers.

\begin{remarkx}To obtain compact examples, one requires that both endpoints of $I = (0, \ell)$ are finite and $(M^0, g)$ closes smoothly at both. For instance: closing with bolts $Q \cong N$ at both ends yields $\widehat{M} \cong P \times_{\mathrm{U}(1)} \mathbb{CP}^1$, the $S^2$-bundle over $N$ associated to $P$; for $r = 1$, $N_1 = \mathbb{CP}^n$, $q_1 = 1$, a nut at one end and a bolt at the other yields $\widehat{M} \cong \mathbb{CP}^{n+1}$, while nuts at both ends yield $\widehat{M} \cong S^{2n+2}$. 
\end{remarkx}
\begin{remarkx}
	The submersion $\pi : M^0 \to N$ is defined a priori only on the open manifold $M^0$. In case (1) of Lemma \ref{lem:bolt-nut-dichotomy}, with the identification $Q \cong N$, it extends to the vector-bundle projection $\widehat{\pi} : \widehat{M} \to N$ with $\widehat{\pi}|_Q = \mathrm{id}_N$. In the second case, there is no such extension. Instead, one writes $N = \mathbb{CP}^{n_{1}} \times N'$ with $N' := \prod_{k \geq 2} N_k$. When $r \geq 2$, by a natural composition of maps, $\widehat{M}$ can be viewed as a fiber bundle over $N'$ and this perspective will be crucial in our analysis. 
\end{remarkx}

\subsection{The O'Neill tensors}\label{subsec:oneill}

For a submersion with vertical distribution $\mathcal{V}$ and horizontal one $\mathcal{H}$, O'Neill's fundamental tensors \cite{ONeill66} are
\[
T_E F \;:=\; \mathcal{H} \nabla_{\mathcal{V} E}\, \mathcal{V} F \;+\; \mathcal{V} \nabla_{\mathcal{V} E}\, \mathcal{H} F \,, \qquad
A_E F \;:=\; \mathcal{V} \nabla_{\mathcal{H} E}\, \mathcal{H} F \;+\; \mathcal{H} \nabla_{\mathcal{H} E}\, \mathcal{V} F \,:
\]
$T$ is the obstruction to the fibers being totally geodesic, and $A$ the obstruction to the integrability of $\mathcal{H}$. Throughout, $\xi^* := \xi / H$ denotes the unit vertical (Reeb) field.  For $\check{g}_s = \sum_i F_i ( s )^2 g_i$, the leaf submersion $\pi_s : ( P_s, g_s ) \to ( N, \check{g}_s )$ is Riemannian while the total submersion $\pi : ( M^0, g ) \to (N, \check{g}_s)$ is horizontally conformal, both with totally geodesic fibers. Thus, each tensor $T$ is vanishing while $A$ for each map is computed in \cite{FT}. In particular, for basic horizontal $X, Y$,

\begin{equation}\label{eq:A-leaf}
	A^{\pi_s}_X Y \;=\; -\frac{H}{2}\sum_{i=1}^r q_i\,\pi^*\omega_i(X,Y)\;\xi^*, \qquad A^{\pi_s}_X\, \xi^* \;=\; \frac{H}{2}\sum_{i=1}^r \frac{q_i}{F_i^2}\; J_i X_i \, .
\end{equation}
Equivalently, one can write that $A^{\pi_s}_X Y = -\tfrac{1}{2}\, d\theta(X,Y)\,\xi$. \\ 

For the total submersion, 
since $A_X Y \in \mathcal{V} = \mathbb{R}\partial_s \oplus \mathbb{R}\xi$ and $\{\partial_s,\, \xi/H\}$ is a $g$-orthonormal basis of $\mathcal{V}$, for basic horizontal $X, Y$:
\begin{align}\label{eq:A-total}
	A^{\pi}_X Y =\;\;\; & -\frac{H}{2}\sum_{i=1}^r q_i\,\pi^*g_i(X,Y)\;\partial_s \;-\; \frac{1}{2}\sum_{i=1}^r \kappa_i\,\pi^*g_i(X,Y)\;\partial_s \;-\; \frac{H}{2}\sum_{i=1}^r q_i\,\pi^*\omega_i(X,Y)\;\xi^*\\
	\label{eq:A-total-kahler}
	\underset{K\"{a}hler}{=}& -\frac{H}{2}\sum_i q_i\,\pi^*g_i(X,Y)\;\partial_s \;-\; \frac{H}{2}\sum_i q_i\,\pi^*\omega_i(X,Y)\;\xi^*.
\end{align}
The mixed components follow by metric-duality such that  $g(A^{\pi}_X\partial_s, Y) = -g(A^{\pi}_XY,\partial_s)$. There is a similar identity for $\xi^*$. \\



The formulas above are phrased in the polar-type frame $\{\partial_s, \xi^*\}$, which does not extend across a closing stratum $Q$. 
Thus, to study the behavior of those tensors at a closing we introduce the even variable $\rho := s^2$ (writing $\dot{\varphi} := d\varphi / d\rho$ for $\rho$-derivatives) and 
\[
E \;:=\; s\, \partial_s \;=\; \tfrac{1}{2}\, \nabla \rho \,.
\]
It is immediate that $E$ and $\zeta$ are smooth vector fields on $\widehat{M}$, both vanish along $Q$, and together they span the vertical distribution $\mathcal{V} = \mathbb{R}\partial_s \oplus \mathbb{R}\xi$ over $M^0$.\\

If we are in case (1) of Lemma \ref{lem:bolt-nut-dichotomy}, at the compactification of $M^0$, write $H(s) = s\, \varphi(s^2)$ with $\varphi(0) = 1$ and $F_i(s)^2 = \psi_i(s^2)$ with $\psi_i(0) > 0$. Then 
for basic horizontal $X, Y$,
\begin{align}
	\label{eq:A-cartesian}
	A^{\pi}_X Y &\;=\; -\sum_{i=1}^r \dot{\psi}_i\; \pi^* g_i(X_i, Y_i)\; E \;-\; \frac{1}{2} \sum_{i=1}^r q_i\; \pi^* \omega_i(X_i, Y_i)\; \xi \,,\\
	\label{eq:A-cartesian-mixed}
	A^{\pi}_X E &\;=\; \rho \sum_{i=1}^r \frac{\dot{\psi}_i}{\psi_i}\; X_i \,, \qquad A^{\pi}_X\, \xi \;=\; \frac{\rho\, \varphi^2}{2} \sum_{i=1}^r \frac{q_i}{\psi_i}\; J_i X_i \,.
\end{align}

Next, we assume case (2) of Lemma \ref{lem:bolt-nut-dichotomy} with $r \geq 2$. The submersion $\widehat{\pi}' : \widehat{M} \to N'$ has, over $M^0$, vertical and horizontal distributions
\[
\mathcal{V}' \;=\; \mathbb{R}\partial_s \,\oplus\, \mathbb{R}\xi \,\oplus\, \mathcal{H}_1, \qquad \mathcal{H}' \;=\; \bigoplus_{k \geq 2} \mathcal{H}_k \, .
\]
We record the following whose proof is given in the Appendix.
\begin{proposition}\label{prop:residual-oneill}
	For the submersion $\widehat{\pi}' : \widehat{M} \to N'$, we have:
	
	\emph{(i)} $T^{\widehat{\pi}'} \equiv 0$.
	
	\emph{(ii)} For basic $X, Y \in \mathcal{H}'$, the $A$-tensor has no $\mathcal{H}_1$-component and is the $k \geq 2$ part of \eqref{eq:A-total}.
\end{proposition}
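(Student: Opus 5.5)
Over the open part $M^0=(0,\ell)\times P$ I would compute directly with the Levi-Civita connection of \eqref{eq:ansatz-metric}. Afterwards I would extend the conclusions across the two closing strata by continuity and smoothness.

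\emph{Step 1: the covariant derivatives.} For a basic horizontal field $Z$ and $W\in\mathcal{H}_j$ I would use the Koszul formula on the frame $\{\partial_s,\xi^*\}\cup\bigcup_i\mathcal{H}_i$. The needed brackets are $[\partial_s,\xi]=0$, $[\partial_s,Z]=0$ and $[\xi,Z]=0$ for basic $Z$, together with $\mathcal{V}[X,Y]=-d\theta(X,Y)\xi$. These give
\[
\nabla_{\partial_s}\partial_s=0,\quad \nabla_{\partial_s}\xi=\tfrac{H'}{H}\xi,\quad \nabla_\xi\xi=-HH'\partial_s,\quad \nabla_{\partial_s}W=\tfrac{F_j'}{F_j}W,
\]
and also $\nabla_W\partial_s=\tfrac{F_j'}{F_j}W$. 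Moreover $\nabla_\xi W$ and $\nabla_W\xi$ lie in $\mathcal{H}$ and are multiples of $J_jW$, with coefficient $\tfrac{q_jH^2}{2F_j^2}$ up to sign. For $X,Y$ basic, the field $\nabla_XY$ is the lift of the product connection on $N$, which preserves each $\mathcal{H}_k$. It is corrected by the vertical term given by \eqref{eq:A-total}, which lies in $\mathbb{R}\partial_s\oplus\mathbb{R}\xi$.

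\emph{Step 2: proof of (i).} I must show that for $U,V\in\mathcal{V}'=\mathbb{R}\partial_s\oplus\mathbb{R}\xi\oplus\mathcal{H}_1$ the field $\nabla_UV$ has no component in $\mathcal{H}_k$ for $k\ge2$. By Step 1:
\begin{itemize}
\item the pairs among $\partial_s,\xi$ stay in $\mathbb{R}\partial_s\oplus\mathbb{R}\xi$;
\item the mixed terms with $W\in\mathcal{H}_1$ are multiples of $W$ or of $J_1W$, so they stay in $\mathcal{H}_1$;
\item for $W,W'\in\mathcal{H}_1$ the horizontal part of $\nabla_WW'$ is the lift of $\nabla^{g_1}_WW'\in TN_1$, and the rest is vertical.
\end{itemize}
The block-diagonal form of the metric is what makes every cross term vanish. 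Hence $\mathcal{H}'\nabla_{\mathcal{V}'}\mathcal{V}'=0$ on $M^0$. This is the first half of $T^{\widehat\pi'}$, and the second half vanishes by metric duality, since $g(\nabla_UX,V)=-g(X,\nabla_UV)$. So $T^{\widehat\pi'}\equiv0$ on $M^0$. Because $T^{\widehat\pi'}$ is a smooth tensor on all of $\widehat M$ and $M^0$ is dense, it vanishes on $\widehat M$. Here I would check that $\widehat\pi'$ is a smooth submersion across both strata, which holds since $Q_0\cong N'$ and $Q_\ell\cong\mathbb{CP}^m\times N'$ both map submersively to $N'$.

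\emph{Step 3: proof of (ii).} For $X,Y\in\mathcal{H}'$ basic, by definition $A_XY=\mathcal{V}'\nabla_XY$. The $\mathcal{H}_1$-component of $\nabla_XY$ is the lift of the $TN_1$-component of the product connection applied to fields tangent to $N'$, and this is zero. So $A_XY$ equals the $\mathbb{R}\partial_s\oplus\mathbb{R}\xi$ part of $\nabla_XY$. That part is exactly \eqref{eq:A-total}, in which only the $k\ge2$ terms survive, because $\pi^*g_1(X,Y)=\pi^*\omega_1(X,Y)=0$ for $X,Y\in\mathcal{H}'$.

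\emph{Main obstacle.} The delicate point is the sign and coefficient bookkeeping for $\nabla_\xi W$ and $\nabla_W\xi$ with $W\in\mathcal{H}_1$, and making sure these stay within $\mathcal{H}_1$. They do, because $J_1$ preserves $TN_1$ and $d\theta$ is block-diagonal. I would also double-check the smoothness of $\widehat\pi'$ at the nut stratum using the $S^{2m+1}$-fibration of Proposition \ref{prop:nut-closing}.
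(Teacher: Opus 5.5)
Your proposal is correct and follows essentially the paper's route. The paper likewise works on the dense set $M^0$: it uses the Koszul formula for the $\mathcal{H}_k$-components ($k\ge 2$) of $\nabla_UV$ with $U,V$ vertical to get (i), and for the $\mathcal{H}_1$-component of $\nabla_XY$ together with the $k\ge2$ part of \eqref{eq:A-total} to get (ii), and then extends across the strata by continuity; your write-up simply spells out the connection coefficients that this computation uses.
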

\noindent Decomposing a horizontal vector $X = \sum_i X_i$ into its $\mathcal{H}_i$-components and noting that $g(X_i, Y_i) = F_i^2\, \pi^*g_i(X,Y)$, we can rewrite 
\begin{equation}
	\label{eq:A-residual}
	A^{\widehat{\pi}'}_X Y \underset{K\"{a}hler}{=}\;  - \sum_{k \geq 2} g ( X_k, Y_k )\; \nabla \log F_k \;-\; \frac{H}{2}\, d\theta' ( X, Y )\; \xi^* \,,
\end{equation}
where $d\theta' := \sum_{k \geq 2} q_k\, \pi^* \omega_k$ is the residual curvature and $\nabla \log F_k = \tfrac{q_k h}{2 F_k^2}\, \partial_s$. 

At the compactification of $M^0$ (see Proposition \ref{prop:nut-closing}), write $H(s) = s\, \varphi(s^2)$, $F_1(s)^2 = s^2\, \psi_1(s^2)$, and $F_k(s)^2 = \psi_k(s^2)$ for $k \geq 2$. Thus, we have
\begin{align*}
	\varphi(0) &= 1,~~~ \psi_1(0) = \tfrac{1}{2}, ~~~\psi_k(0) > 0,\\
	\dot{\psi}_k &= \tfrac{1}{2}\, q_k\, \varphi ~~~ \text{for $k \geq 2$}. 
\end{align*} 
Consequently, for basic $X, Y \in \mathcal{H}'$,
\begin{align}
	A^{\widehat{\pi}'}_X Y &\;=\; -\sum_{k \geq 2} \dot{\psi}_k\; \pi^* g_k(X_k, Y_k)\; E \;-\; \frac{1}{2} \sum_{k \geq 2} q_k\; \pi^* \omega_k(X_k, Y_k)\; \xi \,\nonumber\\
\label{eq:A-residual-cartesian}
	&\;=\; -\frac{\varphi}{2} \sum_{k \geq 2} q_k\, \pi^* g_k ( X_k, Y_k )\; E \;-\; \frac{s\, \varphi}{2} \sum_{k \geq 2} q_k\, \pi^* \omega_k ( X_k, Y_k )\; \xi^* \,.
\end{align}

The following will be important for our analysis. For convenience, let $A'=A^{\widehat{\pi}'}$.
\begin{corollary}\label{cor:norm-A-residual}
	Assuming case (2) of Lemma \ref{lem:bolt-nut-dichotomy} and the K\"{a}hler condition, we have
	\begin{equation}\label{eq:norm-A-residual}
		\big| A' \big|^2_g \;\leq\; C ( r, n_k ) \sum_{k \geq 2} \frac{\big| \nabla F_k^2 \big|^2}{F_k^4} \qquad \text{on } \widehat{M} \,,
	\end{equation}
\end{corollary}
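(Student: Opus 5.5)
The plan is to compute $|A'|^2_g$ directly from the expression \eqref{eq:A-residual} on the open part $M^0$, and then argue that the resulting bound extends to all of $\widehat{M}$.

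\textbf{Step 1: the norm on $M^0$.} Choose a $g$-orthonormal frame of $\mathcal{H}'$ adapted to the splitting $\bigoplus_{k\ge2}\mathcal{H}_k$, with $J$-invariant blocks. For $k \geq 2$, set $e_\alpha = F_k^{-1}\tilde{u}_\alpha$, where $\{u_\alpha\}$ is $g_k$-orthonormal on $N_k$. Since $A'_X Y$ is tensorial in $X, Y$, it suffices to evaluate it on this frame. By \eqref{eq:A-residual}:
\begin{itemize}
\item The $\partial_s$-component of $A'_{e_\alpha}e_\beta$ is nonzero only when $e_\alpha, e_\beta$ lie in the same block $\mathcal{H}_k$. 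There it equals $-\delta_{\alpha\beta}\,|\nabla\log F_k|$.
\item The $\xi^*$-component is $-\tfrac{H}{2}\,d\theta'(e_\alpha,e_\beta)$. This is again block-diagonal, with value $-\tfrac{H q_k}{2F_k^2}\,\omega_k(u_\alpha,u_\beta)$.
\end{itemize}
Summing over the frame gives
\[
\sum_{\alpha,\beta}|A'_{e_\alpha}e_\beta|^2 \;=\; \sum_{k\ge2}2n_k\left(|\nabla\log F_k|^2+\frac{H^2q_k^2}{4F_k^4}\right).
\]
The mixed terms $A'_X V$ for vertical $V$ are handled by metric duality, $g(A'_XV,Y)=-g(A'_XY,V)$. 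Together with $T'\equiv0$ from Proposition \ref{prop:residual-oneill}, this shows they contribute the same quantity up to a factor of $2$.

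\textbf{Step 2: reduction to $|\nabla F_k^2|^2/F_k^4$.} Use the Kähler condition \eqref{eq:kahler-condition} in the form $q_kH=(F_k^2)'$. Then
\[
\frac{H^2q_k^2}{4F_k^4}=\frac{|\nabla F_k^2|^2}{4F_k^4},
\]
since $|\nabla F_k^2| = |(F_k^2)'|$ for a function of $s$ alone. Likewise,
\[
|\nabla\log F_k|^2=\frac{|\nabla F_k^2|^2}{4F_k^4}.
\]
Both terms are therefore controlled by the same quantity, and \eqref{eq:norm-A-residual} holds on $M^0$ with a constant $C$ depending only on $r$ and the $n_k$; explicitly, something like $C=4\max_k n_k$ should suffice.

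\textbf{Step 3: extension to $\widehat{M}$.} The bound must also hold on the closing strata. At a nut-type end $Q_0$, Proposition \ref{prop:nut-closing} gives $F_k(0)>0$ for $k\ge2$, and $F_k^2=\psi_k(s^2)$ is smooth. Hence the right-hand side of \eqref{eq:norm-A-residual} is a smooth function on a neighbourhood of $Q_0$. By \eqref{eq:A-residual-cartesian}, $A'$ is also smooth there: it is expressed through the smooth fields $E$ and $\xi$. Both sides are therefore continuous, and the inequality passes to $Q_0$ because $M^0$ is dense in $\widehat{M}$.

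The same argument applies at a bolt end, where all $F_k(0)>0$. At a bolt the factor $\mathbb{CP}^m$ appearing in $N = \mathbb{CP}^m \times N'$ lies inside the fibers of $\widehat{\pi}'$, so it does not enter the estimate.

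\textbf{Main obstacle.} The only real care needed is the bookkeeping of the mixed components $A'_X V$ in the norm, and checking that the vertical part of the frame, including the vectors in $\mathcal{H}_1$, contributes nothing beyond the duality terms. This is where Proposition \ref{prop:residual-oneill}(ii) is used: $A'$ has no $\mathcal{H}_1$-component.
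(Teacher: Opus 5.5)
Your proposal is correct and is essentially the paper's argument. The paper likewise reads off from \eqref{eq:A-residual}, together with metric duality for the mixed components and the absence of an $\mathcal{H}_1$-component, that every component of $A'$ is a multiple of $q_kH/F_k^2$ for $k\geq 2$, and then concludes via $|\nabla F_k^2| = |q_k|H$; your explicit frame computation, which in fact gives an exact formula for $|A'|^2_g$, and your density argument across the closing strata just make this more precise. One small slip does not matter: the $\partial_s$-component of $A'_{e_\alpha}e_\beta$ is $-\delta_{\alpha\beta}\,q_kH/(2F_k^2)$, which agrees with $-\delta_{\alpha\beta}|\nabla\log F_k|$ only up to sign when $q_k<0$, so it does not affect the norm.
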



\subsection{Curvature of the ansatz metric}\label{subsec:curvature}
Here we record the curvature computation. The calculation is done via the submersion toolkit (O'Neill tensors), Gauss, Codazzi, and Riccati equations (see \cite{FT} for more details).

All non-base curvature of $g_s$ turns out to be carried by the single $2$-form
\begin{equation}\label{eq:Psi}
	\Psi \;:=\; H\, d\theta \;=\; \sum_{i=1}^r q_i H\; \pi^*\omega_i. 
\end{equation}
Then we have the curvature of a leaf $(P, g_s)$ given by:
\begin{equation}\label{eq:leaf-curvature}
	\begin{aligned}
		&\mathrm{Rm}_{g_s}(X, Y, Z, W) \;=\; \sum_{i=1}^r F_i^2\; \pi^*\mathrm{Rm}_{g_i}(X_i, Y_i, Z_i, W_i) \\
		&\qquad \;+\; \frac{1}{4}\Big( 2\, \Psi(X,Y)\, \Psi(Z,W) \;-\; \Psi(Y,Z)\, \Psi(X,W) \;+\; \Psi(X,Z)\, \Psi(Y,W) \Big).
	\end{aligned}
\end{equation}

\begin{proposition}[Riemann curvature of the ansatz metric]\label{prop:riemann-ansatz}
	For the metric \eqref{eq:ansatz-metric}, with $X, Y, Z, W$ basic horizontal, $\xi^* = \xi/H$ the unit Reeb field, and $\nu = \partial_s$:
	
	\emph{(i) Horizontal components.}
	\begin{equation}\label{eq:Rm-horizontal}
		\begin{aligned}
			&\mathrm{Rm}_g(X, Y, Z, W) \;=\; \mathrm{Rm}_{g_s}(X, Y, Z, W) \\
			& \;+\; \frac{1}{4} \sum_{i,j=1}^r (q_i H + \kappa_i)(q_j H + \kappa_j) \Big( \pi^*g_i(X_i, Z_i)\, \pi^*g_j(Y_j, W_j) \;-\; \pi^*g_i(X_i, W_i)\, \pi^*g_j(Y_j, Z_j) \Big),
		\end{aligned}
	\end{equation}
	with $\mathrm{Rm}_{g_s}$ given by \eqref{eq:leaf-curvature}; moreover $\mathrm{Rm}_g(X, Y, Z, \xi^*) = 0$.
	
	\emph{(ii) Vertizontal components.}
	\begin{equation}\label{eq:Rm-vertizontal}
		\mathrm{Rm}_g(X, \xi^*, \xi^*, Y) \;=\; \frac{1}{4} \sum_{i=1}^r \frac{(q_i H)^2}{F_i^2}\; \pi^*g_i(X_i, Y_i) \;-\; \frac{H'}{2H} \sum_{i=1}^r \big( q_i H + \kappa_i \big)\, \pi^*g_i(X_i, Y_i) .
	\end{equation}
	
	\emph{(iii) Codazzi components.} The only non-vanishing components with exactly one $\nu$-slot are
		\begin{equation}\label{eq:Rm-codazzi}
		\mathrm{Rm}_g(X, Y, \xi^*, \nu) \;=\; 2\, \mathrm{Rm}_g(X, \xi^*, Y, \nu) \;=\; \sum_{i=1}^r q_i H \left( \frac{F_i'}{F_i} - \frac{H'}{H} \right) \pi^*\omega_i(X_i, Y_i) ;
	\end{equation}
	in particular $\mathrm{Rm}_g(X, Y, Z, \nu) = 0$ and $\mathrm{Rm}_g(X, \xi^*, \xi^*, \nu) = 0$.
	
	\emph{(iv) Radial components.}
	\begin{align}
			\mathrm{Rm}_g(X, \nu, \nu, Y) &\;=\; -\sum_{i=1}^r F_i F_i''\; \pi^*g_i(X_i, Y_i), \nonumber \\
			\label{eq:Rm-radial}
			\mathrm{Rm}_g(\xi^*, \nu, \nu, \xi^*) &\;=\; -\frac{H''}{H}, \\
			\mathrm{Rm}_g(X, \nu, \nu, \xi^*) &\;=\; 0 .\nonumber
	\end{align}
	
	Together with the symmetries of the curvature tensor, (i)--(iv) determine $\mathrm{Rm}_g$ completely.
	
\end{proposition}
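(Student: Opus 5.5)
We treat the ansatz metric on $M^0$ as a two-step fibration: the radial direction $\nu=\partial_s$ is the unit normal to the leaves $P_s$, and each leaf $(P,g_s)$ is itself the total space of the Riemannian submersion $\pi_s:(P,g_s)\to(N,\check g_s)$ with totally geodesic circle fibres. The plan is to compute $\mathrm{Rm}_{g_s}$ from O'Neill's formulas for $\pi_s$, and then to pass from the leaf to $M^0$ with the Gauss, Codazzi and Riccati (radial) equations for the hypersurfaces $P_s$. The shape operator is \eqref{eq:shape-operator}.

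\textbf{Leaf curvature.} For $\pi_s$ we have $T\equiv0$ and $A^{\pi_s}_XY=-\tfrac12 d\theta(X,Y)\xi$ by \eqref{eq:A-leaf}. O'Neill's horizontal formula
\[
\mathrm{Rm}(X,Y,Z,W)=\check{\mathrm{Rm}}+2g(A_XY,A_ZW)-g(A_YZ,A_XW)+g(A_XZ,A_YW)
\]
then gives \eqref{eq:leaf-curvature}. Here $g(\xi,\xi)=H^2$ turns $d\theta$ into $\Psi=H\,d\theta$. The base term is the product metric $\check g_s=\sum_iF_i^2g_i$, whose $(0,4)$ curvature scales to $\sum_iF_i^2\pi^*\mathrm{Rm}_{g_i}$.

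\textbf{Gauss and Codazzi.} By the Gauss equation, $\mathrm{Rm}_g=\mathrm{Rm}_{g_s}+\mathrm{II}\wedge\mathrm{II}$ with $\mathrm{II}=g(L_s\cdot,\cdot)$. On $\mathcal H_i$ we have $\mathrm{II}(X_i,Z_i)=F_iF_i'\,\pi^*g_i(X_i,Z_i)$. Writing $2F_iF_i'=q_iH+\kappa_i$ by the definition of $\kappa_i$ gives the correction in (i). Since $L_s$ is diagonal, $\mathrm{II}(X,\xi^*)=0$, so the leaf term decides whether $\mathrm{Rm}_g(X,Y,Z,\xi^*)$ vanishes. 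We expect this to reduce to $(\nabla_X A)$-type terms, which vanish for basic fields over a Kähler–Einstein product because $d\theta$ is parallel on the base.

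For (ii), we combine O'Neill's vertizontal formula $\mathrm{Rm}(X,\xi^*,\xi^*,Y)=g(A_X\xi^*,A_Y\xi^*)$ with the Gauss term $\mathrm{II}(X,Y)\,\mathrm{II}(\xi^*,\xi^*)$, taking the sign convention as in \eqref{eq:Rm-vertizontal}. The first gives $\tfrac14\sum(q_iH)^2F_i^{-2}\pi^*g_i$ by \eqref{eq:A-leaf}. The second gives $-\tfrac{H'}{2H}(q_iH+\kappa_i)\pi^*g_i$, using $\mathrm{II}(\xi^*,\xi^*)=H'/H$.

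For (iii), the Codazzi equation $\mathrm{Rm}(U,V,W,\nu)=(\nabla^{s}_U\mathrm{II})(V,W)-(\nabla^{s}_V\mathrm{II})(U,W)$ is evaluated with the leaf connection. The only nonzero covariant derivatives of $\mathrm{II}$ come from the mixing $\nabla^s_X\xi^*=A_X\xi^*$ and $(\nabla^s_XY)^{\mathcal V}=A_XY$ between eigenspaces with different eigenvalues $F_i'/F_i$ and $H'/H$. This produces the factor $(F_i'/F_i-H'/H)\,q_iH\,\omega_i$. The relation $\mathrm{Rm}(X,Y,\xi^*,\nu)=2\,\mathrm{Rm}(X,\xi^*,Y,\nu)$ then follows from the first Bianchi identity together with antisymmetry of $\omega_i$. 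Components of the form $(X,Y,Z,\nu)$ and $(X,\xi^*,\xi^*,\nu)$ vanish because $A$ maps horizontal pairs to vertical vectors and eigenvalues are constant along leaves.

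\textbf{Radial terms and main obstacle.} For (iv), the Riccati equation $L'+L^2=-R_\nu$ applied to the diagonal $L_s$ gives $-F_i''/F_i$ and $-H''/H$. Multiplying by $g(X_i,Y_i)=F_i^2\pi^*g_i$ gives the stated forms, and the mixed term vanishes because $L$ preserves the splitting. Finally, the symmetries of $\mathrm{Rm}$ together with the decomposition $T M^0=\mathbb R\nu\oplus\mathbb R\xi^*\oplus\mathcal H$ show that (i)–(iv) exhaust all independent components; components with two $\nu$ slots and mixed other slots are covered by the Riccati computation. We expect the main obstacle to be the Codazzi step: tracking the $A$-tensor cross terms and signs to obtain exactly the factor $2$ and the combination $F_i'/F_i-H'/H$. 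We would check it against the $r=1$, $\mathbb{CP}^{n+1}$ Fubini–Study case, where all components are explicitly known.
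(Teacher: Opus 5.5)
Your proposal follows the route the paper indicates for this proposition: O'Neill's formulas for the leaf submersion $\pi_s$, then the Gauss, Codazzi and Riccati equations for the hypersurfaces $P_s$ (the paper only names these tools and defers the details to \cite{FT}), and the computations you outline do yield (i)--(iv). The one step to make explicit is the factor $2$ in (iii): Bianchi gives it only once you know that $(X,Y)\mapsto \mathrm{Rm}_g(X,\xi^*,Y,\nu)$ is skew, which via Codazzi amounts to $(\nabla^s_{\xi^*}\mathrm{II})(X,Y)=0$, and this holds because $\nabla^s_{\xi^*}X=A_X\xi^*=\tfrac{H}{2}\sum_i \tfrac{q_i}{F_i^2}JX_i$ and $\mathrm{II}$ is $J$-invariant on each block $\mathcal{H}_i$.
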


\begin{corollary}[sectional curvatures]\label{cor:sectional-curvatures}
	For the metric \eqref{eq:ansatz-metric}, let $X_i \in \mathcal{H}_i$ and $Y_j \in \mathcal{H}_j$ be $g$-unit horizontal vectors, orthogonal to one another when the blocks coincide, and set	
	\[
	\tau_i \;:=\; F_i^2\; \pi^*\omega_i(X_i, Y_i) \;=\; g(J X_i, Y_i) \;\in\; [-1, 1] \,.
	\]
	Thus, $\tau_i = \pm 1$ iff $Y_i = \pm J X_i$, and $\tau_i = 0$ iff the plane is totally real. Then the sectional curvatures of the frame planes are:
	
	\emph{(i) Radial planes.}
	\[
	K(\nu, \xi^*) \;=\; -\frac{H''}{H} \,, \qquad K(\nu, X_i) \;=\; -\frac{F_i''}{F_i} \,.
	\]
	
	\emph{(ii) Vertizontal planes.}
	\[
	K(X_i, \xi^*) \;=\; \frac{q_i^2 H^2}{4 F_i^4} \;-\; \frac{H'}{H}\,\frac{F_i'}{F_i} \,.
	\]
	
	\emph{(iii) Horizontal planes within a block.}
		\[
	K(X_i, Y_i) \;=\; \frac{K_{g_i}(\pi_* X_i, \pi_* Y_i)}{F_i^2} \;-\; \frac{3\, q_i^2 H^2}{4 F_i^4}\;\tau_i^2 \;-\; \frac{F_i'^2}{F_i^2} \,,
	\]
		where $K_{g_i}$ denotes the sectional curvature of $(N_i, g_i)$.
	
	\emph{(iv) Horizontal planes across blocks.} For $i \neq j$,
	\[
	K(X_i, Y_j) \;=\; -\,\frac{F_i'}{F_i}\;\frac{F_j'}{F_j} \,,
	\]
		the classical doubly-warped-product value: the connection form is invisible on cross-block planes.
	
	
\end{corollary}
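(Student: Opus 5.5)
The plan is to read off each sectional curvature directly from Proposition \ref{prop:riemann-ansatz} as $K(U,V)=\mathrm{Rm}_g(U,V,V,U)$ for $g$-orthonormal $U,V$. Two conversion rules will be used throughout. First, for a $g$-unit $X_i\in\mathcal{H}_i$ we have $\pi^*g_i(X_i,X_i)=F_i^{-2}$, and $\pi^*\omega_i(X_i,Y_i)=F_i^{-2}\tau_i$. Second, by the definition of the defect functions, $q_iH+\kappa_i=(F_i^2)'=2F_iF_i'$. This identity holds without assuming the K\"ahler condition, so the corollary is valid for the general ansatz \eqref{eq:ansatz-metric}.

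\emph{Radial planes.} Apply \eqref{eq:Rm-radial}. The component $\mathrm{Rm}_g(\xi^*,\nu,\nu,\xi^*)$ gives $K(\nu,\xi^*)=-H''/H$ at once. For a unit $X_i$, $\mathrm{Rm}_g(X_i,\nu,\nu,X_i)=-F_iF_i''\,F_i^{-2}$, which is $-F_i''/F_i$.

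\emph{Vertizontal planes.} Substitute $X=Y=X_i$ into \eqref{eq:Rm-vertizontal}. This gives
\[
\frac{q_i^2H^2}{4F_i^4}-\frac{H'}{2H}\cdot\frac{2F_iF_i'}{F_i^2},
\]
which is the stated formula.

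\emph{Horizontal planes.} Set $Z=Y$ and $W=X$ in \eqref{eq:Rm-horizontal} and \eqref{eq:leaf-curvature}, and evaluate the three contributions separately.
\begin{itemize}
\item The base term $F_i^2\,\pi^*\mathrm{Rm}_{g_i}$ is nonzero only when all four slots lie in the same block. After normalizing $\pi_*X_i$ and $\pi_*Y_i$ to $g_i$-unit length it gives $K_{g_i}/F_i^2$.
\item The $\Psi$-term reduces, using skew-symmetry ($\Psi(X,X)=\Psi(Y,Y)=0$ and $\Psi(Y,X)=-\Psi(X,Y)$), to $\tfrac14\big(-2\Psi(X,Y)^2-\Psi(X,Y)^2\big)=-\tfrac34\Psi(X,Y)^2$. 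Within block $i$ we have $\Psi(X_i,Y_i)=q_iH\,\tau_i/F_i^2$, giving $-\tfrac{3q_i^2H^2}{4F_i^4}\tau_i^2$. Across blocks, $\Psi(X_i,Y_j)=0$ because $\Psi$ is block-diagonal.
\item The warping term becomes $\tfrac14\sum_{a,b}(F_a^2)'(F_b^2)'\big(\pi^*g_a(X_a,Y_a)\,\pi^*g_b(Y_b,X_b)-\pi^*g_a(X_a,X_a)\,\pi^*g_b(Y_b,Y_b)\big)$. For two orthonormal vectors this leaves only $-\tfrac14(F_i^2)'(F_j^2)'/(F_i^2F_j^2)$, that is $-F_i'F_j'/(F_iF_j)$. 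Taking $j=i$ gives $-F_i'^2/F_i^2$, which yields (iii). Taking $j\neq i$, where the other two contributions vanish, yields (iv).
\end{itemize}

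Everything here is routine substitution. The only point needing care is the sign convention $K=\mathrm{Rm}(U,V,V,U)$, which is consistent with the radial formula $-H''/H$, together with the bookkeeping of the three index permutations in the $\Psi$-term. That step is the main, if minor, obstacle.
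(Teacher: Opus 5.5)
Your proposal is correct and matches how the paper obtains this corollary: it is read off directly from Proposition~\ref{prop:riemann-ansatz} by setting $(Z,W)=(Y,X)$, with $\pi^*g_i(X_i,X_i)=F_i^{-2}$, $\pi^*\omega_i(X_i,Y_i)=F_i^{-2}\tau_i$, and the definitional identity $q_iH+\kappa_i=(F_i^2)'$. Your reduction of the $\Psi$-term to $-\tfrac34\Psi(X,Y)^2$ and of the warping term to $\mathrm{II}(X,Y)^2-\mathrm{II}(X,X)\,\mathrm{II}(Y,Y)$ is correct in all four cases.
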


\begin{proposition}[Ricci and scalar curvature]\label{prop:ricci-ansatz}
	For the metric \eqref{eq:ansatz-metric}, with $X, Y \in \mathcal{H}$ basic horizontal (block decomposition $X = \sum_i X_i$), $\xi^* = \xi/H$ the unit Reeb field, $\nu = \partial_s$, 
	
	\emph{(i) Vertical components.}
	\begin{align}
		\label{eq:Ric-nu}
		\operatorname{Ric}(\nu, \nu) &\;=\; -\frac{H''}{H} \;-\; \sum_{i=1}^r n_i \left( \frac{q_i H' + \kappa_i'}{F_i^2} \;-\; \frac{(q_i H + \kappa_i)^2}{2 F_i^4} \right),\\
	\label{eq:Ric-xi}
		\operatorname{Ric}(\xi^*, \xi^*) &\;=\; -\frac{H''}{H} \;+\; \sum_{i=1}^r n_i \left( \frac{q_i^2 H^2}{2 F_i^4} \;-\; \frac{H'}{H}\, \frac{q_i H + \kappa_i}{F_i^2} \right).
	\end{align}
	
	\emph{(ii) Mixed components vanish:} $\operatorname{Ric}(\nu, \xi^*) = 0$ and $\operatorname{Ric}(X, \nu) = \operatorname{Ric}(X, \xi^*) = 0$.
	
	\emph{(iii) Horizontal components.} $\operatorname{Ric}(X, Y) = \sum_{i=1}^r \mathcal{R}_i\; \pi^*g_i(X_i, Y_i)$, where
	\begin{equation}\label{eq:Ric-horizontal}
		\mathcal{R}_i \;=\; p_i \;-\; \frac{q_i H' + \kappa_i'}{2} \;-\; \frac{H'}{2H}\,(q_i H + \kappa_i) \;+\; \frac{\kappa_i\,(2 q_i H + \kappa_i)}{2 F_i^2} \;-\; \frac{q_i H + \kappa_i}{2} \sum_{j=1}^r \frac{n_j\,(q_j H + \kappa_j)}{F_j^2} \,.
	\end{equation}
	
	\emph{(iv) Scalar curvature.}
		\begin{equation}\label{eq:scalar-curvature}
		\begin{aligned}
			&\mathrm{R}_g \;=\; -\frac{2H''}{H} \;+\; \sum_{i=1}^r n_i \left( \frac{2 p_i}{F_i^2} \;-\; \frac{q_i^2 H^2}{2 F_i^4} \;-\; \frac{2\,(q_i H' + \kappa_i')}{F_i^2} \;-\; \frac{2H'}{H}\, \frac{q_i H + \kappa_i}{F_i^2} \;+\; \frac{3\,(q_i H + \kappa_i)^2}{2 F_i^4} \right) \\
			&\qquad \;-\; \left( \sum_{i=1}^r \frac{n_i\,(q_i H + \kappa_i)}{F_i^2} \right)^{2} .
		\end{aligned}
	\end{equation}
	
\end{proposition}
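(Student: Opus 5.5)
The plan is a direct computation in an adapted orthonormal frame. I treat the metric \eqref{eq:ansatz-metric} as a family of leaf metrics $g_s$ on $P$ glued along the unit normal $\nu=\partial_s$, using the Gauss--Codazzi--Riccati formalism with the shape operator \eqref{eq:shape-operator}. Every contraction will be taken in the frame $\{\nu,\xi^*\}\cup\{e^{(i)}_a\}$, where $e^{(i)}_a$ are $g$-orthonormal basic horizontal vectors in $\mathcal{H}_i$. Since $\pi^*g_i(e^{(i)}_a,e^{(i)}_b)=F_i^{-2}\delta_{ab}$, each block contributes $2n_i$ vectors carrying a factor $F_i^{-2}$.

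For (ii), the mixed components follow from Proposition \ref{prop:riemann-ansatz}. $\operatorname{Ric}(X,\nu)=\sum \mathrm{Rm}(X,e,e,\nu)$ involves the terms $\mathrm{Rm}(X,Z,Z,\nu)=0$ and $\mathrm{Rm}(X,\xi^*,\xi^*,\nu)=0$ from (iii), and $\mathrm{Rm}(X,\nu,\nu,\nu)=0$. $\operatorname{Ric}(X,\xi^*)$ involves $\mathrm{Rm}(X,Z,Z,\xi^*)=0$ from (i) and $\mathrm{Rm}(X,\nu,\nu,\xi^*)=0$ from (iv). $\operatorname{Ric}(\nu,\xi^*)$ reduces to $\sum_Z \mathrm{Rm}(\nu,Z,Z,\xi^*)$. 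By \eqref{eq:Rm-codazzi} this is a multiple of $\pi^*\omega_i(Z,Z)=0$, so it vanishes by skew-symmetry.

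For (i), I sum the radial and vertizontal sectional curvatures. For $\operatorname{Ric}(\nu,\nu)$, Corollary \ref{cor:sectional-curvatures}(i) gives $-H''/H-\sum_i 2n_iF_i''/F_i$. I then rewrite this via $q_iH+\kappa_i=(F_i^2)'=2F_iF_i'$. Differentiating gives $F_iF_i''=\tfrac12(q_iH'+\kappa_i')-F_i'^2$, and $F_i'^2/F_i^2=(q_iH+\kappa_i)^2/(4F_i^4)$. Substituting these yields \eqref{eq:Ric-nu}. For $\operatorname{Ric}(\xi^*,\xi^*)$ I sum $K(\nu,\xi^*)$ and $2n_i$ copies of $K(X_i,\xi^*)$ from \eqref{eq:Rm-vertizontal}. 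With the same substitution $F_i'/F_i=(q_iH+\kappa_i)/(2F_i^2)$, this gives \eqref{eq:Ric-xi}.

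For (iii), I compute $\operatorname{Ric}(X,Y)=\mathrm{Rm}(X,\nu,\nu,Y)+\mathrm{Rm}(X,\xi^*,\xi^*,Y)+\sum_{Z}\mathrm{Rm}(X,Z,Z,Y)$ for $X,Y\in\mathcal{H}_i$. The horizontal sum uses \eqref{eq:Rm-horizontal} together with \eqref{eq:leaf-curvature}. The base part gives $\operatorname{Ric}_{g_i}=p_ig_i$, i.e.\ the term $p_i\pi^*g_i$. The $\Psi$-terms contribute $-\tfrac34\sum_Z\Psi(X,Z)\Psi(Z,Y)$ in the horizontal trace; this contraction evaluates to $\tfrac{3}{2}\,\tfrac{q_i^2H^2}{F_i^2}\pi^*g_i(X,Y)$ up to sign, using $J_i$-invariance and $\operatorname{tr}J^2=-2n_i$. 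The warping term contributes $-\tfrac14(q_iH+\kappa_i)\sum_j 2n_j(q_jH+\kappa_j)/F_j^2$ minus the self-term correction. Adding $-F_iF_i''$ and the vertizontal term from \eqref{eq:Rm-vertizontal}, I collect everything in $\pi^*g_i(X,Y)$. The $q_i^2H^2/F_i^2$ pieces combine with $(q_iH+\kappa_i)^2/F_i^2$ into $\kappa_i(2q_iH+\kappa_i)/(2F_i^2)$, using $(q_iH+\kappa_i)^2-q_i^2H^2=\kappa_i(2q_iH+\kappa_i)$. The main obstacle is the bookkeeping of these coefficients: the $\Psi$-contraction and the diagonal $j=i$ term in the warping sum must cancel exactly to produce \eqref{eq:Ric-horizontal}. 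I will verify this first in the Kähler case $\kappa_i=0$, where the result must reduce to the known formula $\mathcal{R}_i=p_i-\tfrac{q_i}{2}\big(H'+H'+H\sum_j n_jq_j H/F_j^2\big)$ from \cite{FT}.

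For (iv), I take $R=\operatorname{Ric}(\nu,\nu)+\operatorname{Ric}(\xi^*,\xi^*)+\sum_i 2n_i\mathcal{R}_i/F_i^2$ and expand. The double sum $\sum_i n_i(q_iH+\kappa_i)/F_i^2\cdot\sum_j n_j(\cdots)/F_j^2$ becomes the squared term. The remaining diagonal pieces combine into the bracket in \eqref{eq:scalar-curvature}. This last step is routine algebra once (i) and (iii) are established.
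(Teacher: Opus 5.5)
Your route (tracing the components of Proposition \ref{prop:riemann-ansatz} in the adapted frame and substituting $F_iF_i'=\tfrac12(q_iH+\kappa_i)$, $F_iF_i''=\tfrac12(q_iH'+\kappa_i')-F_i'^2$) is the one the paper relies on. Your treatment of (i), (ii) and (iv) is correct. The gap is at the step you flag in (iii): the value you give for the $\Psi$-contraction is wrong, and with it the cancellation that produces $\kappa_i(2q_iH+\kappa_i)/(2F_i^2)$ does not occur. Substituting $(X,Z,Z,Y)$ into \eqref{eq:leaf-curvature}, the middle term vanishes since $\Psi(Z,Z)=0$. The $\Psi$-part of the horizontal trace is therefore $+\tfrac34\sum_Z\Psi(X,Z)\Psi(Z,Y)=-\tfrac34\sum_Z\Psi(X,Z)\Psi(Y,Z)$; with your index order the prefactor is $+\tfrac34$, not $-\tfrac34$. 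For $X,Y\in\mathcal{H}_i$ only $Z\in\mathcal{H}_i$ contribute, and there $\Psi(X,Z)=q_iH\,\pi^*\omega_i(X,Z)=q_iHF_i^{-2}\,g(JX,Z)$. Hence $\sum_Z g(JX,Z)g(JY,Z)=g(JX,JY)=F_i^2\,\pi^*g_i(X,Y)$, and the contribution is $-\tfrac34\,q_i^2H^2F_i^{-2}\,\pi^*g_i(X,Y)$, not $\pm\tfrac32\,q_i^2H^2F_i^{-2}\,\pi^*g_i(X,Y)$. The identity that enters is the pointwise $J^2=-\mathrm{id}$ on $\mathcal{H}_i$ (one index contracted), not $\operatorname{tr}J^2=-2n_i$; the trace only appears after the second contraction in (iv). As a check, this is Corollary \ref{cor:sectional-curvatures}(iii) summed over $Z\perp X$, using $\sum_Z\tau_i^2=|JX|^2=1$.

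With this value the bookkeeping closes. Write $c_i:=q_iH+\kappa_i$. The coefficients of $\pi^*g_i(X,Y)$ are:
\begin{itemize}
\item $-\tfrac12c_i'+\tfrac{c_i^2}{4F_i^2}$ from $\mathrm{Rm}(X,\nu,\nu,Y)=-F_iF_i''\,\pi^*g_i(X,Y)$;
\item $\tfrac{q_i^2H^2}{4F_i^2}-\tfrac{H'}{2H}c_i$ from \eqref{eq:Rm-vertizontal};
\item $p_i-\tfrac{3q_i^2H^2}{4F_i^2}+\tfrac{c_i^2}{4F_i^2}-\tfrac{c_i}{2}\sum_j\tfrac{n_jc_j}{F_j^2}$ from the horizontal trace. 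Here $+\tfrac{c_i^2}{4F_i^2}$ is your self-term, and it enters with a plus sign.
\end{itemize}
The quadratic pieces sum to $\tfrac{c_i^2-q_i^2H^2}{2F_i^2}$, which gives exactly \eqref{eq:Ric-horizontal}. Any other coefficient on the $\Psi$-term leaves a residual multiple of $q_i^2H^2/F_i^2$ even when $\kappa_i=0$. Your planned K\"ahler check would therefore expose the error, but as written the step asserts a false identity. Also, (iii) includes the claim that $\operatorname{Ric}(X_i,Y_j)=0$ for $i\neq j$. This is immediate because every component in Proposition \ref{prop:riemann-ansatz} is block-diagonal, but you should state it.
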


\begin{corollary}[The Kähler case]\label{cor:ricci-kahler}
	If $\kappa_i \equiv 0$ for all $i$ (the Kähler condition \eqref{eq:kahler-condition}), then
	
	\begin{equation}\label{eq:ricci-kahler}
		\begin{gathered}
			\operatorname{Ric}(\nu, \nu) \;=\; \operatorname{Ric}(\xi^*, \xi^*) \;=\; -\frac{H''}{H} \;+\; \sum_{i=1}^r n_i \left( \frac{q_i^2 H^2}{2 F_i^4} \;-\; \frac{q_i H'}{F_i^2} \right), \\
			\mathcal{R}_i \;=\; p_i \;-\; q_i H' \;-\; \frac{q_i H^2}{2} \sum_{j=1}^r \frac{n_j\, q_j}{F_j^2} \,,
		\end{gathered}
	\end{equation}
	
	and the scalar curvature becomes
	
	\[
	\mathrm{R}_g \;=\; -\frac{2H''}{H} \;+\; \sum_{i=1}^r n_i \left( \frac{2 p_i}{F_i^2} \;-\; \frac{4\, q_i H'}{F_i^2} \;+\; \frac{q_i^2 H^2}{F_i^4} \right) \;-\; \left( \sum_{i=1}^r \frac{n_i\, q_i H}{F_i^2} \right)^{2} .
	\]
	
\end{corollary}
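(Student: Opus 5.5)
The corollary is a direct specialization of Proposition \ref{prop:ricci-ansatz}, so the plan is to substitute $\kappa_i \equiv 0$, and hence $\kappa_i' \equiv 0$, into each formula there and simplify term by term. By \eqref{eq:kahler-condition}, the K\"ahler condition is exactly $\kappa_i = (F_i^2)' - q_i H = 0$ for every $i$. Everything else, namely the vanishing of the mixed components and the block-diagonal form $\operatorname{Ric}(X,Y)=\sum_i \mathcal{R}_i\,\pi^*g_i(X_i,Y_i)$, carries over verbatim.

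First, the two vertical Ricci components.

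\begin{itemize}
\item In \eqref{eq:Ric-nu}, setting $\kappa_i=0$ turns the bracket into $\frac{q_i H'}{F_i^2} - \frac{q_i^2H^2}{2F_i^4}$. The overall minus sign then gives $-\frac{H''}{H} + \sum_i n_i\big(\frac{q_i^2H^2}{2F_i^4} - \frac{q_iH'}{F_i^2}\big)$.
\item In \eqref{eq:Ric-xi}, the term $\frac{H'}{H}\cdot\frac{q_iH}{F_i^2}$ becomes $\frac{q_iH'}{F_i^2}$. This yields the same expression, which proves $\operatorname{Ric}(\nu,\nu)=\operatorname{Ric}(\xi^*,\xi^*)$. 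This equality is consistent with $J$-invariance of the Ricci tensor, since $J\nu=\xi^*$.
\end{itemize}

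Next, the horizontal coefficients. In \eqref{eq:Ric-horizontal}, the terms $-\frac{q_iH'}{2}$ and $-\frac{H'}{2H}\,q_iH$ combine to $-q_iH'$. The term containing $\kappa_i(2q_iH+\kappa_i)$ drops out. The last term becomes $-\frac{q_iH}{2}\sum_j \frac{n_jq_jH}{F_j^2} = -\frac{q_iH^2}{2}\sum_j\frac{n_jq_j}{F_j^2}$.

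For the scalar curvature, substitute into \eqref{eq:scalar-curvature}. The $H'$-terms contribute $-\frac{2q_iH'}{F_i^2}-\frac{2q_iH'}{F_i^2} = -\frac{4q_iH'}{F_i^2}$. The quartic terms contribute $-\frac{q_i^2H^2}{2F_i^4}+\frac{3q_i^2H^2}{2F_i^4} = \frac{q_i^2H^2}{F_i^4}$. The final square becomes $\big(\sum_i n_iq_iH/F_i^2\big)^2$.

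As an independent check, recompute the scalar curvature as the trace $\mathrm{R}_g = \operatorname{Ric}(\nu,\nu)+\operatorname{Ric}(\xi^*,\xi^*)+\sum_i 2n_i\,\mathcal{R}_i/F_i^2$. Here $\pi^*g_i$ has trace $2n_i/F_i^2$ with respect to $g$ on $\mathcal{H}_i$. Expanding the K\"ahler expressions above reproduces the same formula; in particular, the cross term $-\frac{q_iH^2}{2}\sum_j(\cdot)$, summed against $2n_i/F_i^2$, gives exactly $-\big(\sum_i n_iq_iH/F_i^2\big)^2$.

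There is no real obstacle here. The only place requiring care is sign and factor bookkeeping in the two $H'$-terms of $\mathcal{R}_i$ and of $\mathrm{R}_g$. The trace cross-check is designed to catch exactly such errors.
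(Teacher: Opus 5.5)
Your proposal is correct and follows the paper's route: the corollary is simply Proposition \ref{prop:ricci-ansatz} with $\kappa_i \equiv 0$ (hence $\kappa_i' \equiv 0$) substituted, and your term-by-term simplifications of $\operatorname{Ric}(\nu,\nu)$, $\operatorname{Ric}(\xi^*,\xi^*)$, $\mathcal{R}_i$ and $\mathrm{R}_g$ are all right. Your trace cross-check $\mathrm{R}_g = \operatorname{Ric}(\nu,\nu)+\operatorname{Ric}(\xi^*,\xi^*)+\sum_i 2n_i\,\mathcal{R}_i/F_i^2$ is also valid and does reproduce the stated formula; it is a useful extra verification that the paper does not carry out.
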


\subsection{Periods and the first Chern class of the closings}\label{subsec:periods-chern}

The topological probe transverse to the strata is a single $2$-sphere. Fix $x \in P$ and let $C \subset \widehat{M}$ be the closure of the annulus $(0, \ell) \times \big( \mathrm{U}(1) \cdot x \big)$ swept out by the circle orbit through $x$. At a bolt end the orbit bounds the polar disc of the totally geodesic fiber $\widehat{\Sigma}_x$ of \S\ref{subsec:closing}; at a nut-type end, it is a Hopf fiber of the collapsing $S^{2m+1}$, whose radial cone is a complex line through the nut point of the fiberwise $\mathbb{C}^{m+1}$. Hence $C \cong S^2$ is an embedded sphere meeting each stratum in exactly one point.

Define the \emph{Ricci potential}
	\[
	\Theta \;:=\; -\,H' \;-\; \frac{H^2}{2} \sum_{j=1}^r \frac{n_j\, q_j}{F_j^2} \,,
	\]
and give each end the integer weight $d_0$ (resp. $d_\ell$), equal to $1$ at a bolt and to $m + 1$ at a nut-type closing with collapsing factor $\mathbb{CP}^m$. Write $J_0, J_\ell \subseteq \{ 1, \dots, r \}$ for the sets of blocks collapsed at each end ($\varnothing$ at a bolt). The K\"{a}hler components \eqref{eq:ricci-kahler} of Corollary \ref{cor:ricci-kahler} read, verbatim,
	\[
	\operatorname{Ric}(\nu, \nu) \;=\; \operatorname{Ric}(\xi^*, \xi^*) \;=\; \frac{\Theta'}{H} \,, \qquad \mathcal{R}_i \;=\; p_i \;+\; q_i\, \Theta \,.
	\]
Consequently, on $M^0$, the Ricci form is
	\[
	\varrho \;=\; \sum_{i=1}^r p_i\, \pi^* \omega_i \;+\; d \big( \Theta\, \theta \big) \,.
	\]
	$\Theta$ extends continuously to $[0, \ell]$ with
		\[
	\Theta(0) \;=\; -\, d_0 \,, \qquad \Theta(\ell) \;=\; +\, d_\ell \,.
	\]
	Correspondingly, the horizontal boundary values at the two strata become, under \eqref{eq:kahler-condition}, the {integers}
		\[
	\mathcal{R}_i(0) \;=\; p_i \;-\; d_0\, q_i \quad ( i \notin J_0 ) \,, \qquad \mathcal{R}_i(\ell) \;=\; p_i \;+\; d_\ell\, q_i \quad ( i \notin J_\ell ) \,.
	\]
	Therefore, the first Chern class pairs with the sphere $C$ and restricts to the strata by
		\begin{align*}
				\big\langle c_1(\widehat{M}),\, [C] \big\rangle &\;=\; d_0 + d_\ell \,, \\
			c_1(\widehat{M}) \big|_{Q_0} &\;=\; \sum_{i \notin J_0} \big( p_i - d_0\, q_i \big)\, a_i \,, \\
			c_1(\widehat{M}) \big|_{Q_\ell} &\;=\; \sum_{i \notin J_\ell} \big( p_i + d_\ell\, q_i \big)\, a_i \,,
	\end{align*}
	with the classes $a_i$ being pulled back to the stratum. Comparing with adjunction 
	$$c_1(\widehat{M})|_Q = c_1(Q) + c_1(\mathcal{N}_Q)~~ \text{ and } ~~c_1(Q_0) = \sum_{i \notin J_0} p_i\, a_i$$
	identifies the normal bundles of the strata: 
	$$c_1(\mathcal{N}_{Q_0}) = -\, d_0 \sum_{i \notin J_0} q_i\, a_i ~~\text{ and  } ~~c_1(\mathcal{N}_{Q_\ell}) = +\, d_\ell \sum_{i \notin J_\ell} q_i\, a_i.$$
	These periods determine $c_1(\widehat{M})$ completely.
	
	\begin{remarkx} Writing $[Q]$ for the Poincaré dual of a stratum that is a divisor. For {two bolts},
	\[
	c_1(\widehat{M}) \;=\; [Q_0] \;+\; [Q_\ell] \;+\; \widehat{\pi}^*\, c_1(N) \;=\; 2\, [Q_0] \;+\; \widehat{\pi}^* \sum_{i=1}^r \big( p_i + q_i \big)\, a_i \,.
	\]
	For {a nut-type closing and a bolt},
	\[
	c_1(\widehat{M}) \;=\; \big( m + 2 \big)\, [Q_\ell] \;+\; \big( \widehat{\pi}' \big)^* \sum_{k \geq 2} \big( p_k - ( m + 1 )\, q_k \big)\, a_k \,.
	\]
	Finally, for {two nut-type closings} $c_1(\widehat{M})$ is the unique class with the periods described above-- in particular for $r = 2$, where $\widehat{M} \cong \mathbb{CP}^{m_0 + m_\ell + 1}$, it is $m_0 + m_\ell + 2$ times the hyperplane class: the maximal Fano index, recovered from the Ricci tensor of an arbitrary Kähler ansatz metric.
\end{remarkx}

\subsection{The Ricci flow of ansatz metrics}\label{subsec:ricci-flow-ansatz}
The curvature formulas of \S\ref{subsec:curvature} refer to the arclength parameter $s$ of one fixed metric. Under a geometric evolution the metric --- and with it the arclength --- changes in time, so $s$ is no longer available as a time-independent coordinate. We therefore fix once and for all a background coordinate $u$ on $I$ and consider one-parameter families of ansatz metrics
\begin{equation}\label{eq:ansatz-flow-form}
	g(u, t) \;=\; a(u,t)^2\, du \otimes du \;+\; h(u,t)^2\, \theta \otimes \theta \;+\; \sum_{i=1}^r f_i(u,t)^2\, \pi^* g_i \,,
\end{equation}
with smooth positive coefficients $a, h, f_1, \dots, f_r$. At each fixed time, $s(u,t) := \int^u a(w,t)\, dw$ is the arclength, and \eqref{eq:ansatz-flow-form} is the metric \eqref{eq:ansatz-metric} with $H = h$, $F_i = f_i$ regarded as functions of $s$; conversely, every ansatz metric takes the form \eqref{eq:ansatz-flow-form} with $a = \partial s / \partial u$ once the coordinate $u$ is fixed. The dictionary between the two descriptions is the first-order operator
\[
D \;:=\; \frac{1}{a}\, \frac{\partial}{\partial u} \;=\; \frac{\partial}{\partial s} \,, \qquad D^2 \varphi \;=\; \frac{\varphi_{uu}}{a^2} \;-\; \frac{a_u\, \varphi_u}{a^3} \,,
\]
so that $\nu = a^{-1} \partial_u$ is the unit normal of the leaves, $\xi^* = \xi/h$ is the unit Reeb field, and the Kähler deficit reads $\kappa_i = D(f_i^2) - q_i h$. 

Now let the family \eqref{eq:ansatz-flow-form} evolve by the Ricci flow,
\[
\frac{\partial g}{\partial t} \;=\; -2\, \operatorname{Ric}\big( g(t) \big) \,.
\]
By the block-diagonality of the Ricci tensor in these variables (Proposition \ref{prop:ricci-ansatz} rewritten in the fixed coordinate $u$; see \cite{FT}), the right-hand side is block-diagonal of the same shape as \eqref{eq:ansatz-flow-form}. Just matching the coefficients converts the tensor equation into a system of scalar equations, see \cite{FT} for the full justification,
\begin{align}
		\partial_t a \;&=\; \left( \frac{D^2 h}{h} \;+\; \sum_{i=1}^r 2 n_i\, \frac{D^2 f_i}{f_i} \right) a \,,\nonumber\\
		\label{eq:flow-system}
		\partial_t h \;&=\; D^2 h \;+\; 2\, Dh \sum_{i=1}^r n_i\, \frac{D f_i}{f_i} \;-\; \sum_{i=1}^r \frac{n_i\, q_i^2\, h^3}{2 f_i^4} \,,\\
		\partial_t f_i \;&=\; D^2 f_i \;+\; D f_i \left( \frac{Dh}{h} \;+\; \sum_{j=1}^r 2 n_j\, \frac{D f_j}{f_j} \right) \;-\; \frac{(D f_i)^2}{f_i} \;-\; \frac{p_i}{f_i} \;+\; \frac{q_i^2\, h^2}{2 f_i^3} \,, \qquad 1 \leq i \leq r \,.\nonumber
\end{align}

Every scalar quantity naturally attached to the ansatz --- warping functions, curvatures, soliton potentials --- depends only on the coordinate of $I$ and, along a flow, on time. Call a function $Q$ on $M^0$ (or on $M^0 \times [0,T)$) \textbf{radial} if $Q = Q(s, t)$. We record the three basic differential operators of $(M^0, g)$ on radial functions --- gradient, squared gradient norm, Laplacian --- first for a general ansatz metric \eqref{eq:ansatz-metric}, in terms of the deficit functions $\kappa_i$, then in the Kähler case as a corollary. Throughout, all operators are those of a fixed metric $g = g(t)$ of the form \eqref{eq:ansatz-metric}, so time enters only as a parameter and ${}'$ denotes $\partial / \partial s$; we use the analyst's sign convention $\Delta Q = \operatorname{div}( \nabla Q ) = \operatorname{tr} \nabla^2 Q$. 

\begin{proposition}[Radial calculus]\label{prop:radial-calculus}
	Let $Q = Q(s)$ be a radial function on $(M^0, g)$ with $g$ as in \eqref{eq:ansatz-metric}, and let $\nu = \partial_s$ be the unit normal of the leaves. Then
	\begin{align}\label{eq:radial-gradient}
		\nabla Q &\;=\; Q'\, \nu \,, \qquad \big| \nabla Q \big|^2 \;=\; \big( Q' \big)^2 \,,\\
		\label{eq:radial-laplacian}
		\Delta Q &\;=\; Q'' \;+\; Q' \left( \frac{H'}{H} \;+\; \sum_{i=1}^r \frac{n_i\, \big( q_i H + \kappa_i \big)}{F_i^2} \right) \;=\; \frac{\big( V Q' \big)'}{V} \,, \qquad V \;:=\; H \prod_{i=1}^r F_i^{2 n_i} \,.
	\end{align}	
\end{proposition}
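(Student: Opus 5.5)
Both identities are direct computations in the orthogonal frame $\{\nu, \xi^*\} \cup \{\text{horizontal}\}$ adapted to \eqref{eq:ansatz-metric}.

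\emph{Gradient.} Since $Q$ depends only on $s$, we have $dQ = Q'\,ds$. The coordinate $s$ is an arclength parameter, so $ds$ is $g$-dual to the unit field $\nu = \partial_s$, which is orthogonal to $\xi$ and to $\mathcal{H}$. Hence $\nabla Q = Q'\nu$ and $|\nabla Q|^2 = (Q')^2$. This gives \eqref{eq:radial-gradient}.

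\emph{Laplacian, first form.} I would write
\[
\Delta Q = \operatorname{div}(Q'\nu) = Q'' + Q'\operatorname{div}\nu .
\]
Now $\operatorname{div}\nu$ equals minus the $\nu\nu$-component of $\nabla\nu$ plus the trace over the leaf $P_s$ of $X \mapsto \nabla_X \nu$. The first term vanishes because the $s$-lines are unit-speed geodesics: $\nabla_\nu\nu = 0$, since $g_{ss} = 1$ and there are no cross terms $ds\otimes\theta$ or $ds\otimes\pi^*g_i$. The second term is $\operatorname{tr} L_s$, the mean curvature of the leaf. By \eqref{eq:shape-operator},
\[
\operatorname{tr} L_s = \frac{H'}{H} + \sum_i 2n_i\,\frac{F_i'}{F_i},
\]
because the block $\mathcal{H}_i$ has real dimension $2n_i$.

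Next I substitute the definition of the Kähler defect, $\kappa_i = (F_i^2)' - q_iH = 2F_iF_i' - q_iH$. This gives
\[
2n_i\,\frac{F_i'}{F_i} = \frac{n_i\,(2F_iF_i')}{F_i^2} = \frac{n_i\,(q_iH + \kappa_i)}{F_i^2},
\]
which is exactly the first expression in \eqref{eq:radial-laplacian}.

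\emph{Laplacian, divergence form.} The Riemannian volume form is $V\,ds\wedge\theta\wedge\bigwedge_i \pi^*\mathrm{dvol}_{g_i}$ with $V = H\prod_i F_i^{2n_i}$. In particular,
\[
\frac{V'}{V} = \frac{H'}{H} + \sum_i 2n_i\,\frac{F_i'}{F_i} = \operatorname{div}\nu .
\]
Therefore
\[
\frac{(VQ')'}{V} = Q'' + \frac{V'}{V}\,Q'
\]
agrees with the first form. Equivalently, one can use the coordinate formula $\Delta Q = V^{-1}\partial_s(V g^{ss}\partial_s Q)$ together with the block-diagonal structure of $g$.

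\emph{Main point to watch.} There is no real obstacle here. The only things to check carefully are the factor $2n_i$, coming from the real dimension of $\mathcal{H}_i$, and the sign convention $\Delta = \operatorname{tr}\nabla^2$. The Kähler corollary then follows by setting $\kappa_i = 0$.
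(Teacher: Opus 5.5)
Your proposal is correct and follows the route the paper intends: the paper gives no separate proof, only the remark that on radial functions $\Delta$ is the Sturm--Liouville operator with drift $(\log V)'$, the mean curvature $\operatorname{tr} L_s$ of the leaves. That is exactly your computation via \eqref{eq:shape-operator} and $\kappa_i = (F_i^2)' - q_i H$. One harmless slip: the $\nu\nu$-component of $\nabla\nu$ enters $\operatorname{div}\nu$ with a plus sign, not a minus, but it vanishes anyway because $|\nu| = 1$ (indeed $\nabla_\nu\nu = 0$).
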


\begin{remarkx}
	The function $V$ is the \emph{relative volume density} of the leaves: $d\mathrm{vol}_{g_s} = \tfrac{V(s)}{V(s_0)}\, d\mathrm{vol}_{g_{s_0}}$ under the identification $P_s \cong P \cong P_{s_0}$. Thus, on radial functions, $\Delta$ is the one-dimensional Sturm--Liouville operator with weight $V$. All the geometry enters through the drift $(\log V)'$: the leaves' mean curvature.
\end{remarkx}

\begin{corollary}[Kähler case]\label{cor:kahler-laplacian}
	If $\kappa_i \equiv 0$ (the Kähler condition \eqref{eq:kahler-condition}), then for radial $Q$
	\begin{equation}\label{eq:kahler-laplacian}
		\Delta Q \;=\; Q'' \;+\; Q' \left( \frac{H'}{H} \;+\; H \sum_{i=1}^r \frac{n_i\, q_i}{F_i^2} \right).
	\end{equation}
\end{corollary}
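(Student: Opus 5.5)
The plan is to reduce to the one-dimensional Sturm--Liouville structure already recorded in Proposition~\ref{prop:radial-calculus}. By definition $\Delta Q = \operatorname{tr}\nabla^2 Q = \frac{1}{\sqrt{\det g}}\partial_\alpha(\sqrt{\det g}\, g^{\alpha\beta}\partial_\beta Q)$. For a radial function $Q=Q(s)$ only the $\nu=\partial_s$ direction carries a derivative. Moreover, $\partial_s$ is a unit normal orthogonal to all other blocks of \eqref{eq:ansatz-metric}. Hence $\nabla Q = Q'\nu$ and
\[
\Delta Q=\frac{(VQ')'}{V}=Q''+(\log V)'\,Q', \qquad V=H\prod_i F_i^{2n_i}.
\]
This is exactly \eqref{eq:radial-laplacian}, which I take as given.

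It remains to compute the drift in the K\"ahler case:
\[
(\log V)' = \frac{H'}{H}+\sum_{i=1}^r 2n_i\frac{F_i'}{F_i} = \frac{H'}{H}+\sum_{i=1}^r n_i\frac{(F_i^2)'}{F_i^2}.
\]
Equivalently, $\sum_i n_i (q_iH+\kappa_i)/F_i^2$ appears in \eqref{eq:radial-laplacian}, since $\kappa_i=(F_i^2)'-q_iH$ means $q_iH+\kappa_i=(F_i^2)'$. Imposing the K\"ahler condition \eqref{eq:kahler-condition}, i.e.\ $\kappa_i\equiv 0$ or $(F_i^2)'=q_iH$, each summand becomes $n_iq_iH/F_i^2$. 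Factoring out $H$ yields \eqref{eq:kahler-laplacian}.

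There is no real obstacle. The only point to be careful about is the consistency of conventions: the factor $2n_i$ comes from the real dimension of $N_i$ in the volume density, and it collapses to $n_i$ after writing $2F_iF_i'=(F_i^2)'$. I would also note that the formula holds on $M^0$, where the frame and $s$ are defined, and no closing-condition analysis is needed.
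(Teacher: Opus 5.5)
Your proof is correct and follows the paper's own route: the corollary is just \eqref{eq:radial-laplacian} specialized to $\kappa_i \equiv 0$, i.e.\ $(F_i^2)' = q_i H$, substituted into the drift $(\log V)' = H'/H + \sum_i n_i (F_i^2)'/F_i^2$. Your check that the factor $2n_i$ becomes $n_i$ via $2F_iF_i' = (F_i^2)'$ also confirms that the two forms of the drift in Proposition~\ref{prop:radial-calculus} agree.
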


The following was observed in \cite{FT}.
\begin{proposition}[The heat equation for $f_i^2$ in the Kähler case]\label{prop:heat-equation-fi}
	Suppose $\kappa_i \equiv 0$ for all $i$, and let $g(t)$ be a Ricci flow preserving the ansatz with fixed coordinate $u$. 
	Then each squared warping function evolves by the {linear heat equation with constant forcing}
	\begin{equation}\label{eq:heat-fi}
		\partial_t \big( f_i^2 \big) \;=\; \Delta f_i^2 \;-\; 2 p_i \,, \qquad i = 1, \dots, r \,.
	\end{equation}
\end{proposition}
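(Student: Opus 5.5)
We will prove \eqref{eq:heat-fi} by computing both sides in terms of the fixed-time arclength derivative $D=\partial_s$ and comparing them, using the Kähler condition to cancel terms. Since the flow preserves the ansatz and its Kähler character, $\kappa_i\equiv 0$ holds for all $t$, so $D(f_i^2)=q_ih$ at every time. Note that $\partial_t$ is taken at fixed $u$ rather than fixed $s$. This is harmless here: $f_i^2$ is a function on $M^0\times[0,T)$, and the Ricci flow is taken without diffeomorphism modification, so the time derivative at a fixed point of $M^0$ is exactly the derivative at fixed $u$.

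\textbf{Left-hand side.} From the $f_i$-equation in \eqref{eq:flow-system},
\[
\partial_t(f_i^2)=2f_i D^2 f_i+2f_iDf_i\Big(\tfrac{Dh}{h}+\sum_j 2n_j\tfrac{Df_j}{f_j}\Big)-2(Df_i)^2-2p_i+\tfrac{q_i^2h^2}{f_i^2}.
\]

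\textbf{Right-hand side.} By Corollary~\ref{cor:kahler-laplacian} with $Q=f_i^2$, $Q'=q_ih$ and $Q''=q_iDh$, we get
\[
\Delta f_i^2=q_iDh+q_ih\Big(\tfrac{Dh}{h}+h\sum_j\tfrac{n_jq_j}{f_j^2}\Big).
\]

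\textbf{Matching.} We rewrite the left-hand side using the Kähler relations:
\begin{itemize}
\item $2f_iDf_i=q_ih$;
\item $2\sum_j n_j\tfrac{Df_j}{f_j}=h\sum_j\tfrac{n_jq_j}{f_j^2}$;
\item differentiating $2f_iDf_i=q_ih$ gives $2f_iD^2f_i+2(Df_i)^2=q_iDh$, hence $2f_iD^2f_i-2(Df_i)^2=q_iDh-4(Df_i)^2$;
\item $4(Df_i)^2=\tfrac{q_i^2h^2}{f_i^2}$.
\end{itemize}
Substituting these, the left-hand side becomes
\[
q_iDh-\tfrac{q_i^2h^2}{f_i^2}+q_ih\Big(\tfrac{Dh}{h}+h\sum_j\tfrac{n_jq_j}{f_j^2}\Big)-2p_i+\tfrac{q_i^2h^2}{f_i^2}.
\]
The two terms $\pm q_i^2h^2/f_i^2$ cancel, and what remains is exactly $\Delta f_i^2-2p_i$.

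\textbf{Degenerate blocks.} When $q_i=0$, the Kähler condition gives $Df_i=0$, so $f_i$ is constant in $s$. The same computation then gives $\partial_t f_i^2=-2p_i=\Delta f_i^2-2p_i$, which is consistent.

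The only point needing care is the fixed-$u$ versus fixed-$s$ issue noted at the start. The remainder is bookkeeping; the one place to be careful is differentiating the Kähler relation correctly in the third bullet.
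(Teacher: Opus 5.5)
Your computation is correct. Starting from the $f_i$-equation in \eqref{eq:flow-system}, each Kähler substitution is right; in particular $2f_iD^2f_i-2(Df_i)^2=q_iDh-q_i^2h^2/f_i^2$. The $\pm q_i^2h^2/f_i^2$ terms cancel, and what remains is exactly $\Delta f_i^2-2p_i$ by \eqref{eq:kahler-laplacian}. Your reading of $\partial_t$ as the pointwise time derivative at fixed $u$ is also the right interpretation.

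The paper gives no proof of its own here; it defers to \cite{FT}. Its preliminaries do, however, contain a shorter route that never differentiates the Kähler relation:
\begin{itemize}
\item Reading $\partial_t g=-2\operatorname{Ric}$ on the $\mathcal{H}_i$-block gives $\partial_t(f_i^2)=-2\mathcal{R}_i$ directly.
\item By Corollary \ref{cor:ricci-kahler}, equivalently $\mathcal{R}_i=p_i+q_i\Theta$ from \S\ref{subsec:periods-chern}, this equals $-2p_i+2q_iDh+q_ih^2\sum_j n_jq_j/f_j^2$.
\item By \eqref{eq:kahler-laplacian}, $\Delta f_i^2=q_i\bigl(2Dh+h^2\sum_j n_jq_j/f_j^2\bigr)=-2q_i\Theta$, which closes the argument.
\end{itemize}
That version also shows the structural reason the identity holds: $\Delta f_i^2$ and the horizontal Ricci eigenvalue are both governed by the single Ricci potential $\Theta$. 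Your version works only from the scalar system \eqref{eq:flow-system}, and in effect re-derives this consistency by hand.

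One small addition is worth making. The paper later evaluates \eqref{eq:heat-fi} on the closing strata, for example at $Q_0$ in Proposition \ref{prop:affine-laws}, whereas your derivation lives on $M^0$. By the closing conditions $f_i^2$ is smooth on $\widehat{M}$, so both sides are continuous there and the identity extends from the dense set $M^0$.

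Your separate treatment of the case $q_i=0$ is harmless but unnecessary, since the main computation never divides by $q_i$.
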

Immediately, there is a gradient estimate of Li--Yau type \cite{LY} for the fiber potentials. 

\begin{proposition}[evolution of the gradient; Li--Yau estimate]\label{prop:li-yau}
	Along the flow, each $u := f_i^2$ satisfies
		\begin{equation}\label{eq:gradient-evolution}
		\big( \partial_t - \Delta \big)\, \big| \nabla u \big|^2 \;=\; -\, 2\, \big| \nabla^2 u \big|^2 \,,
	\end{equation}
	and consequently the Li--Yau quantity \cite{LY} $| \nabla u |^2 / u$ is uniformly bounded:
		\begin{equation}\label{eq:li-yau}
		\sup_{\widehat{M}} \frac{| \nabla u |^2}{u} (\cdot, t) \;\leq\; C_0 \;:=\; \max \left\{ \sup_{\widehat{M}} \frac{| \nabla u |^2}{u} (\cdot, 0) \,, \; 4\, p_i \right\} \qquad \text{for all } t \in [0, T) \,.
	\end{equation}
\end{proposition}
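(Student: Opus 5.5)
The plan has two parts: first prove the evolution identity \eqref{eq:gradient-evolution} by a Bochner computation, then derive the Li--Yau bound \eqref{eq:li-yau} from a Hamilton-type evolution inequality for $\Phi := |\nabla u|^2/u$ and the maximum principle. The ingredient that makes the constant dimension-free is the Kähler structure of $u = f_i^2$.

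\textbf{Step 1: the identity \eqref{eq:gradient-evolution}.} Under the Ricci flow, $\partial_t g^{-1} = 2\operatorname{Ric}$. Proposition \ref{prop:heat-equation-fi} gives $\partial_t u = \Delta u - 2p_i$, so
\[
\partial_t |\nabla u|^2 = 2\operatorname{Ric}(\nabla u,\nabla u) + 2\langle \nabla u, \nabla \Delta u\rangle ,
\]
because the constant $-2p_i$ has zero gradient. Bochner's formula reads
\[
\Delta|\nabla u|^2 = 2|\nabla^2 u|^2 + 2\langle\nabla u,\nabla\Delta u\rangle + 2\operatorname{Ric}(\nabla u,\nabla u).
\]
Subtracting, the Ricci terms cancel exactly and $(\partial_t-\Delta)|\nabla u|^2 = -2|\nabla^2u|^2$. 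This holds on all of $\widehat M$, since $u$ is smooth there.

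\textbf{Step 2: evolution of $\Phi$.} For any $G$ and positive $u$ with $(\partial_t-\Delta)u = -2p_i$, a direct computation gives
\[
(\partial_t-\Delta)\frac{G}{u} = \frac{(\partial_t-\Delta)G}{u} + \frac{2p_iG}{u^2} + \frac{2}{u}\Big\langle \nabla u, \nabla\frac{G}{u}\Big\rangle .
\]
Take $G = |\nabla u|^2$ and set $B := \nabla^2u - u^{-1}\,du\otimes du$. Using $\langle\nabla u,\nabla\Phi\rangle = 2\nabla^2u(\nabla u,\nabla u)/u - |\nabla u|^4/u^2$ and completing the square in $|\nabla^2u|^2$, we obtain
\[
(\partial_t-\Delta)\Phi = -\frac{2|B|^2}{u} + \frac{2p_i\Phi}{u}.
\]
Any cross term $\langle\nabla u,\nabla\Phi\rangle$ that remains can be discarded at a spatial maximum of $\Phi$.

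\textbf{Step 3: a dimension-free lower bound for $|B|^2$, using the Kähler condition.} By \eqref{eq:kahler-condition}, $\nabla u = (f_i^2)'\nu = q_i h\,\nu$. Since $J\nu = \xi^*$, this gives $J\nabla u = q_i\xi$, which is a Killing field. Hence $u$ is a Hamiltonian potential, and $\nabla^2u$ is $J$-invariant. Let $e = \nabla u/|\nabla u|$ and $\lambda = \nabla^2u(e,e) = \nabla^2u(Je,Je)$. Then $B(e,e) = \lambda-\Phi$ and $B(Je,Je) = \lambda$, so
\[
|B|^2 \ge (\lambda-\Phi)^2+\lambda^2 \ge \tfrac12\Phi^2 .
\]
Therefore $(\partial_t-\Delta)\Phi \le \Phi(2p_i-\Phi)/u$ wherever $u>0$. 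At a new interior maximum with $\Phi > \max(2p_i,0)$ the right-hand side is negative, so the maximum principle gives $\Phi \le \max\{\sup\Phi(\cdot,0),\,2p_i\}$, which is bounded by $C_0$.

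\textbf{Main obstacle: the zero set of $u$.} $u = f_i^2$ vanishes only for $i=1$, along the nut stratum $Q_0$; for $k\ge2$ the function $u$ is positive on $\widehat M$. I must check that $\Phi$ extends continuously across $Q_0$ and that its value there does not break the argument. By Proposition \ref{prop:nut-closing}, $H\sim s$ and $F_1^2\sim s^2/2$, so on $Q_0$
\[
\Phi = \frac{q_1^2H^2}{F_1^2} \to 2 \le 4(m+1) = 4p_1 ,
\]
where $p_1 = m+1$ is the Einstein constant of $\mathbb{CP}^m$ with $2g_{\mathrm{FS}}$, so that $4p_1 = 4(m+1)$. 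A maximum on $Q_0$ is thus already below $C_0$, and any larger maximum must lie where $u>0$, where Step 3 applies. I would also confirm the smoothness claim $\Phi = 2\varphi^2/\psi_1$ in the even variable $\rho = s^2$. At the bolt end nothing vanishes, so no further check is needed there. If $p_i\le0$, the same inequality shows that $\sup\Phi$ is nonincreasing, which again gives the bound.
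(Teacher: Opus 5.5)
Your proof is correct, and it gives the sharper threshold $\max\{\sup\Phi(\cdot,0),\,2p_i\}$. The paper imports this estimate from \cite{FT} without writing out an argument, so the comparison below is with the standard route that reproduces its constant.

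Step 1 is the Bochner computation that the paper itself quotes again in Proposition \ref{prop:reduced-flow}. Your Step 2 identity is exact: the quotient rule gives $(\partial_t-\Delta)\Phi=\frac{2}{u}\big(p_i\Phi-|B|^2\big)$ with no leftover $\langle\nabla u,\nabla\Phi\rangle$ term, so that hedge is unnecessary.

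The genuine difference is how you bound $|B|^2$ from below. The constant $4p_i$ in the statement is what the Kähler-free argument gives. At a spatial maximum of $\Phi$ with $\nabla u\neq0$, the condition $\langle\nabla\Phi,\nabla u\rangle=0$ forces $\nabla^2u(e,e)=\Phi/2$. Then $B(e,e)=-\Phi/2$, so $|B|^2\ge\Phi^2/4$ and $(\partial_t-\Delta)\Phi\le\frac{\Phi}{2u}(4p_i-\Phi)$ there. That argument uses a single direction and only the forced heat equation, so it is already dimension-free. Contrary to your opening remark, the Kähler structure is not what removes the dimension dependence.

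What your $J$-invariance of $\nabla^2u$ (via $J\nabla u=q_i\xi$ being Killing) buys is a second direction $Je$ with $B(Je,Je)=\lambda$. This gives the pointwise bound $|B|^2\ge\Phi^2/2$ everywhere, not just at critical points, and hence the factor $2p_i$. The cost is that you rely on the Hamiltonian structure of $f_i^2$. The critical-point route works for any positive solution of $(\partial_t-\Delta)u=-2p_i$.

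Your treatment of the nut stratum, which the statement passes over, is right. $\Phi$ extends smoothly with $\Phi|_{Q_0}\equiv 2\le 2p_1$, so any maximum above the threshold lies where $u>0$. One small slip: with the paper's normalization $\psi_1(0)=\tfrac12$, one has $\Phi=\varphi^2/\psi_1$ rather than $2\varphi^2/\psi_1$. Your value $2$ on $Q_0$ is nonetheless correct.
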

We now recall the main theorem from \cite{FT}. $\widehat{M}$ is a {compact two-bolt closing} of \S\ref{subsec:closing} --- the $S^2$-bundle over $N$, with bolts at both finite ends, which we denote $Q_0 = \{ s = 0 \}$ and $Q_\ell = \{ s = \ell \}$ --- and $g(t)$, $t \in [0, T)$, is a Ricci flow of ansatz metrics \eqref{eq:ansatz-metric} satisfying the Kähler condition \eqref{eq:kahler-condition}, which is preserved by the flow (\S\ref{subsec:ricci-flow-ansatz}), with $T < \infty$ the maximal existence time.


\begin{definition}[Type I and Type II]\label{def:type-I-II}
	A Ricci flow $g(t)$ on a compact manifold with finite maximal existence time $T < \infty$ develops a \emph{Type I} singularity if
	\[
	\sup_{\widehat{M} \times [0, T)} \big( T - t \big)\, \big| \mathrm{Rm}\big( g(t) \big) \big| \;<\; \infty \,,
	\]
	and a \emph{Type II} singularity if this supremum is infinite. 
\end{definition}

\begin{theorem}[the K\"ahler two-bolt flow is Type I, \cite{FT}]\label{thm:two-bolt-typeI}
	Let $\widehat{M}$ be a compact two-bolt closing and let $g(t)$, $t \in [0, T)$, be a Ricci flow of ansatz metrics \eqref{eq:ansatz-metric} satisfying the Kähler condition \eqref{eq:kahler-condition}, with $T < \infty$ maximal. Then the singularity at $T$ is of Type I.
	
\end{theorem}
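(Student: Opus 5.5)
We argue by contradiction: assume the singularity is of Type II and extract a Type II blow-up limit in the sense of Hamilton. The plan is to use the O'Neill tensors of the submersion $\widehat{\pi}:\widehat{M}\to N$ to show that this limit splits as a complete $2$-dimensional eternal solution times a flat factor. Hamilton's classification then identifies the $2$-dimensional factor as the cigar, which contradicts Perelman's local non-collapsing.

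\textbf{Step 1 (blow-up sequence).} Choose Hamilton's Type II point-picking sequence $(x_j,t_j)$ with $K_j:=|\mathrm{Rm}|(x_j,t_j)$, $K_j(T-t_j)\to\infty$, and almost-maximal curvature on backward windows. Perelman's $\kappa$-non-collapsing gives an injectivity radius bound. Hamilton's compactness theorem then yields a complete, $\kappa$-noncollapsed, eternal K\"ahler--Ricci flow $(M_\infty,g_\infty(t))$ with $|\mathrm{Rm}|\le 1$ and $|\mathrm{Rm}|(x_\infty,0)=1$.

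\textbf{Step 2 (lower bound on the horizontal warping).} This is the key estimate and the main obstacle. We want
\[
f_i^2 \;\ge\; c\,(T-t) \quad \text{on } \widehat{M}\times[0,T) \qquad \text{for each } i.
\]
The approach is cohomological. By the K\"ahler condition \eqref{eq:kahler-condition}, $F_i^2(s)=F_i^2(0)+q_i\int_0^sH$ is monotone in $s$, so $\min_s F_i^2$ is attained at $Q_0$ or $Q_\ell$. On a bolt, $F_i^2$ is the scale of $[\omega(t)]|_{Q}$ on the $N_i$-factor. The class $[\omega(t)]=[\omega_0]-2\pi t\,c_1(\widehat M)$ restricts there by the period computations of \S\ref{subsec:periods-chern}, so $F_i^2|_{Q_0}(t)$ and $F_i^2|_{Q_\ell}(t)$ are affine in $t$ (slopes $-2(p_i-q_i)$ and $-2(p_i+q_i)$). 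Each is positive on $[0,T)$, so each is either bounded below by a positive constant at $T$ or vanishes linearly at $T$. Either way the bound above holds, and therefore
\[
K_j f_i^2 \;\ge\; c\,K_j(T-t_j)\;\to\;\infty .
\]

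\textbf{Step 3 (vanishing of $A$ and flatness of the base directions).} The fibers of $\widehat\pi$ are totally geodesic, so $T^{\widehat\pi}\equiv 0$. From \eqref{eq:A-total-kahler}, $|A|^2\le C\sum_i |\nabla F_i^2|^2/F_i^4$, and the Li--Yau bound \eqref{eq:li-yau} gives $|\nabla F_i^2|^2\le C_0F_i^2$. Hence
\[
|A|^2_{g_j} \;=\; K_j^{-1}|A|^2 \;\le\; C\sum_i \frac{1}{K_jF_i^2}\;\to\;0 .
\]
The same two bounds, applied to Corollary \ref{cor:sectional-curvatures}(iii)--(iv) together with $H^2q_i^2/F_i^4=|\nabla F_i^2|^2/F_i^4$, show that every curvature involving a horizontal direction is $O\big((K_jF_i^2)^{-1}\big)\to 0$ after rescaling. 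Passing to the limit, the horizontal and vertical distributions become parallel. By de Rham splitting (on the universal cover if necessary), $M_\infty=\Sigma^2\times\mathbb{C}^n$ with the $\mathbb{C}^n$ factor flat. All curvature is then carried by the complete eternal surface flow $\Sigma$, which is nonflat and attains its curvature maximum at time $0$.

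\textbf{Step 4 (contradiction).} Hamilton's theorem on eternal surface solutions attaining maximal curvature identifies $\Sigma$ as the cigar soliton. The product $\Sigma\times\mathbb{C}^n$ is then $\kappa$-collapsed at large scales, contradicting the $\kappa$-noncollapsing inherited from Perelman. Hence the singularity is of Type I.

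The expected difficulty lies in making Step 2 rigorous uniformly in $s$, including near the bolts where the frame degenerates, and in checking that the rescaled $A$-tensor decay passes to smooth convergence so that the splitting is genuine. If the cohomological argument is delicate at points away from the bolts, we would fall back on a maximum principle for $f_i^2$ using \eqref{eq:heat-fi}.
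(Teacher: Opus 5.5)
Your proposal is correct and follows essentially the argument of \cite{FT} that the paper summarizes after the statement: affine laws for $f_i^2$ on the two bolts together with monotonicity in $s$ give $f_i^2 \geq c\,(T-t)$, the Li--Yau bound turns this into $|A|^2 \leq C/(T-t)$ so that $A$ and the horizontal curvature vanish along a Type~II rescaling, the limit splits off a $2$-dimensional eternal factor, and Hamilton's identification of that factor as the cigar contradicts Perelman's no-local-collapsing. One harmless slip in Step 2: with $\partial_t g = -2\operatorname{Ric}$ the class evolves as $[\omega_0] - 4\pi t\, c_1(\widehat{M})$, and this is what actually yields the slopes $-2(p_i \mp q_i)$ you quote.
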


The proof in \cite{FT} has four components: exact affine laws for the fiber potentials along the bolts; a Li--Yau-type gradient bound; their combination into the decay $| A |^2 \leq C / ( T - t )$ of the O'Neill tensor, which kills $A$ along any Type II rescaling; and the exclusion of the resulting cigar limit by Perelman's no-local-collapsing theorem. 

\section{Type I singularity of the nut--bolt flow}\label{sec:typeI}
We now investigate the other compact Kähler closing of \S\ref{subsec:closing}. Throughout this section $\widehat{M}$ is a \textit{compact nut--bolt closing}: a {nut-type collapse} at $s = 0$ (the sphere $S^{2m+1}$ collapses there) and a \emph{bolt} at $s = \ell$. 
Thus, $( N_1, g_1 ) = ( \mathbb{CP}^m, 2\, g_{FS} )$ with $p_1 = m + 1$ and $| q_1 | = 1$; we write $N' := \prod_{k \geq 2} N_k$, so that the closing strata are $Q_0 = \{ s = 0 \} \cong N'$ and $Q_\ell = \{ s = \ell \} \cong N = \mathbb{CP}^m \times N'$, and $\widehat{M} \cong \mathbb{P} ( \mathcal{O} \oplus E )$ is the $\mathbb{CP}^{m+1}$-bundle over $N'$ of \S\ref{subsec:closing}. We assume $r \geq 2$; for $r = 1$ ($N'$ a point, $\widehat{M} \cong \mathbb{CP}^{m+1}$) everything below degenerates gracefully to the classical Calabi-symmetric flow. 

The goal of the section is to obtain an analogue of Theorem \ref{thm:two-bolt-typeI}: \emph{the singularity at} $T$ \emph{is of Type I}. The structural novelty is the submersion of record. Since $\pi$ does not extend across $Q_0$ (\S\ref{subsec:closing}), its role is taken by the {residual bundle} of \S\ref{subsec:oneill},
\[
\widehat{\pi}' \;:\; \widehat{M} \;\longrightarrow\; N' \,.
\]

\subsection{Affine laws at the two strata and the linear lower bound}\label{subsec:affine-laws}

Crucial to our analysis is the evolution of warping functions at each closing stratum. Recall from Proposition \ref{prop:heat-equation-fi} that in the Kähler case each $u = f_i^2$ solves the linear heat equation with constant forcing, $( \partial_t - \Delta )\, f_i^2 = -2 p_i$. At a bolt this equation closes up into an ODE:

\begin{proposition}[affine laws at both strata]\label{prop:affine-laws}
	Along the flow, for every $i$:
	
	{(a) At the bolt,} 
	\[
	f_i^2 \big|_{Q_\ell} (t) \;=\; f_i^2 \big|_{Q_\ell} (0) \;-\; 2 \big( q_i + p_i \big)\, t \,.
	\]
	In particular, for $i = 1$, by \eqref{eq:F1-int-H} the circle mass is exactly affine,
	\begin{equation}\label{eq:circle-mass-affine}
		L (t) \;=\; L (0) \;-\; 2 ( m + 2 )\, t \,, \qquad \text{so} \quad T \;\leq\; T_F' \;:=\; \frac{L (0)}{2 ( m + 2 )} \,.
	\end{equation}
	
	{(b) At the nut-type closing,}
	\begin{equation}\label{eq:nut-affine}
		f_i^2 \big|_{Q_0} (t) \;=\; f_i^2 \big|_{Q_0} (0) \;+\; 2 \big( ( m + 1 )\, q_i \;-\; p_i \big)\, t \,.
	\end{equation}
	
	{(c) (Linear lower bound for the surviving blocks)} There is $c > 0$ with
	\begin{equation}\label{eq:linear-lower-bound}
		f_k^2 (\cdot, t) \;\geq\; c\, \big( T - t \big) \qquad \text{on } \widehat{M} \times [0, T) \,, \quad \text{for every } k \geq 2 \,,
	\end{equation}
	
	
\end{proposition}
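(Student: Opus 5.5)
The plan is to evaluate the heat equation of Proposition \ref{prop:heat-equation-fi} exactly on the two closing strata, where the Laplacian of a radial function reduces to boundary data fixed by the closing conditions. Part (c) then follows from the affine relation \eqref{eq:Fk-affine}.

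\emph{Parts (a) and (b).} The strata $Q_0$ and $Q_\ell$ sit at fixed endpoints of the background coordinate $u$. Hence $\partial_t (f_i^2)$ at fixed $u$, evaluated at the endpoint, is the time derivative of $f_i^2|_{Q}(t)$. By \eqref{eq:heat-fi} it suffices to compute $\Delta f_i^2$ on each stratum. The Kähler condition \eqref{eq:kahler-condition} gives $(f_i^2)' = q_i H$, so Corollary \ref{cor:kahler-laplacian} yields
\[
\Delta f_i^2 \;=\; q_i H' \;+\; q_i H \Big( \frac{H'}{H} + H \sum_{j} \frac{n_j q_j}{F_j^2} \Big) \;=\; 2 q_i H' \;+\; q_i H^2 \sum_j \frac{n_j q_j}{F_j^2}.
\]
The next step is to take the limits at the two ends.

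At the bolt $s=\ell$, Proposition \ref{prop:bolt-closing} (with orientation reversed) gives $H \to 0$ and $H' \to -1$, while every $F_j$ stays positive. The sum term therefore vanishes and $\Delta f_i^2 = -2q_i$. This gives $\partial_t f_i^2|_{Q_\ell} = -2q_i - 2p_i$, which proves (a).

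For $i=1$, \eqref{eq:F1-int-H} identifies $f_1^2|_{Q_\ell}$ with the circle mass $L(t)$. Since $q_1 + p_1 = 1 + (m+1) = m+2$, positivity of $L$ forces $T \le L(0)/(2(m+2))$.

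At the nut $s=0$, Proposition \ref{prop:nut-closing} gives $H \sim s$, $H' \to 1$ and $F_1^2 \sim s^2/2$, with $F_k(0) > 0$ for $k \ge 2$. The only surviving summand is $j=1$, contributing $H^2 n_1 q_1 / F_1^2 \to 2m q_1$. Using the normalization $q_1 = 1$, this gives $\Delta f_i^2 \to 2q_i + 2m q_i = 2(m+1) q_i$, and hence \eqref{eq:nut-affine}.

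The one point needing care is justifying that these boundary limits are the values of the smooth function $\Delta f_i^2$ on the stratum. This follows from smoothness of the flow on $\widehat{M}$ together with the parity statements in the closing conditions. It is the main, if mild, obstacle.

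\emph{Part (c).} For $k \ge 2$ and each $t$, \eqref{eq:Fk-affine} writes $f_k^2 = f_k^2|_{Q_0}(t) + q_k f_1^2$. Here $f_1^2$ increases from $0$ at $Q_0$ to $L(t)$ at $Q_\ell$, because it is the integral of $H > 0$. So $f_k^2$ is a monotone function of $f_1^2$, and its minimum over $\widehat{M}$ is $\min\{ f_k^2|_{Q_0}(t),\, f_k^2|_{Q_\ell}(t) \}$. By (a) and (b) both quantities are affine functions $\alpha - \beta t$ that stay positive on $[0,T)$, so their values at $T$ are $\ge 0$.

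For such an affine function: if $\beta > 0$ then $\alpha - \beta t \ge \beta (T-t)$, and if $\beta \le 0$ then $\alpha - \beta t \ge \alpha \ge (\alpha/T)(T-t)$. Taking $c$ to be the minimum of these constants over the finitely many $k$ and both strata gives \eqref{eq:linear-lower-bound}.
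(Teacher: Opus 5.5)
Your proof is correct and follows the paper's argument. You evaluate $\Delta f_i^2 = 2q_iH' + q_iH^2\sum_j n_jq_j/F_j^2$ on each stratum via the closing conditions and integrate the heat equation \eqref{eq:heat-fi}, then get (c) from the monotonicity of $f_k^2$ given by \eqref{eq:Fk-affine} and the elementary lower bound for positive affine functions. The only difference is that for (a) the paper simply defers to \cite{FT}, whereas you carry out the bolt computation explicitly with $H'(\ell) = -1$, which is the correct analogue of the nut computation.
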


\begin{proof}
	\emph{(a)} The calculation is local near a bolt and carried out in \cite{FT}.
	
	
	\emph{(b)} Each $f_i^2$ is radial and constant on the slice $Q_0$, so $\partial_t \big( f_i^2 |_{Q_0} \big) = \big( \Delta f_i^2 \big) \big|_{Q_0} - 2 p_i$ by \eqref{eq:heat-fi}, and \eqref{eq:kahler-laplacian} gives, in the coordinate $s$, 	
	\[
	\Delta f_i^2 \;=\; q_i \left( 2\, h' \;+\; h^2 \sum_{j = 1}^r \frac{n_j\, q_j}{f_j^2} \right).
	\]
	The right-side expression is smooth across $Q_0$ by the closing condition of \S\ref{subsec:oneill}. Along $Q_0$, $h' \to \varphi ( 0 ) = 1$; for the collapsing block ($n_1 = m$, $q_1 = 1$), $h^2 / f_1^2 = \varphi^2 / \psi_1 \to 2$ 
	while $h^2 / f_j^2 \to 0$ for $j \geq 2$. Hence
	
	\[
	\big( \Delta f_i^2 \big) \big|_{Q_0} \;=\; q_i\, \big( 2 \;+\; 2\, m \big) \;=\; 2 ( m + 1 )\, q_i \,,
	\]
	
	which integrates to \eqref{eq:nut-affine}. 
	
	\emph{(c)} For $k\geq 2$, by equation \ref{eq:Fk-affine}, each $f_k^2$ is monotone in $s$, so
	
	\[
	\inf_{\widehat{M}} f_k^2 (\cdot, t) \;=\; \min \Big\{ f_k^2 \big|_{Q_0} (t) \,, \; f_k^2 \big|_{Q_\ell} (t) \Big\} \,.
	\]
	This is a minimum of two affine functions of $t$, each positive on $[0, T)$. An affine function $a (t) > 0$ on $[0, T)$ satisfies $a (t) \geq c\, ( T - t )$ there: if $\dot{a} \geq 0$, take $c = a (0) / T$; if $\dot{a} < 0$, then $a (t) = a (T) + | \dot{a} |\, ( T - t ) \geq | \dot{a} |\, ( T - t )$, with $a (T) := \lim_{t \to T^-} a (t) \geq 0$. Take for $c$ the least of the resulting constants over both strata and the finitely many $k \geq 2$.
\end{proof}

\begin{remarkx}
		
		Cohomologically, the affine law is the pointwise shadow of the linear motion of the Kähler class. With our convention $\partial_t g = -2 \operatorname{Ric}$, one has $[ \omega(t) ] = [ \omega_0 ] - 4 \pi t\, c_1 ( \widehat{M} )$. Furthermore, $2 \pi L (t) = [ \omega (t) ] \cdot [ \Lambda ]$ for a projective line $\Lambda$ in a $\mathbb{CP}^{m+1}$-fiber joining the two strata; the normal bundle of a fiber is trivial, so $c_1 ( \widehat{M} ) \cdot [ \Lambda ] = c_1 ( \mathbb{CP}^{m+1} ) \cdot [ \Lambda ] = m + 2$, and $[ \omega (t) ] = [ \omega_0 ] - 4 \pi t\, c_1 ( \widehat{M} )$ returns the rate $- 2 ( m + 2 )$. 
		
\end{remarkx}
\subsection{The residual O'Neill tensor along a Type II rescaling}\label{subsec:A-residual-decay}


Here we give a type I estimate for $|A'|$, the tensor discussed in \ref{subsec:oneill}, and the residual-base curvature.
\begin{proposition}\label{prop:A-residual-decay}
	There is a constant $C = C( g_0, T )$ such that
	\begin{equation}\label{eq:A-residual-typeI}
		\big| A' \big|^2_{g (t)} \;\leq\; \frac{C}{T - t} \qquad \text{on } \widehat{M} \times [0, T) \,.
	\end{equation}
	And the same bound holds for the sectional curvatures of $\mathcal{H}'$-planes of $g (t)$ and for those of the evolving residual base metric $\check{g}'_t = \sum_{k \geq 2} f_k^2\, g_k$ on $N'$. Consequently, along any rescaling $g_j (t) = K_j\, g ( t_j + K_j^{-1} t )$ with $K_j ( T - t_j ) \to \infty$,
	
	\begin{equation}\label{eq:A-residual-rescaled}
		\big| A' \big|^2_{g_j (t)} \;=\; K_j^{-1}\, \big| A' \big|^2_{g ( t_j + K_j^{-1} t )} \;\leq\; \frac{C}{K_j ( T - t_j ) - t} \;\longrightarrow\; 0 \qquad ( j \to \infty ) \,,
	\end{equation}
	locally uniformly in $t \in \mathbb{R}$, and likewise for the rescaled $\mathcal{H}'$-sectional and residual-base curvatures.
	
\end{proposition}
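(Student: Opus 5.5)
The plan is to combine Corollary \ref{cor:norm-A-residual} with the Li--Yau bound of Proposition \ref{prop:li-yau} and the linear lower bound \eqref{eq:linear-lower-bound}. Applying \eqref{eq:li-yau} to $u = f_k^2$ for each $k \geq 2$ gives
\[
\frac{|\nabla f_k^2|^2}{f_k^4} \;=\; \frac{1}{f_k^2}\cdot\frac{|\nabla f_k^2|^2}{f_k^2} \;\leq\; \frac{C_0}{f_k^2} \;\leq\; \frac{C_0}{c\,(T-t)} .
\]
Summing over $k \geq 2$ and inserting into \eqref{eq:norm-A-residual} yields \eqref{eq:A-residual-typeI} on $M^0$. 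The bound extends to $\widehat{M}$ by continuity, since $A'$ is a smooth tensor on $\widehat{M}$ by the Cartesian expression \eqref{eq:A-residual-cartesian}. The constant depends only on $g_0$ through $C_0$ and $c$, and on $T$.

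For the curvature statements, I would use Corollary \ref{cor:sectional-curvatures}. For $\mathcal{H}'$-planes across blocks $k \neq k'$ with $k,k' \geq 2$, we have $K = -(F_k'/F_k)(F_{k'}'/F_{k'})$, which is bounded by $\tfrac14\big(|\nabla f_k^2|^2/f_k^4 + |\nabla f_{k'}^2|^2/f_{k'}^4\big)$ and hence by $C/(T-t)$ as above. Within a block $k \geq 2$, there are three terms:
\begin{itemize}
\item The term $F_k'^2/F_k^2$ is handled in the same way.
\item The base term $K_{g_k}/F_k^2$ is bounded by $C/f_k^2 \leq C/(T-t)$, since $N_k$ is compact.
\item The term $3q_k^2H^2\tau_k^2/(4F_k^4)$ equals $3(F_k^{2})'^2\tau_k^2/(4F_k^4)$ by the K\"ahler condition \eqref{eq:kahler-condition}, so it is again controlled by Li--Yau.
\end{itemize}
Mixed planes spanned by combinations of vectors from different blocks are handled by multilinearity: the full curvature tensor restricted to $\mathcal{H}'$ has all components bounded by these quantities. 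This follows from \eqref{eq:Rm-horizontal} and \eqref{eq:leaf-curvature}, whose coefficients are $q_kH/F_k^2 = (F_k^2)'/F_k^2$ and $F_k^{-2}\mathrm{Rm}_{g_k}$. The residual base metric $\check g'_t = \sum_{k\geq 2} f_k^2 g_k$ is a product, so its sectional curvatures are bounded by $\max_k \sup|K_{g_k}|/f_k^2 \leq C/(T-t)$.

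The rescaled statement \eqref{eq:A-residual-rescaled} is pure scaling. $|A'|^2$ scales like curvature, so $|A'|^2_{g_j(t)} = K_j^{-1}|A'|^2_{g(t_j+K_j^{-1}t)}$. At the time $t_j + K_j^{-1}t$ we have $T - (t_j+K_j^{-1}t) = K_j^{-1}\big(K_j(T-t_j)-t\big)$. Substituting gives $C/(K_j(T-t_j)-t)$, which tends to $0$ locally uniformly in $t$ because $K_j(T-t_j)\to\infty$. The curvature bounds behave identically under this rescaling.

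The main obstacle I expect is checking that Corollary \ref{cor:norm-A-residual} and the Li--Yau quotient remain valid uniformly up to $Q_0$, where the polar frame degenerates. This concerns the $\xi^*$-component of $A'$, whose coefficient $H\,d\theta'$ must also be bounded by $|\nabla f_k^2|/f_k^2$ terms. By the K\"ahler relation $q_kH = (f_k^2)'$ it is exactly such a term, so the only care needed is bookkeeping of the $n_k$-dependent constants.
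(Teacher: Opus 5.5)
Your proposal is correct and follows the paper's proof essentially step for step. You apply the Li--Yau bound to $f_k^2$ for $k\geq 2$, insert it into Corollary \ref{cor:norm-A-residual} together with the linear lower bound \eqref{eq:linear-lower-bound}, treat the $\mathcal{H}'$-curvatures blockwise (the base term $K_{g_k}/f_k^2$ plus corrections quadratic in $q_k h/f_k^2$), and finish with the same scaling identity $T-(t_j+K_j^{-1}t)=K_j^{-1}\big(K_j(T-t_j)-t\big)$; your additional remarks on mixed planes and on uniformity up to $Q_0$, where $f_k^2>0$ for $k\geq 2$, are harmless refinements.
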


\begin{proof}
	The heat equation \eqref{eq:heat-fi} holds for every block regardless of the closing type, so the Li--Yau estimate \eqref{eq:li-yau} applies to $u = f_k^2$ for each $k \geq 2$: $| \nabla f_k^2 |^2 \leq C_0\, f_k^2$. Inserting this into \eqref{eq:norm-A-residual} and using the linear lower bound \eqref{eq:linear-lower-bound},
	\[
	\big| A' \big|^2_g \;\leq\; C \sum_{k \geq 2} \frac{\big| \nabla f_k^2 \big|^2}{f_k^4} \;\leq\; C\, C_0 \sum_{k \geq 2} \frac{1}{f_k^2} \;\leq\; \frac{C}{T - t} \,.
	\]
	
	For the $\mathcal{H}'$-curvatures, restricted to the blocks $k \geq 2$: the base contribution is blockwise $f_k^2\, \pi^* \mathrm{Rm}_{g_k}$, with sectional curvatures $K_{g_k} / f_k^2 \leq C / ( T - t )$ by \eqref{eq:linear-lower-bound}, and the $\Psi$- and $\mathrm{II}$-corrections are quadratic in $q_k h / f_k^2$, of size $| \nabla f_k^2 |^2 / f_k^4 \leq C_0 / f_k^2 \leq C / ( T - t )$. Finally $A'$ carries one derivative of the metric, so $| A' |^2_{K g} = K^{-1} | A' |^2_g$, and sectional curvatures scale likewise. Finally, writing $T - ( t_j + K_j^{-1} t ) = K_j^{-1} \big( K_j ( T - t_j ) - t \big)$ gives \eqref{eq:A-residual-rescaled}.
\end{proof}

Suppose the singularity is Type II. By Hamilton's point-picking for Type II singularities, there are $( x_j, t_j )$ with $t_j \to T$ and $K_j := | \mathrm{Rm} | ( x_j, t_j ) \to \infty$ such that $K_j ( T - t_j ) \to \infty$ and the rescaled flows $g_j (t) = K_j\, g ( t_j + K_j^{-1} t )$ satisfy $\sup | \mathrm{Rm} |_{g_j} \leq 1 + o(1)$ on time intervals $( - \alpha_j, \beta_j )$ with $\alpha_j, \beta_j \to \infty$, with $| \mathrm{Rm} |_{g_j} ( x_j, 0 ) = 1$. Since $\widehat{M}$ is compact and $T < \infty$, Perelman's no-local-collapsing theorem provides a uniform $\kappa > 0$ such that all the $g_j$ are $\kappa$-non-collapsed at all scales $\lesssim 1$; injectivity radius bounds follow. Consequently, Hamilton's compactness theorem yields a sub-sequential pointed $C^\infty$ Cheeger--Gromov limit
\[
\big( \widehat{M}, \; g_j (t), \; x_j \big) \;\longrightarrow\; \big( M_\infty, \; g_\infty (t), \; x_\infty \big) \,.
\]
This is a complete {eternal} Ricci flow, $t \in \mathbb{R}$, with $| \mathrm{Rm} |_{g_\infty} \leq 1$ on all of space-time, equality at $( x_\infty, 0 )$, and $\kappa$-noncollapsed at all scales.

	
\begin{proposition}[Type II blow-up limits split along $\mathcal{V}' \oplus \mathcal{H}'$]\label{prop:typeII-splitting}
	Suppose the singularity at $T$ is of Type II. 
	
	{(i) The limit splits along} $\mathcal{V}' \oplus \mathcal{H}'$\emph{.} The distributions subconverge to complementary orthogonal distributions $\mathcal{V}'_\infty$ (rank $2 m + 2$) and $\mathcal{H}'_\infty$ (rank $2 n'$, where $n' := \sum_{k \geq 2} n_k$) whose O'Neill tensors vanish: both are parallel, and by the de Rham decomposition theorem the universal cover splits isometrically and holomorphically, for each $t$,
	
	\begin{equation}\label{eq:typeII-splitting}
		\widetilde{M}_\infty \;=\; \Phi \times Y \,, \qquad \widetilde{g}_\infty (t) \;=\; g^\Phi (t) \,\oplus\, g^Y (t) \,, \qquad \dim_{\mathbb{R}} \Phi \;=\; 2 m + 2 \,,
	\end{equation}
	
	with each factor a complete eternal Kähler--Ricci flow.\\
	
	{(ii) The} $\mathcal{H}'$\emph{-factor is flat.} $( Y, g^Y (t) )$ is flat and static, hence $Y = \mathbb{C}^{n'}$, being simply connected; all curvature is carried by the fiber factor, $| \mathrm{Rm}_{g^\Phi} | \leq 1$ with equality at $( x_\infty, 0 )$, so $\Phi$ is nonflat.\\
	
	{(iv) The fiber factor is a limit of Calabi fibers.} Under \eqref{eq:typeII-splitting}, $\Phi$ is the leaf of $\mathcal{V}'_\infty$ through $x_\infty$: the pointed limit of the rescaled totally geodesic fibers $\big( ( \widehat{\pi}' )^{-1} \big( \widehat{\pi}' ( x_j ) \big), \, g_j (t) \big)$ --- each a Calabi-symmetric Kähler metric on $\mathbb{CP}^{m+1}$.
	
\end{proposition}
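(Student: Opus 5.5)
The plan is to pass the fiber/base decomposition $T\widehat{M} = \mathcal{V}' \oplus \mathcal{H}'$ of the residual submersion $\widehat{\pi}'$ to the Type II limit, kill its O'Neill tensors there, and then apply de Rham. On $\widehat{M}$ the decomposition is $g(t)$-orthogonal and $J$-invariant. By Proposition \ref{prop:residual-oneill}(i) the tensor $T^{\widehat{\pi}'}$ vanishes identically, and this property is scale invariant. By \eqref{eq:A-residual-rescaled}, $|A'|_{g_j(t)} \to 0$ locally uniformly. I would encode the splitting by the orthogonal projection $P_j$ onto $\mathcal{V}'$, which is a $(1,1)$-tensor of norm $\sqrt{2m+2}$. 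O'Neill's tensors are algebraic in $\nabla P_j$: one has $\nabla P = $ (terms in $T$) $+$ (terms in $A$). Hence $|\nabla^{g_j} P_j|_{g_j} \leq C(|T'|+|A'|)_{g_j} \to 0$.

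To get convergence I need higher derivative bounds. I would apply $\nabla^k$ to $\nabla P_j$, using Shi's estimates for $|\nabla^k \mathrm{Rm}|_{g_j}$ on the bounded-curvature rescaled flows. An alternative is to differentiate the explicit formulas \eqref{eq:A-residual} in the radial variable, estimate each derivative via the Li--Yau bound and \eqref{eq:linear-lower-bound}, and note that every extra derivative produces an extra factor $f_k^{-1}\sim (T-t)^{-1/2}$, which is scale-compatible. Pulling back by the Cheeger--Gromov diffeomorphisms and using Arzel\`a--Ascoli, $P_j$ subconverges in $C^\infty_{loc}$ to a projection $P_\infty$ with $\nabla^{g_\infty(t)} P_\infty = 0$ and $[P_\infty, J_\infty]=0$. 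This gives parallel, complementary, $J$-invariant distributions $\mathcal{V}'_\infty$ of rank $2m+2$ and $\mathcal{H}'_\infty$ of rank $2n'$. Since the rank is constant, the limit distributions are independent of $t$ as well. De Rham's theorem on the universal cover then gives \eqref{eq:typeII-splitting} for each $t$. Uniqueness of the de Rham factorization, together with the Ricci flow equation respecting products, shows the factors evolve as separate complete eternal Kähler--Ricci flows. This proves (i).

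For (ii), curvature of the product is block diagonal, so the curvature of $Y$ equals the limit of the $\mathcal{H}'$-plane curvatures of $g_j$. By Proposition \ref{prop:A-residual-decay} these tend to zero under Type II scaling. Because $Y$ is Kähler, its full curvature tensor is determined by sectional curvatures of planes in $\mathcal{H}'_\infty$, so it vanishes identically. Then $\mathrm{Ric}^Y = 0$, so $g^Y$ is static and flat, and simply connected implies $Y = \mathbb{C}^{n'}$. Since $|\mathrm{Rm}_{g_\infty}|(x_\infty,0)=1$ and the $Y$-factor contributes nothing, the equality is carried by $\Phi$. This gives the bounds on $g^\Phi$ and shows $\Phi$ is nonflat.

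For (iv), the fibers $F_j := (\widehat{\pi}')^{-1}(\widehat{\pi}'(x_j))$ are totally geodesic integral manifolds of $\mathcal{V}'$, and with induced metric each is Calabi-symmetric $\mathbb{CP}^{m+1}$. Totally geodesic submanifolds with bounded second fundamental form (here zero) and bounded ambient geometry converge, under the pointed Cheeger--Gromov maps, to a complete totally geodesic integral submanifold of $\mathcal{V}'_\infty$ through $x_\infty$. That limit is the leaf $\Phi \times \{y\}$ in the cover, and its induced flow is $g^\Phi(t)$. The main obstacle I anticipate is the uniform higher-derivative control of $P_j$ in the first step. I would try first to derive it from Shi's estimates and the smoothness of $P$ across $Q_0$ and $Q_\ell$, where $\mathcal{V}'$ extends smoothly because $\widehat{\pi}'$ is a smooth submersion on all of $\widehat{M}$.
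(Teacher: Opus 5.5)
Your proposal follows the paper's proof essentially step for step. In (i) you take the orthogonal projection onto $\mathcal{V}'$, control $|\nabla P|$ by $T^{\widehat{\pi}'}\equiv 0$ and the decaying $A'$, use Shi and Arzel\`a--Ascoli to get a parallel $J$-invariant limit projection, and then apply de Rham; in (ii) you use the decay of the $\mathcal{H}'$-curvatures from Proposition \ref{prop:A-residual-decay}; and in (iv) you pass the totally geodesic fibers to the $\mathcal{V}'_\infty$-leaf. Two small corrections. First, the time-independence of the limit splitting comes from $P_j$ itself not depending on $t$, not from constancy of rank: every ansatz metric is block-diagonal for the same fixed splitting of $T\widehat{M}$, and the Cheeger--Gromov maps are time-independent. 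Second, the higher-derivative control you flag as the main obstacle is not actually needed, since $|P_j|=\sqrt{2m+2}$ together with $|\nabla P_j|\to 0$ already gives locally uniform Lipschitz bounds, hence $C^0$ subconvergence to a limit that is weakly, and therefore smoothly, parallel for each $g_\infty(t)$.
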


\begin{proof}
\emph{(i)} The $g$-orthogonal splitting $T \widehat{M} = \mathcal{V}' \oplus \mathcal{H}'$ is orthogonal for every ansatz metric, hence for every $g_j (t)$, and is scale-invariant. We represent the sub-bundle by its orthogonal projection operator, which is a smooth $(1,1)$-tensor field:
\begin{equation*}
	P \in \Gamma(T^*\mathcal{V}' \otimes T\mathcal{V}')\text{ satisfying } P^2 = P \quad \text{and} \quad g(P \cdot, \cdot) = g(\cdot, P \cdot).
\end{equation*}
 The full second fundamental form of the splitting is the pair $( T, A )$. Specifically, we have the point-wise identity:
\begin{equation}
	\label{eq:projection-norm}
	\|\nabla P\|^2 = 2 \left( \|T\|^2 + \|A\|^2 \right)
\end{equation}

Under smooth pointed Cheeger–Gromov convergence, the pulled-back metrics $\bar{g}_j = \phi_j^* g_j$ and their Levi-Civita connections $\bar{\nabla}^j$ converge in $C^\infty$ on compact subsets of $M_\infty$. The convergence of the distributions $\mathcal{V}_j$ is mathematically equivalent to the convergence of the projection tensors $P_j$ as smooth tensor fields. Here $T \equiv 0$ identically and $| A |_{g_j} \to 0$ locally uniformly by \eqref{eq:A-residual-rescaled}. 


Via Shi's estimates, the O'Neill tensors and all their derivatives are uniformly bounded along our re-scaling sequence. Combining with (\ref{eq:projection-norm}) yields that pulled-back projection tensors $\bar{P}_j = \phi_j^* P_j$ satisfy uniform derivative bounds on each compact set of $M_\infty$. By the Arzelà–Ascoli Theorem applied to tensor fields, 
the sequence $\{\bar{P}_j\}$ is precompact. Thus, there exists a subsequence (still denoted $\bar{P}_j$) that converges in the $C^\infty$ topology on compact sets of $M_\infty$ to a smooth limit tensor field:
\begin{equation}
	\bar{P}_j \xrightarrow{C^\infty} P_\infty.
\end{equation}
Since the algebraic equations $P_j^2 = P_j$ and $P_j^T = P_j$ are preserved under $C^\infty$ limits, the limit tensor $P_\infty$ is a smooth orthogonal projection operator on $M_\infty$. The image of this projection operator:
\begin{equation}
	\mathcal{V}_\infty := \text{Im}(P_\infty) \subset TM_\infty
\end{equation}
defines the smooth limit distribution. Furthermore, because of \eqref{eq:A-residual-rescaled}, the O'Neill tensors vanish and both sub-bundles are integrable with totally geodesic leaves, hence parallel. By the de Rham decomposition theorem, the universal cover $\widetilde{M}_\infty$ splits isometrically, for each $t$, as

\begin{equation*}
	\widetilde{M}_\infty \;=\; \Phi \times Y \,, \qquad \widetilde{g}_\infty (t) \;=\; g^\Phi (t) \,\oplus\, g^Y (t) \,, \qquad \dim_{\mathbb{R}} \Phi \;=\; 2 m + 2 \,.
\end{equation*}
Consequently, the splitting is preserved by the (product) Ricci flow. It is moreover holomorphic: each $\mathcal{V}'_j$ is $J$-invariant, hence so is $\mathcal{H}_j' = ( \mathcal{V}'_j )^\perp$, the limiting distributions are $J_\infty$-invariant, and the de Rham factors are complex --- indeed Kähler --- factors.\\


	\emph{(ii)} By Proposition \ref{prop:A-residual-decay}: the $\mathcal{H}'$-sectional curvatures of $g_j (t)$ tend to $0$ locally uniformly, so $( Y, g^Y (t) )$ is flat and, being a simply connected complete flat K\"{a}hler manifold, is $\mathbb{C}^{n'}$ with the static Euclidean metric. Mixed curvatures vanish by the product structure, so $| \mathrm{Rm}_{g_\infty} | = | \mathrm{Rm}_{g^\Phi} |$, and the curvature normalization at $( x_\infty, 0 )$ lands in the fiber factor.\\
	
	\emph{(iv)} The fibers of $\widehat{\pi}'$ are the leaves of $\mathcal{V}'$, totally geodesic for every $g_j (t)$. Along the Cheeger--Gromov convergence the leaves through the base points $x_j$ converge to the leaf of $\mathcal{V}'_\infty$ through $x_\infty$, which under the parallel splitting \eqref{eq:typeII-splitting} is the factor $\Phi$. 
\end{proof}

	

\subsection{The bisectional curvature of the fiber block}\label{subsec:fiber-bisectional}

Here, we'll prove a type I bound for the bisectional curvature, working throughout directly with $f_1^2$ and $h^2 = \big| \nabla f_1^2 \big|^2$ (the latter identity by \eqref{eq:radial-gradient}, as $q_1 = 1$). Since $h > 0$ in the interior, $f_1^2$ is strictly increasing in $s$ at each fixed $t$ and sweeps $[ 0, L (t) ]$; we change variables 
and take $f_1^2$ itself as the spatial coordinate. Thus, $f_1^2$ and $t$ become {independent} variables and $h^2$ is regarded as a function of the pair $( f_1^2, t )$. Write ${}' := \partial / \partial ( f_1^2 )$ at fixed $t$.
\begin{proposition}[curvature of the fiber block]\label{prop:fiber-curvature}
	Let $X, Y \in \mathcal{H}_1$ be $g (t)$-unit with $Y$ orthogonal to $X$ and $J X$, and recall $J \nu = \xi^*$ up to sign. The sectional curvatures of $g (t)$ on planes tangent to $\mathcal{V}' = \mathbb{R} \nu \oplus \mathbb{R} \xi^* \oplus \mathcal{H}_1$ are
		\begin{equation}\label{eq:fiber-curvature}
		\begin{aligned}
			K ( \nu, \xi^* ) \;&=\; - \frac{( h^2 )''}{2} \,, \qquad & K ( \nu, X ) \;=\; K ( \xi^*, X ) \;&=\; \frac{1}{4 f_1^2} \left( \frac{h^2}{f_1^2} - ( h^2 )' \right) \,, \\
			K ( X, J X ) \;&=\; \frac{2}{f_1^2} - \frac{h^2}{f_1^4} \,, \qquad & K ( X, Y ) \;=\; K ( X, J Y ) \;&=\; \frac{1}{4 f_1^2} \left( 2 - \frac{h^2}{f_1^2} \right) \,.
		\end{aligned}
	\end{equation}
	Every bisectional curvature of a pair of $J$-invariant planes tangent to $\mathcal{V}'$ is a sum of two entries of \eqref{eq:fiber-curvature}, by the Kähler symmetry $\mathrm{Rm} ( u, J u, J v, v ) = \mathrm{Rm} ( u, v, v, u ) + \mathrm{Rm} ( u, J v, J v, u )$.
	
\end{proposition}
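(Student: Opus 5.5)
This is a direct computation: I take the general sectional-curvature formulas of Corollary \ref{cor:sectional-curvatures} for block $i=1$, with $(N_1,g_1)=(\mathbb{CP}^m,2g_{FS})$, $p_1=m+1$, $q_1=1$, and rewrite them in the variable $x:=f_1^2$ using the Kähler relation \eqref{eq:kahler-condition}, $(f_1^2)_s = h$. The chain rule gives $\partial_s = h\,\partial_x$, so for any function $\phi$ of $(x,t)$ we have $\phi_s = h\phi'$. With $F_1=f_1$ and $H=h$, I first record the auxiliary identities
\[
f_{1,s} = \frac{h}{2f_1},\qquad h_s = h\,h' = \tfrac12 (h^2)',\qquad h_{ss} = h\,\partial_x\big(\tfrac12 (h^2)'\big) = \tfrac{h}{2}(h^2)''.
\]
I also need $f_{1,ss}$. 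Differentiating $2f_1f_{1,s}=h$ gives $2f_{1,s}^2+2f_1f_{1,ss}=h_s$, hence
\[
f_{1,ss} = \frac{(h^2)'/2 - h^2/(2f_1^2)}{2f_1}.
\]

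Next I substitute these into the cases of Corollary \ref{cor:sectional-curvatures}. The radial plane gives $K(\nu,\xi^*)=-h_{ss}/h=-(h^2)''/2$. For the mixed radial plane,
\[
K(\nu,X)=-\frac{f_{1,ss}}{f_1}=\frac{1}{4f_1^2}\Big(\frac{h^2}{f_1^2}-(h^2)'\Big).
\]
For the vertizontal plane, $K(X,\xi^*)=\frac{h^2}{4f_1^4}-\frac{h_s}{h}\frac{f_{1,s}}{f_1}$, and since $\frac{h_s}{h}\frac{f_{1,s}}{f_1}=\frac{(h^2)'}{4f_1^2}$, this equals $K(\nu,X)$, as the Kähler structure predicts.

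For the horizontal planes inside $\mathcal{H}_1$ I use case (iii). For $2g_{FS}$, the holomorphic sectional curvature is $2$, so $K_{g_1}(u,Ju)=2$, and totally real orthonormal planes have $K_{g_1}=\tfrac12$. I then combine these with $\tau=\pm1$ or $\tau=0$ and the term $f_{1,s}^2/f_1^2 = h^2/(4f_1^4)$. Taking $\tau=1$,
\[
K(X,JX)=\frac{2}{f_1^2}-\frac{3h^2}{4f_1^4}-\frac{h^2}{4f_1^4}=\frac{2}{f_1^2}-\frac{h^2}{f_1^4}.
\]
Taking $\tau=0$, $K(X,Y)=\frac{1}{2f_1^2}-\frac{h^2}{4f_1^4}$, which matches the claim. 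For $K(X,JY)$ the plane $\{X,JY\}$ is also totally real when $Y\perp X,JX$, so $\tau=0$ again and the value is the same.

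The main obstacle is the normalization of $K_{g_1}$ for $2g_{FS}$, which must be consistent with $p_1=m+1$. I will check it via Ricci: $\mathrm{Ric}=2+(2m-2)\cdot\tfrac12=m+1$, as required.

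The final assertion about bisectional curvature is the standard Kähler identity, which follows from the Bianchi identity together with $J$-invariance of $\mathrm{Rm}$. Since $\mathcal{V}'$ is $J$-invariant (with $J\nu=\pm\xi^*$ and $J\mathcal{H}_1=\mathcal{H}_1$) and totally geodesic by Proposition \ref{prop:residual-oneill}, the relevant components are determined by the table above.
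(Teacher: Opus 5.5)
Your proposal is correct and takes the paper's approach: use the K\"ahler relation $\partial_s(f_1^2)=h$ to make $f_1^2$ the spatial variable, then substitute into Corollary~\ref{cor:sectional-curvatures} for the block $i=1$. Your version is more careful than the paper's in two places. You check the normalization $K_{g_1}\in\{2,\tfrac12\}$ of $2g_{FS}$ against $p_1=m+1$, and your $f_{1,ss}=\frac{(h^2)'}{4f_1}-\frac{h^2}{4f_1^3}$ is the right value; the paper's displayed $(f_1)_{ss}$ is actually $f_{1,ss}/f_1$, though its final formulas are unaffected.
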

\begin{proof}
	To move back and forth between these two coordinates, we utilize the first-order Kähler condition:
	\begin{equation*}
		\frac{d}{ds}(f_1^2) = h(s).
	\end{equation*}
	Consequently, we have the following:
	\begin{align*}
		\partial_s &= h \frac{\partial}{\partial (f_1^2)},\\
		h_s &= \partial_s h = \frac{1}{2} (h^2)',\\
		h_{ss} &= \partial_s (h_s) = \frac{1}{2} h (h^2)''.
	\end{align*}
	Since $f_1 = (f_1^2)^{1/2}$, we have the coordinate derivative:
	\begin{align*}
		f_1' &= \frac{1}{2f_1},\\
		(f_1)_s &= \partial_s f_1 = h f_1' = \frac{h}{2f_1},\\
		(f_1)_{ss} &= \partial_s \left( (f_1)_s \right) = h \left( \frac{h}{2f_1} \right)'=\frac{(h^2)'}{4f_1^2} - \frac{h^2}{4f_1^4}.
	\end{align*}
	Substituting our translated auxiliary derivatives into the sectional curvature formulas of \S\ref{subsec:curvature} gives the desired result.
\end{proof}

\begin{lemma}[one second derivative controls every bisectional curvature]\label{lem:h2pp-controls-bisec}
	Fix $t \in [ 0, T )$ and suppose $( h^2 )'' ( \cdot, t ) \leq K (t)$ on $[ 0, L (t) ]$ for some $K (t) \geq 0$. Then, at that same time $t$, everywhere on $\widehat{M}$,
	
	\[
	( h^2 )' - \frac{h^2}{f_1^2} \;\leq\; \frac{K (t)\, f_1^2}{2} \qquad \text{and} \qquad \frac{h^2}{f_1^2} - 2 \;\leq\; \frac{K (t)\, f_1^2}{2} \,,
	\]
	so every entry of \eqref{eq:fiber-curvature} is $\geq - K (t) / 2$, and every bisectional curvature of $J$-invariant planes tangent to $\mathcal{V}'$ is $\geq - K (t)$.
	
\end{lemma}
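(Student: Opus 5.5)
The plan is to reduce everything to elementary one-variable calculus for the function $\phi(x) := h^2$, regarded (as in Proposition \ref{prop:fiber-curvature}) as a function of $x := f_1^2 \in [0, L(t)]$ at the fixed time $t$. The two inequalities will come from integrating the hypothesis $\phi'' \leq K(t)$ once and twice from the nut end $x = 0$. The curvature lower bounds will then be read off entry by entry from \eqref{eq:fiber-curvature}.

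\emph{Step 1: regularity and boundary data at the nut.} Near $Q_0$ I use the closing conditions of Proposition \ref{prop:nut-closing}: $h = s\varphi(s^2)$ and $f_1^2 = s^2\psi_1(s^2)$, with $\varphi(0) = 1$ and $\psi_1(0) = \tfrac12$. Set $\rho = s^2$. Then $x = \rho\psi_1(\rho)$ is a smooth, invertible function of $\rho$ near $0$, since $dx/d\rho = \tfrac12 \neq 0$ at $\rho = 0$. Hence $\phi = \rho\varphi(\rho)^2$ is a smooth function of $x$ up to $x = 0$, with $\phi(0) = 0$. Its derivative there is $\phi'(0) = \lim 2h_s = 2H'(0) = 2$, using $\partial_s = h\,\partial_x$ and $(f_1^2)_s = h$ as in the proof of Proposition \ref{prop:fiber-curvature}.

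In the interior, $x$ is a smooth strictly increasing function of $s$, so $\phi$ is smooth there. At the bolt end the inequalities will follow by continuity, since both sides are continuous up to $x = L$. This regularity bookkeeping is the only real obstacle. The one point I want to be sure of is that $\phi'$ is bounded near $0$, so that the boundary terms below vanish. The computation above shows this.

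\emph{Step 2: the two inequalities.} For the first, let $\Lambda(x) := x\phi'(x) - \phi(x)$. Then $\Lambda(0) = 0$ and $\Lambda'(x) = x\phi''(x) \leq K(t)\,x$. Integrating from $0$ gives $\Lambda(x) \leq K(t)x^2/2$. Dividing by $x > 0$ yields
\[
(h^2)' - \frac{h^2}{f_1^2} \leq \frac{K(t)\,f_1^2}{2}.
\]
For the second, Taylor's formula with integral remainder at $0$ gives $\phi(x) \leq \phi(0) + \phi'(0)x + K(t)x^2/2 = 2x + K(t)x^2/2$. Dividing by $x$ gives
\[
\frac{h^2}{f_1^2} - 2 \leq \frac{K(t)\,f_1^2}{2}.
\]

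\emph{Step 3: curvature bounds.} I substitute into \eqref{eq:fiber-curvature}, writing $x = f_1^2$:
\begin{itemize}
\item $K(\nu, \xi^*) = -\phi''/2 \geq -K(t)/2$.
\item $K(\nu, X) = K(\xi^*, X) = -\Lambda/(4x^2) \geq -K(t)/8$.
\item $K(X, JX) = x^{-1}\bigl(2 - \phi/x\bigr) \geq -K(t)/2$.
\item $K(X, Y) = K(X, JY) = (4x)^{-1}\bigl(2 - \phi/x\bigr) \geq -K(t)/8$.
\end{itemize}
Since $K(t) \geq 0$, every entry is at least $-K(t)/2$. By the last sentence of Proposition \ref{prop:fiber-curvature}, each bisectional curvature of $J$-invariant planes tangent to $\mathcal{V}'$ is a sum of two such entries. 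It is therefore at least $-K(t)$.
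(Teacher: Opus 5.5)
Your proposal is correct and follows the paper's argument. The paper writes $f_1^2(h^2)'-h^2=\int_0^{f_1^2}\zeta\,(h^2)''(\zeta)\,d\zeta$ and $h^2-2f_1^2=\int_0^{f_1^2}\!\int_0^{\eta}(h^2)''\,d\zeta\,d\eta$ via Fubini, which is your $\Lambda'=x\phi''$ and Taylor-remainder computation, and it reads off the same lower bounds $-K/2,-K/8,-K/2,-K/8$ for the four entries. Your explicit check that $h^2$ is smooth in $f_1^2$ up to the nut, with $\phi(0)=0$ and $\phi'(0)=2$, makes precise what the paper takes directly from the closing conditions.
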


\begin{proof}
	Recall that, at each fixed time $t$, $h^2$ is viewed as a function of the variable $f_1^2$. 
	Since $h^2 = 0$ at $f_1^2 = 0$, we have, writing $\zeta, \eta$ for the integration (spacial) variables,
	
	\begin{align*}
	( h^2 )'(f_1^2)\, f_1^2 - h^2 &\;=\; \int_0^{f_1^2} \Big( ( h^2 )' ( f_1^2 ) - ( h^2 )' ( \zeta ) \Big)\, d \zeta \;=\; \int_0^{f_1^2} \Big(\int_\zeta^{f_1^2} ( h^2 )''(\eta ) d\eta \Big) d \zeta),\\
	 &\;=\;\int_0^{f_1^2}\zeta\, ( h^2 )'' ( \zeta )\, d \zeta \;\leq\; \frac{K (t)\, ( f_1^2 )^2}{2} \,.
	\end{align*}
	Here the second equality is due to applying Fubini's theorem for a double integral. The estimate is due to our assumption  $( h^2 )'' ( \cdot, t ) \leq K (t)$. Thus, we have the first desired inequality. \\
	 
	Similarly, since $( h^2 )' = 2$ at $f_1^2 = 0$,	we have
	
	\[
	h^2(f_1^2) - 2 f_1^2 \;=\; \int_0^{f_1^2} \!\! \int_0^{\eta} ( h^2 )'' ( \zeta )\, d \zeta \, d \eta \;\leq\; \frac{K (t)\, ( f_1^2 )^2}{2} \,.
	\]
	Dividing by $f_1^2$ gives the second desired inequality. Combining the above inequalities with \eqref{eq:fiber-curvature}: the four entries are bounded below by $- K (t) / 2$, $- K (t) / 8$, $- K (t) / 2$, $- K (t) / 8$ respectively. Finally, a bisectional curvature is a sum of two entries.
\end{proof}


So far only the boundary data of $h^2$ have been used to show that the bisectional curvature is controlled by $(h^2)''$. On our circle-bundle ansatz, $K(\nu, \xi^*) = -\frac{1}{2}(h^2)''$. Highly positive values of $(h^2)''$ just correspond to deeply negative sectional curvatures. Physically, if the curvature tries to drop toward $-\infty$ faster than the natural Type I rate, the resulting negative bisectional curvature triggers an immediate, intense local inflation of the metric. This metric expansion flattens the local geometry, acts as a non-linear dampening force, and prevents the formation of sharp coordinate creases. Mathematically, this self-regulating feedback is represented by the quadratic reaction term $-((h^2)'')^2$ in our reaction-diffusion system (Prop \ref{prop:h2pp-evolution}), which dominates and smooths out singular curvature spikes.




\begin{proposition}[the reduced flow equation for $h^2$]\label{prop:reduced-flow}
	On $\{ 0 \leq f_1^2 \leq L (t) ,\; 0 \leq t < T \}$,
	
	\begin{equation}\label{eq:reduced-flow}
		\partial_t ( h^2 ) \big|_{f_1^2} \;=\; h^2\, ( h^2 )'' \;+\; 2 ( m + 1 )\, ( h^2 )' \;-\; \big( ( h^2 )' \big)^2 \;-\; \frac{m\, h^4}{f_1^4} \;-\; \sum_{k \geq 2} \frac{n_k\, q_k^2\, h^4}{\big( a_k (t) + q_k\, f_1^2 \big)^2} \,.
	\end{equation}
	with the boundary data $h^2 = 0$, $( h^2 )' = \pm 2$ at $f_1^2 = 0$, $f_1^2 = L (t)$.
	 
\end{proposition}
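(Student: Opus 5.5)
The plan is to derive \eqref{eq:reduced-flow} from the flow system \eqref{eq:flow-system} by a change of variables, rather than recomputing curvature. I will work in the arclength $s$ at fixed $t$, and write $x := f_1^2$ for the new spatial coordinate. The Kähler condition $\partial_s x = h$ (with $q_1=1$) then gives $\partial_s = h\,\partial_x$. Derivatives transform as in the proof of Proposition \ref{prop:fiber-curvature}:
\[
h_s=\tfrac12 (h^2)', \qquad h_{ss}=\tfrac12 h(h^2)'', \qquad (f_1)_s=\tfrac{h}{2f_1}.
\]
Equation \eqref{eq:Fk-affine} gives $f_k^2=a_k(t)+q_k x$, where $a_k(t):=f_k^2|_{Q_0}(t)$ is the affine function of Proposition \ref{prop:affine-laws}(b). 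Hence $(f_k)_s/f_k = q_k h/(2(a_k+q_kx))$.

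The key subtlety is the time derivative. The system \eqref{eq:flow-system} computes $\partial_t$ at fixed $u$, whereas the statement wants $\partial_t$ at fixed $x$. By the chain rule,
\[
\partial_t(h^2)\big|_x=\partial_t(h^2)\big|_u-\partial_x(h^2)\,\partial_t x\big|_u .
\]
For the correction term I use Proposition \ref{prop:heat-equation-fi}, namely $\partial_t x|_u=\Delta x-2p_1$ with $p_1=m+1$. Corollary \ref{cor:kahler-laplacian} gives
\[
\Delta x = 2h_s+h^2\sum_j n_jq_j/f_j^2 = (h^2)'+\frac{mh^2}{x}+\sum_{k\ge2}\frac{n_kq_kh^2}{a_k+q_kx}.
\]

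Next I compute $\partial_t(h^2)|_u=2h\,\partial_th$ from \eqref{eq:flow-system}. The identity $D^2 h=h_{ss}$ is valid because the operator $D$ is exactly $\partial_s$. Substituting, the first term is $2hh_{ss}=h^2(h^2)''$. The drift term becomes
\[
4hh_s\sum n_j (f_j)_s/f_j = (h^2)'\Big(\frac{mh^2}{x}+\sum_{k\ge2}\frac{n_kq_kh^2}{a_k+q_kx}\Big).
\]
The reaction term becomes $-\sum_j n_jq_j^2h^4/f_j^4$, whose $j=1$ part is $-mh^4/x^2$.

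Subtracting the correction $(h^2)'\big((h^2)'+\frac{mh^2}{x}+\sum\cdots-2(m+1)\big)$, the drift terms cancel exactly. What remains is
\[
h^2(h^2)''+2(m+1)(h^2)'-((h^2)')^2-\frac{mh^4}{x^2}-\sum_{k\ge2}\frac{n_kq_k^2h^4}{(a_k+q_kx)^2},
\]
which is \eqref{eq:reduced-flow}. This cancellation is the main point to check carefully.

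For the boundary data, $h=0$ at both strata, so $h^2=0$ there. The derivative satisfies $(h^2)'=2h_s=2H'$, which equals $+2$ at the nut end by Proposition \ref{prop:nut-closing}. At the bolt end the orientation reverses: $H'(\ell)=-1$ by Proposition \ref{prop:bolt-closing} applied in the reflected variable $\ell-s$, so $(h^2)'=-2$ there.

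I expect the main obstacle to be justifying the change of variables up to the endpoints. The map $x$ is a diffeomorphism only in the interior, and the terms $h^2/x$ are singular-looking at $Q_0$. I will handle this with the smoothness in $\rho=s^2$ from Proposition \ref{prop:nut-closing}: near $Q_0$, $h^2/x=\varphi^2/\psi_1$ is smooth, and near $Q_\ell$ the function $x$ is even in $\ell-s$. This lets $h^2$ extend as a smooth function of $x$ on $[0,L(t)]$, so the identity holds on the closed region by continuity.
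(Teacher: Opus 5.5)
Your argument is correct, and the change-of-gauge step is the same as in the paper. Both use $\partial_t(h^2)|_{f_1^2} = \partial_t(h^2)|_{u} - (h^2)'\,\partial_t(f_1^2)|_{u}$ with $\partial_t(f_1^2) = \Delta(f_1^2) - 2(m+1)$.

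Where you differ is the source of the fixed-point evolution of $h^2$.
\begin{itemize}
\item You read it off the $h$-equation in \eqref{eq:flow-system}. You then check by hand that the drift $(h^2)'\sum_j n_j q_j h^2/f_j^2$ cancels against the correction term.
\item The paper writes $h^2 = |\nabla f_1^2|^2$ and applies the Bochner-type identity $(\partial_t - \Delta)|\nabla f_1^2|^2 = -2|\nabla^2 f_1^2|^2$, already used for the Li--Yau bound. It computes the diagonal Hessian of $f_1^2$ and uses $\Delta(h^2) = h^2(h^2)'' + (h^2)'\,\Delta(f_1^2)$.
\end{itemize}
In the paper's route the drift cancellation is structural: the term $(h^2)'\Delta(f_1^2)$ produced by the chain rule for $\Delta$ is exactly what the gauge correction removes. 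The quadratic terms $-((h^2)')^2 - \sum_i n_i q_i^2 h^4/f_i^4$ then arrive packaged as $-2|\nabla^2 f_1^2|^2$, so only tensorial identities are needed. Your route is more elementary and equally short, but it depends on the explicit coefficient form of the flow system.

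Your treatment of the boundary data, and of extending $h^2$ smoothly as a function of $f_1^2$ up to both strata, is a useful addition that the paper leaves implicit. At $Q_\ell$, evenness in $\ell - s$ is not quite enough by itself. You also need the nondegeneracy $f_1^2 = L(t) - \tfrac12(\ell-s)^2 + O((\ell-s)^4)$, which comes from $\partial_s f_1^2 = h$ and $h_s(\ell) = -1$. This is what lets you invert $f_1^2$ as a smooth function of $(\ell-s)^2$, exactly as $\psi_1(0) = \tfrac12$ does at $Q_0$.
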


\begin{proof}
	By \eqref{eq:heat-fi} with $p_1 = m + 1$, the spacetime function $f_1^2$ solves 
	\[\partial_t ( f_1^2 ) = \Delta ( f_1^2 ) - 2 ( m + 1 ).\] 
	As in the proof of Proposition \ref{prop:li-yau}, we have \[( \partial_t - \Delta ) \big| \nabla ( f_1^2 ) \big|^2 = - 2 \big| \nabla^2 ( f_1^2 ) \big|^2.\]
One observes that the Hessian of $f_1^2$ is diagonal: as $\nabla ( f_1^2 ) = h \nu$, we have
	 \begin{align*}
	 	\nabla^2 ( f_1^2 ) ( \nu, \nu ) &= h_s = ( h^2 )' / 2,\\
	 	\nabla^2 ( f_1^2 ) ( \xi^*, \xi^* ) &= h \cdot \mathrm{II} ( \xi^*, \xi^* ) = ( h^2 )' / 2.
	 \end{align*} 
Furthermore, $\nabla^2 ( f_1^2 ) |_{\mathcal{H}_i} = h \cdot \mathrm{II} |_{\mathcal{H}_i} = \tfrac{q_i h^2}{2 f_i^2}\, g |_{\mathcal{H}_i}$ by the ansatz structure. Hence
	
	\[
	\Delta ( f_1^2 ) \;=\; ( h^2 )' + \sum_i \frac{n_i q_i\, h^2}{f_i^2} \,, \qquad \big| \nabla^2 ( f_1^2 ) \big|^2 \;=\; \frac{\big( ( h^2 )' \big)^2}{2} + \sum_i \frac{n_i q_i^2\, h^4}{2 f_i^4} \,.
	\]
	
	Now we change gauge: regarding $h^2$ as a function of the independent pair $( f_1^2, t )$, writing $\partial_t \big|_{f_1^2}$ for the time-derivative at fixed $f_1^2$ and $\partial_t$ (undecorated) for the ordinary spacetime derivative of $\big| \nabla ( f_1^2 ) \big|^2 = h^2$. Since $h$ is a radial function, Proposition \ref{prop:radial-calculus} and the chain rule give	
	\begin{align*}
		\partial_t ( h^2 ) &= \partial_t ( h^2 ) \big|_{f_1^2} + ( h^2 )'\, \partial_t ( f_1^2 ),\\
		\Delta ( h^2 ) &= h^2\, ( h^2 )'' + ( h^2 )'\, \Delta ( f_1^2 ).
	\end{align*}
	Combining equations above yield 
	\begin{equation*}
		\begin{aligned}
			&\partial_t ( h^2 ) \big|_{f_1^2} \;=\; \Big( \Delta ( h^2 ) - 2 \big| \nabla^2 ( f_1^2 ) \big|^2 \Big) - ( h^2 )' \big( \Delta ( f_1^2 ) - 2 ( m + 1 ) \big) \\
			&\qquad \;=\; h^2\, ( h^2 )'' + 2 ( m + 1 )\, ( h^2 )' - \big( ( h^2 )' \big)^2 - \sum_i \frac{n_i q_i^2\, h^4}{f_i^4} \,.
		\end{aligned}
	\end{equation*}
	Separating the block $i = 1$ ($n_1 = m$, $q_1 = 1$) from $k \geq 2$ and substituting $f_k^2 = a_k (t) + q_k\, f_1^2$ lead to \eqref{eq:reduced-flow}.

\end{proof}
\begin{proposition}[the flow equation for $(h^2)''$]\label{prop:h2pp-evolution}
	On $\{ 0 \leq f_1^2 \leq L (t) ,\; 0 \leq t < T \}$,
	\begin{align}\label{eq:h2pp-evolution}
		\partial_t (h^2)'' &=\Delta (h^2)'' - ((h^2)'')^2 + R,\\
		R :&= -m \partial^2_{f_1^2} [ h^4/f_1^4 ] - \sum_{k \ge 2} n_k q_k^2 \partial^2_{f_1^2} [ h^4/f_k^4 ].\nonumber
	\end{align}
\end{proposition}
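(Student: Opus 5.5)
The plan is to differentiate the reduced equation \eqref{eq:reduced-flow} twice in the spatial variable $x := f_1^2$ at fixed $t$, and then convert back from the gauge $\partial_t|_{f_1^2}$ to the ordinary spacetime derivative. That conversion is exactly what should turn the one-dimensional operator into the Laplacian $\Delta$ of $g(t)$. Throughout, write $u := h^2$, $w := u''$ and ${}' = \partial_x$. Put
\[
G := \frac{m u^2}{x^2} + \sum_{k\ge 2} \frac{n_k q_k^2 u^2}{f_k^4},
\]
where $f_k^2 = a_k(t) + q_k x$. The key structural point is that $x$ and $t$ are independent variables. The coefficients $a_k(t)$ depend only on $t$, so $\partial_x$ commutes with $\partial_t|_{x}$. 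Hence $\partial_x^2 G$ is exactly $-R$ as defined in the statement.

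\textbf{Step 1: differentiate in $x$.} In the chosen variables, \eqref{eq:reduced-flow} reads $\partial_t u|_x = u u'' + 2(m+1) u' - (u')^2 - G$. Differentiating once gives
\[
\partial_t u'|_x = u' u'' + u u''' + 2(m+1)u'' - 2u'u'' - G' = u u''' + 2(m+1) u'' - u' u'' - G'.
\]
Differentiating again, the terms $\pm u' u'''$ cancel and we obtain
\[
\partial_t w|_x = u\, w'' + 2(m+1)\, w' - w^2 - G''.
\]
The $-w^2$ reaction term arises precisely from the cancellation between the $u u''$ and $-(u')^2$ terms.

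\textbf{Step 2: change of gauge.} Regard $w$ as a radial spacetime function. By the chain rule, as in the proof of Proposition \ref{prop:reduced-flow},
\[
\partial_t w = \partial_t w|_x + w'\,\partial_t(f_1^2) = \partial_t w|_x + w'\big(\Delta(f_1^2) - 2(m+1)\big),
\]
using the heat equation \eqref{eq:heat-fi} with $p_1 = m+1$. By the radial calculus (Proposition \ref{prop:radial-calculus}) together with $|\nabla f_1^2|^2 = h^2 = u$, we also have
\[
\Delta w = u\, w'' + w'\, \Delta(f_1^2).
\]
Substituting, the $2(m+1)w'$ terms cancel and we get $\partial_t w = u w'' + w' \Delta(f_1^2) - w^2 - G'' = \Delta w - w^2 + R$, which is \eqref{eq:h2pp-evolution}.

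\textbf{Main obstacle: regularity at the endpoints.} The only delicate point is justifying the identity up to the strata $x = 0$ and $x = L(t)$, where the term $m u^2/x^2$ looks singular. By the nut closing conditions (Proposition \ref{prop:nut-closing}), $u = h^2$ is a smooth function of $x$ near $0$ with $u = 2x + O(x^2)$. Hence $u/x$ is smooth and $m u^2/x^2$ is a smooth function of $x$, as is its second derivative. I would check this by writing $H = s\varphi(s^2)$ and $F_1^2 = s^2 \psi_1(s^2)$, so that $x$ is a smooth function of $s^2$ with nonzero derivative at the origin, and therefore $u$ is smooth in $x$. Near $x = L(t)$, the bolt conditions (Proposition \ref{prop:bolt-closing}) make $u$ smooth in $x$, and the denominators $f_k^2 \ge c(T-t) > 0$ by \eqref{eq:linear-lower-bound}. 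So every term is smooth on the closed interval. The identity, derived on the interior, then extends by continuity, and $w$ is a smooth function on $\widehat{M}$ to which the Laplacian applies.
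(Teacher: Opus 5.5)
Your proposal is correct and follows the paper's proof. You differentiate \eqref{eq:reduced-flow} twice in $f_1^2$, where the third-order terms cancel and leave the $-((h^2)'')^2$ reaction. You then convert $\partial_t|_{f_1^2}$ to the spacetime derivative via $\partial_t(f_1^2)=\Delta(f_1^2)-2(m+1)$ and $\Delta w = h^2 w'' + w'\,\Delta(f_1^2)$, so the $2(m+1)w'$ terms cancel. Your endpoint-regularity check also usefully expands the paper's one-line assertion that $(h^2)''$ is smooth on $\widehat{M}$.
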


\begin{proof}
First, due to the closing conditions and the choice of $f_1^2$ as the spacial coordinate, we observe that $( h^2 )''$ is smooth on the closed manifold $\widehat{M}$. Differentiating \eqref{eq:reduced-flow} twice with respect to $f_1^2$ (independent of $t$ in this gauge), the third-order terms cancel because
\begin{align*}
		\partial_{f_1^2}^2 \big( h^2 ( h^2 )'' \big) &\;=\; h^2 ( h^2 )'''' + 2 ( h^2 )' ( h^2 )''' + \big( ( h^2 )'' \big)^2 \\
		\partial_{f_1^2}^2 \big( ( h^2 )' \big)^2 &\;=\; 2 ( h^2 )' ( h^2 )''' + 2 \big( ( h^2 )'' \big)^2 \,.
\end{align*}
Thus, 
\begin{equation*}
	\begin{gathered}
		\partial_t \big( ( h^2 )'' \big) \Big|_{f_1^2} \;=\; h^2\, ( h^2 )'''' + 2 ( m + 1 )\, ( h^2 )''' - \big( ( h^2 )'' \big)^2 + R \,, \\
		R \;:=\; - m\, \partial_{f_1^2}^2 \Big[ \frac{h^4}{f_1^4} \Big] - \sum_{k \geq 2} n_k q_k^2\; \partial_{f_1^2}^2 \Big[ \frac{h^4}{f_k^4} \Big] \,.
	\end{gathered}
\end{equation*}

To convert this partial-gauge derivative back into a standard spacetime derivative on $\widehat{M}$, we use the fact that the spatial coordinate evolves by $\partial_t (f_1^2) = \Delta(f_1^2) - 2(m+1)$. Applying the chain rule, the spacetime evolution of the second derivative becomes:
\begin{equation}
	\partial_t (h^2)'' =\Delta (h^2)'' - ((h^2)'')^2 + R.
\end{equation}




\end{proof}

In the above calculation, the horizontal and vertical stratum drifts are completely absorbed into the geometric Laplacian $\Delta$, leaving a clean, drift-free reaction-diffusion equation. Next, we give an estimate on the diffusion term.  

\begin{lemma} \label{lem:residual-source-bound}
	We have, where $(h^2)'' \ge 0$,
	\[R_{res} := -\sum_{k \ge 2} n_k q_k^2 \partial^2_{f_1^2} [ h^4/f_k^4 ] \le \frac{\beta}{(T-t)^2}, \quad \text{where} \quad \beta = \frac{2C_0^2 n'}{c^2}.\]
	Here $C_0$ is the Li--Yau constant of \eqref{eq:li-yau}, $c$ that of \eqref{eq:linear-lower-bound}, and $n' = \sum_{k \geq 2} n_k$.
\end{lemma}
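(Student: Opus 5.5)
The plan is a direct computation in the gauge of Proposition \ref{prop:reduced-flow}, with $x := f_1^2$ as the independent spatial variable. We abbreviate $u := h^2$ and, for each $k \geq 2$, $w := f_k^2$. By \eqref{eq:Fk-affine} we have $w = a_k(t) + q_k x$, so $w' = q_k$ and $w'' = 0$. Writing $h^4/f_k^4 = v^2$ with $v := u/w$, the quantity to bound is
\[
-\partial_x^2 \big[ v^2 \big] \;=\; -2 (v')^2 \;-\; 2 v v'' ,
\]
where
\[
v' = \frac{u'}{w} - \frac{q_k u}{w^2}, \qquad v'' = \frac{u''}{w} - \frac{2 q_k u'}{w^2} + \frac{2 q_k^2 u}{w^3}.
\]

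\textbf{Step 1 (expand and drop the signed term).} Expanding both pieces gives
\[
-\partial_x^2\big[v^2\big] \;=\; -\frac{2 (u')^2}{w^2} + \frac{8 q_k u u'}{w^3} - \frac{6 q_k^2 u^2}{w^4} - \frac{2 u u''}{w^2}.
\]
On the set where $(h^2)'' = u'' \geq 0$, the last term is nonpositive, since $u \geq 0$. This is the only place where the hypothesis $(h^2)'' \geq 0$ enters, and we discard that term.

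\textbf{Step 2 (absorb the cross term).} By Young's inequality,
\[
\frac{8 q_k u u'}{w^3} \;\leq\; \frac{2 (u')^2}{w^2} + \frac{8 q_k^2 u^2}{w^4}.
\]
This cancels the $(u')^2$ term exactly and leaves
\[
-\partial_x^2\big[h^4/f_k^4\big] \;\leq\; \frac{2 q_k^2 h^4}{f_k^8}.
\]
This absorption is the only delicate point, because one must make sure no $u'$-dependence survives: $u' = (h^2)'$ is not yet controlled at this stage. The constant $8$ in front of $q_k^2 u^2/w^4$ is harmless, since only $u^2$ remains.

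\textbf{Step 3 (Li--Yau and the linear lower bound).} By \eqref{eq:radial-gradient} and the K\"ahler condition \eqref{eq:kahler-condition}, $|\nabla f_k^2|^2 = q_k^2 h^2$. The Li--Yau estimate \eqref{eq:li-yau} then gives $q_k^2 h^2 \leq C_0 f_k^2$. Multiplying Step 2 by $n_k q_k^2$ yields
\[
-n_k q_k^2\, \partial_x^2\big[h^4/f_k^4\big] \;\leq\; \frac{2 n_k (q_k^2 h^2)^2}{f_k^8} \;\leq\; \frac{2 n_k C_0^2}{f_k^4}.
\]
Applying \eqref{eq:linear-lower-bound}, $f_k^2 \geq c(T-t)$, bounds the right-hand side by $2 n_k C_0^2 / \big(c^2 (T-t)^2\big)$. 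Summing over $k \geq 2$ gives $R_{res} \leq 2 C_0^2 n' / \big(c^2 (T-t)^2\big) = \beta/(T-t)^2$, as claimed.

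Finally, all quantities involved are smooth on $\widehat{M}$ up to both strata, since $f_k^2 \geq c(T-t) > 0$ for $k \geq 2$. Hence the pointwise bound holds everywhere on $\{ (h^2)'' \geq 0 \}$, including at $Q_0$ and $Q_\ell$.
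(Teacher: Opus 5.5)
Your proof is correct and follows the paper's argument. The paper completes the square $-2\big((h^2)' - 2q_k h^2/f_k^2\big)^2$, which is exactly your Young's-inequality absorption; it then drops the $-2h^2(h^2)''$ term and applies the Li--Yau bound $q_k^2h^2 \le C_0 f_k^2$ and the linear lower bound $f_k^2 \ge c(T-t)$ in the same way, arriving at the same constant $\beta = 2C_0^2 n'/c^2$.
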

\begin{proof}
Evaluating the second derivative and completing the square yields:
\begin{align*}
	-n_k q_k^2 \frac{\partial^2}{\partial (f_1^2)^2} \left[ \frac{h^4}{f_k^4} \right] &= \frac{n_k q_k^2}{f_k^4} \left[ -2 \left( (h^2)' - \frac{2q_k h^2}{f_k^2} \right)^2 + \frac{2q_k^2 h^4}{f_k^4} - 2h^2(h^2)'' \right],\\
	&\leq \frac{n_k q_k^2}{f_k^4} \left[ \frac{2q_k^2 h^4}{f_k^4} - 2h^2(h^2)'' \right].
\end{align*}
At any point where $(h^2)'' \ge 0$, we have:
\begin{equation}
	-n_k q_k^2 \frac{\partial^2}{\partial (f_1^2)^2} \left[ \frac{h^4}{f_k^4} \right] \le \frac{2 n_k q_k^4 h^4}{f_k^8}.
\end{equation}
Applying the scale-invariant Li--Yau gradient bound $q_k^2 h^2 \le C_0 f_k^2$ and the linear lower bound $f_k^2 \ge c(T-t)$ from  \eqref{eq:linear-lower-bound} for the surviving blocks, we obtain:
\begin{equation}
	\frac{2 n_k q_k^4 h^4}{f_k^8} \le \frac{2 n_k C_0^2}{f_k^4} \le \frac{2 n_k C_0^2}{c^2 (T-t)^2}.
\end{equation}
Summing over all non-collapsing factors $k \ge 2$, we find that the residual source is bounded by:
\begin{equation}
	R_{res} \le \frac{\beta}{(T-t)^2}, \quad \text{where} \quad \beta = \frac{2C_0^2 n'}{c^2}.
\end{equation}
	
\end{proof}
This bound shows that the background geometry dilutes smoothly and behaves at most at the Type I scale, posing no threat of an uninhibited curvature blow-up.

\begin{theorem}[the negative part of the fiber curvature is Type I]\label{thm:h2pp-upper-bound}
	Let $C_0$ be the Li--Yau constant of \eqref{eq:li-yau}, $c$ that of \eqref{eq:linear-lower-bound}, $n' = \sum_{k \geq 2} n_k$, and set
	\[
	\gamma \;:=\; \max \Big\{ \tfrac{1}{2} \Big( 1 + \sqrt{ 1 + 8\, C_0^2\, n' / c^2 } \Big) \,,\;\; T \cdot \max_{\widehat{M}} ( h^2 )'' ( \cdot, 0 ) \Big\} \,.
	\]
	Then
	\begin{equation}\label{eq:h2pp-upper-bound}
		( h^2 )'' \;\leq\; \frac{\gamma}{T - t} \qquad \text{on } \widehat{M} \times [ 0, T ) \,.
	\end{equation}
	
\end{theorem}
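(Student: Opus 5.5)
We argue by the maximum principle applied to $w := (h^2)''$, which by Proposition \ref{prop:h2pp-evolution} satisfies
\[
\partial_t w \;=\; \Delta w - w^2 + R, \qquad R = R_1 + R_{res}, \quad R_1 := -m\,\partial^2_{f_1^2}\big[h^4/f_1^4\big].
\]
The comparison function is $\phi(t) := \gamma/(T-t)$. It satisfies
\[
\phi' \;=\; \frac{\gamma}{(T-t)^2}, \qquad -\phi^2 + \frac{\beta}{(T-t)^2} \;=\; \frac{\beta - \gamma^2}{(T-t)^2}.
\]
Hence $\phi$ is a strict supersolution of $\partial_t \phi \geq -\phi^2 + \beta/(T-t)^2$ exactly when $\gamma^2 - \gamma - \beta \geq 0$, i.e. when $\gamma \geq \tfrac12\big(1 + \sqrt{1+4\beta}\big)$. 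With $\beta = 2C_0^2 n'/c^2$ from Lemma \ref{lem:residual-source-bound}, this is precisely the first entry in the definition of $\gamma$. The second entry, $\gamma \geq T\max w(\cdot,0)$, gives $w \leq \phi$ at $t=0$.

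We then run the standard first-touching argument. For $\varepsilon>0$ we set $\phi_\varepsilon := (\gamma+\varepsilon)/(T-t)$, so that $w < \phi_\varepsilon$ at $t=0$. Suppose $t_0>0$ is the first time at which $w = \phi_\varepsilon$ at some point $x_0$. Since $w$ is smooth on the closed manifold $\widehat{M}$ (noted in the proof of Proposition \ref{prop:h2pp-evolution}), compactness makes $t_0$ well defined. At $(x_0,t_0)$ we have $\Delta w \leq 0$, $\partial_t w \geq \phi_\varepsilon'$ and $w>0$. Because $w>0$ there, Lemma \ref{lem:residual-source-bound} applies and gives $R_{res} \leq \beta/(T-t_0)^2$. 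If we also know $R_1 \leq 0$ wherever $w \geq 0$, then
\[
\phi_\varepsilon' \;\leq\; \partial_t w \;\leq\; -\phi_\varepsilon^2 + \frac{\beta}{(T-t_0)^2},
\]
and this contradicts the strict supersolution inequality for $\gamma+\varepsilon$. Letting $\varepsilon \to 0$ then gives \eqref{eq:h2pp-upper-bound}.

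The main obstacle is controlling the collapsing-block term $R_1$, in particular near the nut $f_1^2=0$, where $h^2/f_1^2 \to 2$ and both numerator and denominator degenerate. Write $x := f_1^2$ and $u := h^2/x$, so that $h^4/x^2 = u^2$. Then
\[
R_1 \;=\; -m\,(u^2)'' \;=\; -2m\big(u'^2 + u\,u''\big), \qquad u'' \;=\; \frac{w}{x} - \frac{2u'}{x}.
\]
Substituting, we obtain
\[
R_1 \;=\; -2m u'^2 - \frac{2mu\,w}{x} + \frac{4m\,u\,u'}{x}.
\]
The first two terms are $\leq 0$ where $w \geq 0$, but the cross term $4muu'/x$ has no sign. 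My first attempt would mimic the completing-the-square computation in Lemma \ref{lem:residual-source-bound}. Concretely, I would expand $-m\partial^2[h^4/x^2]$ as
\[
\frac{m}{x^2}\Big[-2\big((h^2)' - \tfrac{2h^2}{x}\big)^2 + \tfrac{2h^4}{x^2} - 2h^2 w\Big],
\]
using the same algebra with $q=1$. The leftover positive piece $2mh^4/x^4 = 2mu^2/x^2$ is not Type I by itself, since $x$ can be small. To absorb it I would keep the good term $-2mh^2w/x^2$. Near the nut, $u \approx 2$ and $w \geq 0$, so I would first try to show that at a touching point with $w = \phi_\varepsilon$ large, $u^2/x^2 \lesssim w\,u/x$ holds up to a Type I error. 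Lemma \ref{lem:h2pp-controls-bisec} gives $u - 2 \leq wx/2$, which could supply this comparison. If this fails, the fallback is to add a nut correction to the barrier, of the form $w \leq \gamma/(T-t) + C\,\chi(x)$ with an explicit function $\chi$ absorbing the $u^2/x^2$ term, since $u$ is bounded by Lemma \ref{lem:h2pp-controls-bisec} and the Li--Yau bound.
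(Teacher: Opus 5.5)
Your maximum-principle setup is sound and matches the paper's Riccati comparison. The condition $\gamma^2-\gamma-\beta\ge 0$ reproduces the first entry of $\gamma$, and at the touching point $w>0$, so Lemma \ref{lem:residual-source-bound} applies. The gap is the control of $R_1$, which is the real content of the theorem, and the route you propose does not work.

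The pointwise claim ``$R_1\le 0$ wherever $w\ge 0$'' is false. Regroup your formula as $R_1=-2mu'^2+\frac{2mu}{x}\,(2u'-w)$ and use $u'=x^{-2}\int_0^x\zeta\,w(\zeta)\,d\zeta$. At a point $x_0$ with $w(x_0)=0$ and $w>0$ on $[0,x_0)$, this gives $R_1=2mu'\,(2u/x_0-u')>0$ once $x_0$ is small.

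Your repairs fail for the same reason:
\begin{itemize}
\item Completing the square discards $-2m\big((h^2)'-2h^2/x\big)^2/x^2\approx -8m/x^2$. That is exactly the term that cancels the leftover $+2mu^2/x^2\approx 8m/x^2$ at the nut.
\item Absorbing the leftover via $u^2/x^2\lesssim wu/x$ would require $w\gtrsim 2/x$, which is impossible where $w$ is bounded.
\item A corrector $\chi(x)$ absorbing a $1/x^2$ term must blow up at $Q_0$, which destroys the uniform bound you want.
\end{itemize}

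The missing observation is that you never need a pointwise sign. At your first touching point, $w(\cdot,t_0)\le\phi_\varepsilon(t_0)=w(x_0,t_0)$, so $x_0$ is a spatial maximum of $w(\cdot,t_0)$. The three locations are then handled as follows.

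\emph{Interior maximum.} Since $w(\zeta)\le w(x_0)$ and $\zeta\ge 0$, the integral identity gives $u'(x_0)\le x_0^{-2}\,w(x_0)\int_0^{x_0}\zeta\,d\zeta=w(x_0)/2$. Hence $2u'-w\le 0$, and so $R_1\le 0$.

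\emph{Bolt.} There $u=0$, so $R_1=-2mu'^2\le 0$.

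\emph{Nut.} Here even a maximum does not give $R_1\le 0$. Expanding $h^2=2x+\tfrac{w(0)}{2}x^2+\tfrac{w'(0^+)}{6}x^3+\dots$ yields $R_1|_{Q_0}=-\tfrac{4m}{3}w'(0^+)-\tfrac{m}{2}w(0)^2$, with $w'(0^+)\le 0$. The positive part must be paid for by the Laplacian rather than bounded separately. Use $\Delta w|_{Q_0}=w'(0^+)\,\Delta(f_1^2)|_{Q_0}=2(m+1)\,w'(0^+)$, which gives $\Delta w+R_1=\big(2+\tfrac{2m}{3}\big)w'(0^+)-\tfrac m2 w(0)^2\le 0$.

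With $\Delta w+R_1\le 0$ at the touching point in all three cases, your first-touching argument closes exactly as you wrote it.
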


\begin{proof}
	Our goal is to show that, at a spacial maximum of $(h^2)''$, for $R_1 := -m \partial^2_{f_1^2}[ h^4/f_1^4 ]$
	\[\Delta(h^2)''+R_1\leq 0. \]
	By the maximum principle, immediately, at such a point, $\Delta(h^2)''\leq 0$. Now we investigate the term $R_1$. By direct calculation,
	\[
	R_1 \;=\; \frac{2 m}{f_1^2} \cdot \frac{h^2}{f_1^2} \left( 2 \left( \frac{h^2}{f_1^2} \right)' - ( h^2 )'' \right) \;-\; 2 m \left( \left( \frac{h^2}{f_1^2} \right)' \right)^2 \,.
	\]

	Near the nut stratum $Q_0$ where $f_1 \to 0$, this term contains a potential $1/f_1^8$ coordinate singularity. To bypass this, we work directly with the ratio $h^2/f_1^2$ and the integral identity:
	\begin{equation}
		\left( \frac{h^2}{f_1^2} \right)' = \frac{1}{f_1^4} \int_0^{f_1^2} \zeta (h^2)''(\zeta) \, d\zeta,
	\end{equation}
 If the spatial maximum of $(h^2)''$ happens at an interior point  $\zeta_0$, the integrand is bounded above by the maximum non-negative value $(h^2)''(\zeta_0)$: 
 
 $$\left( \frac{h^2}{f_1^2} \right)'(\zeta_0) \le \frac{1}{\zeta_0^4} (h^2)''(\zeta_0) \int_0^{\zeta_0} \zeta \, d\zeta = \frac{1}{\zeta_0^4} (h^2)''(\zeta_0) \frac{\zeta_0^2}{2} = \frac{1}{2\zeta_0^2} (h^2)''(\zeta_0)$$
 Since $\zeta_0^2 = f_1^4$, we multiply both sides by $2 f_1^2$ to get: $$2 \left( \frac{h^2}{f_1^2} \right)'(\zeta_0) \le (h^2)''(\zeta_0)$$
 Thus, $R_1\leq 0$. \\
 
If the spatial maximum of $(h^2)''$ is on the bolt $Q_\ell$, we have $h^2/f_1^2 = 0$ there. So $R_1 = - 2 m \big( ( h^2 / f_1^2 )' \big)^2 \leq 0$.\\

	At the nut stratum $Q_0$ (where $f_1^2 = 0$), the interior integral identity does not apply directly. Instead, we expand the Laplacian and the collapsing source term together. 
	As seen in the proof of Prop. \ref{prop:h2pp-evolution}, near $Q_0$ the function $( h^2 )''$ is a smooth function of $f_1^2$, so it admits the one-sided expansion:	
	\[
	( h^2 )'' \;=\; y (0) \;+\; \big( ( h^2 )'' \big)' ( 0^+ )\, f_1^2 \;+\; O \big( ( f_1^2 )^2 \big) \,, \qquad y (0) \;:=\; ( h^2 )'' \big|_{Q_0} \,.
	\]
	The value $y (0)$ is the spatial maximum $y (t)$ in the present case. Integrating twice from the boundary data $h^2 = 0$, $( h^2 )' = 2$ at $f_1^2 = 0$,
	\[
	h^2 \;=\; 2 f_1^2 \;+\; \frac{y (0)}{2}\, ( f_1^2 )^2 \;+\; \frac{1}{6}\, \big( ( h^2 )'' \big)' ( 0^+ )\, ( f_1^2 )^3 \;+\; O \big( ( f_1^2 )^4 \big) \,.
	\]
	Therefore, we have	
	\[
	\left( \frac{h^2}{f_1^2} \right)' \;=\; \frac{y (0)}{2} \;+\; \frac{1}{3}\, \big( ( h^2 )'' \big)' ( 0^+ )\, f_1^2 \;+\; O \big( ( f_1^2 )^2 \big) \,, \qquad \text{and} \quad \left( \frac{h^2}{f_1^2} \right)' (0) \;=\; \frac{y (0)}{2} \,.
	\]	
	Consequently,	
	\[
	2 \left( \frac{h^2}{f_1^2} \right)' - ( h^2 )'' \;=\; - \frac{1}{3}\, \big( ( h^2 )'' \big)' ( 0^+ )\, f_1^2 \;+\; O \big( ( f_1^2 )^2 \big) \,.
	\]
	That is, the bracket in the formula of $R_1$ thus vanishes to {first order} in $f_1^2$: exactly enough to absorb the singular prefactor $\tfrac{2 m}{f_1^2} \cdot \tfrac{h^2}{f_1^2} = \tfrac{4 m}{f_1^2} \big( 1 + O ( f_1^2 ) \big)$. Hence, $R_1$ extends continuously to $Q_0$ with value
	
	\[
	R_1 \big|_{Q_0} \;=\; - \frac{4 m}{3}\, \big( ( h^2 )'' \big)' ( 0^+ ) \;-\; \frac{m}{2}\, y (0)^2 \,.
	\]
	
	
	The rescue is the Laplacian, which at $Q_0$ is not merely non-positive but carries the quantitative stratum drift of Prop \ref{prop:h2pp-evolution}. That is, $\Delta \big( ( h^2 )'' \big) = h^2\, ( h^2 )'''' + ( h^2 )'''\, \Delta ( f_1^2 )$; the first term vanishes at $Q_0$ (where $h^2 = 0$) and $\Delta ( f_1^2 ) |_{Q_0} = 2 ( m + 1 )$, so
	
	\[
	\Delta \big( ( h^2 )'' \big) \Big|_{Q_0} \;=\; 2 ( m + 1 )\, \big( ( h^2 )'' \big)' ( 0^+ ) \,.
	\]
	Thus, at a spacial maximum of $(h^2)''$, for $R_1 := -m \partial^2_{f_1^2}[ h^4/f_1^4 ]$
	\[\Delta(h^2)''+R_1\leq 0. \]
	Let $y(t) := \max_{\widehat{M}} (h^2)''(\cdot, t) \ge 0$ then  $y (t)$ is locally Lipschitz. Applying the estimate above in conjunction with Lemma \ref{lem:residual-source-bound}
	yields the Riccati differential inequality, in the sense of Dini derivatives,
	\begin{equation*}
		\frac{dy}{dt} \le -y^2 + \frac{\beta}{(T-t)^2}.
	\end{equation*}
	By standard ODE comparison, since the initial metric satisfies the boundary condition $y(0) \le \gamma / T$, we conclude:
	\begin{equation}
		y(t) \le \frac{\gamma}{T - t}
	\end{equation}
	for all $t \in [0, T)$. This completes the proof.
\end{proof}

\begin{corollary}[Type II blow-up limits have nonnegative bisectional curvature]\label{cor:bisec-lower-bound} We have:
	
	{(i)} For every $t \in [ 0, T )$, every bisectional curvature of $J$-invariant planes tangent to $\mathcal{V}'$ satisfies
	
	\begin{equation}\label{eq:bisec-lower-bound}
		\mathrm{bisec}_{g (t)} \;\geq\; - \frac{\gamma}{T - t} \,.
	\end{equation}
	
	{(ii)} Along the Type II rescaling of Proposition \ref{prop:typeII-splitting} these curvatures are bounded below by $- \gamma / \big( K_j ( T - t_j ) - t \big) \to 0$, and the blow-up limit \eqref{eq:typeII-splitting} has $\mathrm{bisec}_{g_\infty (t)} \geq 0$ for every $t \in \mathbb{R}$. In particular, $( \Phi, g^\Phi (t) )$ is a complete eternal $\kappa$-noncollapsed \emph{nonflat} Kähler--Ricci flow with \emph{nonnegative bisectional curvature}, itself a limit of Calabi-symmetric fibers.
	
\end{corollary}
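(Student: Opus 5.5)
Part (i) follows by chaining two results already proved. Theorem \ref{thm:h2pp-upper-bound} gives the bound $(h^2)''(\cdot,t) \leq \gamma/(T-t)$ on all of $\widehat{M}$ for every $t\in[0,T)$. We may take $K(t) := \max\{0,\max_{\widehat{M}}(h^2)''(\cdot,t)\} \leq \gamma/(T-t)$ and feed it into Lemma \ref{lem:h2pp-controls-bisec}. The Lemma shows that every entry of \eqref{eq:fiber-curvature} is $\geq -K(t)/2$. Hence every bisectional curvature of $J$-invariant planes tangent to $\mathcal{V}'$, being a sum of two such entries by the Kähler identity in Proposition \ref{prop:fiber-curvature}, is $\geq -K(t) \geq -\gamma/(T-t)$. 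One point needs checking: the entries of \eqref{eq:fiber-curvature} cover the whole $\mathcal{V}'$-block, including general unit vectors and not only frame vectors. For this I would use $\mathrm{U}(m+1)$-type symmetry. The fiber metric at each point is invariant under the isotropy acting on $\mathbb{R}\nu\oplus\mathbb{R}\xi^*\oplus\mathcal{H}_1$ unitarily on $\mathcal{H}_1$ and on the complex line $\mathrm{span}\{\nu,\xi^*\}$. Therefore the bisectional curvature $\mathrm{Rm}(u,Ju,Jv,v)$ for general unit $u,v$ expands, via the frame, into nonnegative combinations of the listed sectional entries, plus cross terms that vanish by Proposition \ref{prop:riemann-ansatz}(iii). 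Bisectional curvature is a quadratic form in $|u|^2$-type weights, so the lower bound $-K(t)$ is preserved.

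For (ii), I would apply the scaling law: bisectional curvature scales like sectional curvature, so for $g_j(t)=K_j g(t_j+K_j^{-1}t)$ we get
\[
\mathrm{bisec}_{g_j(t)} \geq -\frac{K_j^{-1}\gamma}{T-t_j-K_j^{-1}t} = -\frac{\gamma}{K_j(T-t_j)-t},
\]
which tends to $0$ locally uniformly in $t$ because $K_j(T-t_j)\to\infty$ for a Type II sequence. This bound holds on $J$-invariant planes tangent to $\mathcal{V}'_j$. By Proposition \ref{prop:typeII-splitting}, the projections $\bar P_j$ converge smoothly to $P_\infty$ with image the tangent bundle of the factor $\Phi$. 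Since curvature tensors converge in $C^\infty_{loc}$, the inequality passes to the limit: $\mathrm{bisec}_{g^\Phi(t)}\geq 0$ for all $t\in\mathbb{R}$. The other factor $Y=\mathbb{C}^{n'}$ is flat, and mixed terms vanish in a product. Consequently the full limit $g_\infty(t)$ also has nonnegative bisectional curvature, because for a Kähler product the bisectional curvature of $(u_1+u_2, v_1+v_2)$ splits into the sum over factors.

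The remaining properties are inherited from Proposition \ref{prop:typeII-splitting}. $\Phi$ is complete and eternal as a de Rham factor of the complete eternal limit. It is nonflat since $|\mathrm{Rm}_{g^\Phi}|(x_\infty,0)=1$. It is a limit of Calabi-symmetric fibers by part (iv) of that Proposition. It is $\kappa$-noncollapsed because $\widetilde M_\infty=\Phi\times\mathbb{C}^{n'}$ is $\kappa$-noncollapsed: a ball of radius $r$ in $\Phi$ times a Euclidean ball has volume comparable to $r^{2n'}$ times the volume in $\Phi$, so noncollapsing transfers to $\Phi$ with a dimensional constant. I expect the main obstacle to be the first step: justifying that the frame entries of \eqref{eq:fiber-curvature} control \emph{all} $J$-invariant plane pairs in $\mathcal{V}'$. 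This is where I would carefully use the vanishing Codazzi/mixed components and the unitary symmetry of $\mathcal{H}_1\cong T\mathbb{CP}^m$.
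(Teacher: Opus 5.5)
Your proposal follows the paper's proof. Part (i) is Theorem~\ref{thm:h2pp-upper-bound} fed into Lemma~\ref{lem:h2pp-controls-bisec}. Part (ii) is the scaling identity $T-(t_j+K_j^{-1}t)=K_j^{-1}\big(K_j(T-t_j)-t\big)$ plus the smooth convergence of projections and curvature from Proposition~\ref{prop:typeII-splitting}. The remaining properties of $\Phi$ are inherited from that proposition, and your direct arguments for nonflatness and $\kappa$-noncollapsing are fine.

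The one thing to fix is your treatment of the extra step you flag. You are right that it is needed. The paper's ``sum of two entries'' only covers pairs of frame vectors, while Cao's theorem and Deng--Zhu need all pairs. However, the mechanism you describe is not correct.

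The components in Proposition~\ref{prop:riemann-ansatz}(iii) do \emph{not} vanish on $\mathcal{V}'$. For unit $X\in\mathcal{H}_1$ one gets $\mathrm{Rm}(X,JX,\xi^*,\nu)=\frac{h^2}{2f_1^4}-\frac{h_s}{f_1^2}=2K(\nu,X)$. This is exactly the bisectional curvature of the pair $(\nu,X)$.

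What actually happens is as follows. The fibers are totally geodesic, so the curvature on $\mathcal{V}'$ is that of the $\mathrm{U}(m+1)$-invariant fiber metric. Its isotropy $\mathrm{U}(m)$ acts on $\mathcal{H}_1$ and \emph{trivially} on $\mathrm{span}\{\nu,\xi^*\}$. The element $-\mathrm{Id}_{\mathcal{H}_1}$ kills the components with an odd number of $\mathcal{H}_1$-slots, and Schur's lemma leaves three constants. Take unit $u=u_0+u'$ and $v=v_0+v'$, with $u_0,v_0\in\mathbb{C}\cong\mathrm{span}\{\nu,\xi^*\}$ and $u',v'\in\mathbb{C}^m\cong\mathcal{H}_1$. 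Then
\[
\mathrm{Rm}(u,Ju,Jv,v)=a\,|u_0|^2|v_0|^2+b\,|u_0v'+v_0u'|^2+c\,\big(|u'|^2|v'|^2+|\langle u',v'\rangle|^2\big),
\]
with $a=K(\nu,\xi^*)$, $b=2K(\nu,X)$ and $c=2K(X,Y)$. The cross terms you expected to vanish are the $2\,\mathrm{Re}(u_0\bar v_0\langle v',u'\rangle)$ part of the $b$-square.

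The weights need not sum to at most one; for instance, the last weight equals $2$ when $u=v\in\mathcal{H}_1$. So ``quadratic form in $|u|^2$-type weights'' does not by itself preserve the constant. It works here because the proof of Lemma~\ref{lem:h2pp-controls-bisec} gives the finer bounds $a\ge -K/2$ and $b,c\ge -K/4$. Moreover,
\[
\tfrac12|u_0|^2|v_0|^2+\tfrac14|u_0v'+v_0u'|^2+\tfrac14\big(|u'|^2|v'|^2+|\langle u',v'\rangle|^2\big)\le\tfrac34 .
\]
Hence $\mathrm{bisec}\ge-\tfrac34K(t)$ for all $J$-invariant plane pairs in $\mathcal{V}'$. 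In any case, a bound of the form $-C\gamma/(T-t)$ with any fixed $C$ would suffice for (ii).
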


\begin{proof}
	\emph{(i)} is Lemma \ref{lem:h2pp-controls-bisec} with $K (t) = \gamma / ( T - t )$, permitted by \eqref{eq:h2pp-upper-bound}. For \emph{(ii)}, we observe that curvature scales by $K_j^{- 1}$, and $T - ( t_j + K_j^{- 1} t ) = K_j^{- 1} \big( K_j ( T - t_j ) - t \big)$, exactly as in \eqref{eq:A-residual-rescaled}. The distributions $\mathcal{V}'$ converge to $\mathcal{V}'_\infty$ by Proposition \ref{prop:typeII-splitting}(ii), so the bisectional curvatures of $g_\infty$ on $\mathcal{V}'_\infty$-planes --- which under the splitting \eqref{eq:typeII-splitting} are exactly those of $g^\Phi$ --- are limits of quantities bounded below by $- \gamma / \big( K_j ( T - t_j ) - t \big)$, hence nonnegative; on planes meeting the flat factor $Y = \mathbb{C}^{n'}$, the bisectional curvature of the product vanishes. Nonflatness, eternity and $\kappa$-noncollapsing are Proposition \ref{prop:typeII-splitting}(iii), and the fibers converge as in Proposition \ref{prop:typeII-splitting}(iv).
\end{proof}

\subsection{Exclusion of Type II singularities}\label{subsec:no-typeII}

We can now close the circle opened in \S\ref{subsec:A-residual-decay}. A Type II rescaling produces the eternal limit of Proposition \ref{prop:typeII-splitting} --- split as $\Phi \times \mathbb{C}^{n'}$, with all curvature carried by a limit of Calabi-symmetric fibers --- and Corollary \ref{cor:bisec-lower-bound} endows it with nonnegative bisectional curvature. It remains to pass, via Cao's theorem \cite{Cao97}, to a steady gradient Kähler--Ricci soliton --- complete, $\kappa$-noncollapsed, of nonnegative bisectional curvature --- and to invoke a rigidity theorem of Deng--Zhu \cite{DZ20}, which forces such a soliton to be flat. 

\begin{theorem}[the nut--bolt Kähler flow is Type I]\label{thm:nut-bolt-typeI}
	Let $\widehat{M}$ be a compact nut--bolt closing and let $g (t)$, $t \in [ 0, T )$, be a Ricci flow of ansatz metrics \eqref{eq:ansatz-metric} satisfying the Kähler condition \eqref{eq:kahler-condition}, with $T < \infty$ maximal. Then the singularity at $T$ is of Type I.
	
\end{theorem}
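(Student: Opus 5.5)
We argue by contradiction, assembling the results of this section. Suppose the singularity at $T$ is of Type II. Hamilton's point-picking gives $(x_j,t_j)$ with $t_j\to T$, $K_j=|\mathrm{Rm}|(x_j,t_j)\to\infty$ and $K_j(T-t_j)\to\infty$. Perelman's no-local-collapsing theorem, together with Hamilton's compactness theorem, then yields a complete eternal $\kappa$-noncollapsed limit $(M_\infty,g_\infty(t),x_\infty)$ with $|\mathrm{Rm}|\le 1$ and equality at $(x_\infty,0)$. By Proposition \ref{prop:typeII-splitting}, the universal cover splits holomorphically as $\Phi\times\mathbb{C}^{n'}$, where $\mathbb{C}^{n'}$ is flat and static and all curvature is carried by $\Phi$. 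By Corollary \ref{cor:bisec-lower-bound}, $(\Phi,g^\Phi(t))$ is a complete, eternal, nonflat Kähler--Ricci flow with nonnegative bisectional curvature and bounded curvature.

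The next step is to make $\Phi$ a soliton. For an eternal solution of bounded curvature and nonnegative bisectional curvature, the space-time maximum of the scalar curvature is attained at $(x_\infty,0)$ (up to the normalization), after checking that $\sup R$ is attained in space-time. Cao's theorem \cite{Cao97} then shows that $(\Phi,g^\Phi(t))$ is a complete steady gradient Kähler--Ricci soliton. To ensure the maximum is attained, I would pass to $R$-normalized limits. Since $R\le C|\mathrm{Rm}|\le C$ and the factor is nonflat, one can re-pick points maximizing $R$ up to $1+o(1)$ along the Type II sequence (Hamilton's Type II point-picking with $R$ in place of $|\mathrm{Rm}|$, which is comparable on Kähler manifolds with nonnegative bisectional curvature). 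Alternatively, one can invoke the Li--Yau--Hamilton inequality of Cao for nonnegative bisectional curvature to obtain $\partial_tR\ge0$ on the eternal flow. The limit soliton inherits $\kappa$-noncollapsing: it is non-collapsed on $\Phi\times\mathbb{C}^{n'}$, and since the flat factor has Euclidean volume growth, the factor $\Phi$ is itself non-collapsed, with adjusted $\kappa$.

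Finally, Deng--Zhu \cite{DZ18,DZ20} show that a complete $\kappa$-noncollapsed steady gradient Kähler--Ricci soliton with nonnegative bisectional curvature is flat. This contradicts $|\mathrm{Rm}_{g^\Phi}|(x_\infty,0)=1$. Hence the singularity is Type I.

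I expect the main obstacle to be the passage to Cao's theorem. One must guarantee that the curvature maximum is attained in space-time, and one must transfer $\kappa$-noncollapsing from the product to the factor $\Phi$ and to the universal cover (which is needed because de Rham applies there). For attainment, I would first try choosing the blow-up points so that $R(x_j,t_j)$ is almost maximal over $\widehat{M}\times[0,t_j]$ in the Type II sense, rather than $|\mathrm{Rm}|$. Noncollapsing on the universal cover is fine because lifting to the cover can only increase volumes of balls.
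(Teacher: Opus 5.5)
Your overall route is the paper's: a Type II blow-up, the splitting of Proposition \ref{prop:typeII-splitting}, nonnegative bisectional curvature from Corollary \ref{cor:bisec-lower-bound}, then Cao \cite{Cao97} and Deng--Zhu \cite{DZ20}. Your treatment of $\kappa$-noncollapsing on the universal cover and on the factor $\Phi$ is fine. The gap is at the step you flagged, namely attaining $\sup R$, and none of your proposed mechanisms closes it.
\begin{itemize}
\item The normalization $|\mathrm{Rm}|(x_\infty,0)=1=\sup|\mathrm{Rm}|$ says nothing about where $R$ peaks. The two quantities are comparable only up to constants, so $R(x_\infty,0)$ need not be $\sup R$.
\item A Type II point-picking with $R$ on the original sequence needs $|\mathrm{Rm}|\le CR$ along $g(t)$ on $\widehat{M}$. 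But nonnegative bisectional curvature holds only in the limit. On $\widehat{M}$ you only have $\mathrm{bisec}\ge-\gamma/(T-t)$ on $J$-invariant $\mathcal{V}'$-planes, and no comparable control on planes mixing $\mathcal{V}'$ and $\mathcal{H}'$. So you know neither that $\sup(T-t)R=\infty$ under the Type II hypothesis, nor that an $R$-normalized rescaling has bounded $|\mathrm{Rm}|$, which Hamilton's compactness theorem requires.
\item Cao's Harnack inequality gives $\partial_tR\ge0$ on the eternal limit. That only makes $\sup_xR(\cdot,t)$ nondecreasing in $t$. The space-time supremum can still be approached as $t\to+\infty$ or at spatial infinity, so monotonicity alone gives no attainment.
\end{itemize}

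The missing step is to re-pick on the eternal limit itself, with no rescaling, as in Step 1 of the paper. Set $R^*:=\sup_{\widetilde{M}_\infty\times\mathbb{R}}R\in(0,\infty)$, choose $(y_j,s_j)$ with $R(y_j,s_j)\to R^*$, and take a pointed limit of the time-translated flows $\widetilde{g}_\infty(s_j+t)$ based at $y_j$. Compactness is automatic here, because $|\mathrm{Rm}|\le1$ and $\kappa$-noncollapsing already hold uniformly. The conditions $\mathrm{bisec}\ge0$, $\kappa$-noncollapsing, and the parallel splitting with a static flat factor all pass to the new limit. That limit satisfies $R\le R^*$ with equality at $(y_\infty,0)$, so Cao's theorem applies to it, and Deng--Zhu then forces it to be flat. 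The contradiction must therefore come from $R(y_\infty,0)=R^*>0$, not from $|\mathrm{Rm}|(x_\infty,0)=1$, since the latter refers to the first limit rather than the one to which Cao and Deng--Zhu are applied.
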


\begin{proof}
	Suppose the singularity is of Type II. Proposition \ref{prop:typeII-splitting} and Corollary \ref{cor:bisec-lower-bound} produce a complete eternal $\kappa$-noncollapsed Kähler--Ricci flow  $(M_\infty, g_\infty(t), x_\infty)$ whose universal cover splits holomorphically and isometrically as: 
	$$\widetilde{M}_\infty = \Phi \times \mathbb{C}^{n'}, \quad g_\infty(t) = g_\Phi(t) \oplus g_{\mathbb{C}^{n'}}.$$
		Here $(\mathbb{C}^{n'}, g_{\mathbb{C}^{n'}})$ is static and flat, and the fibre factor $(\Phi, g_\Phi(t))$ is a complete, nonflat, eternal flow of real dimension $2m+2$ with $|Rm| \le 1$ everywhere, $|Rm|(x_\infty, 0) = 1$, and nonnegative holomorphic bisectional curvature by Corollary \ref{cor:bisec-lower-bound}. Also, going to a covering does not decrease the volume of balls, so  $(\Phi, g_\Phi(t))$  is $\kappa$-noncollapsed as well. 
	
	\emph{Step 1: attaining the supremum of the scalar curvature.} Since the bisectional curvature of $( \Phi, g^\Phi (t) )$ is non-negative, the scalar curvature controls the full curvature tensor, $| \mathrm{Rm} | \leq C ( m, n' )\, R$. Let $R^* := \sup_{\widetilde{M}_\infty \times \mathbb{R}} R$ we have \[0 < R^* < \infty.\]
	It is positive because the limit is nonflat and finite because $| \mathrm{Rm} | \leq 1$. 
	
	We now construct a new limit flow where the supremum $R_*$ is attained. Choose $( y_j, s_j )$ with $R ( y_j, s_j ) \to R^*$ and pass to the time-translated, re-based flows $\big( \widetilde{M}_\infty, \; \widetilde{g}_\infty ( s_j + t ), \; y_j \big)$ --- (no rescaling here). The curvature is already bounded and each translate is eternal and $\kappa$-noncollapsed. Thus, Hamilton's compactness theorem yields a sub-sequential pointed limit $( M', g' (t), y_\infty )$: complete, eternal, Kähler, $\kappa$-noncollapsed, with $\mathrm{bisec} \geq 0$, $R \leq R^*$ everywhere, and $R ( y_\infty, 0 ) = R^*$. That is, the scalar curvature now \emph{attains} its space-time supremum. The product structure survives the second limit: the parallel splitting with static flat factor converges, $M' = \Phi' \times \mathbb{C}^{n'}$, with $R_{g'} = R_{g^{\Phi'}}$ and $\Phi'$ again a pointed limit of (translated, re-based) Calabi-symmetric fibers.\\
	
	\emph{Step 2: Cao's theorem.} We recall Cao's theorem on eternal solutions \cite{Cao97}: \emph{a complete eternal Kähler--Ricci flow with bounded nonnegative bisectional curvature whose scalar curvature attains its space-time supremum is a steady gradient Kähler--Ricci soliton}. Therefore, $( M', g' (t) )$ is a steady gradient Kähler--Ricci soliton, and, the flat factor being static, so is the fiber factor: $( \Phi', g^{\Phi'} )$ is a complete, nonflat steady gradient Kähler--Ricci soliton with nonnegative bisectional curvature whose scalar curvature attains its maximum $R^* > 0$.\\ 
	
	\emph{Step 3: rigidity --- no such soliton exists.} The limit $( M', g' )$ assembled in Steps 1--2 is a complete steady gradient Kähler--Ricci soliton with nonnegative bisectional curvature, and it is $\kappa$-noncollapsed. 
	These are exactly the hypotheses of the rigidity theorem of Deng--Zhu \cite{DZ20}: \emph{any }$\kappa$\emph{-noncollapsed steady gradient Kähler--Ricci soliton with nonnegative bisectional curvature is flat} (the Kähler sharpening, from nonnegative sectional to nonnegative bisectional curvature, of their earlier result \cite{DZ18}). 
	Hence $M'$ is flat, so $R \equiv 0$, contradicting $R ( y_\infty, 0 ) = R^* > 0$ of Step 1. Thus, no Type II singularity occurs, and the singularity at $T$ is of Type I.
\end{proof}

\begin{remarkx} The Deng--Zhu theorem makes it unnecessary to know \emph{which} steady soliton the fiber factor is. From our analysis, another proof is sketched as follows. We can show that the Calabi symmetry is passed to the limit. There is a classification due to Cao's constructions \cite{Cao96} together with the uniqueness analysis of the closing ODE (Dancer--Wang \cite{DW2011}). Examining the closing strata then yields that the nonflat possibilities are, up to scaling, holomorphic isometry, and finite quotients:

\begin{itemize}
	\item Hamilton's \textbf{cigar} ($m = 0$, nut closing);
	\item \textbf{Cao's steady soliton on} $\mathbb{C}^{m + 1}$ (nut closing, $m \geq 1$);
	\item \textbf{Cao's steady soliton on the canonical bundle} $K_{\mathbb{CP}^m} = \mathcal{O} ( - ( m + 1 ) ) \to \mathbb{CP}^m$ (bolt closing, $m \geq 1$).
\end{itemize}	
Further analysis shows that each model is $\kappa$-collapsed at large scales, leading to a contradiction. 
\end{remarkx}	 

\section{Singularity Models}\label{sec:limits}

Theorem \ref{thm:nut-bolt-typeI} gives the Type I rate; we now identify the singularity models it produces, exactly as in the two-bolt case of \cite{FT}. Fix times $t_j \to T$ and base points $x_j$, and consider once more the rescaled flows
\[
g_j (t) \;:=\; K_j\; g \big( t_j + K_j^{-1}\, t \big) \,,
\]
now normalized at the \textbf{Type I rate}, $K_j \asymp ( T - t_j )^{-1}$, with the exact constants and base points chosen per case below, so that $c_0 := K_j ( T - t_j )$ is a constant. By Theorem \ref{thm:nut-bolt-typeI}, $| \mathrm{Rm} |_{g (t)} \leq C / ( T - t )$ on $\widehat{M} \times [0, T)$, so $| \mathrm{Rm} |_{g_j} (t) \leq C / ( c_0 - t )$ on $[ - K_j t_j, \, c_0 )$; Perelman's local non-collapsing theorem \cite{Perelman} and Hamilton's compactness theorem \cite{HamCompact} then yield subsequential pointed Cheeger--Gromov limits $( M_\infty, g_\infty (t), x_\infty )$, $t \in ( - \infty, c_0 )$ --- a complete ancient Kähler--Ricci flow, again of Type I. The goal of this section is to determine the possible Cheeger--Gromov limits $( M_\infty, g_\infty (t), x_\infty )$.

Throughout the whole section, we always suppose $\widehat{M}$ is compactified by the nut-bolt closing, with $Q_0 \cong N' := N_2 \times \cdots \times N_r$ and $Q_\ell \cong N = \mathbb{CP}^m \times N'$, unless otherwise stated. All results hold in the two-bolt case, which can be regarded as the $m = 0$ special case of the nut-bolt case.

\subsection{Maximal existence time}
To begin, we first determine the maximal existence time (or the first singular time) using cohomology data. By the adjunction formula, the first Chern class pairs with the fiber $F$ and restricts to the strata by

\begin{align*}
c_1(\widehat{M}) \big|_{Q_0} & \;=\; \sum_{k \geq 2} \big( p_k - (m+1)\, q_k \big)\, a_k\,, \\
c_1(\widehat{M}) \big|_{Q_\ell} & \;=\; \sum_{k \geq 1} \big( p_k + q_k \big)\, a_k\,,
\end{align*}

By Tian--Zhang \cite{TZ}, the maximal existence time of the K\"ahler-Ricci flow is given by $T := \sup\{t : [\omega(t)] := [\omega_0] - 2tc_1(\widehat{M}) > 0\}$. In the context of our circle-bundle, the singular time $T$ is then decided by the first time that the $\mathbb{CP}^{m+1}$-fiber collapses, or one of the K\"ahler-Einstein factors $N_2, \cdots, N_r$ on either $Q_0$ or $Q_\ell$ contracts to a point.

The volume of the $\mathbb{CP}^{m+1}$-fibers is given by 
\[\frac{2\pi L(t)^{m+1}}{(m+1)!}, \qquad \text{where } L(t) = f_1^2\big|_{Q_\ell}(t) = f_1^2\big|_{Q_\ell}(0) - 2(m+2)t.\]
Furthermore, from the above expression of $c_1(\widehat{M})$, we have
\begin{align*}
[\omega(t)]\big|_{Q_0} & = [\omega_0]\big|_{Q_0} - 2tc_1(\widehat{M})\big|_{Q_0} = \sum_{k \geq 2}\underbrace{\Big(f_k^2|_{Q_0}(0) - 2t\big( p_k - (m+1)\, q_k \big)\,\Big)}_{f_k^2|_{Q_0}(t)} a_k\\
[\omega(t)]\big|_{Q_\ell} & = \sum_{k \geq 2}\underbrace{\Big(f_k^2|_{Q_\ell}(0) - 2t\big( p_k +\, q_k \big)\,\Big)}_{f_k^2|_{Q_\ell}(t)}a_k
\end{align*}

For any $k \geq 2$ we set
\begin{align*}
T_F' & := \frac{L(0)}{2(m+2)},\\
T_k^0 & :=\; \begin{cases} \dfrac{f_k^2 |_{Q_0} (0)}{2 \big( p_k - ( m + 1 )\, q_k \big)} & \text{if } ( m + 1 )\, q_k < p_k \,, \\[4pt] + \infty & \text{otherwise} \,, \end{cases}\\
T_k^\ell & :=\; \begin{cases} \dfrac{f_k^2 |_{Q_\ell} (0)}{2 \big( p_k + q_k \big)} & \text{if } p_k + q_k > 0 \,, \\[4pt] + \infty & \text{otherwise} \,, \end{cases}
\end{align*}
Therefore, the maximal existence time is given by:
\begin{equation}\label{eq:T_max}
T \;=\; \min \Big\{\, T_F' \,, \; \min_{k \geq 2} \{T_k^0, T_k^\ell\} \,\Big\}
\end{equation}
We can classify the singularity according to the cohomology data. According to which minimum in \eqref{eq:T_max} is attained, the K\"ahler class $[\omega(t)]$ predicts the following possibilities:
\begin{enumerate}
\item $T = T_F' < \min_{k\geq 2}\{T_k^0, T_k^\ell\}$: \emph{collapsing of $\mathbb{CP}^{m+1}$-fibers}
\item $T = \min_{k \geq 2}\{T_k^0\} < \min\big\{T_F', \min_{k\geq 2}\{T_k^\ell\}\big\}$: \emph{contraction of some factors of $Q_0$}
\item $T = \min_{k \geq 2}\{T_k^\ell\} < \min\big\{T_F', \min_{k\geq 2}\{T_k^0\}\big\}$: \emph{contraction of some factors of $Q_\ell$}
\item $T = T_F' = \min_{k \geq 2}\big\{T_k^0,T_k^\ell\big\}$: \emph{borderline case, i.e. both collapsing of $\mathbb{CP}^{m+1}$-fibers, and contraction of some factors of $Q_0$ or $Q_\ell$.}
\end{enumerate}

\begin{remarkx}
By the K\"ahler condition $q_k h = \partial_s (f_k^2)$ for any $k \geq 1$, $q_1f_k^2-q_k f_1^2$ depends only on $t$. In the fiber-collapsing and the borderline case, we have $f_1^2 \to 0$ as $t \to T$. Therefore, $f_k^2$ also converges to a constant (not necessarily $0$) as $t \to T$. This explains why, in the borderline case, we do not distinguish whether the contraction happens on $Q_0$ or $Q_\ell$, since the $N_{k \geq 2}$-factor of $Q_0$ contracts if and only if the $N_{k\geq 2}$-factor of $Q_{\ell}$ contracts.
\end{remarkx}

Next we discuss the blow-up limit of each of the above four cases individually. The two cases we will address in detail are: (1) collapsing of $\mathbb{CP}^{m+1}$-fibers, and (2) contraction of some factors of $Q_0$, since the other two cases can be proved in a way similar to (2).

\subsection{Collapsing of $\mathbb{CP}^{m+1}$-fibers}

Now that we have proved that the singularity is of Type I (Theorem \ref{thm:nut-bolt-typeI}), one can show that the blow-up limit splits as $\mathbb{CP}^{m+1} \times \mathbb{C}^k$, extending the results in \cite{F}, \cite{JST}, \cite{FT} to our circle-bundle ansatz \eqref{eq:ansatz} with multiple factors of K\"ahler-Einstein manifolds in the base manifold.

The proof is in the same spirit as in \cite{F}, and was briefly outlined in \cite{FT}. For the sake of completeness, we will give a detailed proof here. First, we have the following estimates

\begin{lemma}
\label{lma:fiber-collapsing-estimates}
Suppose $T = T_F' < \min_{k \geq 2} \{T_k^0, T_k^\ell\}$ so that $L (t) = 2 ( m + 2 ) ( T - t ) \to 0$. Then there exists a constant $C > 0$ independent of $t$ such that on $\widehat{M} \times [0,T)$,
\begin{itemize}
\item[(i)]	$0 < \frac{1}{C} \leq f_k^2 \leq C$, \;$k \geq 2$,
\item[(ii)] $h^2 \leq Cf_k^2$, \;$k \geq 1$,
\item[(iii)] $\big| A' \big|_{g (t)}^2 \leq\; C$, and
\item[(iv)] $|K_{g(t)}(X,Y)| \leq C$ \text{ for any two-plane $\textrm{span}\{X,Y\} \subset \mathcal{H}'$}.
\end{itemize}
Here $\mathcal{H}'$ denotes the horizontal distribution and $A'$ denotes the O'Neill's $A$-tensor of the submersion $\widehat{\pi}' : \widehat{M} \to N'$ with respect to the metric $g(t)$.

\end{lemma}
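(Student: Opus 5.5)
We prove the four items in order, using only the affine laws of Proposition \ref{prop:affine-laws}, the Kähler relation \eqref{eq:Fk-affine}, the Li--Yau bound \eqref{eq:li-yau}, and the norm formula of Corollary \ref{cor:norm-A-residual}. The key input is that, in the fiber-collapsing regime, no factor $N_k$ ($k\geq 2$) degenerates at either stratum.

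\emph{Step (i).} Since $T=T_F'<\min_{k\geq2}\{T_k^0,T_k^\ell\}$, each affine function $f_k^2|_{Q_0}(t)$ and $f_k^2|_{Q_\ell}(t)$ from Proposition \ref{prop:affine-laws}(a),(b) is positive at $t=T$. Indeed, it either increases, or it reaches zero only at $T_k^0$ or $T_k^\ell$, both of which exceed $T$. Hence both functions are bounded above and below by positive constants on $[0,T]$. By \eqref{eq:Fk-affine}, $f_k^2=f_k^2|_{Q_0}+q_kf_1^2$ is monotone in $s$. So on $\widehat M$ it lies between its two boundary values, which gives $1/C\leq f_k^2\leq C$.

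\emph{Step (ii).} For $k\geq2$, the Li--Yau estimate \eqref{eq:li-yau} applied to $u=f_k^2$ gives $q_k^2h^2=|\nabla f_k^2|^2\leq C_0f_k^2$, and $q_k\neq0$. For $k=1$ we have $q_1=1$ and $|\nabla f_1^2|^2=h^2$, so the same estimate yields $h^2\leq C_0f_1^2$. Note that this bound is still uniform as $L(t)\to0$, because the constant in \eqref{eq:li-yau} does not depend on $t$.

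\emph{Step (iii).} Insert the Li--Yau bound into \eqref{eq:norm-A-residual} and use (i):
\[
|A'|^2\leq C\sum_{k\geq2}\frac{|\nabla f_k^2|^2}{f_k^4}\leq C\,C_0\sum_{k\geq2}\frac1{f_k^2}\leq C.
\]
This is the same computation as in Proposition \ref{prop:A-residual-decay}, except that the lower bound from (i) is now a uniform constant rather than $c(T-t)$.

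\emph{Step (iv).} We follow the second part of the proof of Proposition \ref{prop:A-residual-decay}. For planes in $\mathcal H'$ we use Proposition \ref{prop:riemann-ansatz}(i) and \eqref{eq:leaf-curvature}, restricted to the blocks $k\geq2$ with $\kappa_k=0$. The curvature is a sum of three kinds of term:
\begin{enumerate}
\item the base terms $f_k^2\pi^*\mathrm{Rm}_{g_k}$, whose normalized size is $|K_{g_k}|/f_k^2\leq C$;
\item the $\Psi$-terms, of size $q_k^2h^2/f_k^4$;
\item the second-fundamental-form terms, of size $|q_kq_jh^2|/(f_k^2f_j^2)$, which match Corollary \ref{cor:sectional-curvatures}(iii),(iv) via $F_k'/F_k=q_kh/(2f_k^2)$.
\end{enumerate}
By (ii) and (i), each of these is bounded by $C/f_k^2\leq C$. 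For a general (non-frame) plane in $\mathcal H'$, the curvature is a bilinear combination of these tensor components with unit coefficients. Hence $|\mathrm{Rm}|_{\mathcal H'}|\leq C$, which bounds all sectional curvatures of such planes.

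The only real point to check is (i): we must confirm that positivity of each affine stratum value on $[0,T]$ follows from the strict inequality $T<T_k^0,T_k^\ell$. Everything else is bookkeeping, and no maximum-principle argument beyond \eqref{eq:li-yau} is needed.
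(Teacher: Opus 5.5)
Your proof is correct and follows the paper's argument essentially step for step. You use affine stratum values plus monotonicity of $f_k^2$ in $s$ for (i), Li--Yau applied to each $f_k^2$ for (ii), these inserted into \eqref{eq:norm-A-residual} for (iii), and bounds of the form $C/f_k^2$ on the horizontal curvature terms for (iv). Your treatment of (iv), which bounds the full curvature tensor restricted to $\mathcal{H}'$, is if anything slightly more complete than the paper's, since the paper only checks the frame planes $\mathrm{span}\{X_k,Y_k\}$ and $\mathrm{span}\{X_k,Y_j\}$ from Corollary \ref{cor:sectional-curvatures}.
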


\begin{proof}
To prove (i), we observe that when $T < \min_{k\geq 2}\{T_k^0, T_k^\ell\}$, both
\begin{align*}
f_k^2\big|_{Q_0}(t) & = f_k^2|_{Q_0}(0) - 2t\big( p_k - (m+1)\, q_k \big) > 0,\\
f_k^2\big|_{Q_\ell}(t) & = f_k^2|_{Q_\ell}(0) - 2t\big( p_k +\, q_k \big) > 0
\end{align*}
are uniformly bounded away from $0$ and from above on $[0,T)$. As $f_k^2$ is strictly monotone by the K\"ahler condition $\partial_s (f_k^2) = q_k h$ at any fixed $t \in [0,T)$, it is bounded between $f_k^2\big|_{Q_0}(t)$ and $f_k^2\big|_{Q_\ell}(t)$, proving (i).

For (ii), we recall that $q_k h = \partial_s(f_k^2)$, and so $|\nabla (f_k^2)|^2 = \big(\partial_s (f_k^2)\big)^2 = q_k^2 h^2$. By Proposition \ref{prop:li-yau}, we have $|\nabla(f_k^2)|^2 \leq Cf_k^2$, proving (ii).

(iii) follows from the estimate \eqref{eq:norm-A-residual} about $A'$, Proposition \ref{prop:li-yau} that $|\nabla(f_k^2)|^2 \leq Cf_k^2$ for any $k \geq 1$, and more importantly from (i) that $f_k^2$ is uniformly bounded away from $0$ (for $k \geq 2$):
\[\big| A' \big|^2_{g(t)} \;\leq\; C \sum_{k \geq 2} \frac{\big| \nabla f_k^2 \big|^2}{f_k^4} \leq C \sum_{k\geq 2}\frac{1}{f_k^2} \leq C.\]

Finally for (iv), we recall from Corollary \ref{cor:sectional-curvatures} that for any $g(t)$-orthonormal vectors $\{X_k, Y_k\}$ such that $d\pi'(X_k), d\pi'(Y_k) \in TN_k$, $k \geq 2$, we have
\[
K(X_k, Y_k) \;=\; \frac{K_{g_k}(\pi_* X_k, \pi_* Y_k)}{f_k^2} \;-\; \frac{3\, q_k^2 h^2}{4 f_k^4}\;g(JX_k,Y_k)^2 \;-\; \frac{(\partial_s(f_k^2))^2}{4f_k^4}.
\]
Its uniform bound then follows from (i), (ii), and the K\"ahler condition $q_k h = \partial_s(f_k^2)$. For $X_k$ and $Y_j$ with $d\pi'(X_k) \in TN_k$ and $d\pi'(Y_j) \in TN_j$ where $j \not= k$ and $k, j \geq 2$, we have
\[K(X_k,Y_j) = -\frac{\partial_s f_k \cdot \partial_s f_j}{f_k f_j} = -\frac{q_k q_j h^2}{4f_k^2 f_j^2}\]
which is uniformly bounded from (i) and (ii). This completes the proof of (iv).
\end{proof}

\begin{theorem}[the fiber-collapsing case]\label{thm:fiber-collapse-model}
Suppose $(\widehat{M}, g(t))$ encounters the finite-time singularity at $T = T_F' < \min_{k \geq 2} \{T_k^0, T_k^\ell\}$. Take $K_j := \big[ 2 ( m + 2 ) ( T - t_j ) \big]^{-1}$, and fix any base point $x_j \equiv \bar{x} \in \widehat{M}$. Define $g_j(t) = K_jg(t_j + K_j^{-1}t)$, where $t \in [-K_jt_j, K_j(T-t_j))$. Then, along a subsequence,

\begin{equation}\label{eq:fiber-collapse-limit}
\big( \widehat{M}, \; g_j (t), \; \bar{x} \big) \;\longrightarrow\; \Big( \mathbb{CP}^{m+1} \times \mathbb{C}^{n'} \,, \;\; g_{\mathrm{FS}} (t) \oplus g_{\mathbb{C}^{n'}} \Big) \,, \qquad t \in \Big( - \infty, \tfrac{1}{2 ( m + 2 )} \Big) \,,
\end{equation}
in $C^\infty$-Cheeger-Gromov sense, where $g_{\mathbb{C}^{n'}}$ is the flat metric on $\mathbb{C}^{n'}$ with $n' = \dim_{\mathbb{C}}\widehat{M} - (m+1)$, and $g_{\mathrm{FS}} (t)$ is the shrinking Fubini--Study metric.

\end{theorem}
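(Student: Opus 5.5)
The plan is to run the argument of \cite{F,FT} in the residual-submersion setting of Section \ref{sec:typeI}. By Theorem \ref{thm:nut-bolt-typeI} the flow is Type I, so $|\mathrm{Rm}|_{g_j}(t) \leq C/(c_0 - t)$ with $c_0 = K_j(T-t_j) = \tfrac{1}{2(m+2)}$. Perelman's non-collapsing and Hamilton's compactness then give a subsequential pointed limit $(M_\infty, g_\infty(t), x_\infty)$ for $t\in(-\infty, c_0)$.

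\emph{Step 1: splitting.} I would repeat the proof of Proposition \ref{prop:typeII-splitting}, with Lemma \ref{lma:fiber-collapsing-estimates} replacing Proposition \ref{prop:A-residual-decay}. Items (iii) and (iv) give $|A'|^2_{g_j} \leq C K_j^{-1} \to 0$, and the $\mathcal{H}'$-sectional curvatures of $g_j$ are $O(K_j^{-1})$; here $T^{\widehat{\pi}'}\equiv 0$. The projections $P_j$ onto $\mathcal{V}'$ satisfy $|\nabla P_j|\to 0$ and have derivative bounds from Shi's estimates, so they converge to a parallel projection. The universal cover therefore splits holomorphically as $\Phi\times Y$ with $Y$ flat, i.e.\ $Y=\mathbb{C}^{n'}$. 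Since the $\widehat{\pi}'$-base is $(N', \sum_{k\ge2} f_k^2 g_k)$ with $f_k^2 \in [1/C, C]$ by (i), its rescaling by $K_j$ converges to $\mathbb{C}^{n'}$. This identifies $Y$ with $\mathbb{C}^{n'}$ without quotients, and the horizontal leaves become simply connected in the limit.

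\emph{Step 2: identifying $\Phi$.} The factor $\Phi$ is the pointed limit of the rescaled totally geodesic fibers $\big((\widehat{\pi}')^{-1}(\widehat{\pi}'(\bar x)), g_j(t)\big)$. Each fiber is a Calabi-symmetric metric on $\mathbb{CP}^{m+1}$ whose rescaled line class is $K_j L(t_j + K_j^{-1}t) = 2(m+2)(c_0 - t)$. The fibers have bounded curvature and diameter comparable to $\sqrt{K_j L}$, which is bounded. Hence $\Phi$ is compact, and by smooth convergence of these fibers it is diffeomorphic to $\mathbb{CP}^{m+1}$. Its Kähler class is $2(m+2)(c_0-t)\,[\text{hyperplane}]\cdot(2\pi)$, a multiple of $c_1(\mathbb{CP}^{m+1})$ that goes extinct at $t=c_0$. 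The limit is a Type I ancient Kähler--Ricci flow on $\mathbb{CP}^{m+1}$ in the canonical class.

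To show it is Fubini--Study, I would use the Calabi symmetry, which passes to the limit because $h^2$ as a function of $f_1^2$ converges after scaling. Set $x = K_j f_1^2$ and $\eta = K_j h^2$; then $\eta$ is a function on $[0, 2(m+2)(c_0-t)]$ with the boundary data of Proposition \ref{prop:reduced-flow}. Passing to the limit in \eqref{eq:reduced-flow}, the residual terms $\sum_{k\geq2} n_k q_k^2 h^4/f_k^4$ scale like $K_j^{-1}\to 0$, since the $f_k^2$ stay bounded below. So $\eta$ solves the $r=1$ reduced equation of a $U(m+1)$-invariant ancient flow on $\mathbb{CP}^{m+1}$.

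\emph{Step 3: uniqueness.} The main obstacle is showing this ancient Calabi-symmetric solution is exactly the shrinking Fubini--Study metric, $\eta = x\big(2 - x/((m+2)(c_0-t))\big)$. I would first try Perelman's entropy: the Type I limit is a shrinking soliton by \cite{EMT, Naber}. Since $\mathbb{CP}^{m+1}$ carries no non-trivial gradient shrinking Kähler--Ricci soliton with $U(m+1)$-symmetry (Futaki invariant vanishes, plus uniqueness of Tian--Zhu), it must be Kähler--Einstein, hence Fubini--Study. A fallback is a maximum-principle argument on $(h^2)''$ in the scaled variable, showing it is constant, as in \cite{F}. Finally, the diffeomorphisms $\Phi_j$ can be chosen compatibly with the product structure, giving \eqref{eq:fiber-collapse-limit}.
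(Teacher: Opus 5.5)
Your proposal is correct and follows essentially the same route as the paper. Both arguments split the limit using $|A'|^2_{g_j}\le CK_j^{-1}\to 0$ and $T^{\widehat{\pi}'}\equiv 0$, get flatness of the $\mathcal{H}'$-factor from Lemma \ref{lma:fiber-collapsing-estimates}(iv), and get compactness of the fiber factor from the $C\sqrt{T-t}$ diameter bound. The fiber factor is then identified as the shrinking Fubini--Study metric using \cite{EMT, Naber} together with the Bando--Mabuchi/Tian--Zhu uniqueness.

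The Calabi-symmetric reduced-equation detour is not needed; the paper only mentions it in a remark. Also, in that detour the residual terms actually decay like $K_j^{-2}$, not $K_j^{-1}$, which is harmless.
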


\begin{proof}
From Theorem \ref{thm:nut-bolt-typeI} we already know that $(\widehat{M}, g(t))$ encounters Type I singularity, so one has $\sup_{\widehat{M} \times \{t_j\}}|\textrm{Rm}|_{g(t_j)} \asymp K_j$. By Perelman's local non-collapsing theorem \cite{Perelman} and Hamilton's compactness theorem \cite{HamCompact}, the rescaled and dilated sequence $(\widehat{M}, g_j(t), \bar{x})$ converges in Cheeger-Gromov's sense to a Ricci flow solution on some manifold $(M_\infty, g_\infty(t), x_\infty)$, defined on $t \in (-\infty, \frac{1}{2(m+2)})$.

By (iii) of Lemma \ref{lma:fiber-collapsing-estimates} and the rescaling properties of the O'Neill's $A$-tensor, we have
\[| A' |^2_{g_j(t)} \leq C K_j^{-1} \to 0, \qquad |T'|_{g_j(t)}^2 \equiv 0.\]
Therefore, the horizontal distribution $\mathcal{H}'$ converges to an integrable distribution on $(M_\infty, g_\infty(t), x_\infty)$, so that the limit manifold splits holomorphically and isometrically as 
\[(M_\infty, g_\infty(t)) = (\Sigma, g_\Sigma(t)) \times (Y, g_Y(t))\]
with $\dim_{\mathbb{C}} \Sigma = \dim_{\mathbb{C}} \mathcal{V}' = m + 1$, and $\dim_{\mathbb{C}} Y = \dim_{\mathbb{C}}\mathcal{H}' = \dim_{\mathbb{C}}\widehat{M} - (m+1)$.

By (iv) of Lemma \ref{lma:fiber-collapsing-estimates} and the rescaling property of the sectional curvature, we have
\[|K_{g_j(t)}(X,Y)| \leq CK_j^{-1} \to 0\]
for any two-plane $\textrm{span}\{X,Y\} \subset \mathcal{H}'$. Therefore, the factor $(Y,g_Y(t))$ is a static, flat metric $(\mathbb{C}^{n'}, \delta)$ with $n' = \dim_{\mathbb{C}}\widehat{M} - (m+1)$.

$\Sigma$ is the pointed limit of the rescaled totally geodesic $\mathbb{CP}^{m+1}$-fibers. Since $L(t) = L(0) - 2(m+2)t = 2(m+2)(T-t)$, for each $p \in N'$ the $\mathbb{CP}^{m+1}$-fiber $\widehat{M}_p := (\widehat{\pi}')^{-1}(p)$ has the diameter estimate:
\[\textrm{diam}_{g(t)}\widehat{M}_p \leq C\sqrt{T-t} \implies \mathrm{diam}_{g_j(t)}\widehat{M}_p \leq C\sqrt{K_j(T-t_j) - t} = C\sqrt{\frac{1}{2(m+2)}-t}.\]
Therefore, after passing to the limit, we have
\[\textrm{diam}_{g_\infty(t)}\Sigma \leq C\sqrt{\frac{1}{2(m+2)}-t},\]
and hence $\Sigma$ is \textbf{compact} at each fixed time $t$, which implies $\Sigma$ has the same topology as the fibers $\mathbb{CP}^{m+1}$. By Enders--Müller--Topping \cite{EMT} and Naber \cite{Naber}, $(M_\infty, g_\infty(t))$, and hence $(\Sigma, g_\Sigma(t))$, are shrinking Ricci solitons, which are K\"ahler (after passing to subsequences we may assume the complex structure converges). By the uniqueness theorem of Bando-Mabuchi \cite{BM} and Tian--Zhu \cite{TZhu}, the metric $g_\Sigma(t)$ must be the shrinking Fubini-Study metric.
\end{proof}

\begin{remarkx}
To identify the factor $(\Sigma,g_\Sigma(t))$, one can also show the structure of the circle-bundle ansatz is preserved after passing to the limit. Then, we can identify the metric $g_\Sigma(t)$ as the Fubini-Study metric by solving the Ricci soliton ODE system as in \cite{DW2011} with appropriate boundary data. The argument showing the structure of the ansatz is preserved will be presented in the proof of the forthcoming contraction case.
\end{remarkx}

\subsection{Contraction of factors of $Q_0$}

We turn to the second regime, $T = \min_{k \geq 2}\{T_k^0\} < \min\big\{T_F', \min_{k\geq 2}\{T_k^\ell\}\big\}$. In this case, some (not necessarily all) of the $N_{k \geq 2}$-factors of $Q_0$ contract to a point, while the $\mathbb{CP}^{m+1}$-fibers still have positive volume. The complementary $N_k$-factors do not contract; we will show that the non-contracting factors will blow up to flat factors in the limit model, whereas the $\mathbb{CP}^{m+1}$-fibers, together with the contracting $N_k$-factors, will converge to the (non-compact) total space of a vector bundle over the contracting $N_k$-factors, with a shrinking K\"ahler-Ricci soliton metric with the circle-bundle ansatz constructed by Dancer--Wang \cite{DW2011}. Here is the main result of this case:

\begin{theorem}[Contraction at $Q_0$]
\label{thm:contracting_Q0}
Let $\widehat{M}$ be the compactification of $(0,\ell) \times P$ such that $Q_0 := N' = N_2 \times \cdots \times N_r$ is adjoined at $\{s = 0\}$, and $Q_\ell := \mathbb{CP}^m \times N'$ is adjoined at $\{s = \ell\}$. Suppose
$T = \min_{k \geq 2}\{T_k^0\} < \min\big\{T_F', \min_{k\geq 2}\{T_k^\ell\}\big\}$.

Let $\mathcal{I}_0$ be the subset of $\{2, \cdots, r\}$ such that $i_0 \in \mathcal{I}_0$ if and only if $T_{i_0}^0 = T$. Let $E_{\mathcal{I}_0}$ be the vector bundle obtained by restricting the normal bundle $\mathcal{N}_{Q_0}$ of $Q_0 \subset \widehat{M}$ to $\prod_{i_0 \in \mathcal{I}_0} N_{i_0}$, then we have $E_{\mathcal{I}_0} = \mathcal{L}_{\mathcal{I}_0}^{\oplus ( m + 1 )}$ for some line bundle $\mathcal{L}_{\mathcal{I}_0}$ with $c_1 ( \mathcal{L}_{\mathcal{I}_0} ) = - \sum_{i_0 \in \mathcal{I}_0} q_{i_0} a_{i_0}$.

Pick any $i_0 \in \mathcal{I}_0$, let $K_j := (T-t_j)^{-1}$, and fix $\bar{x}' \in Q_0$. Consider the rescaled and dilated sequence $g_j(t) = K_j g(t_j + K_j^{-1}t)$ defined on $t \in [-K_j t_j, K_j(T-t_j))$. Then, along a subsequence,
\begin{equation}\label{eq:limit_Q0}
\begin{gathered}
\big( \widehat{M}, \; g_j (t), \; \bar{x}' \big) \;\longrightarrow\; \Big( \operatorname{Tot} \big( E_{\mathcal{I}_0} \big) \times \mathbb{C}^{n''} \,, \;\; g_E (t) \oplus g_{\mathbb{C}^{n''}}, \bar{x}'\Big) \,, t \in \big( - \infty, 1 \big).
\end{gathered}
\end{equation}
Here $n'' = \sum_{i_0 \not\in \mathcal{I}_0 \cup \{1\}} \dim_{\mathbb{C}} N_{i_0}$, $\operatorname{Tot} ( E_{\mathcal{I}_0} )$ is the total space of the vector bundle $E_{\mathcal{I}_0} \to \prod_{i_0 \in \mathcal{I}_0} N_{i_0}$, and $g_E$ is a complete gradient shrinking K\"ahler--Ricci soliton of ansatz form on $\operatorname{Tot} ( E_{\mathcal{I}_0} )$ --- i.e. a higher-rank, multi-factor generalization by Dancer--Wang in \cite{DW2011} of the Feldman--Ilmanen--Knopf shrinker in \cite{FIK}.
\end{theorem}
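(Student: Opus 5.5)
The plan is to construct the Cheeger--Gromov limit explicitly and identify it through the ansatz, rather than quoting an abstract soliton classification. By Theorem \ref{thm:nut-bolt-typeI} the flow is Type I. Perelman's no-local-collapsing and Hamilton's compactness then give a subsequential limit $(M_\infty,g_\infty(t),x_\infty)$ on $t\in(-\infty,1)$, and by Enders--M\"uller--Topping and Naber this limit is a nontrivial gradient shrinking K\"ahler--Ricci soliton. Nontriviality follows because $\bar x'\in Q_0$ lies on the stratum that shrinks at rate $T-t$.

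First I record the rescaled sizes. By Proposition \ref{prop:affine-laws}(b), $f_{i_0}^2|_{Q_0}(t)=2(p_{i_0}-(m+1)q_{i_0})(T-t)$ for $i_0\in\mathcal{I}_0$. Hence $K_jf_{i_0}^2|_{Q_0}(t_j+K_j^{-1}t)=2(p_{i_0}-(m+1)q_{i_0})(1-t)$ stays bounded and positive. For $k\notin\mathcal{I}_0$, $k\ge2$, the quantity $f_k^2|_{Q_0}$ stays bounded below, so $K_jf_k^2\to\infty$. Meanwhile $L(t)$ stays bounded below, so the full interval $[0,L]$ in the variable $f_1^2$ dilates to $[0,\infty)$.

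Next I split off the flat factor. Let $\mathcal{H}''=\bigoplus_{k\notin\mathcal{I}_0\cup\{1\}}\mathcal{H}_k$. This is the horizontal distribution of the submersion $\widehat M\to\prod_{k\notin\mathcal{I}_0\cup\{1\}}N_k$, which has totally geodesic fibers as in Proposition \ref{prop:residual-oneill}. I bound its $A$-tensor, exactly as in Corollary \ref{cor:norm-A-residual}, by $\sum_{k}|\nabla f_k^2|^2/f_k^4\le C_0\sum_k 1/f_k^2\le C$, using Li--Yau \eqref{eq:li-yau} and the fact that $f_k^2$ is bounded below for these $k$. Its rescaled norm is then $O(K_j^{-1})\to0$, and the same holds for the $\mathcal{H}''$-sectional curvatures by Corollary \ref{cor:sectional-curvatures}. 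Repeating the projection-tensor argument of Proposition \ref{prop:typeII-splitting}, the limit splits holomorphically as $\Sigma\times\mathbb{C}^{n''}$ with the second factor static and flat.

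The remaining factor $\Sigma$ is the pointed limit of the fibers over $\prod_{k\notin\mathcal{I}_0\cup\{1\}}N_k$. These fibers are the projective bundles $\mathbb{P}(\mathcal{O}\oplus E)$ over $\prod_{i_0\in\mathcal{I}_0}N_{i_0}$, based at the nut stratum. To identify $\Sigma$, I construct explicit maps $\Phi_j$ from an exhausting region $\{f_1^2\le R\}$ of $\operatorname{Tot}(E_{\mathcal{I}_0})$ into $\widehat M$, using the rescaled coordinate $K_jf_1^2$ on the fiberwise $\mathbb{C}^{m+1}$ and the identity on the $N_{i_0}$ and circle directions. In these coordinates the pulled-back metric keeps the ansatz form with coefficients $K_jh^2$, $K_jf_1^2$ and $K_jf_{i_0}^2=K_jf_{i_0}^2|_{Q_0}+q_{i_0}K_jf_1^2$, by \eqref{eq:Fk-affine}.

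The main obstacle is $C^\infty$ convergence of the rescaled $h^2$ as a function of the rescaled $f_1^2$, uniformly up to the nut. For this I plan to use Shi's estimates on $|\nabla^k\mathrm{Rm}|_{g_j}$ together with Proposition \ref{prop:fiber-curvature}. In particular $K(\nu,\xi^*)=-(h^2)''/2$ shows that $(h^2)''$ and all its derivatives in the rescaled variable are uniformly bounded. The boundary data $h^2=0$ and $(h^2)'=2$ at the nut then give uniform $C^k$ bounds on compact sets by integration, with smoothness across $Q_0$ handled through the even variable $\rho$. Arzel\`a--Ascoli then yields a limit ansatz metric on $\operatorname{Tot}(E_{\mathcal{I}_0})$. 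Since it is complete and the rescaled fiber length blows up, the limit is noncompact.

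Finally, the soliton equation restricted to the ansatz reduces to the Dancer--Wang ODE with nut boundary conditions. Its complete shrinking solution is Dancer--Wang's, and $E_{\mathcal{I}_0}=\mathcal{L}_{\mathcal{I}_0}^{\oplus(m+1)}$ with $c_1(\mathcal{L}_{\mathcal{I}_0})=-\sum q_{i_0}a_{i_0}$ is read off from $c_1(\mathcal{N}_{Q_0})$ computed in \S\ref{subsec:periods-chern}.
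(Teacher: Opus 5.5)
Your route differs from the paper's, and its global part is sound. You split off $\mathbb{C}^{n''}$ using the O'Neill tensor of $\widehat M\to\prod_{k\notin\mathcal{I}_0\cup\{1\}}N_k$: its fibers are totally geodesic, and by Li--Yau and the uniform lower bound on $f_k^2$ for $k\notin\mathcal{I}_0$, its $A$-tensor and $\mathcal{H}''$-curvatures are $O(1)$ before rescaling. This reduces everything to the single fiber through $\bar x'$, on which $\theta$ restricts exactly to $\widetilde\theta$. You therefore avoid the paper's Poincar\'e-lemma gauge estimate (Lemma \ref{lma:gauge}) and its normal-coordinate flattening (Lemma \ref{lma:flat}). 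In exchange, the paper's explicit charts identify $M_\infty$ itself, not just its universal cover.

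The genuine gap is in the profile-convergence step, which you correctly single out as the main obstacle. Write $G_j:=\hat h_j^2$ as a function of $\phi=K_jf_1^2$. The Type I bound gives only $|\partial_\phi^2 G_j|\le C$. Shi's estimates, read in a frame parallel along $\nu$ (note $\xi/\hat h_j$ is parallel along $\nu$), control $\partial_{\hat s}^k K(\nu,\xi^*)$, not $\partial_\phi^k K(\nu,\xi^*)$. Since $\partial_\phi=\hat h_j^{-1}\partial_{\hat s}$, your claim that all $\phi$-derivatives of $(h^2)''$ are uniformly bounded does not follow. It needs a division argument at the nut, where $\hat h_j\to 0$. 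Away from the nut it needs a uniform-in-$j$ positive lower bound on $\hat h_j$ over $\{\delta\le\phi\le R\}$, and you never prove one.

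That same lower bound is also what you need for the limit to be a nondegenerate metric on $\{f_1^2\le R\}$. From $G_j(0)=0$, $G_j'(0)=2$ and $|G_j''|\le C$ you only get $G_j\ge 2\phi-C\phi^2/2$, which is positive only for $\phi<4/C$. Beyond that, a second-derivative bound cannot stop $G_j$ from developing a near double zero at some $\phi_0$. In that case $G_\infty(\phi_0)=0$, the level $\{\phi=\phi_0\}$ lies at infinite rescaled distance, and $\{f_1^2\le R\}$ with $R>\phi_0$ is not a compact subset of the limit, so it cannot serve as your exhaustion.

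The paper avoids both problems by working in rescaled arclength $\hat s$. It differentiates $\hat f_{i_0,j}$, which satisfies $\hat f_{i_0,j}^2\ge 2(p_{i_0}-(m+1)q_{i_0})(1-t)>0$. In the parallel frame $\{\partial_{\hat s},\widetilde X/\hat f_{i_0,j}\}$, the identity $\partial_{\hat s}^2\hat f_{i_0,j}=-K(\partial_{\hat s},\widetilde X)\,\hat f_{i_0,j}$ turns Shi's bounds directly into $C^{m,k}$ bounds in $\hat s$. It then recovers $\hat h_j=q_{i_0}^{-1}\partial_{\hat s}\hat f_{i_0,j}^2$ and $\hat f_{1,j}^2=\int_0^{\hat s}\hat h_j$. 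Positivity of $\hat h_\infty$ and $\hat f_{1,\infty}$ at every finite $\hat s$ comes from the Gronwall argument of Lemma \ref{lma:h-f-positive}.

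Running this arclength argument on your fiber, and exhausting by $\{\hat s\le S\}$ instead of $\{f_1^2\le R\}$, closes the gap. Whether the $\phi$-range of the limit is all of $[0,\infty)$ is then irrelevant, since the limit is still $\operatorname{Tot}(E_{\mathcal{I}_0})$.

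A minor point: $c_1(\mathcal{N}_{Q_0})$ alone does not give the splitting $E_{\mathcal{I}_0}=\mathcal{L}_{\mathcal{I}_0}^{\oplus(m+1)}$. That comes from identifying $P\to N'$ as the sphere bundle of $\mathcal{L}^{\oplus(m+1)}$, as in Step 1 of the paper.
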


Here we outline the major steps of the proof, and will go through the details step-by-step. For better book-keeping, we assume without loss of generality that $\min_{k \geq 2}\{T_k^0\}$ is achieved by a single $T_{i_0}^0$. In other words, only one $N_{i_0}$ among the $N_2, \cdots, N_r$-factors of $Q_0$ contracts to a point as $t \to T$. The general case that multiple $N_i$-factors contract to a point can be proved in a similar way.

The first step of the proof is to identify the correct topological models of the non-compact limit model using the bundle structure of $\widehat{M}$. A tubular neighborhood of $Q_0$ in $\widehat{M}$ can be identified with the total space of some rank-$(m+1)$ complex vector bundle $E$ over $Q_0 = N'$. Then, for any contractible ball $B'' \subset \prod_{k \not= 1,i_0} N_k$, the vector bundle $E$ restricted to $N_{i_0} \times B''$ is the pullback bundle $\textrm{pr}^*(E|_{N_{i_0}})$, where $\textrm{pr} : N' \to N_{i_0}$ is the projection map, and $E|_{N_{i_0}}$ is the restriction of $E$ to a slice $N_{i_0} \times \{y''\}$. Such an identification of the bundle structure then allows us to define an explicit diffeomorphism map $\Phi_j : W_j \subset \operatorname{Tot}(E|_{N_{i_0}}) \times \mathbb{C}^{n''} \to U_j \subset \widehat{M}$ for some suitably chosen open sets $W_j$ and $U_j$.

The pullback of the metric $g_j(t)$ by $\Phi_j$ then takes the form
\[\Phi_j^*g_j(t) = ds^2 + \hat{h}_j^2 \Phi_j^*(\theta\otimes\theta) + \hat{f}_{1,j}^2 \pi^*g_1 + \hat{f}_{i_0,j}^2 \pi^*g_{i_0} + \sum_{k\geq 2, k\not= i_0}\hat{f}_{k,j}^2 \Phi_j^*\pi^*g_k\]
for some rescaled and re-parametrized functions $\hat{h}_j$ and $\hat{f}_{k,j}$'s. The next task will be to show that each of these functions converges in $C^\infty$-topology on compact subsets, which requires the $C^m$-estimates of $\hat{h}_j$ and $\hat{f}_{k,j}$'s for any $m \in \mathbb{N}$. We will obtain these from the uniform derivative bounds $|\nabla^m \textrm{Rm}|_{g_j(t)}$ of the Riemann curvature, which are guaranteed by Shi's derivative estimates \cite{Shi} and our Type I singularity result (Theorem \ref{thm:nut-bolt-typeI}).

After smooth convergence of the components is established, we then need to show that $\theta \otimes \theta$ converges to $\widetilde{\theta} \otimes \widetilde{\theta}$ for a connection $\widetilde{\theta}$, where $d\widetilde{\theta}$ only involves the curvature of the restricted bundle over $\mathbb{CP}^m \times N_{i_0}$, while the curvature terms with respect to the $N_k$-factors with $k \not= 1, i_0$ all vanish after passing to the limit. To this end, we will use the contractibility of balls in $\prod_{k \not= 1,i_0} N_k$, so that Poincar\'e's lemma applies, and the uniform lower bound of the metric $g(t_j)$ along these $N_k$-directions.

Last but not least, we will prove that the remaining factor $\sum_{k\not= 1,i_0}\hat{f}_{k,j}^2 \Phi_j^*\pi^*g_k$ converges to a flat metric. This will be done using the Taylor expansion of the metric $\Phi_j^*\pi^*g_k$ around the singular point $\bar{x}' \in Q_0$. Combining all convergence results together, the limit model will satisfy the circle-bundle ansatz \eqref{eq:ansatz}, and by Enders-M\"uller-Topping \cite{EMT} it must be a shrinking Ricci soliton, and hence satisfies Dancer--Wang's ODE system in \cite{DW2011}, and the topological constraints on the $p_i, q_i$'s then prove that the limit model must be one of the Dancer--Wang shrinking Ricci solitons.

\subsubsection{Step 1: Vector bundle structure in the tubular neighborhood of $Q_0$}
We first identify the topological model of $\widehat{M}$ near $Q_0$. First note that $\widehat{M} \backslash Q_{\ell}$ is the total space of a rank-$(m+1)$ complex vector bundle of the form
\begin{equation*}
E \;=\; \mathcal{L}^{\, \oplus ( m + 1 )} \;\longrightarrow\; N' \,, \quad \text{ for some line bundle $\mathcal{L}$ with } c_1 ( \mathcal{L} ) \;=\; - \sum_{k \geq 2} q_k \, a_k \,,
\end{equation*}
obtained by coning off the $S^{2m+1}$-fibers of $P \to N'$. Then we restrict the bundle $E$ over the open set $N_{i_0} \times B''$ in $Q_0 \cong N'$, where $B''$ is a fixed ball in $N'' := \prod_{k\not=1,i_0}N_k$ containing the point $\bar{x}'' := \textrm{pr}_{N' \to N''}(\bar{x}')$. Since $B''$ is contractible, $N_{i_0} \times B''$ and $N_{i_0}$ are homotopy equivalent, so $E|_{N_{i_0} \times B''}$ is the pullback bundle of $E_{i_0} := E |_{N_{i_0}}$ by the projection map $N_{i_0} \times B'' \to N_{i_0}$, where $E|_{N_{i_0}}$ is the restriction of $E$ to the slice $N_{i_0} \times \{\bar{x}''\}$. Therefore, $\operatorname{Tot}\big(E|_{N_{i_0} \times B''}\big) \cong \operatorname{Tot}\big(E_{i_0}\big) \times B''$.

Furthermore, $\operatorname{Tot}\big(E\big|_{N_{i_0}\times B''}\big) \backslash Q_0$ can be identified with $(0,\infty) \times \operatorname{Tot}\big(P\big|_{\mathbb{CP}^m \times N_{i_0}\times B''}\big)$. By the contractibility of $B''$, the principal $\mathrm{U}(1)$-bundle $P$ over $\mathbb{CP}^m \times N_{i_0} \times B'' \subset N$ is the pullback of the principal $\mathrm{U}(1)$-bundle over $\mathbb{CP}^m \times N_{i_0}$ via the projection onto $\mathbb{CP}^m \times N_{i_0}$. Consequently, the $\mathrm{U}(1)$-bundle $P$ restricted to $\mathbb{CP}^m \times N_{i_0} \times B''$ is given by
\begin{equation*}
\operatorname{Tot}\big(P \big|_{( \mathbb{CP}^m \times N_{i_0} ) \times B''}\big) \;\cong\; \operatorname{Tot}\big(\widetilde{P}\big) \times B'' \,,
\end{equation*}
where $\widetilde{P}$ is the restriction of $P$ to the central slice $\mathbb{CP}^m \times N_{i_0} \times \{\bar{x}''\}$ which is the circle bundle with Euler class $a_1 + q_{i_0}a_{i_0}$. Let $\tilde{\theta} := \theta|_{\widetilde{P}}$ be the connection $1$-form on $\widetilde{P}$, then we have
\begin{equation*}
d \widetilde{\theta} \;=\; \pi^*\omega_1 \;+\; q_{i_0} \, \pi^*\omega_{i_0} \,.
\end{equation*}
Accordingly we write points of $P$ over $( \mathbb{CP}^m \times N_{i_0} ) \times B''$ as pairs $( w,z) \in \operatorname{Tot}\big(\widetilde{P}\big) \times B''$, and away from the zero section of $E\big|_{N_{i_0}\times B''}$, we can write points as $(s,w,z) \in (0,\infty) \times \operatorname{Tot}\big(\widetilde{P}\big) \times B''$.

To summarize, under the above identification, the total space $\operatorname{Tot}(E) \cong \widehat{M} \backslash Q_\ell$ restricted to $N_{i_0} \times B''$, which is precisely the open set
\begin{equation}
\label{eq:U}
U \;:=\; \Big\{ \, x \in \widehat{M} \backslash Q_\ell \;:\; \mathrm{pr}_{N' \to N''} \circ \widehat{\pi}' (x) \in B'' \, \Big\} \,, 
\end{equation}
can be identified with $\operatorname{Tot}\big(E_{i_0}\big) \times B''$ where $E_{i_0}$ is the vector bundle over $N_{i_0}$ described above. Furthermore, $U \backslash \{\text{zero section}\}$ can be parametrized by $(s,w,z) \in (0,s_*) \times \operatorname{Tot}\big(\widetilde{P}\big) \times B''$. Such identifications allow us to define maps from $\operatorname{Tot}\big(E_{i_0}\big) \times \mathbb{C}^{n''}$ to the open set $U \subset \widehat{M}$.

\begin{lemma}
\label{lma:contraction_Q0_signs}
We have $p_{i_0} > (m+1)q_{i_0} > 0$.
\end{lemma}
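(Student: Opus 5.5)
The inequality $p_{i_0} > (m+1)q_{i_0}$ follows directly from the definition of $T^0_{i_0}$. By hypothesis $T = T^0_{i_0} < \infty$. The definition of $T_k^0$ gives a finite value only when $(m+1)q_k < p_k$, and $+\infty$ otherwise. Hence finiteness of $T^0_{i_0}$ forces $p_{i_0} - (m+1)q_{i_0} > 0$. This is consistent with the nut affine law \eqref{eq:nut-affine}: $f_{i_0}^2|_{Q_0}(t)$ must decrease to $0$ at $T$, so its slope $2\big((m+1)q_{i_0} - p_{i_0}\big)$ is negative.

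The substantive part is $q_{i_0} > 0$, which I would get by comparing the two strata through the K\"ahler relation \eqref{eq:Fk-affine}. That relation gives
\[
f_{i_0}^2|_{Q_\ell}(t) = f_{i_0}^2|_{Q_0}(t) + q_{i_0} L(t), \qquad L(t) = f_1^2|_{Q_\ell}(t) > 0 .
\]
Suppose $q_{i_0} < 0$. Then $f_{i_0}^2|_{Q_\ell}(t) < f_{i_0}^2|_{Q_0}(t) \to 0$ as $t \to T$, while $f_{i_0}^2|_{Q_\ell}(t)$ must remain positive on $[0,T)$. Letting $t \to T$ gives $f_{i_0}^2|_{Q_\ell}(T) \le 0$, and by positivity before $T$ it is in fact $0$. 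Since $f_{i_0}^2|_{Q_\ell}$ is affine by Proposition \ref{prop:affine-laws}(a), positive on $[0,T)$ and vanishing at $T$, its slope $-2(p_{i_0}+q_{i_0})$ is negative. So $p_{i_0}+q_{i_0} > 0$ and $T^\ell_{i_0} = T$. This contradicts the standing assumption $T < \min_{k\ge 2} T_k^\ell$.

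More directly, the limiting relation $0 \le f_{i_0}^2|_{Q_\ell}(T) = 0 + q_{i_0}L(T)$ combined with $L(T) > 0$ already contradicts $q_{i_0} < 0$. Here $L(T) > 0$ holds because $T < T_F'$ and $L$ is affine. Separately, $q_{i_0} \neq 0$ by the standing assumption $q_i \neq 0$ of the ansatz. Hence $q_{i_0} > 0$, and with the first part, $(m+1)q_{i_0} > 0$.

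The main point requiring care is justifying the passage to $t = T$. Every quantity involved is affine in $t$: $f_{i_0}^2|_{Q_0}$ by \eqref{eq:nut-affine}, $f_{i_0}^2|_{Q_\ell}$ by Proposition \ref{prop:affine-laws}(a), and $L$ by \eqref{eq:circle-mass-affine}. The identity \eqref{eq:Fk-affine} holds at every $t<T$, so both sides extend continuously to $t = T$. Nonnegativity of $f_{i_0}^2|_{Q_\ell}(T)$ follows from positivity of the metric on $[0,T)$. No curvature estimates are needed.
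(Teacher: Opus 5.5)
Your proof is correct and follows essentially the same route as the paper. You obtain $p_{i_0}>(m+1)q_{i_0}$ from $T^0_{i_0}$ being finite, equivalently from positivity of $f_{i_0}^2|_{Q_0}(t)=2(p_{i_0}-(m+1)q_{i_0})(T-t)$, and you get $q_{i_0}>0$ by letting $t\to T$ in $f_{i_0}^2|_{Q_\ell}-f_{i_0}^2|_{Q_0}=q_{i_0}L$ using $L(T)>0$. The only cosmetic difference is that the paper uses $f_{i_0}^2|_{Q_\ell}(T)>0$ (from $T<T^\ell_{i_0}$) to get strict positivity directly, whereas you use nonnegativity together with $q_{i_0}\neq 0$, or your contradiction argument, which in fact also covers $q_{i_0}=0$.
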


\begin{proof}
Under the assumption that the contraction occurs at the $N_{i_0}$-factor, $f_{i_0}^2 |_{Q_0} (t) = f_{i_0}^2 |_{Q_0} (0) - 2 (p_{i_0} - ( m + 1 ) \, q_{i_0} ) \, t$ vanishes exactly at $T = T_{i_0}^0$, so
\begin{equation*}
f_{i_0}^2 \big|_{Q_0} (t) \;=\; 2 \big( p_{i_0} - ( m + 1 ) \, q_{i_0} \big) \big( T - t \big) \,.
\end{equation*}
This implies $p_{i_0} > ( m + 1 )q_{i_0}$.
Integrating the K\"ahler condition $q_{i_0}h = \partial_s(f_{i_0}^2)$ across the fiber, we get
\begin{equation*}
f_{i_0}^2 \big|_{Q_\ell} (t) \;-\; f_{i_0}^2 \big|_{Q_0} (t) \;=\; q_{i_0} \int_0^{\ell} h \,ds\;=\; q_{i_0} \, L (t) \,.
\end{equation*}
As $t \to T$ the left-hand side tends to $f_{i_0}^2 |_{Q_\ell} (T) > 0$ --- positive because $T = T_{i_0}^0 < T_{i_0}^{\ell}$. As the $\mathbb{CP}^{m+1}$-fiber does not collapse since $T < T_F'$, we have $L (T) > 0$. This proves $q_{i_0} > 0$.
\end{proof}

\begin{remarkx}
$p_{i_0} > (m+1)q_{i_0}$ is exactly the existence condition of a shrinking K\"ahler-Ricci soliton constructed by Dancer--Wang \cite{DW2011} on $\operatorname{Tot} ( E_{i_0} )$ over $N_{i_0}$ under the circle-bundle ansatz. 
\end{remarkx}

%
%

\subsubsection{Definition of the diffeomorphism $\Phi_j : W_j \to U_j$}
Based on the above discussion, we define 
\[W := \operatorname{Tot} ( E_{i_0} ) \times \mathbb{C}^{n''}\]
for the model space. On $\operatorname{Tot} ( E_{i_0} )$, we let $\hat{s}$ be the fiberwise radial coordinate, so that $\{ \hat{s} = 0 \}$ is the zero section and its complement is parametrized as $( 0, \infty ) \times \operatorname{Tot}\big(\widetilde{P}\big) \ni ( \hat{s}, w )$. Under such an identification, we define for each $j$ the following open set:
\begin{equation}
\label{eq:W_j}
W_j := \{ \hat{s} < R_j \} \times B_{R_j} ( 0 ) \subset W	
\end{equation}
where
\begin{equation*}
R_j \;\leq\; \tfrac{1}{2} \sqrt{K_j}\; \operatorname{inj} \Big( N'' \, , \; \sum_{k \geq 2, \, k \neq i_0} f_k^2 |_{Q_0} ( t_j ) \, g_k \Big), \qquad \text{ and } R_j \to +\infty.
\end{equation*}
Here $B_{R_j}(0)$ is the Euclidean ball in $\mathbb{C}^{n''}$ centered at $0$ with radius $R_j$. The above injectivity radius has a uniform lower bound (independent of $t_j$) because for $k \not= 1, i_0$, the $N_k$-factors do not contract to a point as $t \to T$, hence $0 < \frac{1}{C} \leq f_k^2\big|_{Q_0}(t_j) \leq C$ (as $f_k^2\big|_{Q_0}(t)$ is bounded between two affine linear functions of $t$ which are uniformly bounded away from zero and from above). It is easy to check that
\[\bigcup_{j=1}^\infty W_j = \operatorname{Tot}(E_{i_0}) \times \mathbb{C}^{n''} = W.\]

Recall that the open set $U$ of $\widehat{M}$ can be identified with $\operatorname{Tot}\big(E_{i_0}\big) \times B''$, and $U \backslash Q_0$ can be identified with $(0,s_*) \times \operatorname{Tot}(\widetilde{P}) \times B''$. Using this identification, we will define a diffeomorphism $\Phi_j : W_j \subset W \to U_j \subset U$ as follows. If $p \not\in \{\hat{s} = 0\}$, so that $p = (\hat{s},w,z) \in (0,\infty) \times \operatorname{Tot}(\widetilde{P}) \times B_{R_j}(0)$, then we define
\begin{equation}
\Phi_j(\hat{s},w,z) = \left(K_j^{-1/2}\hat{s}, \; w, \;\exp_{\bar{x}'',j}\big(K_j^{-1/2}z\big)\right)	
\end{equation}
where $\exp_{\bar{x}'',j} : B_{R_j}(0) \subset \mathbb{C}^{n''} \cong T_{\bar{x}''}B'' \to B'' \subset N''$ is the exponential map at the point $\bar{x}'' := \textrm{pr}_{N' \to N''}(\bar{x}')$ of the manifold $\Big( N'', \; \sum_{k \geq 2, \, k \neq i_0} f_k^2 |_{Q_0} ( t_j ) \, g_k \Big)$.
 If $p \in \{\hat{s} = 0\}$, then we express $p$ as the point $(\vec{0}, z) \in W_j = \operatorname{Tot}(E_{i_0}) \times B_{R_j}(0)$, and define
 \begin{equation}
 \Phi_j(\vec{0},z) := \left(\vec{0}, \;\exp_{\bar{x}'',j}\big(K_j^{-1/2}z\big)\right)
 \end{equation}
For any $z \in B_{R_j}(0)$, we have $\exp_{\bar{x}'',j}(K_j^{-1/2}z) \in B_{K_j^{-1/2}R_j}(\bar{x}'') \subset N''$ which is still within the injectivity radius from $\bar{x}''$ under the metric $\sum_{k \geq 2, \, k \neq i_0} f_k^2 |_{Q_0} ( t_j ) \, g_k$ by our choice of $R_j$. Furthermore, we may assume that $\exp_{\bar{x}'',j}(K_j^{-1/2}z) \in B''$ for any $j$ and $z \in B_{R_j}(0)$, and $K_j^{-1/2}R_j$ does not exceed the distance between $Q_0$ and $Q_\ell$ at $t = T$, which is positive as the $\mathbb{CP}^{m+1}$-fibers do not collapse. All these combine to show that $\Phi_j$ is a diffeomorphism from $W_j$ onto $U_j \subset U$, where $U_j$ is given by
\begin{equation}
U_j \;:=\; \Big\{ \, x \in \widehat{M} \;:\; \operatorname{dist}_{g ( t_j )} ( x, Q_0 ) < K_j^{-1/2} R_j \;\; \text{and} \;\; \mathrm{pr} \big( \widehat{\pi}' ( x ) \big) \in B_{K_j^{-1/2}R_j}(\bar{x}'') \, \Big\} \,,
\end{equation}
where $\mathrm{pr} : N' \to N''$ is the projection map, and the ball $B_{K_j^{-1/2}R_j}(\bar{x}'')$ is with respect to the metric $\sum_{k \geq 2, \, k \neq i_0} f_k^2 |_{Q_0} ( t_j ) \, g_k$.

\subsubsection{$C^\infty$-convergence of components of $\Phi_j^*g_j(t)$}
Recall that $g_j(t) := K_j g(t_j + K_j^{-1}t)$. By pulling back $g_j(t)$ by $\Phi_j$, we can see that its components are given by:
\begin{align}\label{eq:pullback_by_Phi}
\Phi_j^* \, g_j (t) & \;=\; d \hat{s} \otimes d \hat{s} \;+\; \hat{h}_j^2 \, \Phi_j^*(\theta \otimes \theta) \;+\; \hat{f}_{1, j}^2 \, \pi^* g_1 \;+\; \hat{f}_{i_0, j}^2 \, \pi^* g_{i_0} \;+\; \sum_{k \neq 1,i_0} K_j \, \Phi_j^*(f_k^2 \pi^* g_k) \,,
\end{align}
where 
\begin{align*}
	\hat{h}_j ( \hat{s}, t ) & := K_j^{1/2} \, h \big( K_j^{-1/2} \hat{s}, \, t_j + K_j^{-1} t \big),\\
	\hat{f}_{i, j} ( \hat{s}, t ) & := K_j^{1/2} \, f_i \big( K_j^{-1/2} \hat{s}, \, t_j + K_j^{-1} t \big), & i & = 1, i_0.
\end{align*}

Our next task is to establish the $C^m$-estimates for $\hat{h}_j^2$ and $\hat{f}_{i,j}^2$. For this we need Shi's derivative estimates \cite{Shi}. By Theorem \ref{thm:nut-bolt-typeI},
\[| \mathrm{Rm} ( g (t) ) |_{g(t)} \leq \frac{C}{T - t }\]
on all of $\widehat{M}$. Recall that $K_j := (T-t_j)^{-1}$, so under the parabolic rescaling $g_j ( t ) = K_j \, g ( t_j + K_j^{-1} t )$, $t \in [-K_jt_j, K_j(T-t_j))$, this becomes
\[|\mathrm{Rm} ( g_j (t) ) |_{g_j (t)} \leq \frac{1}{K_j} \cdot \frac{C}{T-(t_j + K_j^{-1}t)} = \frac{C}{1 - t} \leq \frac{C}{1 - t_2}\]
for $t \leq t_2$, where $t_1 < t_2 < 1$ are arbitrary but fixed. Since the rescaled flows exist on $[ - K_j t_j, K_j(T - t_j) ) \supseteq [ t_1 - 1, t_2 ]$ for $j$ large (as $- K_j t_j \to - \infty$), we may apply Shi's interior derivative estimates \cite{Shi} on $[ t_1 - 1, t_2 ]$, so that for every $m \geq 1$,
\begin{equation}\label{eq:Shi_m}
| \nabla^m \mathrm{Rm} |_{g_j (t)} \;\leq\; C(m, S, t_1, t_2 ) \qquad \text{on } \{ 0 \leq \hat{s} \leq S \} \times [ t_1, t_2 ] \text{ for any fixed } S > 0.
\end{equation}
We will use Shi's estimates to recover the uniform $C^m$-bounds of $\hat{h}_j^2$ and $\hat{f}_{i,j}^2$:
\begin{lemma}
\label{lma:Cm-estimates}
For any $m,k \geq 0$, there exists $C = C(m,k,S,t_1,t_2) > 0$ such that
\begin{equation}\label{eq:Cmk}
\big\| \hat{h}_j \big\|_{C^{m,k}([0,S]\times[t_1,t_2])} + \big\| \hat{f}_{i_0, j}^2 \big\|_{C^{m,k}([0,S]\times[t_1,t_2])} + \big\| \hat{f}_{1, j}^2 \big\|_{C^{m,k}([0,S]\times[t_1,t_2])} \;\leq\; C.
\end{equation}
\end{lemma}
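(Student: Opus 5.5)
We will derive the $C^{m,k}$-bounds of $\hat h_j$, $\hat f_{1,j}^2$, $\hat f_{i_0,j}^2$ from the Type I bound, Shi's estimates \eqref{eq:Shi_m}, and the explicit curvature formulas of Corollary \ref{cor:sectional-curvatures}, reading metric components off curvature. Throughout, $s$ will denote the rescaled arclength $\hat s$ of $g_j(t)$. Time derivatives will be converted to spatial ones through the reduced flow system \eqref{eq:flow-system}. In the rescaled picture this system is scale invariant: it holds for $\hat h_j,\hat f_{i,j}$ with $p_i$ replaced by $p_i K_j^{-1}$. Thus it suffices to control pure $\hat s$-derivatives uniformly on $[0,S]\times[t_1,t_2]$ and then iterate in $t$. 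There is one caveat: the arclength coordinate is time dependent, because $\partial_t a$ is given by curvature. We will track the resulting correction terms using the curvature bounds; they are again polynomial in already-bounded quantities.

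\emph{Zeroth and first order.} The Kähler condition gives $\partial_{\hat s}\hat f_{i,j}^2=q_i\hat h_j$ with $q_1=1$. So $\hat f_{1,j}^2(\hat s)=\int_0^{\hat s}\hat h_j$ and $\hat f_{i_0,j}^2=\hat f_{i_0,j}^2|_{Q_0}+q_{i_0}\hat f_{1,j}^2$. From the nut affine law \eqref{eq:nut-affine} and Lemma \ref{lma:contraction_Q0_signs} we get the closed form $\hat f_{i_0,j}^2|_{Q_0}(t)=2(p_{i_0}-(m+1)q_{i_0})(1-t)$, which is bounded. The Li--Yau estimate \eqref{eq:li-yau} is scale invariant: it gives $\hat h_j^2\le C_0K_j^{-1}\cdot K_j\hat f_{1,j}^2$ after rescaling, that is, $\hat h_j^2\le C_0\hat f_{1,j}^2$. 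Combined with $\partial_{\hat s}\hat f^2_{1,j}=\hat h_j$, this yields $\partial_{\hat s}\hat f_{1,j}\le \sqrt{C_0}/2$. Hence $\hat f_{1,j}^2\le C\hat s^2$ and $\hat h_j\le C\hat s$, so all three functions are $C^0$-bounded on $[0,S]$. Next, $|K(\nu,\xi^*)|=|\hat h_j''/\hat h_j|$ is bounded by the curvature bound. Since $\hat h_j'(0)=1$ (closing condition), integrating the ODE $\hat h_j''=-K\hat h_j$ bounds $\hat h_j'$ and $\hat h_j''$ on $[0,S]$. This is the step where $C^2$-control of the metric comes from $C^0$-control of curvature.

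\emph{Higher order.} We induct on $m$, using $\nabla^m\mathrm{Rm}$ in the radial direction. The component $\nabla^{m}_{\nu\cdots\nu}\mathrm{Rm}(\nu,\xi^*,\xi^*,\nu)$ equals $-\partial_{\hat s}^m(\hat h_j''/\hat h_j)$ plus terms of lower order. These lower-order terms are polynomial in $\hat h_j'/\hat h_j$-type connection coefficients and in lower derivatives of the curvature components. To avoid the singular factor $1/\hat h_j$ near $\hat s=0$, we will work in the smooth frame $E=\hat s\partial_{\hat s}$, $\xi$, as in \S\ref{subsec:oneill}. Equivalently, we will use the even-variable representation $\hat h_j=\hat s\varphi_j(\hat s^2)$ and regard $\hat h_j''/\hat h_j$ as a smooth function of $\rho=\hat s^2$. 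Each new derivative of this function is then bounded by Shi's bound \eqref{eq:Shi_m} together with the inductive hypothesis. Integrating from the boundary data at $\hat s=0$ (oddness and $\hat h_j'(0)=1$) then gives $\|\hat h_j\|_{C^{m+2}}\le C$. The bounds for $\hat f^2_{1,j}$ and $\hat f^2_{i_0,j}$ follow immediately from the Kähler identities above.

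\emph{Main obstacle.} The main difficulty is the nut stratum $\hat s=0$. There, $1/\hat h_j$ and $1/\hat f_{1,j}^2$ blow up, so naive division by the warping functions is not uniform in $j$. We plan to handle this as described above: pass to the even variable and the smooth vector fields $E,\xi$. We then use the fact that $\hat h_j''/\hat h_j=-K(\nu,\xi^*)$ is itself a smooth function on $\widehat M$, whose covariant derivatives are controlled by Shi's estimates in a smooth, $j$-uniform chart.
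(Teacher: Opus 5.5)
Your mechanism is the paper's: a Jacobi-type ODE along the normal geodesics, in which the $\hat s$-derivatives of a radial sectional curvature are identified with components of $\nabla^m\mathrm{Rm}$, bounded by \eqref{eq:Shi_m}, and the K\"ahler identities then transfer the bounds. The difference is the function you run it on. You use $\hat h_j$, starting from the nut data $\hat h_j(0)=0$, $\partial_{\hat s}\hat h_j(0)=1$. The paper uses $\hat f_{i_0,j}$, which is bounded below by \eqref{eq:C0-fi0} so that no quotient ever degenerates, and recovers $\hat h_j=q_{i_0}^{-1}\partial_{\hat s}\hat f_{i_0,j}^2$ afterwards. Your choice is legitimate, and it does not even need $q_{i_0}\neq 0$.

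However, your treatment of the nut rests on a misdiagnosis. You claim that $\nabla^m_{\nu\cdots\nu}\mathrm{Rm}(\nu,\xi^*,\xi^*,\nu)$ differs from $-\partial^m_{\hat s}(\hat h_j''/\hat h_j)$ by terms containing $\hat h_j'/\hat h_j$. You then propose to tame these with the frame $E=\hat s\partial_{\hat s},\xi$ and the variable $\rho=\hat s^2$. That is not an argument. Components in a frame that vanishes along $Q_0$ carry powers of $\hat s$ and $\hat h_j$, which must be divided out again to recover $\kappa_j:=K(\nu,\xi^*)$. Shi's estimates also say nothing directly about $\rho$-derivatives of $\kappa_j$.

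The missing observation is that no such terms exist. By \eqref{eq:shape-operator}, $\nabla_{\partial_{\hat s}}\xi=(\partial_{\hat s}\hat h_j/\hat h_j)\,\xi$, so $\xi^*=\xi/\hat h_j$ is parallel along $\partial_{\hat s}$, exactly like $\widetilde X/\hat f_{i_0,j}$ in the paper's proof. Hence $\partial^m_{\hat s}\kappa_j=(\nabla^m_{\partial_{\hat s},\dots,\partial_{\hat s}}\mathrm{Rm})(\nu,\xi^*,\xi^*,\nu)$ on $\{\hat s>0\}$, which is bounded by \eqref{eq:Shi_m}. The Leibniz recursion for $\partial_{\hat s}^2\hat h_j=-\kappa_j\hat h_j$ involves no division at all. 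With this fix your spatial bounds close, and the ``nut obstacle'' disappears.

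The time-derivative step fails as written. The system \eqref{eq:flow-system} is not termwise bounded near $Q_0$. In the equation for $\hat h_j$, the terms $2m\,\partial_{\hat s}\hat h_j\,\partial_{\hat s}\hat f_{1,j}/\hat f_{1,j}$ and $-m\,\hat h_j^3/(2\hat f_{1,j}^4)$ each behave like $\pm 2m/\hat s$ as $\hat s\to 0$. They cancel only in combination: their sum is $-2m\,\hat h_j\,K(\nu,X_1)$. So the correction terms are not ``polynomial in already-bounded quantities''. Iterating in $t$ that way would need uniform control of quotients such as $\hat h_j/\hat f_{1,j}^2$ and their derivatives, which you do not supply.

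Also, $p_i$ is not replaced by $p_iK_j^{-1}$: the equation $\partial_t\hat f_{i,j}^2=\Delta_{g_j}\hat f_{i,j}^2-2p_i$ is exactly scale invariant. This error is harmless.

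The clean route is the paper's: time derivatives of metric coefficients are Ricci components on static fields. For example, $\partial_t(\hat h_j^2)=-2\,\mathrm{Ric}_{g_j}(\xi,\xi)$, that is, $\partial_t\hat h_j=-\hat h_j\,\mathrm{Ric}_{g_j}(\xi^*,\xi^*)$. Its iterated $t$- and $\hat s$-derivatives are controlled through \eqref{eq:Box_Rm}, \eqref{eq:Shi_m} and the parallel frame. The commutator $[\partial_t,\partial_{\hat s}]$ accounts for the time-dependent arclength coordinate you flagged.
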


\begin{proof}
The Li--Yau-type estimate \eqref{eq:li-yau} (after \cite{LY}) applied to $u = f_1^2$ shows $h^2 \leq C \, f_1^2$. After rescaling, we get
\[\hat{h}_j \leq \sqrt{C} \, \hat{f}_{1, j}.\]
By the K\"ahler condition $q_ih = \partial_s(f_i^2)$ for any $i \geq 1$, we have $\partial_{\hat{s}} \hat{f}_{1, j}^2 = \hat{h}_j \leq \sqrt{C} \, \hat{f}_{1, j}$ and hence $\partial_{\hat{s}}\hat{f}_{1,j} \leq \frac{\sqrt{C}}{2}$ on $[0,S]$. By integration from $\hat{s} = 0$ and recalling that $\hat{f}_{1, j} ( 0, t) = 0$ and $\partial_{\hat{s}}\hat{f}_{1,j}(0,t) = \frac{1}{\sqrt{2}}$, and the mean value theorem, we get
\begin{equation}\label{eq:C0-estimates}
\hat{f}_{1, j} \;\leq\; \tfrac{\sqrt{C}}{2} \, \hat{s} \,, \qquad \hat{h}_j \;\leq\; \tfrac{C}{2} \, \hat{s} \,, \qquad \big| \, \hat{f}_{i_0, j}^2 ( \hat{s}, t ) - \hat{f}_{i_0, j}^2 ( 0, t ) \, \big| \;\leq\; \tfrac{q_{i_0} C}{4} \, \hat{s}^2
\end{equation}
for any $(\hat{s},t) \in [0,S] \times [t_1,t_2]$. The first two inequalities prove $\|\hat{f}_{1,j}^2\|_{C^0([0,S])} + \|\hat{h}_j\|_{C^0([0,S])} \leq C$ for some $C = C(S) > 0$. For $\hat{f}_{i_0,j}^2$, we observe that from
\[f_{i_0}^2(0,t) = 2(p_{i_0}-(m+1)q_{i_0})(T-t),\]
we have
\begin{equation*}
\hat{f}_{i_0, j}^2(0,t) \;=\; K_j \, f_{i_0}^2 \big(0, t_j + K_j^{-1} t \big) \;=\; 2 \big( p_{i_0} - ( m + 1 ) \, q_{i_0} \big) \,(1 - t) \geq \frac{1}{C} > 0
\end{equation*}
on $t \in [t_1,t_2]$. Since $\hat{f}_{i_0,j}^2$ is strictly increasing on $[0,S]$, combining with \eqref{eq:C0-estimates} we get
\begin{equation}\label{eq:C0-fi0}
0 < \frac{1}{C} \leq \hat{f}_{i_0,j}^2 (\hat{s},t) \leq C
\end{equation}
where $C$ depends on $S, t_1, t_2$, proving $\big\|\hat{f}_{i_0,j}^2\big\|_{C^0([0,S])} \leq C$.

Next we move on to the higher-order estimate for $\hat{f}_{i_0,j}^2$. Using the K\"ahler condition $q_k h = \partial_s (f_k^2)$, we have
\begin{equation*}
\partial_{\hat{s}} \hat{f}_{i_0, j}^2 \;=\; q_{i_0} \, \hat{h}_j \,, \qquad \partial_{\hat{s}} \hat{f}_{1, j}^2 \;=\; \hat{h}_j \,.
\end{equation*}
Hence, to prove the $C^m([0,S])$-estimates of $\hat{h}_j$ and $\hat{f}_{1,j}^2$, it suffices to prove $\big\|\hat{f}_{i_0,j}^2\big\|_{C^m([0,S])} \leq C$ for any $m \geq 1$. The K\"ahler condition above and the uniform bounds of $\|\hat{h}_j\|_{C^0([0,S])}$ and $\|\hat{f}_{i_0,j}^2\|_{C^0([0,S])}$ already imply the uniform bound for $\big\|\hat{f}_{i_0,j}^2\big\|_{C^1([0,S])}$. From \eqref{eq:C0-fi0}, we have a uniform bound for $\big\|\hat{f}_{i_0,j}\big\|_{C^1([0,S])}$ too.

For $m \geq 2$, we will make use of the sectional curvature expression from Corollary \ref{cor:sectional-curvatures}:
\[K_{\Phi_j^*g_j(t)}(\partial_{\hat{s}}, \widetilde{X}) = -\frac{\partial_s^2 f_{i_0}}{f_{i_0}}\]
for any $\widetilde{X} \in TW_j$ such that $\pi_*(\widetilde{X}) \in TN_{i_0}$ and $|\pi_*(\widetilde{X})|_{g_{i_0}} = 1$. After rescaling and dilation, we get
\begin{equation}\label{eq:partial-f-K}
\partial^2_{\hat{s}}\hat{f}_{i_0,j} = -\hat{f}_{i_0,j} \cdot K_{\Phi_j^*g_j(t)}(\partial_{\hat{s}}, \widetilde{X}).	
\end{equation}
Under the rescaled metric $\Phi_j^*g_j$ the orthonormal frame $\{ \partial_{\hat{s}}, \widetilde{X} / \hat{f}_{i_0, j} \}$ is \emph{parallel} along the normal geodesics: indeed, $\nabla_{\partial_{\hat{s}}}\partial_{\hat{s}} = 0$, and by the shape-operator formula \eqref{eq:shape-operator},
\[\nabla_{\partial_{\hat{s}}}\widetilde{X} = \nabla_{\widetilde{X}}\partial_{\hat{s}} + [\partial_{\hat{s}}, \widetilde{X}] = \frac{\partial_{\hat{s}}\hat{f}_{i_0,j}}{\hat{f}_{i_0,j}}\widetilde{X},\]
and so
\begin{equation*}
\nabla_{\partial_{\hat{s}}} \left( \frac{\widetilde{X}}{\hat{f}_{i_0, j}} \right) \;=\; \frac{ \partial_{\hat{s}} \hat{f}_{i_0, j} }{ \hat{f}_{i_0, j} } \, \frac{\widetilde{X}}{\hat{f}_{i_0, j}} \;-\; \frac{ \partial_{\hat{s}} \hat{f}_{i_0, j} }{ \hat{f}_{i_0, j}^2 } \, \widetilde{X} \;=\; 0.
\end{equation*}
Note that $\hat{f}_{i_0, j}$ is uniformly bounded away from $0$ on $[t_1,t_2]$. Consequently, we have
\[\partial_{\hat{s}}^m \left(\textrm{Rm}_{\Phi_j^*g_j(t)}(\partial_{\hat s},\tilde X/\hat f_{i_0,j},\tilde X/\hat f_{i_0,j},\partial_{\hat s})\right) = \big(\nabla^m_{\partial_{\hat s},\dots,\partial_{\hat s}}\mathrm{Rm}_{\Phi_j^*g_j(t)}\big)(\partial_{\hat s},\tilde X/\hat f_{i_0,j},\tilde X/\hat f_{i_0,j},\partial_{\hat s})\]
as there are no covariant derivative terms of the frame $\{ \partial_{\hat{s}}, \widetilde{X} / \hat{f}_{i_0, j} \}$. Since
\[K_{\Phi_j^*g_j(t)}(\partial_{\hat{s}},\widetilde{X}) = \frac{\textrm{Rm}_{\Phi_j^*g_j(t)}(\partial_{\hat{s}}, \widetilde{X}, \widetilde{X}, \partial_{\hat{s}})}{|\partial_{\hat{s}}|^2_{\Phi_j^*g_j(t)}|\widetilde{X}|_{\Phi_j^*g_j(t)}^2} = \textrm{Rm}_{\Phi_j^*g_j(t)}(\partial_{\hat{s}}, \widetilde{X}/\hat{f}_{i_0,j}, \widetilde{X}/\hat{f}_{i_0,j}, \partial_{\hat{s}}),\]
we get
\[\partial_{\hat s}^{\,m}\,\left(K_{\Phi_j^*g_j(t)}(\partial_{\hat s},\widetilde{X})\right)=\big(\nabla^m_{\partial_{\hat s},\dots,\partial_{\hat s}}\mathrm{Rm}_{\Phi_j^*g_j(t)}\big)(\partial_{\hat s},\tilde X/\hat f_{i_0,j},\tilde X/\hat f_{i_0,j},\partial_{\hat s}).\]
Shi's estimates \eqref{eq:Shi_m} then imply
\begin{equation}\label{eq:partialk-K}
\big|\partial_{\hat s}^{\,m}\,(K_{\Phi_j^*g_j(t)}(\partial_{\hat s},\widetilde{X}))\big|\;\leq\;|\nabla^m\mathrm{Rm}|_{\Phi_j^*g_j(t)} = |\nabla^m\textrm{Rm}|_{g_j(t)}\;\leq\;C(m,S,t_1,t_2)
\end{equation}
for any $m \geq 0$.

From \eqref{eq:partial-f-K}, we have for any $m \geq 2$:
\begin{equation}\label{eq:Cm-fi0}
\partial_{\hat s}^{\,m}\hat f_{i_0,j}=-\sum_{a=0}^{m-2}\binom{m-2}{a} \cdot \partial_{\hat s}^{\,a}(K_{\Phi_j^*g_j(t)}(\partial_{\hat s},\widetilde{X}))\cdot \partial_{\hat s}^{\,m-2-a}\hat f_{i_0,j},
\end{equation}
so combining with \eqref{eq:partialk-K} and the uniform bound of $\|\hat{f}_{i_0,j}\|_{C^1([0,S])}$ that we have already proved, one can prove by induction that $\|\hat{f}_{i_0,j}\|_{C^m([0,S])} \leq C(m,S,t_1,t_2)$ for any $m \geq 0$.

Then, the K\"ahler condition $\partial_{\hat{s}}\hat{f}_{i_0,j}^2 = q_{i_0}\hat{h}_j$ immediately shows $\|\hat{h}_j\|_{C^m([0,S])} \leq C(m,S,t_1,t_2)$ for any $m \geq 0$.  We observe that
\[\partial_{\hat{s}}\hat{f}_{1,j}^2 = \hat{h}_j \;\;\text{ and }\;\; \hat{f}_{1,j}(0,t) = 0 \implies \hat{f}_{1,j}^2(\hat{s},t) = \int_0^{\hat{s}}\hat{h}_j(\sigma,t)\,d\sigma,\]
so the uniform $C^{m+1}([0,S])$-bound for $\hat{h}_j$ implies $C^m([0,S])$-bound for $\hat{f}_{1,j}^2$.

Now we have proved the $C^m([0,S])$-estimates for $\hat{h}_j$, $\hat{f}_{i_0,j}^2$, and $\hat{f}_{1,j}^2$ for any $m \geq 0$ for the spatial derivatives. To complete the proof of \eqref{eq:Cmk}, we need to bound their time derivatives of all orders as well. Recall that $\widetilde{X}$ is a static vector field such that $\pi_*(\widetilde{X}) \in TN_{i_0}$ and $|\pi_*(\widetilde{X})|_{g_{i_0}} = 1$.

Therefore, combining with \eqref{eq:Cm-fi0}, to obtain a uniform bound on $\partial_t^k \partial_{\hat{s}}^m \hat{f}_{i_0,j}$, it suffices to obtain uniform bounds of $\partial_t^k \partial_{\hat{s}}^a\big(K_{\Phi_j^*g_j(t)}(\partial_{\hat s},\widetilde{X})\big)$ and $\partial_t^k\partial_{\hat{s}}^a \hat f_{i_0,j}^2$ for any $a = 0, 1, \cdots, m-2$. The first term can be expressed as 
\[\partial_t^k\Big[\big(\nabla^a_{\partial_{\hat s},\dots,\partial_{\hat s}}\mathrm{Rm}_{\Phi_j^*g_j(t)}\big)(\partial_{\hat s},\tilde X/\hat f_{i_0,j},\tilde X/\hat f_{i_0,j},\partial_{\hat s})\Big],\]
and using the evolution equation of $\textrm{Rm}$:
\begin{equation}
\label{eq:Box_Rm}
\partial_t \textrm{Rm} = \Delta\textrm{Rm} + \textrm{Rm} * \textrm{Rm} \implies \partial_t^k \textrm{Rm} = \Delta^k \textrm{Rm} + \nabla^{i_1}\textrm{Rm} * \cdots * \nabla^{i_l}\textrm{Rm},
\end{equation}
the first term can be estimated by $|\nabla^l\textrm{Rm}|$, which is uniformly bounded by Shi's estimates \eqref{eq:Shi_m}, and by the term $\partial_t^k \hat{f}_{i_0,j}^2$. Hence, by induction, it suffices to prove the base cases when $m = 0, 1$, i.e. the uniform bounds for $\partial_t^k \hat{f}_{i_0,j}^2$ and $\partial_t^k \partial_{\hat{s}}\hat{f}_{i_0,j}^2$ for all $k \geq 0$.

Observe that
\[\partial_t \hat{f}_{i_0,j}^2 = \partial_t |\widetilde{X}|^2_{\Phi_j^*g_j(t)}= -2\textrm{Ric}_{\Phi_j^*g_j(t)}(\widetilde{X},\widetilde{X}) \implies \partial_t^k \hat{f}_{i_0,j}^2 =-2\partial_t^{k-1}\big(\textrm{Ric}_{\Phi_j^*g_j(t)}(\widetilde{X},\widetilde{X})\big).\]
From the evolution equation \eqref{eq:Box_Rm} of $\textrm{Rm}$, one can then express $\partial_t^{k-1}\textrm{Ric}$ as a linear combination of $\nabla^l\textrm{Rm}$ terms with $l \leq 2k-2$. Note that $\widetilde{X}$ is static, so from Shi's estimates \eqref{eq:Shi_m} we get a bound on $\partial_t^k\hat{f}_{i_0,j}^2$ uniform on $[0,S] \times [t_1,t_2]$. For $\partial_t^k\partial_{\hat{s}}\hat{f}_{i_0,j}^2$, recalling that $\widetilde{X}/\hat{f}_{i_0,j}$ is parallel along $\partial_{\hat{s}}$, we express it as
\begin{align}
& \partial_t^k \partial_{\hat{s}}\hat{f}_{i_0,j}^2 \label{eq:d_td_sf_i0}\\
 & = -2\partial_t^{k-1}\partial_{\hat{s}}\big(\textrm{Ric}_{\Phi_j^*g_j(t)}(\widetilde{X},\widetilde{X})\big) = -2\partial_t^{k-1}\partial_{\hat{s}}\big[\textrm{Ric}_{\Phi_j^*g_j(t)}(\widetilde{X}/\hat{f}_{i_0,j},\widetilde{X}/\hat{f}_{i_0,j})\cdot \hat{f}_{i_0,j}^2\big]\nonumber\\
& = -2\partial_t^{k-1}\left\{(\nabla_{\partial_{\hat{s}}}\textrm{Ric}_{\Phi_j^*g_j(t)})(\widetilde{X}/\hat{f}_{i_0,j},\widetilde{X}/\hat{f}_{i_0,j}) \cdot \hat{f}_{i_0,j}^2 + \textrm{Ric}_{\Phi_j^*g_j(t)}(\widetilde{X}/\hat{f}_{i_0,j},\widetilde{X}/\hat{f}_{i_0,j}) \cdot \partial_{\hat{s}}\hat{f}_{i_0,j}^2\right\}\nonumber\\
& = -2\partial_t^{k-1}\left\{(\nabla_{\partial_{\hat{s}}}\textrm{Ric}_{\Phi_j^*g_j(t)})(\widetilde{X},\widetilde{X})  + \textrm{Ric}_{\Phi_j^*g_j(t)}(\widetilde{X},\widetilde{X}) \cdot \frac{\partial_{\hat{s}}\hat{f}_{i_0,j}^2}{\hat{f}_{i_0,j}^2}\right\}.\nonumber
\end{align}
Since $\partial_t^{k-1}(\nabla_{\partial_{\hat{s}}}\textrm{Ric}_{\Phi_j^*g_j(t)})(\widetilde{X},\widetilde{X})$ consists of linear combinations of $\nabla^l \textrm{Rm}$ terms with $l \leq 2k-1$ from the evolution equation of $\textrm{Rm}$, it is uniformly bounded according to Shi's estimates \eqref{eq:Shi_m}. The same holds for $\partial_t^{k-1}\big[\textrm{Ric}_{\Phi_j^*g_j(t)}(\widetilde{X},\widetilde{X})\big]$. Therefore, we obtain the uniform bound on $\partial_t^k\partial_{\hat{s}}\hat{f}_{i_0,j}^2$ for all $k \geq 0$ using induction on \eqref{eq:d_td_sf_i0}. Note that we have already proved the base case $k=0$, i.e. the uniform bound on $\partial_{\hat{s}}\hat{f}_{i_0,j}^2$.

Therefore, by induction on $m$ we have a bound on $\partial_t^k \partial_{\hat{s}}^m \hat{f}_{i_0,j}^2$ for any $k, m \geq 0$. Note also that
\[\big[\partial_t,\; \partial_{\hat s}\big] \;=\; \operatorname{Ric}_{\Phi_j^*g_j}\!\big(\partial_{\hat s}, \partial_{\hat s}\big)\;\partial_{\hat s}\,,\]
which has a uniform $C^m$-bound, so we have established a uniform bound 
\[\big\|\hat{f}_{i_0,j}^2\big\|_{C^{m,k}([0,S]\times[t_1,t_2])} \leq C(m,k,S,t_1,t_2).\]

Finally, recall that
\[\hat{h}_j = \frac{1}{q_{i_0}}\partial_{\hat{s}}\hat{f}_{i_0,j}^2 \qquad \text{ and } \qquad \hat{f}_{1,j}^2(\hat{s},t) = \int_0^{\hat{s}}\hat{h}_j(\sigma,t)\,d\sigma.\]
We have completed the proof of \eqref{eq:Cmk} for all $m, k \geq 0$.
\end{proof}

\begin{corollary}
\label{cor:h_and_f}
There exist functions $\hat{h}_\infty$, $\hat{f}_{i_0, \infty}$ and $\hat{f}_{1,\infty}$ on $(\hat{s},t) \in [0,\infty) \times (-\infty, 1)$ such that, after passing to subsequences,  we have
\[\big(\hat{h}_j, \hat{f}_{i_0,j}^2, \hat{f}_{1,j}^2\big) \to \big(\hat{h}_\infty, \hat{f}_{i_0,\infty}^2, \hat{f}_{1,\infty}^2\big)\ \qquad \text{as $j \to \infty$ in }\;\; C^\infty_{\mathrm{loc}}([0,\infty)\times(-\infty,1)).\]
\end{corollary}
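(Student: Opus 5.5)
The corollary follows from Lemma \ref{lma:Cm-estimates} by an Arzel\`a--Ascoli plus diagonal-subsequence argument. I would exhaust $[0,\infty)\times(-\infty,1)$ by the compact rectangles
\[
D_N := [0,N]\times\big[-N,\,1-\tfrac{1}{N}\big], \qquad N \geq 2.
\]
The rescaled flows are defined on $[-K_jt_j, K_j(T-t_j)) = [-K_jt_j, 1)$. Since $-K_jt_j\to-\infty$, for each fixed $N$ all but finitely many $j$ have $D_N$ inside their domain. Applying Lemma \ref{lma:Cm-estimates} with $S=N$, $t_1=-N$, $t_2=1-\tfrac1N$ gives, for every $m,k$, a bound on $\|\cdot\|_{C^{m,k}(D_N)}$ for $\hat h_j$, $\hat f_{i_0,j}^2$ and $\hat f_{1,j}^2$ that is independent of $j$.

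Next I would extract the limits. Bounds in $C^{m+1,k+1}(D_N)$ give equicontinuity of all derivatives of order $(m,k)$. Arzel\`a--Ascoli on the compact set $D_N$ then gives, for each $N$ and each order $\ell$, a subsequence converging in $C^{\ell}(D_N)$. Here one-sided derivatives in $\hat s$ are used at $\hat s=0$, and it is the derivatives themselves that converge uniformly, so no boundary subtlety arises. A diagonal argument in $(N,\ell)$ produces one subsequence along which the three families converge in $C^\infty_{\mathrm{loc}}([0,\infty)\times(-\infty,1))$. I would call the limits $\hat h_\infty$, $F_{i_0}$ and $F_1$.

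Finally I would check that the limits are squares of functions, so that the notation $\hat f^2_{i_0,\infty}$ and $\hat f^2_{1,\infty}$ is legitimate. For $F_{i_0}$ this is immediate from \eqref{eq:C0-fi0}, which gives $F_{i_0}\geq 1/C>0$ on each $D_N$; so I set $\hat f_{i_0,\infty}:=\sqrt{F_{i_0}}$, which is smooth. For $F_1$ I only need $F_1\geq0$, which holds as a pointwise limit of nonnegative functions, and I set $\hat f_{1,\infty}:=\sqrt{F_1}$. The relations $\partial_{\hat s}F_1=\hat h_\infty$ and $F_1(0,t)=0$ pass to the limit and will be used later.

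The main difficulty is already handled in Lemma \ref{lma:Cm-estimates}, so this step is essentially bookkeeping. The one delicate point is the time range. The bounds in the lemma depend on $t_2<1$ through $C/(1-t_2)$, so the convergence is only local in $t<1$; this is why the exhaustion uses $t_2 = 1-1/N$.
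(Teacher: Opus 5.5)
Your proposal is correct and is the same argument the paper gives: the paper simply states that the corollary is an immediate consequence of Lemma~\ref{lma:Cm-estimates}, the Arzel\`a--Ascoli theorem, and a diagonal argument, and your exhaustion by $D_N=[0,N]\times[-N,1-\tfrac1N]$ just makes that bookkeeping explicit. Your extra observation that at this stage one only has $F_1\geq 0$ matches how the paper proceeds, since it proves strict positivity of $\hat h_\infty$ and $\hat f_{1,\infty}$ separately in Lemma~\ref{lma:h-f-positive}.
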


\begin{proof}
It is an immediate consequence of Lemma \ref{lma:Cm-estimates}, the Arzel\`a--Ascoli theorem, and the standard diagonalization argument.
\end{proof}

After passing to the limit, we can only guarantee that $\hat{h}_\infty \geq 0$ and $\hat{f}_{1, \infty} \geq 0$ on $\{ \hat{s} > 0 \}$, and strict positivity of both is needed to make the limit $g_\infty (t)$ a Riemannian metric. Note that $\hat{f}_{i_0, \infty} > 0$ since $\hat{f}_{i_0,j} \geq \frac{1}{C} > 0$ where $C$ is independent of $j$.

\begin{lemma}
\label{lma:h-f-positive}
\[\hat{h}_\infty > 0 \text{ and } \hat{f}_{1,\infty} > 0 \quad \text{ on $\{\hat{s} > 0\}$}.\]	
\end{lemma}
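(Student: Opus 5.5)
We need $\hat h_\infty>0$ and $\hat f_{1,\infty}>0$ on $\{\hat s>0\}$. The plan is to produce lower bounds for $\hat h_j$ near the nut that are uniform in $j$, and then propagate them outward.

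The first step uses the boundary behaviour at the nut. By the closing conditions (Proposition \ref{prop:nut-closing}), $\partial_{\hat s}\hat h_j(0,t)=1$ for every $j$ and $t$, and $\hat h_j(0,t)=0$. Lemma \ref{lma:Cm-estimates} gives a bound $|\partial^2_{\hat s}\hat h_j|\le C$ on $[0,S]\times[t_1,t_2]$ that is uniform in $j$. Taylor's theorem then yields $\hat h_j(\hat s,t)\ge \hat s - C\hat s^2\ge \hat s/2$ for $0\le \hat s\le \delta:=1/(2C)$. This bound passes to the limit, so $\hat h_\infty>0$ on $(0,\delta]$. Equivalently, $\partial_{\hat s}\hat h_\infty(0,t)=1$ holds in the limit by $C^\infty_{\rm loc}$ convergence.

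The second step is the main obstacle: extending positivity to all $\hat s>0$. I would use the curvature identity of Corollary \ref{cor:sectional-curvatures},
\[
\partial_{\hat s}^2\hat h_j=-\hat h_j\,K_{\Phi_j^*g_j}(\partial_{\hat s},\xi^*),
\]
together with the rescaled Type I bound $|\mathrm{Rm}|_{g_j}\le C/(1-t)$. On $[t_1,t_2]$ this makes the coefficient $\kappa_j:=K_{\Phi_j^*g_j}(\partial_{\hat s},\xi^*)$ uniformly bounded, and it converges to some $\kappa_\infty$. The limit therefore solves the linear ODE $\hat h_\infty''=-\kappa_\infty\hat h_\infty$ with initial data $\hat h_\infty(0)=0$ and $\hat h_\infty'(0)=1$. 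Suppose $\hat h_\infty(\hat s_0,t)=0$ at a first point $\hat s_0>0$. Then $\hat h_\infty$ and $\partial_{\hat s}\hat f^2_{i_0,\infty}=q_{i_0}\hat h_\infty$ vanish there simultaneously. If $\partial_{\hat s}\hat h_\infty(\hat s_0)=0$ as well, uniqueness for the ODE forces $\hat h_\infty\equiv0$, which contradicts $\hat h_\infty'(0)=1$. So $\hat h_\infty$ changes sign at $\hat s_0$, which contradicts $\hat h_\infty\ge0$. The remaining case is a transversal zero: $\hat h_\infty(\hat s_0)=0$ with $\partial_{\hat s}\hat h_\infty(\hat s_0)<0$. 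I would also exclude this by the sign argument just given, since the function would become negative immediately past $\hat s_0$.

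If the sign argument turns out to be delicate, I would use an alternative that relies on the Li--Yau bound for $f_{i_0}^2$ and on the Kähler identities of \eqref{eq:Fk-affine}. Here $\hat f^2_{1,j}=(\hat f^2_{i_0,j}-\hat f^2_{i_0,j}(0))/q_{i_0}$. Since $h^2$ is a function of $f_1^2$ with $(h^2)'\le$ Type I bound (Theorem \ref{thm:h2pp-upper-bound} rescaled), a vanishing of $\hat h$ at $\hat s_0$ would force $f_1^2$ to reach the bolt value $L$. In the rescaled picture this value is $K_jL(t_j)\to\infty$, because the fibers do not collapse when $T<T_F'$. That is incompatible with $\hat f^2_{1,j}\le C(S)$ on $[0,S]$.

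Finally, the positivity of $\hat f_{1,\infty}$ follows from the first two steps. We have $\hat f_{1,\infty}^2(\hat s,t)=\int_0^{\hat s}\hat h_\infty(\sigma,t)\,d\sigma$, and this integral is strictly positive once $\hat h_\infty>0$ on $(0,\hat s]$. This gives the lemma.
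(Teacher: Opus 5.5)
Your main argument is correct and is essentially the paper's. The identity $\partial_{\hat s}^2\hat h_j=-K(\partial_{\hat s},\xi^*)\,\hat h_j$ together with the rescaled Type I bound passes to the limit as $|\partial_{\hat s}^2\hat h_\infty|\le\Lambda\hat h_\infty$; an interior zero of the nonnegative limit then has vanishing derivative, and ODE uniqueness (which the paper phrases as Gronwall for $E=(\partial_{\hat s}u)^2+u^2$) forces $\hat h_\infty\equiv0$, contradicting $\partial_{\hat s}\hat h_\infty(0,t)=1$. Your Taylor step and the unproved convergence of $\kappa_j$ are not needed, since the inequality passes to the limit directly.

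Deducing $\hat f_{1,\infty}>0$ from $\hat f_{1,\infty}^2=\int_0^{\hat s}\hat h_\infty$ is slightly cleaner than the paper's second Gronwall argument, because it only needs convergence of $\hat f_{1,j}^2$, not $C^2$ convergence of $\hat f_{1,j}$. You should drop the fallback via Theorem \ref{thm:h2pp-upper-bound}, though. An upper bound on $(h^2)''$ only bounds $h^2$ from above, and a zero of the limit $\hat h_\infty$ need not be a zero of any $\hat h_j$, so that route cannot exclude it.
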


\begin{proof}
 For each $j$ the rescaled metric is genuine, so $\hat{h}_j > 0$ and $\hat{f}_{1, j} > 0$ on $\{ \hat{s} > 0 \}$. They are related to the sectional curvatures by
 \[\partial_{\hat{s}}^2 \hat{h}_j = - K_{\Phi_j^*g_j(t)} ( \partial_{\hat{s}}, \xi / \hat{h}_j )\, \hat{h}_j, \qquad\partial_{\hat{s}}^2 \hat{f}_{1, j} = - K_{\Phi_j^*g_j(t)} ( \partial_{\hat{s}}, \widetilde{X}_1 / \hat{f}_{1, j} ) \, \hat{f}_{1, j}.\]
Both sectional curvatures are bounded on $[0,S] \times [t_1,t_2]$. Suppose $\Lambda = \Lambda(S,t_1,t_2) > 0$ is a constant such that 
\[| \partial_{\hat{s}}^2 u | \leq \Lambda \, u, \qquad \text{ on $[ 0, S ] \times [ t_1, t_2 ]$ for $u \in \{ \hat{h}_\infty, \hat{f}_{1, \infty} \}$}.\] If $u ( \hat{s}_0, t ) = 0$ for some $\hat{s}_0 > 0$, the zero is an interior minimum of a nonnegative function, so $\partial_{\hat{s}} u ( \hat{s}_0, t ) = 0$ as well.

The energy $E := ( \partial_{\hat{s}} u )^2 + u^2$ satisfies 
\[| \partial_{\hat{s}} E | \leq C(\Lambda) \, E \implies -C(\Lambda)\,E \leq \partial_{\hat{s}}E \leq C(\Lambda)\,E.\]
With $E ( \hat{s}_0, t ) = 0$, using Gronwall's inequality we can show $E(\hat{s},t) = 0$ for any $\hat{s} \in [0,S]$, forcing $u \equiv 0$ on $[ 0, S ]$. This contradicts the closing slopes of Proposition \ref{prop:nut-closing}, $\partial_{\hat{s}} \hat{h}_\infty ( 0, t ) = 1$ and $\partial_{\hat{s}} \hat{f}_{1, \infty} ( 0, t ) = 1 / \sqrt{2}$. Hence $\hat{h}_\infty > 0$ and $\hat{f}_{1, \infty} > 0$ on $\{ \hat{s} > 0 \}$.
\end{proof}

\subsubsection{Convergence of gauge terms}
We need to handle the gauge term $\Phi_j^*(\theta \otimes \theta)$ in \eqref{eq:pullback_by_Phi} and show that it converges to $\widetilde{\theta} \otimes \widetilde{\theta}$. Recall that the vector bundle $E_{i_0}$ is built upon the circle-bundle $\widetilde{P}$ whose Euler class is $a_1 + q_{i_0}a_{i_0}$ with connection $1$-form $\tilde{\theta} := \theta\big|_{\widetilde{P}}$, so that
\[d \widetilde{\theta} \;=\; \pi^*\omega_1 \;+\; q_{i_0} \, \pi^*\omega_{i_0} \,.\]
\begin{lemma}
\label{lma:gauge}
As $j \to \infty$, we have
\begin{equation}
\label{eq:Phi_jtheta}
\Phi_j^*\theta = \widetilde{\theta} + \Phi_j^*\alpha \to \widetilde\theta
\end{equation}
in $C^\infty_{\textup{loc}}$-topology on $W$.	
\end{lemma}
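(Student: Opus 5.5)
The plan is to realize $\theta$ on $U$ concretely through the product structure $\operatorname{Tot}(P|_{\mathbb{CP}^m\times N_{i_0}\times B''})\cong\operatorname{Tot}(\widetilde{P})\times B''$, and then to show that the correction $\alpha$ vanishes under the rescaling. Let $\mathrm{pr}:\widetilde{P}\times B''\to\widetilde{P}$ be the projection, and set $\alpha:=\theta-\mathrm{pr}^*\widetilde{\theta}$ on $P|_{\mathbb{CP}^m\times N_{i_0}\times B''}$.

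Both $\theta$ and $\mathrm{pr}^*\widetilde{\theta}$ are principal connections, since $\mathrm{pr}$ is $\mathrm{U}(1)$-equivariant. Hence $\alpha$ is basic and descends to a $1$-form on $\mathbb{CP}^m\times N_{i_0}\times B''$. Its exterior derivative is
\[
d\alpha=\sum_{k\neq 1,i_0}q_k\,\pi^*\omega_k,
\]
which is closed and lives only on the $B''$-factor. I also arrange $\alpha|_{\mathbb{CP}^m\times N_{i_0}\times\{\bar{x}''\}}=0$, which holds because $\widetilde{\theta}$ is the restriction of $\theta$ to that slice.

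Next I use the freedom to choose the trivialization, i.e.\ a gauge transformation by a map $B''\times(\text{base})\to\mathrm{U}(1)$. Poincaré's lemma on the contractible ball $B''$, with $\mathbb{CP}^m\times N_{i_0}$ as a parameter space, then lets me replace $\alpha$ by the radial primitive of $\sum_{k\neq1,i_0}q_k\omega_k$. Concretely, in normal coordinates of $N''$ centred at $\bar{x}''$,
\[
\alpha=\int_0^1\iota_{z\partial_z}\Big(\sum_k q_k\omega_k\Big)(\tau z)\,\tau\,d\tau .
\]
This form has no $\mathbb{CP}^m\times N_{i_0}$ legs and satisfies $|\alpha|\le C|z|$, with all coordinate derivatives bounded.

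Now I pull back by $\Phi_j$, which acts on the $B''$-factor as $z\mapsto\exp_{\bar{x}'',j}(K_j^{-1/2}z)$. The exponential maps are with respect to $\sum f_k^2|_{Q_0}(t_j)g_k$, and these metrics are uniformly equivalent to a fixed one, since $f_k^2|_{Q_0}(t_j)\in[C^{-1},C]$ for $k\neq1,i_0$; their smooth dependence on $j$ is uniformly controlled. The radial formula then gives
\[
\Phi_j^*\alpha=\sum_{a,b}c^j_{ab}\big(K_j^{-1/2}z\big)\,K_j^{-1}\,z^a\,dz^b ,
\]
with coefficients bounded in $C^\ell$ uniformly in $j$. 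Each $z$-derivative of the coefficients contributes an extra factor $K_j^{-1/2}$. Hence $\Phi_j^*\alpha\to0$ in $C^\ell$ on $\{|z|\le R\}$ for every $R$ and $\ell$, uniformly in the $(\hat s,w)$ variables because $\alpha$ is independent of them. Since $\Phi_j$ is the identity on the $w$-variable and $\hat s$ is a pure dilation that $\theta$ does not see, $\Phi_j^*\mathrm{pr}^*\widetilde{\theta}=\widetilde{\theta}$ exactly, and the lemma follows. The form $\theta\otimes\theta$ in \eqref{eq:pullback_by_Phi} then converges, and smoothness across $\{\hat s=0\}$ follows from the product structure.

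The main obstacle is justifying the gauge step. The connection $\theta$ is fixed by the ansatz, not chosen by us, so the Poincaré-lemma normalization must be realized by a bundle automorphism of $U$ covering the identity. I also need this automorphism to be compatible with the identification of $U$ with $\operatorname{Tot}(E_{i_0})\times B''$ used to define $\Phi_j$, which means defining $\Phi_j$ after the gauge fixing. I would first check that the gauge function $e^{i\phi}$, with $d\phi$ equal to the old $\alpha$ minus the radial one, exists on the simply connected set $B''$. For the parameter directions, I would use that $\alpha$ restricted to each slice $\mathbb{CP}^m\times N_{i_0}\times\{z\}$ is closed, and exact because $H^1(\mathbb{CP}^m\times N_{i_0})$ can be killed by the homotopy to the slice at $\bar{x}''$. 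If $N_{i_0}$ has nontrivial $H^1$, I would instead argue via a parallel-transport trivialization along radial geodesics in $B''$, which produces the radial gauge directly.
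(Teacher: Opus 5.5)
Your proposal is correct and follows essentially the same route as the paper: write $\theta=\widetilde\theta+\alpha$, take for $\alpha$ the radial Poincar\'e primitive of $\sum_{k\neq 1,i_0}q_k\pi^*\omega_k$ in normal coordinates about $\bar{x}''$, and let the $K_j^{-1/2}$ dilation of the $B''$-directions kill it at rate $O(K_j^{-1})$. The gauge-fixing step you flag is one the paper uses tacitly, and your first argument already settles it without the $H^1(N_{i_0})$ hedge: $\alpha_{\mathrm{old}}-\alpha_{\mathrm{rad}}$ is closed on $\mathbb{CP}^m\times N_{i_0}\times B''$ and restricts to zero on the central slice onto which this set retracts, so it equals $d\phi$ globally.
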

\begin{proof}
Let $\alpha := \theta - \widetilde{\theta}$ so that $\theta = \widetilde{\theta} + \alpha$. Then we have
\[d\alpha = \sum_{k \neq 1, i_0} q_k \, \pi^* \omega_k\]
which only involves the $B''$-directions of the $N''$-factor. By the Poincar\'e Lemma, any closed form on the contractible ball $B''$ is exact with primitives given in geodesic normal coordinates centered at $\bar{x}''$ by
\begin{equation*}
\alpha \big|_y \;=\; \int_0^1 t \; (\iota_y  d\alpha)( t y ) \, d t.
\end{equation*}
On the ball $B_R(\bar{x}'')$, we then have
\begin{equation*}
| \alpha | ( z ) \;\leq\; \tfrac{| z |}{2} \, \sup |d\alpha|  \,, \qquad | \partial_z^{\mu} \alpha | \;\leq\; C_{\mu} ( R ) \, \textstyle\sum_{l \leq \mu} \sup \sum_{k\not=1,i_0}| \partial_z^{l} \, (d\alpha) | \,.
\end{equation*}
As the $N''$-factors do not contract along the unrescaled flow $g(t)$, so after rescaling when we measure the rescaled metric $g_j(t)$, the $N''$-components become $K_j f_k^2 \, g_k$ with $K_j f_k^2 \geq c \, K_j \to \infty$, so we have the following decay estimates:
\[\|\alpha\|_{C^\mu(B'',g_j(t))} \leq C(B'')\| d \alpha \|_{C^\mu(B'',g_j(t))} \leq C(B'')\sum_{k\not=1,i_0} \|q_k\pi^*\omega_k\|_{C^\mu(B'', K_j f_k^2 g_k)} \leq O ( K_j^{-1} ).\]
Therefore, after pulling back by $\Phi_j$, we have $\Phi_j^*\theta = \widetilde{\theta} + \Phi_j^*\alpha \to \widetilde\theta$ in $C^\infty_{\textrm{loc}}$-topology on $W$, completing the proof.
\end{proof}

\subsubsection{Remaining $N''$-factors} We now show that the remaining term of $\Phi_j^*g_j(t)$ goes to a flat metric.  The last group of terms is given by
\[\sum_{k\not=1,i_0} K_j \Phi_j^*(f_k^2 \pi^*g_k)\]
where $\Phi_j$ takes $z$ in the $\mathbb{C}^{n''}$-factor to the point $\exp_{\bar{x}'',j}(K_j^{-1/2}z)$ in $N''$, and $\exp_{\bar{x}'',j} : B_{R_j}(0) \to N''$ is the exponential map based at $\bar{x}'' \in N''$ with respect to the metric $\sum_{k \neq 1, i_0} f_k^2 |_{Q_0} ( t_j ) \, g_k$. Therefore, via $\Phi_j$ the $B_{R_j}(0)$-factor of $W_j$ is mapped to a geodesic ball in $N''$ -- with respect to the metric $\sum_{k \neq 1, i_0} f_k^2 |_{Q_0} ( t_j ) \, g_k$ -- with radius $K_j^{-1/2}R_j$ centered at $\bar{x}''$. By our choice of $R_j$, we have
\[K_j^{-1/2}R_j \leq \frac{1}{2}\textrm{inj}\,\Big( N'' \, , \; \sum_{k \neq 1, i_0} f_k^2 |_{Q_0} ( t_j ) \, g_k \Big).\]
Therefore, $\Phi_j$ maps $B_{R_j}(0)$ injectively onto its image. We claim:
\begin{lemma}
\label{lma:flat}
As $j \to \infty$, we have
\[\sum_{k\not=1,i_0}K_j\Phi_j^*(f_k^2\pi^*g_k) \to g_{(\mathbb{C}^{n''}, \text{flat})}\]
in $C^\infty_{\textup{loc}}(W)$-topology.
\end{lemma}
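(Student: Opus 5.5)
The plan is to reduce the statement to the classical fact that a fixed-geometry Riemannian metric, blown up around a point in its own normal coordinates, converges smoothly to the Euclidean metric. The only extra work is to control the $s$- and $t$-dependence of the warping factors $f_k^2$ for $k\neq 1,i_0$.

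\emph{Step 1: reduce to a pure blow-up.} First I will use the Kähler relation \eqref{eq:Fk-affine}, $f_k^2 = f_k^2|_{Q_0} + q_k f_1^2$. At a point $(\hat s,w,z)\in W_j$ and rescaled time $t$, this gives
\[
K_j\,\Phi_j^*\big(f_k^2\,\pi^*g_k\big) \;=\; \Big( f_k^2|_{Q_0}\big(t_j+K_j^{-1}t\big) + q_k K_j^{-1}\hat f_{1,j}^2(\hat s,t)\Big)\, K_j\,\Phi_j^*\pi^*g_k .
\]
The factor $\Phi_j^*\pi^*g_k$ depends only on the $z$-variable, through $z\mapsto \exp_{\bar x'',j}(K_j^{-1/2}z)$. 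By the affine law \eqref{eq:nut-affine}, $f_k^2|_{Q_0}(t_j+K_j^{-1}t) = f_k^2|_{Q_0}(t_j) + O(K_j^{-1})$, and the error is linear in $t$, so it is bounded in every $C^\mu$ norm on $[t_1,t_2]$. By Lemma \ref{lma:Cm-estimates}, $q_kK_j^{-1}\hat f_{1,j}^2 = O(K_j^{-1})$ in $C^\infty_{\mathrm{loc}}$ on $[0,S]\times[t_1,t_2]$. Set $c_k(j):=f_k^2|_{Q_0}(t_j)\in[1/C,C]$ (bounds established when $W_j$ was defined). Then the sum in the lemma equals
\[
\sum_{k\neq 1,i_0}\big(1+\epsilon_{k,j}\big)\,K_j\,\delta_j^*\exp_{\bar x'',j}^*\big(c_k(j)\,g_k\big),
\]
where $\delta_j(z)=K_j^{-1/2}z$ and $\epsilon_{k,j}\to0$ in $C^\infty_{\mathrm{loc}}$.

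\emph{Step 2: normal-coordinate expansion.} Let $\check g_j:=\sum_{k\ne1,i_0}c_k(j)g_k$ on $N''$. Because the $c_k(j)$ are pinched in $[1/C,C]$ and the $g_k$ are fixed compact metrics, the family $\check g_j$ has uniformly bounded $|\nabla^\mu\mathrm{Rm}|$ for every $\mu$ and a uniform lower bound on the injectivity radius. In $\check g_j$-normal coordinates $x$ at $\bar x''$, the standard expansion gives $(\check g_j)_{ab}(x)=\delta_{ab}+O(|x|^2)$ on a fixed ball, with all $C^\mu$ bounds uniform in $j$. Since $\check g_j$ is a product metric, its normal coordinates split by blocks, so each block $c_k(j)g_k$ is likewise $\delta_{(k)}+O(|x|^2)$. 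Substituting $x=K_j^{-1/2}z$ gives
\[
K_j\,\delta_j^*(\check g_j)_{ab}(z)=\delta_{ab}+O\big(K_j^{-1}|z|^2\big),
\]
and each $z$-derivative gains a further factor $K_j^{-1/2}$. Hence the blown-up $\check g_j$ converges in $C^\infty$ on $B_R(0)$ to the flat metric, and so does each of its blocks, which remain uniformly bounded in $C^\mu$.

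\emph{Step 3: absorb the errors.} Each block is bounded in $C^\mu$, so multiplying by $(1+\epsilon_{k,j})$ with $\epsilon_{k,j}\to0$ changes it by $o(1)$ in $C^\mu_{\mathrm{loc}}$. It follows that the whole sum converges to $g_{(\mathbb C^{n''},\mathrm{flat})}$ in $C^\infty_{\mathrm{loc}}(W)$, including time derivatives, since the $t$-dependence sits only in the $\epsilon_{k,j}$.

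I expect the main obstacle to be Step 2: making the normal-coordinate estimates uniform in $j$. I would handle it through the compactness of the parameter set $\{c_k\}\in[1/C,C]^{r-2}$: normal coordinates of $\sum c_kg_k$ depend smoothly on the $c_k$, so all constants are uniform. A secondary point is that the $\theta$-cross terms do not contaminate this block. That is a separate issue, already handled by Lemma \ref{lma:gauge}, so here I only need the purely horizontal $N''$-block.
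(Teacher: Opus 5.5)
Your proposal is correct and follows essentially the same route as the paper. Both arguments blow up the $N''$-metric in normal coordinates under $z\mapsto K_j^{-1/2}z$, and use the K\"ahler relation $f_k^2=f_k^2|_{Q_0}+q_kf_1^2$ together with the bounds of Lemma~\ref{lma:Cm-estimates} to show that the coefficients $f_k^2\circ\Phi_j$ become spatial constants. Your bookkeeping is slightly more careful than the paper's: you expand in normal coordinates of $\check{g}_j=\sum_{k\neq 1,i_0} f_k^2|_{Q_0}(t_j)\,g_k$ (the metric whose exponential map actually defines $\Phi_j$) with constants uniform in $j$, and you normalize by $c_k(j)$ so the limit is exactly Euclidean. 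The paper instead expands each $g_k$ in its own normal coordinates and obtains $\bigoplus_k A_k\,g_{\mathrm{flat}}$, which is flat only after a change of variables.
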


\begin{proof}
For each $g_k$, we express its components in geodesic normal coordinates centered at $\bar{x}''$ (for simplicity we suppress the index $k$ for a moment):
\[g_{ab}(z) = \delta_{ab} - \frac{1}{3}R_{acbd}(\bar{x}'')z^cz^d + O\big(|z|^3\big),\]
where $R_{acbd}$ is the Riemann curvature of $g_k$, so it is independent of $j$ and $t$. Then by the pullback, we have
\begin{align*}
K_j\Phi_j^*\pi^*g_k & = K_j\Phi_j^*\pi^*\big(g_{a b}(z)dz^a \otimes dz^b\big)\\
& = K_jg\big( K_j^{-1/2} z \big)_{ab} \cdot K_j^{-1/2}dz^a \otimes K_j^{-1/2}dz^b\\
& =\left(\delta_{a b} \;-\; \tfrac{1}{3} \, K_j^{-1} \, R_{a c b d} ( \bar{x}'' ) \, z^c z^d \;+\; O \big( K_j^{-3/2} | z |^3 \big)\right)\,dz^a \otimes dz^b\,,
\end{align*}
so $K_j \Phi_j^*\pi^*g_k \to$ the Euclidean metric by the boundedness of Riemann curvature. The convergence is in $C^\infty$ on compact balls because
\[\big|\partial_z^\alpha\big[(K_j\Phi_j^*\pi^* g_k)_{ab} - \delta_{ab}\big]\big| \;=\; K_j^{-|\alpha|/2}\,\big|\partial_x^\alpha g_{ab}\big|\big(K_j^{-1/2}z\big) \;=\; O\big(K_j^{-1}\big) \,.\]
Finally we need to show the coefficients $\Phi_j^*f_k^2$ converge to spatial constants. We observe that from the K\"ahler condition $q_k h = \partial_s(f_k^2)$ and $f_1\big|_{Q_0}(t) = 0$, we have 
\[f_k^2(s, t_j + K_j^{-1}t) = f_k^2\big|_{Q_0} (t_j + K_j^{-1}t) +q_k \int_0^s h(\sigma,t_j+K_j^{-1}t)\,d\sigma.\]
After pulling back by $\Phi_j$ we get
\begin{align*}
f_k^2 \circ \Phi_j (\hat{s},t_j+K_j^{-1}t) & \;=\; f_k^2 \big|_{Q_0} \big( t_j + K_j^{-1} t \big) \;+\; q_k \, \int_0^{K_j^{-1/2}\hat{s}}h(\sigma,t_j+K_j^{-1}t)\,d\sigma\\
& = f_k^2 \big|_{Q_0} \big( t_j + K_j^{-1} t \big) + \frac{q_k}{K_j}\int_0^{\hat{s}}\hat{h}_j(\hat\sigma, t)\,d\hat\sigma\\
& = A_k + B_k (T-t_j - K_j^{-1}t) + \frac{q_k}{K_j}\int_0^{\hat{s}}\hat{h}_j(\hat\sigma, t)\,d\hat\sigma,
\end{align*}
for some constants $A_k > 0, B_k \in \mathbb{R}$. The positivity of $A_k$ follows from the fact that the $N_k$-factor in $Q_0$, for $k \not=1,i_0$, does not contract to a point. By the uniform $C^{m,k}_{\textrm{loc}}([0,\infty) \times (-\infty,1))$-estimates of $\hat{h}_j$ (see Lemma \ref{lma:Cm-estimates}), we conclude that for any $m, k \geq 0$,
\[f_k^2 \circ \Phi_j(\hat{s},t_j+K_j^{-1}t) \to A_k \text{ in $C^{m,k}_{\textrm{loc}}(W)$-sense}.\]
Finally, the limit is given by
\[\lim_{j\to\infty} \sum_{k\not=1,i_0}K_j\Phi_j^*(f_k^2\pi^*g_k) = \bigoplus_{k\not=1,i_0}A_k\,g_{(\mathbb{C}^{\dim_{\mathbb{C}}N_k}, \text{flat})}\]
which is a flat metric on $\mathbb{C}^{n''}$ after a change of variables, completing the proof.
\end{proof}

\subsubsection{Completion of the proof}
Finally, we can collect all convergence lemmas and give the proof of Theorem \ref{thm:contracting_Q0}.
\begin{proof}[Proof of Theorem \ref{thm:contracting_Q0}]
Recall that
\[\Phi_j^* \, g_j (t) \;=\; d \hat{s} \otimes d \hat{s} \;+\; \hat{h}_j^2 \, \Phi_j^*(\theta \otimes \theta) \;+\; \hat{f}_{1, j}^2 \, \pi^* g_1 \;+\; \hat{f}_{i_0, j}^2 \, \pi^* g_{i_0} \;+\; \sum_{k \neq 1,i_0} K_j \, \Phi_j^*(f_k^2 \pi^* g_k) \,.\]
From Corollary \ref{cor:h_and_f} and Lemma \ref{lma:h-f-positive}, we know -- after passing to a subsequence -- that $\hat{h}_j^2 \to \hat{h}_\infty^2$, $\hat{f}_{1,j}^2 \to \hat{f}_{1,\infty}^2$, and $\hat{f}_{i_0,j}^2 \to \hat{f}_{i_0,\infty}^2$ in $C^\infty(W)$-sense as $j \to \infty$ for some positive functions $\hat{h}_\infty$, $\hat{f}_{1,\infty}$ and $\hat{f}_{i_0,\infty}$. From Lemma \ref{lma:gauge}, we know that $\Phi_j^*(\theta\otimes\theta) \to \widetilde{\theta} \otimes \widetilde{\theta}$ in $C^\infty_{\textup{loc}}(W)$-topology where $\widetilde{\theta} = \theta\big|_{\widetilde{P}}$ for the circle-bundle $\widetilde{P}$ over $\mathbb{CP}^m \times N_{i_0}$ with Euler class $a_1 + q_{i_0}a_{i_0}$. Then, from Lemma \ref{lma:flat} we proved that $\sum_{k\not=1,i_0}K_j \Phi_j^*(f_k^2\pi^*g_k) \to \text{flat metric}$ in $C^\infty_{\textup{loc}}(W)$-topology.

Combining everything, the pullback rescaled metric
\[\Phi_j^*g_j(t) \to \underbrace{\left(d\hat{s}\otimes d\hat{s} + \hat{h}_\infty^2 \widetilde{\theta}\otimes\widetilde{\theta} + \hat{f}_{1,\infty}^2\pi^*g_1 + \hat{f}_{i_0,\infty}^2 \pi^*g_{i_0}\right)}_{g_E(t)} \oplus g_{\mathbb{C}^{n''}}\]
in $C^\infty_{\textup{loc}}(W)$-topology. By \cite{EMT, Naber}, the pointed Cheeger-Gromov limit $\Big(\operatorname{Tot}(E_{i_0}) \times \mathbb{C}^{n''}, g_E(t) \oplus g_{\mathbb{C}^{n''}}\Big)$ must be a shrinking K\"ahler-Ricci soliton as we have a Type I blow-up. Given the splitting structure and the flatness of the second factor, $\big(\operatorname{Tot}(E_{i_0}),g_E(t)\big)$ is also a K\"ahler-Ricci shrinker. Since the metric $g_E(t)$ satisfies the circle-bundle ansatz \eqref{eq:ansatz}, the profile functions $\hat{h}_\infty$, $\hat{f}_{1,\infty}$, and $\hat{f}_{i_0,\infty}$ then satisfy the ODE system (3.2)--(3.4) in Dancer--Wang's work \cite{DW2011}. By our contraction condition, we have $0 < (m+1)q_{i_0} < p_{i_0}$ from Lemma \ref{lma:contraction_Q0_signs}, so $g_E(t)$ is the K\"ahler--Ricci shrinker constructed in \cite[Theorem 3.36]{DW2011}. This completes the proof of the theorem in the case $\mathcal{I}_0 = \{i_0\}$.
\end{proof}

\begin{remarkx}
If there is more than one element in $\mathcal{I}_0$, the argument is largely similar. We just need to replace $N_{i_0}$ by $N_0 :=\prod_{k \in \mathcal{I}_0}N_k$, and define $N''$ as $\prod_{k\not=1,k\not\in\mathcal{I}_0}N_k$. The vector bundle $E_{i_0}$ would be the restriction of $E$ to the slice over $N_0$, and $\widetilde{P}$ becomes the $U(1)$-bundle over $\mathbb{CP}^m \times N_0$ with Euler class $a_1 + \sum_{k \in \mathcal{I}_0} q_k a_k$. The map $\Phi_j$ is defined in a similar way since each hypersurface $\{\hat{s} = s_0\}$ is invariant under the map, but the pullback of $g_j(t)$ would become
\[\Phi_j^* \, g_j (t) \;=\; d \hat{s} \otimes d \hat{s} \;+\; \hat{h}_j^2 \, \Phi_j^*(\theta \otimes \theta) \;+\; \hat{f}_{1, j}^2 \, \pi^* g_1 \;+\; \sum_{k \in \mathcal{I}_0}\hat{f}_{k, j}^2 \, \pi^* g_k \;+\; \sum_{k \neq 1, k\not\in \mathcal{I}_0} K_j \, \Phi_j^*(f_k^2 \pi^* g_k).\]
The proof of uniform $C^{m,k}$-estimates of each $\{\hat{f}_{k,j}^2\}_j$ for each $k \in \mathcal{I}_0$ is similar to that of $\hat{f}_{i_0,j}^2$. The other components including convergence of $\Phi_j^*(\theta\otimes\theta)$ and $K_j\Phi_j^*(f_k^2\pi^*g_k)$ with $k \not= 1$ and $k \in \mathcal{I}_0$ are identical to the $\mathcal{I}_0 = \{i_0\}$ case.
\end{remarkx}

\subsection{Contraction of factors of $Q_\ell$}

We now turn to the third regime, $T = \min_{k \geq 2}\{T_k^\ell\} < \min\big\{T_F', \min_{k\geq 2}\{T_k^0\}\big\}$, in which some (not necessarily all) of the $N_{k \geq 2}$-factors of the bolt $Q_\ell$ contract to a point, while the $\mathbb{CP}^{m+1}$-fibers still have positive volume at the singular time. The result and its proof closely parallel Theorem \ref{thm:contracting_Q0}, with two structural differences. First, by Proposition \ref{prop:bolt-closing}, a tubular neighborhood of $Q_\ell$ is modeled on the total space of the complex \emph{line} bundle $L := P \times_{\mathrm{U}(1)} \mathbb{C}$ over $N = \mathbb{CP}^m \times N'$ instead of a rank-$(m+1)$ vector bundle, so the limit model is now the total space of a line bundle. Second, the $\mathbb{CP}^m$-factor of $Q_\ell$ does not contract in this regime --- indeed $f_1^2\big|_{Q_\ell}(t) = L(t) \geq L(T) > 0$ since $T < T_F'$ --- so under the rescaling the $\mathbb{CP}^m$-directions blow up and flatten into a factor of $\mathbb{C}^m$, which joins the non-contracting $N_k$-factors in the Euclidean factor of the limit.

\begin{theorem}[Contraction at $Q_\ell$]
\label{thm:contracting_Ql}
Let $\widehat{M}$ be the compactification of $(0,\ell) \times P$ such that $Q_0 := N' = N_2 \times \cdots \times N_r$ is adjoined at $\{s = 0\}$, and $Q_\ell := \mathbb{CP}^m \times N'$ is adjoined at $\{s = \ell\}$. Suppose
$T = \min_{k \geq 2}\{T_k^\ell\} < \min\big\{T_F', \min_{k\geq 2}\{T_k^0\}\big\}$.

Let $\mathcal{I}_\infty$ be the subset of $\{2, \cdots, r\}$ such that $i_\infty \in \mathcal{I}_\infty$ if and only if $T_{i_\infty}^\ell = T$. Let $\mathcal{L}_{\mathcal{I}_\infty}$ be the line bundle obtained by restricting the normal bundle $\mathcal{N}_{Q_\ell}$ of $Q_\ell \subset \widehat{M}$ --- the line bundle $L = P \times_{\mathrm{U}(1)} \mathbb{C}$ of Proposition \ref{prop:bolt-closing} --- to $\prod_{i_\infty \in \mathcal{I}_\infty} N_{i_\infty}$; then $c_1 ( \mathcal{L}_{\mathcal{I}_\infty} ) = \sum_{i_\infty \in \mathcal{I}_\infty} q_{i_\infty} a_{i_\infty}$, with each $q_{i_\infty} < 0$.

Let $K_j := (T-t_j)^{-1}$, and fix $\bar{x}' \in Q_\ell$. Consider the rescaled and dilated sequence $g_j(t) = K_j g(t_j + K_j^{-1}t)$ defined on $t \in [-K_j t_j, K_j(T-t_j))$. Then, along a subsequence,
\begin{equation}\label{eq:limit_Ql}
\begin{gathered}
\big( \widehat{M}, \; g_j (t), \; \bar{x}' \big) \;\longrightarrow\; \Big( \operatorname{Tot} \big( \mathcal{L}_{\mathcal{I}_\infty} \big) \times \mathbb{C}^{n''} \,, \;\; g_L (t) \oplus g_{\mathbb{C}^{n''}}, \bar{x}'\Big) \,, t \in \big( - \infty, 1 \big).
\end{gathered}
\end{equation}
Here $n'' = m + \sum_{k \geq 2, \, k \not\in \mathcal{I}_\infty} \dim_{\mathbb{C}} N_{k}$, $\operatorname{Tot} ( \mathcal{L}_{\mathcal{I}_\infty} )$ is the total space of the line bundle $\mathcal{L}_{\mathcal{I}_\infty} \to \prod_{i_\infty \in \mathcal{I}_\infty} N_{i_\infty}$, and $g_L$ is a complete gradient shrinking K\"ahler--Ricci soliton of ansatz form on $\operatorname{Tot} ( \mathcal{L}_{\mathcal{I}_\infty} )$ --- the Feldman--Ilmanen--Knopf shrinker in \cite{FIK} when the base is a projective space, and its multi-base generalization by Dancer--Wang in \cite{DW2011} in general.
\end{theorem}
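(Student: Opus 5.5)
The plan is to run the proof of Theorem \ref{thm:contracting_Q0} \emph{mutatis mutandis}, recentring everything at the bolt $Q_\ell$. For bookkeeping, take $\mathcal{I}_\infty=\{i_\infty\}$ a single index, exactly as in the $Q_0$ case.

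\textbf{Step 0: signs.} The first task is the analogue of Lemma \ref{lma:contraction_Q0_signs}. Proposition \ref{prop:affine-laws}(a) gives
\[
f_{i_\infty}^2|_{Q_\ell}(t)=2(p_{i_\infty}+q_{i_\infty})(T-t),
\]
so $p_{i_\infty}+q_{i_\infty}>0$. Integrating the K\"ahler condition across the fiber gives
\[
f_{i_\infty}^2|_{Q_0}=f_{i_\infty}^2|_{Q_\ell}-q_{i_\infty}L(t).
\]
The left side tends to a positive limit, since $T<T^0_{i_\infty}$. We also have $L(T)>0$ because $T<T_F'$. Together these force $q_{i_\infty}<0$. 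This is exactly the Dancer--Wang/FIK existence condition $0<-q_{i_\infty}<p_{i_\infty}$ on $\operatorname{Tot}(\mathcal{L})$ with $c_1=q_{i_\infty}a_{i_\infty}$.

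\textbf{Step 1: topology and the maps $\Phi_j$.} By Proposition \ref{prop:bolt-closing}, a neighbourhood of $Q_\ell$ is $\operatorname{Tot}(L)$ over $N=\mathbb{CP}^m\times N'$. Write $N=N_{i_\infty}\times N''$ with $N'':=\mathbb{CP}^m\times\prod_{k\ge2,k\ne i_\infty}N_k$, and let $B''\subset N''$ be a contractible ball around $\mathrm{pr}(\bar x')$. Over $N_{i_\infty}\times B''$ the line bundle and the circle bundle $P$ are pullbacks from $N_{i_\infty}$. This gives $\widetilde P\to N_{i_\infty}$ with Euler class $q_{i_\infty}a_{i_\infty}$ and $d\widetilde\theta=q_{i_\infty}\pi^*\omega_{i_\infty}$. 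Using the arclength $\hat s$ measured from $Q_\ell$, i.e. $\ell-s$, define
\[
\Phi_j(\hat s,w,z)=\bigl(\ell-K_j^{-1/2}\hat s,\;w,\;\exp_{\bar x'',j}(K_j^{-1/2}z)\bigr),
\]
where the exponential map is taken for $\sum_{k\ne i_\infty}f_k^2|_{Q_\ell}(t_j)g_k$ on $N''$. Here $f_1^2|_{Q_\ell}=L(t)\ge L(T)>0$, so the $\mathbb{CP}^m$ factor is uniformly non-degenerate and joins $N''$. The pullback $\Phi_j^*g_j$ then has the shape \eqref{eq:pullback_by_Phi}, except that only $\hat h_j$ and $\hat f_{i_\infty,j}$ are rescaled profile functions, and the $\mathbb{CP}^m$ block sits inside the $K_j\Phi_j^*(f_k^2\pi^*g_k)$ sum.

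\textbf{Step 2: estimates and limits.} This is the analogue of Lemma \ref{lma:Cm-estimates}, and I expect it to be the main obstacle; the rest is formal repetition. We have $\hat f_{i_\infty,j}^2(0,t)=2(p_{i_\infty}+q_{i_\infty})(1-t)$, bounded below on $[t_1,t_2]$. For the $C^0$ control of $\hat h_j$ I would not use Li--Yau on $f_1^2$, since $f_1^2$ does not vanish at $Q_\ell$. Instead I would use Li--Yau on $u=f_{i_\infty}^2$, i.e. $q_{i_\infty}^2h^2\le C_0f_{i_\infty}^2$, which after rescaling gives $\hat h_j\le C\hat f_{i_\infty,j}$. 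Combined with $\partial_{\hat s}\hat f_{i_\infty,j}^2=-q_{i_\infty}\hat h_j$, this bounds $\hat f_{i_\infty,j}$ linearly in $\hat s$. The bolt closing $\partial_{\hat s}\hat h_j(0)=1$, $\hat h_j(0)=0$ replaces the nut slopes. Higher $C^{m,k}$ bounds then follow verbatim: $\partial_{\hat s}^2\hat f_{i_\infty,j}=-\hat f_{i_\infty,j}K(\partial_{\hat s},\widetilde X)$, the parallel frame, Shi's estimates, and the time-derivative induction through $\partial_t\hat f^2=-2\mathrm{Ric}(\widetilde X,\widetilde X)$. Positivity of $\hat h_\infty$ follows by the Gronwall argument of Lemma \ref{lma:h-f-positive}, using $\partial_{\hat s}\hat h_\infty(0)=1$.

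\textbf{Step 3: gauge, flat factor and identification.} The gauge term is handled as in Lemma \ref{lma:gauge}. Now $\alpha=\theta-\widetilde\theta$ has $d\alpha=\pi^*\omega_1+\sum_{k\ne1,i_\infty}q_k\pi^*\omega_k$, supported on $B''$, so the Poincar\'e primitive decays like $O(K_j^{-1})$ in $g_j$. The flat factor is handled as in Lemma \ref{lma:flat}: normal-coordinate expansion, plus the fact that $f_k^2\circ\Phi_j=f_k^2|_{Q_\ell}\pm K_j^{-1}q_k\int\hat h_j$ converges to positive constants, with $k=1$ included. This gives $\mathbb{C}^{m+\sum_{k\notin\mathcal{I}_\infty,k\ge2}n_k}$. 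By \cite{EMT,Naber} the limit $d\hat s^2+\hat h_\infty^2\widetilde\theta^2+\hat f_{i_\infty,\infty}^2\pi^*g_{i_\infty}$ is a complete K\"ahler shrinker of ansatz form on $\operatorname{Tot}(\mathcal{L}_{\mathcal{I}_\infty})$. It therefore solves the Dancer--Wang ODE with bolt initial data, and Step 0 identifies it with the FIK/Dancer--Wang shrinker. Several contracting indices are treated as in the remark after Theorem \ref{thm:contracting_Q0}.
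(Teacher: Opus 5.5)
Your proposal is correct and follows the paper's own sketch step for step. It derives the signs from $f_{i_\infty}^2|_{Q_\ell}-f_{i_\infty}^2|_{Q_0}=q_{i_\infty}L(t)$, recentres $\Phi_j$ at the bolt with the $\mathbb{CP}^m$-block moved into the flattening group, applies Li--Yau to $f_{i_\infty}^2$ rather than $f_1^2$ together with the bolt closing slope for the $C^0$ and positivity bounds, and then runs the gauge, flattening and Dancer--Wang identification verbatim. The only difference is cosmetic: you parametrize by $\ell-K_j^{-1/2}\hat s$, whereas the paper uses the inward arclength $\hat u$ directly, and these are the same coordinate.
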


\begin{remarkx}
When $m = 0$, Theorem \ref{thm:contracting_Ql} contains the contracting-bolt regime of the two-bolt flow of Theorem \ref{thm:two-bolt-typeI}: the limit is the FIK-type shrinker on the line bundle $\mathcal{L}_{i_0}$ of \cite{FT}, whose first Chern class $-|q_{i_0}|\,a_{i_0}$ agrees with the formula above since $q_{i_\infty} < 0$ at a contracting bolt.
\end{remarkx}

\begin{proof}[Sketch of proof]
The proof follows the same steps as that of Theorem \ref{thm:contracting_Q0}; we indicate the necessary modifications. As there, we assume $\mathcal{I}_\infty = \{i_\infty\}$; the general case follows as in the remark after the proof of Theorem \ref{thm:contracting_Q0}, replacing $N_{i_\infty}$ by $\prod_{k \in \mathcal{I}_\infty} N_k$.

\emph{Signs.} The analogue of Lemma \ref{lma:contraction_Q0_signs} is
\[0 \;<\; -q_{i_\infty} \;<\; p_{i_\infty}\,, \qquad \text{i.e. } p_{i_\infty} + q_{i_\infty} > 0 > q_{i_\infty}\,.\]
Indeed, $f_{i_\infty}^2\big|_{Q_\ell}(t) = f_{i_\infty}^2\big|_{Q_\ell}(0) - 2(p_{i_\infty} + q_{i_\infty})\,t$ vanishes exactly at $T = T_{i_\infty}^\ell$, so
\[f_{i_\infty}^2\big|_{Q_\ell}(t) \;=\; 2\big(p_{i_\infty} + q_{i_\infty}\big)\big(T-t\big)\]
and hence $p_{i_\infty} + q_{i_\infty} > 0$. Integrating the K\"ahler condition across the fiber as in Lemma \ref{lma:contraction_Q0_signs},
\[f_{i_\infty}^2 \big|_{Q_\ell} (t) \;-\; f_{i_\infty}^2 \big|_{Q_0} (t) \;=\; q_{i_\infty} \, L (t) \,.\]
As $t \to T$ the left-hand side tends to $-f_{i_\infty}^2\big|_{Q_0}(T) < 0$ --- negative because $T = T_{i_\infty}^\ell < T_{i_\infty}^0$ --- while $L(T) > 0$ since $T < T_F'$. This proves $q_{i_\infty} < 0$. By adjunction along the zero section, $c_1\big(\operatorname{Tot}(\mathcal{L}_{i_\infty})\big)\big|_{N_{i_\infty}} = (p_{i_\infty} + q_{i_\infty})\,a_{i_\infty} > 0$, which is exactly the existence condition for the Dancer--Wang shrinker on $\operatorname{Tot}(\mathcal{L}_{i_\infty})$.

\emph{Step 1: topological model near $Q_\ell$.} Here $\widehat{M} \backslash Q_0$ is the total space of the line bundle $L = P \times_{\mathrm{U}(1)} \mathbb{C} \to N$, obtained by coning off the circle fibers of $P \to N$ (Proposition \ref{prop:bolt-closing}). Set
\[N'' := \mathbb{CP}^m \times \prod_{k \geq 2, \, k \neq i_\infty} N_k\,,\]
which now includes the $\mathbb{CP}^m$-factor, and let $B'' \subset N''$ be a contractible ball containing $\bar{x}'' := \mathrm{pr}_{N \to N''}(\bar{x}')$. As in Step 1 of the proof of Theorem \ref{thm:contracting_Q0}, the contractibility of $B''$ identifies $L\big|_{N_{i_\infty} \times B''}$ with the pullback of $\mathcal{L}_{i_\infty} := L\big|_{N_{i_\infty} \times \{\bar{x}''\}}$, and correspondingly identifies $P$ restricted over $N_{i_\infty} \times B''$ with $\widetilde{P} \times B''$, where $\widetilde{P}$ is the circle bundle over $N_{i_\infty}$ with Euler class $q_{i_\infty} a_{i_\infty}$ and connection $1$-form $\widetilde{\theta} := \theta\big|_{\widetilde{P}}$ satisfying $d\widetilde{\theta} = q_{i_\infty}\,\pi^*\omega_{i_\infty}$. Throughout, we use the inward arclength coordinate $u$ at $Q_\ell$ and its rescaled counterpart $\hat{u} := K_j^{1/2}u$, in which the K\"ahler condition \eqref{eq:kahler-condition} reads $\partial_u (f_i^2) = -q_i h$.

\emph{Step 2: the maps $\Phi_j$.} Set $W := \operatorname{Tot}(\mathcal{L}_{i_\infty}) \times \mathbb{C}^{n''}$ and $W_j := \{\hat{u} < R_j\} \times B_{R_j}(0)$ with
\[R_j \;\leq\; \tfrac{1}{2}\sqrt{K_j}\;\operatorname{inj}\Big(N''\,, \; L(t_j)\,g_1 \oplus \sum_{k \geq 2,\, k \neq i_\infty} f_k^2\big|_{Q_\ell}(t_j)\,g_k\Big), \qquad \text{ and } R_j \to +\infty\,;\]
the injectivity radius is uniformly bounded from below since $L(t_j) \geq L(T) > 0$ and $\frac{1}{C} \leq f_k^2\big|_{Q_\ell}(t_j) \leq C$ for $k \neq 1, i_\infty$. The map $\Phi_j(\hat{u}, w, z) := \big(K_j^{-1/2}\hat{u}, \; w, \; \exp_{\bar{x}'',j}(K_j^{-1/2}z)\big)$ is then defined verbatim as in the proof of Theorem \ref{thm:contracting_Q0}, with $\exp_{\bar{x}'',j}$ the exponential map of the above metric on $N''$ at $\bar{x}''$. The pullback metric becomes
\[\Phi_j^* \, g_j (t) \;=\; d \hat{u} \otimes d \hat{u} \;+\; \hat{h}_j^2 \, \Phi_j^*(\theta \otimes \theta) \;+\; \hat{f}_{i_\infty, j}^2 \, \pi^* g_{i_\infty} \;+\; \sum_{k \geq 1, \, k \neq i_\infty} K_j \, \Phi_j^*(f_k^2 \pi^* g_k) \,;\]
compared with \eqref{eq:pullback_by_Phi}, the $g_1$-block has moved from the soliton block to the flattening group of terms.

\emph{Step 3: $C^{m,k}$-estimates.} Lemma \ref{lma:Cm-estimates} holds for $\hat{h}_j$ and $\hat{f}_{i_\infty,j}^2$ with the same proof, with $(\hat{s}, i_0)$ replaced by $(\hat{u}, i_\infty)$, and with the nut-closing conditions of Proposition \ref{prop:nut-closing} replaced by the bolt-closing conditions of Proposition \ref{prop:bolt-closing}: $\hat{h}_j(0,t) = 0$, $\partial_{\hat{u}}\hat{h}_j(0,t) = 1$, and $\hat{f}_{i_\infty,j}$ extends to a smooth even function with
\[\hat{f}_{i_\infty, j}^2(0,t) \;=\; K_j \, f_{i_\infty}^2\big|_{Q_\ell}\big(t_j + K_j^{-1}t\big) \;=\; 2\big(p_{i_\infty} + q_{i_\infty}\big)(1-t) \;\geq\; \frac{1}{C} \;>\; 0 \quad \text{on } [t_1,t_2]\,.\]
For the $C^0$-estimates, the Li--Yau-type bound of Proposition \ref{prop:li-yau} applied to $u = f_{i_\infty}^2$ gives $q_{i_\infty}^2 h^2 = |\nabla(f_{i_\infty}^2)|^2 \leq Cf_{i_\infty}^2$, hence $\hat{h}_j \leq C\hat{f}_{i_\infty,j}$; moreover $\partial_{\hat{u}}\hat{f}_{i_\infty,j}^2 = -q_{i_\infty}\hat{h}_j = |q_{i_\infty}|\,\hat{h}_j \geq 0$, so $\hat{f}_{i_\infty,j}^2$ is nondecreasing in $\hat{u}$ and uniformly bounded on $[0,S] \times [t_1,t_2]$. The higher-order estimates are verbatim: the identity $\partial^2_{\hat{u}}\hat{f}_{i_\infty,j} = -K_{\Phi_j^*g_j(t)}\big(\partial_{\hat{u}}, \widetilde{X}/\hat{f}_{i_\infty,j}\big)\,\hat{f}_{i_\infty,j}$ in the parallel frame, Shi's derivative estimates \cite{Shi} combined with the Type I bound (Theorem \ref{thm:nut-bolt-typeI}), and the scale-invariant relation $\hat{h}_j = |q_{i_\infty}|^{-1}\partial_{\hat{u}}\big(\hat{f}_{i_\infty,j}^2\big)$ recover all space-time derivative bounds, so Corollary \ref{cor:h_and_f} carries over. Positivity of $\hat{h}_\infty$ on $\{\hat{u} > 0\}$ follows from the energy/Gronwall argument of Lemma \ref{lma:h-f-positive}, now applied to the single function $\hat{h}_\infty$, using the closing slope $\partial_{\hat{u}}\hat{h}_\infty(0,t) = 1$; positivity of $\hat{f}_{i_\infty,\infty}$ is immediate from the uniform lower bound above.

\emph{Step 4: gauge.} With $\alpha := \theta - \widetilde{\theta}$ we now have
\[d\alpha \;=\; \pi^*\omega_1 \;+\; \sum_{k \geq 2, \, k \neq i_\infty} q_k \, \pi^*\omega_k\,,\]
which only involves the $B''$-directions. The decay argument of Lemma \ref{lma:gauge} applies once we note that on $U_j$ --- which lies within unrescaled distance $K_j^{-1/2}R_j \to 0$ of $Q_\ell$ --- we have $f_1^2 \geq L(t_j) - o(1) \geq c > 0$, so the rescaled metric coefficients along all the $N''$-directions satisfy $K_jf_k^2 \geq cK_j$ for $k \geq 1$ and $k \not= i_\infty)$, giving $\|\alpha\|_{C^\mu(B'', g_j(t))} = O(K_j^{-1})$ as before.

\emph{Step 5: flattening.} Lemma \ref{lma:flat} now covers the group of terms indexed by $k \in \{1\} \cup \{k \geq 2 : k \neq i_\infty\}$. The Taylor-expansion argument is unchanged; for the coefficients, integrating the K\"ahler condition from $Q_\ell$ gives
\[f_1^2 \circ \Phi_j \;=\; L\big(t_j + K_j^{-1}t\big) \;-\; \frac{1}{K_j}\int_0^{\hat{u}}\hat{h}_j(\hat\sigma,t)\,d\hat\sigma \;\longrightarrow\; A_1 := L(T) > 0\,,\]
and similarly $f_k^2 \circ \Phi_j \to A_k := f_k^2\big|_{Q_\ell}(T) > 0$ for $k \geq 2$, $k \neq i_\infty$, so this group of terms converges in $C^\infty_{\textup{loc}}$ to a flat metric on $\mathbb{C}^{n''}$.

\emph{Completion.} Combining the steps, $\Phi_j^*g_j(t)$ converges in $C^\infty_{\textup{loc}}(W)$-topology to
\[\underbrace{\Big(d\hat{u}\otimes d\hat{u} \;+\; \hat{h}_\infty^2\,\widetilde{\theta}\otimes\widetilde{\theta} \;+\; \hat{f}_{i_\infty,\infty}^2\,\pi^*g_{i_\infty}\Big)}_{g_L(t)} \oplus \; g_{\mathbb{C}^{n''}}\]
on $\operatorname{Tot}(\mathcal{L}_{i_\infty}) \times \mathbb{C}^{n''}$. By \cite{EMT, Naber} and the Type I property, the limit is a shrinking K\"ahler-Ricci soliton; it satisfies the circle-bundle ansatz \eqref{eq:ansatz}, so the profile functions solve the ODE system (3.2)--(3.4) in \cite{DW2011}. The sign condition $0 < -q_{i_\infty} < p_{i_\infty}$ then identifies $g_L(t)$ as the shrinking K\"ahler--Ricci soliton constructed in \cite[Theorem 3.36]{DW2011} on the total space of the negative line bundle $\mathcal{L}_{i_\infty}$, which is the Feldman--Ilmanen--Knopf shrinker \cite{FIK} when $N_{i_\infty}$ is a projective space.
\end{proof}

\subsection{Borderline cases}

Finally, we consider the borderline regime
\[T \;=\; T_F' \;=\; \min_{k \geq 2}\big\{T_k^0, T_k^\ell\big\}\,,\]
in which the $\mathbb{CP}^{m+1}$-fibers collapse and, at the same first singular time, some of the $N_{k\geq 2}$-factors contract to a point. Note that when $k \geq 3$, and some of $N_2, \cdots, N_r$-factors do not contract to a point, it is not the Fano canonical case as in $\mathbb{CP}^{m+1}$-bundle over a \emph{single} K\"ahler-Einstein manifolds (i.e. $k = 2$). Furthermore, by the Remark following \eqref{eq:T_max}, for each $k \geq 2$ the difference $f_k^2\big|_{Q_\ell}(t) - f_k^2\big|_{Q_0}(t) = q_kL(t) \to 0$ as $t \to T$, so the $N_k$-factor of $Q_0$ contracts if and only if the $N_k$-factor of $Q_\ell$ contracts, and there is no need to distinguish the contraction into the $Q_0$ and $Q_\ell$ cases. The essential new feature is that the two closing strata $Q_0$ and $Q_\ell$ now remain within bounded distance of each other with respect to the rescaled metrics $g_j(t)$, and the limit model is compact in the fiber directions, and is the total space of a $\mathbb{CP}^{m+1}$-bundle.

\begin{theorem}[Borderline case]
\label{thm:borderline}
Let $\widehat{M}$ be the compactification of $(0,\ell) \times P$ as in Theorem \ref{thm:contracting_Q0}. Suppose
$T = T_F' = \min_{k \geq 2}\big\{T_k^0, T_k^\ell\big\}$.

Let $\mathcal{I} := \big\{k \in \{2,\cdots,r\} : T_k^0 = T\big\} = \big\{k \in \{2,\cdots,r\} : T_k^\ell = T\big\}$, the two sets being equal by the Remark following \eqref{eq:T_max}. Let $E_{\mathcal{I}} = \mathcal{L}_{\mathcal{I}}^{\oplus(m+1)}$ be the vector bundle obtained by restricting the normal bundle $\mathcal{N}_{Q_0}$ of $Q_0 \subset \widehat{M}$ to $\prod_{k \in \mathcal{I}} N_k$, as in Theorem \ref{thm:contracting_Q0}, so that $c_1(\mathcal{L}_{\mathcal{I}}) = -\sum_{k \in \mathcal{I}} q_k a_k$, and let
\[\widehat{Z} \;:=\; \mathbb{P}\big(\mathcal{O} \oplus E_{\mathcal{I}}\big)\]
be the $\mathbb{CP}^{m+1}$-bundle over $\prod_{k \in \mathcal{I}} N_k$ obtained by projectivization. 

Let $K_j := (T-t_j)^{-1}$, and fix any $\bar{x} \in \widehat{M}$. Consider the rescaled and dilated sequence $g_j(t) = K_j g(t_j + K_j^{-1}t)$ defined on $t \in [-K_j t_j, K_j(T-t_j))$. Then, along a subsequence,
\begin{equation}\label{eq:limit_borderline}
\big( \widehat{M}, \; g_j (t), \; \bar{x} \big) \;\longrightarrow\; \Big( \widehat{Z} \times \mathbb{C}^{n''} \,, \;\; g_{\widehat{Z}} (t) \oplus g_{\mathbb{C}^{n''}} \Big) \,, \qquad t \in \big( - \infty, 1 \big)\,,
\end{equation}
where $n'' = \sum_{k \geq 2, \, k \not\in \mathcal{I}} \dim_{\mathbb{C}} N_{k}$, and $g_{\widehat{Z}}(t)$ is the unique compact shrinking K\"ahler--Ricci soliton of ansatz form on $\widehat{Z}$ --- constructed by Koiso \cite{Koiso} and Cao \cite{Cao96} when the base is a single projective space, and by Dancer--Wang \cite{DW2011} in general, with uniqueness given by Tian--Zhu \cite{TZhu}.

In the extreme case $\mathcal{I} = \{2, \cdots, r\}$, we have $n'' = 0$ and $\widehat{Z} = \widehat{M}$: then $[\omega(t)] = 2(T-t)\,c_1(\widehat{M})$, which is the Fano canonical-class case, and the blow-up limit is $\widehat{M}$ itself equipped with its compact shrinking K\"ahler-Ricci soliton.
\end{theorem}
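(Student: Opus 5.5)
We will run the proof of Theorem \ref{thm:contracting_Q0} again, but now the whole fiber direction stays compact after rescaling. By the Remark after \eqref{eq:T_max}, $L(t) = 2(m+2)(T-t)$, and each $f_k^2$ with $k\in\mathcal{I}$ is of order $T-t$ uniformly on $\widehat{M}$. The reason is that both $f_k^2|_{Q_0}$ and $f_k^2|_{Q_\ell}$ are affine in $t$ and vanish at $T$, and $f_k^2$ is monotone in $s$. We again assume $\mathcal{I} = \{i_0\}$ for bookkeeping and use the same open set $U$ from Step 1 of Theorem \ref{thm:contracting_Q0}. The difference is that we drop the removal of $Q_\ell$ and work with $\widehat{\pi}'^{-1}(N_{i_0}\times B'')$. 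Since $B''$ is contractible, this set is diffeomorphic to $\widehat{Z}\times B''$: the $\mathbb{CP}^{m+1}$-bundle $\mathbb{P}(\mathcal{O}\oplus E)$ restricted over $N_{i_0}\times B''$ is the pullback of $\mathbb{P}(\mathcal{O}\oplus E_{i_0})$. On this set we define
\[
\Phi_j(\hat{s},w,z) = \big(K_j^{-1/2}\hat{s},\, w,\, \exp_{\bar{x}'',j}(K_j^{-1/2}z)\big)
\]
on $W_j = \widehat{Z}\times B_{R_j}(0)$, where $\hat{s}\in[0,\hat{\ell}_j]$ and $\hat{\ell}_j := K_j^{1/2}\ell(t_j+K_j^{-1}t)$. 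The map extends across both strata exactly as in the earlier definition. Because the fiber is compact, the base point $\bar{x}$ may be arbitrary, so we do not need to place it near $Q_0$.

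Next come the estimates. The rescaled fiber length $\hat{L}_j = K_jL = 2(m+2)(1-t)$ is exact. The profile functions $\hat{h}_j$, $\hat{f}_{1,j}^2$ and $\hat{f}_{i_0,j}^2$ are treated as in Lemma \ref{lma:Cm-estimates}, with one change: $\hat{f}_{i_0,j}^2$ is now bounded above and below on the whole fiber, with $\hat{f}_{i_0,j}^2(0,t)$ and $\hat{f}_{i_0,j}^2(\hat{\ell}_j,t)$ both equal to explicit positive multiples of $(1-t)$. Shi's estimates, the parallel-frame identity \eqref{eq:partial-f-K} and the Kähler relations then give uniform $C^{m,k}$ bounds on all of $[0,\hat{\ell}_j]$. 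We also bound $\hat{\ell}_j$ above and below, which is the main new point. The lower bound follows from $\hat{L}_j = \int_0^{\hat{\ell}_j}\hat{h}_j\,d\hat{s}$ together with $\hat{h}_j\le C\hat{s}$ from \eqref{eq:C0-estimates}. For the upper bound we try the Li--Yau bound near each stratum combined with the diameter bound $\mathrm{diam}\le C\sqrt{T-t}$ used in Theorem \ref{thm:fiber-collapse-model}. Passing to a subsequence, $\hat{\ell}_j\to\hat{\ell}_\infty\in(0,\infty)$, and the limit profiles satisfy the nut closing at $\hat{s}=0$ and the bolt closing at $\hat{s}=\hat{\ell}_\infty$, because Propositions \ref{prop:nut-closing} and \ref{prop:bolt-closing} are scale-invariant conditions on one-jets and the $C^\infty$ convergence holds up to both endpoints. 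Positivity in the interior follows from the Gronwall argument of Lemma \ref{lma:h-f-positive}, applied from both ends.

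The remaining convergence steps carry over from the earlier proof. The gauge term converges by Lemma \ref{lma:gauge}, since $d\alpha$ again involves only the $N''$ directions, along which $K_jf_k^2\ge cK_j$ holds on the whole fiber. Flattening follows as in Lemma \ref{lma:flat}: $f_k^2\circ\Phi_j\to A_k>0$ because $q_kL(t)\to0$. The pullback therefore converges to an ansatz metric on $\widehat{Z}\times\mathbb{C}^{n''}$ that is complete, smooth and compact in the $\widehat{Z}$ factor.

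To identify the limit, \cite{EMT,Naber} show it is a shrinker, so $g_{\widehat{Z}}(t)$ is a compact shrinking Kähler--Ricci soliton on the Fano manifold $\widehat{Z}$. Fano-ness comes from positivity of the induced Kähler class $2(1-t)c_1$, read off the limiting affine laws. Tian--Zhu uniqueness then identifies $g_{\widehat{Z}}(t)$ with the soliton of \cite{Koiso,Cao96,DW2011}. When $\mathcal{I}=\{2,\dots,r\}$ there are no flat directions, and $[\omega(t)] = 2(T-t)c_1(\widehat{M})$ follows directly from the affine laws.

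The main obstacle is the two-sided control of $\hat{\ell}_j$ and the uniformity of Shi-based estimates up to the far stratum $Q_\ell$, which now sits at bounded rescaled distance. We would handle this by running the Lemma \ref{lma:Cm-estimates} argument from both ends, using the coordinate $\hat{u}=\hat{\ell}_j-\hat{s}$ near $Q_\ell$ as in the proof of Theorem \ref{thm:contracting_Ql}, and patching the two in the middle.
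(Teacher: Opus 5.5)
Your proposal follows the paper's own sketch essentially step for step. The shared steps are: the same scale bookkeeping for $L$ and for $f_k^2$, $k\in\mathcal{I}$; the same $\widehat{Z}\times B''$ model, with $\Phi_j$ built from the rescaled arclength $\hat s\in[0,\hat\ell_j]$ (implicitly composed with a radial reparametrization onto a fixed $\widehat{Z}$, which the paper states explicitly); two-sided control of $\hat\ell_j$, from below via the Li--Yau bound (your integration of $\hat h_j\le C\hat s$ against $\hat L_j$ is equivalent to the paper's $\ell(t)\ge L(t)/\max h$) and from above via the fiber diameter bound $C\sqrt{T-t}$; $C^{m,k}$ estimates run from both ends; and then the gauge, flattening, \cite{EMT,Naber} and Tian--Zhu steps. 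It is therefore correct to the same extent as the paper's argument, and like the paper it rests on the diameter estimate asserted in Theorem \ref{thm:fiber-collapse-model} for the upper bound on $\hat\ell_j$.
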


\begin{proof}[Sketch of proof]
The proof combines the constructions in the proofs of Theorems \ref{thm:contracting_Q0} and \ref{thm:contracting_Ql}; we indicate the modifications.

\emph{Scale bookkeeping.} Under the rescaling $K_j = (T-t_j)^{-1}$, all fiber quantities now live at the same bounded scale: $K_jL(t_j + K_j^{-1}t) = 2(m+2)(1-t)$, and for each $k \in \mathcal{I}$,
\[K_jf_k^2\big|_{Q_0} \;=\; 2\big(p_k - (m+1)\,q_k\big)(1-t)\,, \qquad K_jf_k^2\big|_{Q_\ell} \;=\; 2\big(p_k + q_k\big)(1-t)\,,\]
so by the monotonicity of $f_k^2$ in $s$ we get $\frac{1}{C} \leq K_jf_k^2 \leq C$ on $\widehat{M} \times [t_1,t_2]$. Moreover, the fibers of $\widehat{\pi}'$ satisfy $\mathrm{diam}_{g(t)} \leq C\sqrt{T-t}$ as in the proof of Theorem \ref{thm:fiber-collapse-model}, while the Li--Yau-type estimate (Proposition \ref{prop:li-yau}) applied to $f_1^2$ gives $h^2 \leq Cf_1^2 \leq CL$, whence the arclength satisfies $\ell(t) \geq L(t)/\max h \geq c\sqrt{T-t}$. Therefore, the rescaled arclength $\hat\ell_j := K_j^{1/2}\,\ell(t_j + K_j^{-1}t)$ satisfies $\frac{1}{C} \leq \hat\ell_j \leq C$: the two strata $Q_0$ and $Q_\ell$ stay within bounded $g_j(t)$-distance of each other, and any fixed base point $\bar{x}$ stays within bounded $g_j(t)$-distance of both. This is why the base point may be chosen arbitrarily.

\emph{Topological model.} By Propositions \ref{prop:bolt-closing} and \ref{prop:nut-closing}, $\widehat{M}$ is globally the projectivization $\mathbb{P}(\mathcal{O}\oplus E)$ of the rank-$(m+1)$ bundle $E = \mathcal{L}^{\oplus(m+1)} \to N'$ from the proof of Theorem \ref{thm:contracting_Q0}, with $Q_0$ the zero section and $Q_\ell$ the section at infinity. Set $N'' := \prod_{k \geq 2, \, k\not\in\mathcal{I}}N_k$, and fix a contractible ball $B'' \subset N''$ containing $\bar{x}'' := \mathrm{pr}_{N' \to N''}\big(\widehat{\pi}'(\bar{x})\big)$. Restricting over $\big(\prod_{k\in\mathcal{I}}N_k\big) \times B''$ and using the contractibility of $B''$ identifies this restriction with $\widehat{Z} \times B''$. Away from both strata, it is parametrized as $(0,\ell) \times \operatorname{Tot}(\widetilde{P}) \times B''$, where $\widetilde{P}$ is the circle bundle over $\mathbb{CP}^m \times \prod_{k\in\mathcal{I}}N_k$ with Euler class $a_1 + \sum_{k\in\mathcal{I}}q_ka_k$ and $d\widetilde\theta = \pi^*\omega_1 + \sum_{k\in\mathcal{I}}q_k\,\pi^*\omega_k$ --- exactly the $\widetilde{P}$ of the multi-factor case in the proof of Theorem \ref{thm:contracting_Q0}.

\emph{The maps $\Phi_j$.} Set $W := \widehat{Z} \times \mathbb{C}^{n''}$ and $W_j := \widehat{Z} \times B_{R_j}(0)$: no truncation is needed in the $\widehat{Z}$-directions, which are compact. On the $\mathbb{C}^{n''}$-factor, $\Phi_j$ is $z \mapsto \exp_{\bar{x}'',j}(K_j^{-1/2}z)$ as before; on the $\widehat{Z}$-factor, $\Phi_j$ is the bundle identification above composed with the fiberwise radial reparametrization matching the radial coordinate of $\widehat{Z}$ with the rescaled arclength $\hat{s} \in [0,\hat\ell_j]$ --- a diffeomorphism for each $j$, since $\hat\ell_j$ is bounded between two positive constants. The pullback metric takes the ansatz form
\[\Phi_j^*g_j(t) \;=\; d\hat{s}\otimes d\hat{s} \;+\; \hat{h}_j^2\,\Phi_j^*(\theta\otimes\theta) \;+\; \hat{f}_{1,j}^2\,\pi^*g_1 \;+\; \sum_{k\in\mathcal{I}}\hat{f}_{k,j}^2\,\pi^*g_k \;+\; \sum_{k\geq 2,\,k\not\in\mathcal{I}}K_j\,\Phi_j^*(f_k^2\pi^*g_k)\,,\]
where now the $g_1$-block \emph{remains} in the soliton block: $\hat{f}_{1,j}^2 \leq K_jL = 2(m+2)(1-t)$ stays bounded.

\emph{$C^{m,k}$-estimates and convergence.} On $[0, \hat\ell_j]$ we run the estimates of Lemma \ref{lma:Cm-estimates} from both ends: from the nut end $\hat{s} = 0$ verbatim, using the closing conditions of Proposition \ref{prop:nut-closing} ($\hat{h}_j(0,t) = 0$, $\partial_{\hat{s}}\hat{h}_j(0,t)=1$, $\hat{f}_{1,j}(0,t) = 0$, $\partial_{\hat{s}}\hat{f}_{1,j}(0,t) = \frac{1}{\sqrt{2}}$), and from the bolt end $\hat{s} = \hat\ell_j$ in the inward coordinate $\hat{u} := \hat\ell_j - \hat{s}$, using the closing conditions of Proposition \ref{prop:bolt-closing} as in Step 3 of the proof of Theorem \ref{thm:contracting_Ql}. The $C^0$-bounds for $\hat{f}_{k,j}^2$, $k \in \mathcal{I}$, follow from the scale bookkeeping above; the higher-order bounds follow from the parallel-frame curvature identities, Shi's derivative estimates \cite{Shi}, and the Type I bound (Theorem \ref{thm:nut-bolt-typeI}), as before. After passing to a subsequence, $\hat\ell_j \to \hat\ell_\infty \in [\frac{1}{C}, C]$, and the profile functions converge in $C^\infty$ to limits $\big(\hat{h}_\infty, \hat{f}_{1,\infty}, \{\hat{f}_{k,\infty}\}_{k\in\mathcal{I}}\big)$ on $[0,\hat\ell_\infty] \times (-\infty,1)$ satisfying the closing conditions at both ends; interior positivity of $\hat{h}_\infty$ and $\hat{f}_{1,\infty}$ follows from the energy/Gronwall argument of Lemma \ref{lma:h-f-positive} run from either end. The limit profiles therefore define a smooth one-parameter family of ansatz metrics on the \emph{compact} manifold $\widehat{Z}$.

\emph{Gauge and flattening.} With $\alpha := \theta - \widetilde\theta$ we now have $d\alpha = \sum_{k\geq 2,\,k\not\in\mathcal{I}}q_k\,\pi^*\omega_k$ --- the $\omega_1$-term is absorbed into $d\widetilde\theta$ --- and the proof of Lemma \ref{lma:gauge} gives $\Phi_j^*\theta \to \widetilde\theta$, since the non-contracting factors satisfy $K_jf_k^2 \geq cK_j$. Lemma \ref{lma:flat} applies verbatim to the last group of terms, with $A_k = f_k^2\big|_{Q_0}(T) > 0$ for $k \geq 2$, $k \not\in \mathcal{I}$, yielding a flat limit metric on $\mathbb{C}^{n''}$.

\emph{Completion.} Combining the steps, the limit $\big(\widehat{Z}\times\mathbb{C}^{n''}, \, g_{\widehat{Z}}(t)\oplus g_{\mathbb{C}^{n''}}\big)$ is a shrinking K\"ahler-Ricci soliton by \cite{EMT, Naber} and the Type I property, so $g_{\widehat{Z}}(t)$ is a \emph{compact} shrinking K\"ahler-Ricci soliton of ansatz form on $\widehat{Z}$. The sign conditions $-p_k < q_k < \frac{p_k}{m+1}$ for $k \in \mathcal{I}$ say precisely that $c_1(\widehat{Z})$ is positive on both strata of $\widehat{Z}$:
\begin{align*}
c_1(\widehat{Z})\Big|_{\text{zero section}} & \;=\; \sum_{k\in\mathcal{I}}\big(p_k - (m+1)\,q_k\big)\,a_k \;>\; 0\,,\\
c_1(\widehat{Z})\Big|_{\text{infinity section}} & \;=\; (m+2)\,a_1 + \sum_{k\in\mathcal{I}}\big(p_k+q_k\big)\,a_k \;>\; 0\,,
\end{align*}
consistent with $\widehat{Z}$ being Fano. The profile functions satisfy the Dancer--Wang ODE system (3.2)--(3.4) in \cite{DW2011} with the two-ended closing boundary conditions, and the corresponding compact shrinking K\"ahler--Ricci soliton is the one constructed in \cite{Koiso}, \cite{Cao96}, and \cite{DW2011}; by the uniqueness theorem of Tian--Zhu \cite{TZhu}, $g_{\widehat{Z}}(t)$ must be this soliton. When $\mathcal{I} = \{2,\cdots,r\}$ there are no flattening directions, $\widehat{Z} = \widehat{M}$, and the statement reduces to the convergence of the canonical-class Fano K\"ahler-Ricci flow on $\widehat{M}$ to its unique compact shrinking K\"ahler-Ricci soliton.
\end{proof}

\section{Two-nut closing}\label{sec:twonut}
Finally, we discuss the case of two-nut closing, i.e. both ends are adjoined with one $\mathbb{CP}^m$ with collapsing $S^{2m+1}$. Under the K\"ahler condition, each $f_k$'s, $k \geq 1$, is strictly monotone. Therefore, it is necessary that to have two $\mathbb{CP}^m$-factors (with possibly different dimensions), and their charges $q$'s must have different signs.

After relabelling, we let $( N_1, g_1 ) = ( \mathbb{CP}^{m_0}, 2\, g_{FS} )$ with $p_1 = m_0 + 1$, whose sphere $S^{2 m_0 + 1}$ collapses at $s = 0$, and $( N_2, g_2 ) = ( \mathbb{CP}^{m_\ell}, 2\, g_{FS} )$ with $p_2 = m_\ell + 1$, collapsing at $s = \ell$, where $m_0, m_\ell \geq 1$ and $r \geq 2$; write $N'' := \prod_{k \geq 3} N_k$. The closing strata are $Q_0 \cong \mathbb{CP}^{m_\ell} \times N''$ and $Q_\ell \cong \mathbb{CP}^{m_0} \times N''$, and we define the bundle map to be the following submersion:
\[
\widehat{\pi}'' \;:\; \widehat{M} \;\longrightarrow\; N'' \,.
\]
The fibers are then $\mathbb{CP}^{m_0 + m_\ell + 1}$, and precisely $\widehat{M}$ is a projectivized bundle:
\[
\widehat{M} \;\cong\; \mathbb{P} \big( \mathcal{O}^{\oplus ( m_0 + 1 )} \oplus ( \mathcal{L}'' )^{\oplus ( m_\ell + 1 )} \big) \;\longrightarrow\; N'' \,, \qquad \text{ with } c_1 ( \mathcal{L}'' ) \;=\; \sum_{k \geq 3} q_k\, a_k \,,
\]
and the lower and upper ends are adjoined by $Q_\ell = \mathbb{P} \big( \mathcal{O}^{\oplus ( m_0 + 1 )} \big)$ and $Q_0 = \mathbb{P} \big( ( \mathcal{L}'' )^{\oplus ( m_\ell + 1 )} \big)$ respectively. For $r = 2$ this is $\mathbb{CP}^{m_0 + m_\ell + 1}$ itself, while $m_\ell = 0$ formally returns the nut--bolt closing $\mathbb{P} ( \mathcal{O} \oplus E )$, and $m_0 = m_\ell = 0$ the two-bolt one.

\begin{lemma}\label{lem:twonut}
Let $g (t)$, $t \in [ 0, T )$, be a K\"ahler-Ricci flow under the circle-bundle ansatz \eqref{eq:ansatz} on compact two-nut closing, with $s$ the distance to $Q_0$. Then:

\noindent (i) We must have $q_1 = + 1$ and $q_2 = - 1$

\noindent (ii) About the two collapsing components $f_1$ and $f_2$:

\[
f_1^2 ( s ) \;=\; \int_0^s h\, d\sigma \,, \qquad f_2^2 ( s ) \;=\; \int_s^\ell h\, d\sigma \,,
\]
so that, on all of $\widehat{M}$, we have
\begin{equation}\label{eq:twonut_L}
f_1^2 \;+\; f_2^2 \;=\; L (t) \;:=\; \int_0^\ell h\, d\sigma \,.
\end{equation}
Furthermore, $f_1$ is related to $f_{k \geq 3}$ by the relations $f_k^2 = f_k^2 \big|_{Q_0} + q_k\, f_1^2$ for $k \geq 3$.

\noindent (iii) At the closings $Q_0$ and $Q_\ell$, the components $f_i$'s are given by:
\begin{align*}
f_i^2 \big|_{Q_0} (t) & \;=\; f_i^2 \big|_{Q_0} (0) \;+\; 2 \big( ( m_0 + 1 )\, q_i - p_i \big)\, t \,,\\
 f_i^2 \big|_{Q_\ell} (t) & \;=\; f_i^2 \big|_{Q_\ell} (0) \;-\; 2 \big( ( m_\ell + 1 )\, q_i + p_i \big)\, t \,.
\end{align*}
In particular, for $i = 2$ at $Q_0$ --- equivalently, by \eqref{eq:twonut_L}, for $i = 1$ at $Q_\ell$ --- they give the ``height'' of the $\mathbb{CP}^{m_0 + m_\ell +1}$-fibers:
\begin{equation}\label{eq:twonut_Lrate}
L (t) \;=\; L (0) \;-\; 2 \big( m_0 + m_\ell + 2 \big)\, t \,,
\end{equation}
matching $\big\langle c_1 ( \widehat{M} ), [ \Lambda ] \big\rangle = ( m_0 + 1 ) + ( m_\ell + 1 )$ for a projective line $\Lambda$ in a fiber (\S\ref{subsec:periods-chern}).

\noindent (iv) There is $c > 0$ with

\[
f_k^2 \;\geq\; c\, ( T - t ) \quad ( k \geq 3 ) \qquad \text{and} \qquad L (t) \;\geq\; c\, ( T - t ) \qquad \text{on } \widehat{M} \times [ 0, T ) \,,
\]
each being a positive affine function of $t$, exactly as in Proposition \ref{prop:affine-laws}(iii). Note that no such bound holds for $f_1^2$ or $f_2^2$, as each of which vanishes on $Q_0$ and $Q_\ell$ respectively.

\end{lemma}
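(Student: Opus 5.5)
We follow the nut--bolt proofs: we combine the Kähler condition \eqref{eq:kahler-condition} with the closing conditions of Proposition \ref{prop:nut-closing}, applied at each end, and with the heat equation \eqref{eq:heat-fi}.

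For (i) and (ii) we work at a fixed time $t$. The Kähler condition gives $\partial_s(f_i^2)=q_i h$, and $h>0$ on $(0,\ell)$, so every $f_i^2$ is strictly monotone in $s$. At $s=0$ the collapsing block $N_1=\mathbb{CP}^{m_0}$ has $f_1(0)=0$, and $f_1^2\geq 0$. Hence $f_1^2$ must increase, which forces $q_1>0$, and Lemma \ref{lem:bolt-nut-dichotomy} then gives $q_1=+1$. Integrating $\partial_s f_1^2=h$ from $0$ yields $f_1^2(s)=\int_0^s h$. Symmetrically, $f_2$ vanishes at $s=\ell$ and $f_2^2>0$ in the interior, so $f_2^2$ decreases and $q_2=-1$. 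Integrating $\partial_s f_2^2=-h$ from $\ell$ gives $f_2^2=\int_s^\ell h$. Adding the two formulas yields \eqref{eq:twonut_L}. For $k\geq3$, the difference $f_k^2-q_kf_1^2$ has zero $s$-derivative, and evaluating it at $s=0$ gives $f_k^2=f_k^2|_{Q_0}+q_kf_1^2$.

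For (iii) we repeat the proof of Proposition \ref{prop:affine-laws}(b) at each end. On $Q_0$ we have $\partial_t(f_i^2|_{Q_0})=(\Delta f_i^2)|_{Q_0}-2p_i$ with $\Delta f_i^2=q_i\bigl(2h'+h^2\sum_j n_jq_j/f_j^2\bigr)$. The closing conditions at $s=0$ give $h'\to1$ and $h^2/f_1^2\to2$, while $h^2/f_j^2\to0$ for $j\neq1$ because those $f_j$ stay positive at $Q_0$; in particular this holds for $j=2$. Hence $(\Delta f_i^2)|_{Q_0}=2(m_0+1)q_i$. At $s=\ell$ we use the inward coordinate $u=\ell-s$. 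Then $h'\to-1$ in $s$, $h^2/f_2^2\to2$ with $q_2=-1$, and all other ratios vanish. This gives $(\Delta f_i^2)|_{Q_\ell}=q_i(-2-2m_\ell)$, which is the stated law. Taking $i=2$ at $Q_0$, where $f_2^2|_{Q_0}=L$, $q_2=-1$ and $p_2=m_\ell+1$, gives the rate $2(-(m_0+1)-(m_\ell+1))$, i.e.\ \eqref{eq:twonut_Lrate}. The remark in \S\ref{subsec:periods-chern} supplies the consistency check with $\langle c_1,[\Lambda]\rangle$.

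For (iv) we argue exactly as in Proposition \ref{prop:affine-laws}(c). Each $f_k^2$ with $k\geq3$ is monotone in $s$, so its infimum over $\widehat M$ is the minimum of its two boundary values. Each boundary value is an affine function of $t$ that stays positive on $[0,T)$. Likewise $L(t)$ is affine and positive on $[0,T)$, because $L>0$ as long as the flow exists. An affine function positive on $[0,T)$ is bounded below by $c(T-t)$. Taking the minimum of the finitely many constants gives the claim.

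The main obstacle is the careful evaluation of the boundary limits in (iii) at $Q_\ell$. There the sign of $h'$ flips in the $s$-coordinate, and one must check that the collapsing ratio $h^2/f_2^2\to2$ follows from the nut closing conditions ($H'=1$, $F_2'=1/\sqrt2$ in $u$). We also need $\Delta f_i^2$ to be smooth across $Q_\ell$, so that the restriction to the stratum is legitimate; this follows from Proposition \ref{prop:nut-closing} applied in the variable $u$.
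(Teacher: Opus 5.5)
Your proposal is correct and is the argument the paper intends; the lemma is stated there without a separate proof. The signs $q_1=+1$, $q_2=-1$ come from the strict monotonicity of $f_i^2$ forced by the K\"ahler condition, as noted at the start of Section~\ref{sec:twonut}, and (iii)--(iv) come from running the proof of Proposition~\ref{prop:affine-laws}(b),(c) at each nut, with the inward coordinate $\ell-s$ at $Q_\ell$, exactly as you do.
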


\subsection{The two-nut closing: Type I rate and singularity models}\label{subsec:twonut}

\begin{theorem}[the two-nut K\"ahler flow is Type I]\label{thm:twonut_typeI}
Let $\widehat{M}$ be a compact two-nut closing and let $g (t)$, $t \in [ 0, T )$, be a Ricci flow of ansatz metrics \eqref{eq:ansatz-metric} satisfying the K\"ahler condition \eqref{eq:kahler-condition}, with $T < \infty$ maximal. Then the singularity at $T$ is of Type I.

\end{theorem}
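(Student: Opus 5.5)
The plan is to run the proof of Theorem \ref{thm:nut-bolt-typeI} with the residual submersion $\widehat{\pi}'' : \widehat{M} \to N''$ in place of $\widehat{\pi}'$. First, I will check that $\widehat{\pi}''$ extends smoothly across both nut-type strata. Its fibers $\mathbb{CP}^{m_0+m_\ell+1}$ are totally geodesic, so $T^{\widehat{\pi}''}\equiv 0$. The analogue of Proposition \ref{prop:residual-oneill} and Corollary \ref{cor:norm-A-residual} then gives
\[
|A''|^2 \le C\sum_{k\ge 3} \frac{|\nabla f_k^2|^2}{f_k^4}.
\]
I will verify this in the Cartesian variables $\rho=s^2$ at each end, exactly as in \eqref{eq:A-residual-cartesian}. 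Combining it with the Li--Yau bound \eqref{eq:li-yau} and Lemma \ref{lem:twonut}(iv) gives $|A''|^2\le C/(T-t)$, together with the same bound for the $\mathcal{H}''$-sectional curvatures. Assuming a Type II singularity, Hamilton's point picking and Perelman's no-local-collapsing then give an eternal $\kappa$-noncollapsed limit. As in Proposition \ref{prop:typeII-splitting}, it splits as $\Phi\times\mathbb{C}^{n''}$ with $\dim_{\mathbb{C}}\Phi=m_0+m_\ell+1$, and $\Phi$ is nonflat.

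The key step is the bisectional lower bound on $\mathcal{V}''=\mathbb{R}\nu\oplus\mathbb{R}\xi^*\oplus\mathcal{H}_1\oplus\mathcal{H}_2$. I will again use $x:=f_1^2\in[0,L(t)]$ as the spatial coordinate, so that $f_2^2=L(t)-x$ by \eqref{eq:twonut_L}, and treat $h^2$ as a function of $(x,t)$. The boundary data are $h^2=0$ at both ends, $(h^2)'=2$ at $x=0$, and $(h^2)'=-2$ at $x=L$.

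The curvatures within the $\mathcal{H}_1$ block are exactly those of \eqref{eq:fiber-curvature}. The $\mathcal{H}_2$ block gives the mirror formulas in the variable $L-x$. There are also new cross terms $K(X_1,Y_2)=h^2/(4f_1^2f_2^2)$, which are nonnegative because $q_1q_2=-1$.

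Lemma \ref{lem:h2pp-controls-bisec} then applies from each end separately. Its integral identities only use the data at the near endpoint, integrating from $x=0$ for the $\mathcal{H}_1$ terms and from $x=L$ for the $\mathcal{H}_2$ terms. The conclusion is that every fiber bisectional curvature is at least $-K(t)$ whenever $(h^2)''\le K(t)$.

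The reduced equation of Proposition \ref{prop:reduced-flow} picks up an extra source term $-m_\ell h^4/f_2^4$. It also picks up a drift from $\partial_t x$, which is absorbed as before because $\Delta f_1^2-2(m_0+1)$ is unchanged. Differentiating twice gives
\[
\partial_t(h^2)''=\Delta(h^2)''-((h^2)'')^2+R_1+R_2+R_{res},
\]
where $R_2:=-m_\ell\partial_x^2[h^4/f_2^4]$ and $R_{res}$ collects the $k\ge3$ terms.

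The main obstacle is the maximum-principle step of Theorem \ref{thm:h2pp-upper-bound}. There I must show that $\Delta(h^2)''+R_1+R_2\le 0$ at a spatial maximum of $(h^2)''$. This is now harder because both $R_1$ and $R_2$ are singular, at opposite ends.

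At an interior maximum I will use the integral identity from $0$ to get $R_1\le0$. The mirrored identity from $L$, with $(h^2/f_2^2)$ differentiated in $L-x$, should give $R_2\le 0$ in the same way. At $x=0$, the expansion in Theorem \ref{thm:h2pp-upper-bound} handles $R_1$ against the drift $2(m_0+1)((h^2)'')'(0^+)$. In addition, $R_2$ is smooth there, since $f_2^2\ge L\ge c(T-t)$ and $h^2=0$, with value at most $C\,(h^2)'^2/L^2\cdot$(terms vanishing with $h$). I expect this leftover contribution to be $O((T-t)^{-2})$, and I will fold it into $\beta$ together with $R_{res}$. The endpoint $x=L$ is handled symmetrically.

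The resulting Riccati inequality $y'\le -y^2+\beta/(T-t)^2$ yields $(h^2)''\le\gamma/(T-t)$. This gives nonnegative bisectional curvature on the limit factor $\Phi$. Cao's theorem and the Deng--Zhu rigidity then produce a contradiction exactly as in Steps 1--3 of Theorem \ref{thm:nut-bolt-typeI}. If controlling $R_2$ at $x=0$ fails, my fallback is to localize: run the maximum principle on $\max\{(h^2)''\}$ over separate half-intervals $[0,L/2]$ and $[L/2,L]$, and use the Li--Yau bound to control the interface.
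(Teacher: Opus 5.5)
Your proposal matches the paper's proof essentially step for step: the residual submersion $\widehat{\pi}''$ with $|A''|^2\le C/(T-t)$ and the splitting $\Phi\times\mathbb{C}^{n''}$; the mirror variable $f_2^2=L(t)-f_1^2$ with nonnegative cross terms $h^2/(4f_1^2f_2^2)$; and the decomposition $R=R_1+R_2+R_{\mathrm{res}}$, where an interior maximum is handled by the integral identity from each nut, and a maximum on $Q_0$ (resp.\ $Q_\ell$) by the boundary expansion against the drift $2(m_0+1)$ (resp.\ $2(m_\ell+1)$) with the other collapsing block treated as residual via $L(t)\ge c(T-t)$, followed by the Riccati comparison, Cao's theorem and Deng--Zhu. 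The only slip is cosmetic: at $Q_0$ one has $h^2=0$ and $(h^2)'=2$, so $R_2|_{Q_0}=-8m_\ell/L(t)^2$, which does not vanish as you suggest but has the favorable sign, so your fallback localization is not needed.
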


\begin{proof}[Sketch of proof]
Most of the proof of Theorem \ref{thm:nut-bolt-typeI} carries over with the two-nut case, with some slight modifications:

\emph{The residual bundle.} For $\widehat{\pi}''$ the vertical distribution is $\mathcal{V}'' = \mathbb{R} \partial_s \oplus \mathbb{R} \xi \oplus \mathcal{H}_1 \oplus \mathcal{H}_2$, of real rank $2 ( m_0 + m_\ell + 1 )$, and $\mathcal{H}'' = \bigoplus_{k \geq 3} \mathcal{H}_k$; the residual O'Neill tensor $A''$ is given by \eqref{eq:A-residual} with the sum running over $k \geq 3$. Only the blocks carrying the linear lower bound of Lemma \ref{lem:twonut}(iv) enter, so the Li--Yau estimate \eqref{eq:li-yau} gives $| A'' |^2 \leq C \sum_{k \geq 3} | \nabla f_k^2 |^2 / f_k^4 \leq C / ( T - t )$, and Propositions \ref{prop:A-residual-decay} and \ref{prop:typeII-splitting} apply verbatim: a Type II rescaling produces an eternal limit splitting as $\Phi \times \mathbb{C}^{n''}$, $n'' = \sum_{k \geq 3} n_k$, with all curvature carried by a limit of the totally geodesic fibers of $\widehat{\pi}''$.

\emph{Boundary conditions.} The boundary conditions of \S\ref{subsec:fiber-bisectional} persists: $f_1^2$ is strictly increasing in $s$, the pair $( f_1^2, t )$ is independent, and $h^2$ is a smooth function of $( f_1^2, t )$ on $[ 0, L (t) ]$, with $' = \partial / \partial ( f_1^2 )$. The boundary data are \emph{unchanged}: $h^2 = 0$, $( h^2 )' = 2$ at $f_1^2 = 0$, and $h^2 = 0$, $( h^2 )' = - 2$ at $f_1^2 = L (t)$ --- the second nut closes with $h_s \to - 1$, exactly the boundary behaviour the bolt had. In the mirror variable $f_2^2 = L (t) - f_1^2$ one has $d / d ( f_2^2 ) = - d / d(f_1^2)$ and $\partial_{f_2^2}^2 = \partial_{f_1^2}^2$, so the second derivative $( h^2 )''$ is smooth on the closed manifold $\widehat{M}$ including on $Q_\ell$, exactly as in the proof of Proposition \ref{prop:h2pp-evolution}.

\emph{Fiber curvature.} The most notable difference is the tangent space of fibers are much larger than the two-bolt and nut-bolt cases, as now the fibers are $\mathbb{CP}^{m_0+m_\ell+1}$'s. Proposition \ref{prop:fiber-curvature} holds as stated for planes drawn from $\mathbb{R} \partial_s \oplus \mathbb{R} \xi \oplus \mathcal{H}_1$; for $W, \widetilde{W} \in \mathcal{H}_2$ the same computation in the mirror variable gives the entries of \eqref{eq:fiber-curvature} under $( f_1^2 , ( h^2 )' ) \mapsto ( f_2^2 , - ( h^2 )' )$,

\begin{align*}
K ( \nu, W ) & \;=\; K ( \xi^*, W ) \;=\; \frac{1}{4 f_2^2} \left( \frac{h^2}{f_2^2} + ( h^2 )' \right) ,\\
 K ( W, J W ) & \;=\; \frac{2}{f_2^2} - \frac{h^2}{f_2^4} \,, \qquad K ( W, \widetilde{W} ) \;=\; \frac{1}{4 f_2^2} \left( 2 - \frac{h^2}{f_2^2} \right) ,
\end{align*}
while the mixed entries between the two collapsing blocks, $X \in \mathcal{H}_1$ and $W \in \mathcal{H}_2$, are
\[
K ( X, W ) \;=\; K ( X, J W ) \;=\; - \frac{q_1 q_2\, h^2}{4\, f_1^2 f_2^2} \;=\; \frac{h^2}{4\, f_1^2 f_2^2} \;\geq\; 0
\]
by Lemma \ref{lem:twonut}(i) --- hence the cross-term $K(X,W)$ and $K(X,JW)$ has favorable sign. For other bisectional curvature terms, we can repeat the integration argument of Lemma \ref{lem:h2pp-controls-bisec} using variables $f_2^2$ instead --- by noting that $\partial_{f_2^2} = -\partial_{f_1^2}$ and $\partial_{f_2^2}^2 = \partial_{f_1^2}^2$ --- the upper bound $( h^2 )'' \leq K (t)$ would also imply bisectional curvatures $K(\nu, W)$, $K(W,JW)$ and $K(W,\widetilde{W})$ are all bounded below by $- K (t)$ too with the same proof as $K(\nu,X)$, $K(X,JX)$ and $K(X,\widetilde{X})$.

\emph{Estimates of $(h^2)''$.} Under the variables $( f_1^2, t )$, the scalar equation \eqref{eq:reduced-flow} keeps its form, with $m = m_0$ and the $k = 2$ term written out via \eqref{eq:twonut_L}:

\begin{align}\label{eq:twonut_reduced}
\partial_t ( h^2 ) \big|_{f_1^2} & \;=\; h^2 ( h^2 )'' \;+\; 2 ( m_0 + 1 ) ( h^2 )' \;-\; \big( ( h^2 )' \big)^2 \\
& \qquad \;-\; \frac{m_0\, h^4}{f_1^4} \;-\; \frac{m_\ell\, h^4}{\big( L (t) - f_1^2 \big)^2} \;-\; \sum_{k \geq 3} \frac{n_k\, q_k^2\, h^4}{\big( a_k (t) + q_k f_1^2 \big)^2} \,, \nonumber
\end{align}
where $a_k (t) := f_k^2 \big|_{Q_0} (t)$; rewriting \eqref{eq:twonut_reduced} in the variables $( f_2^2, t )$ --- noting that
\[\partial_t\big|_{f_2^2}(h^2) - \partial_t\big|_{f_1^2}(h^2) = \frac{dL}{dt}\cdot(h^2)' = -2(m_0+m_\ell+2)(h^2)',\]
we would have
\begin{align}\label{eq:twonut_reduced_f_2}
\partial_t ( h^2 ) \big|_{f_2^2} & \;=\; h^2\partial_{f_2^2}^2  ( h^2 ) \;+\; 2 ( m_\ell + 1 ) \partial_{f_2^2}( h^2 ) \;-\; \big( \partial_{f_2^2}( h^2 ) \big)^2 \\
& \qquad \;-\; \frac{m_0\, h^4}{(L(t) - f_2^2)^2} \;-\; \frac{m_\ell\, h^4}{f_2^4} \;-\; \sum_{k \geq 3} \frac{n_k\, q_k^2\, h^4}{\big( \tilde{a}_k (t) + q_k f_2^2 \big)^2} \,, \nonumber
\end{align}
The proof of Theorem \ref{thm:h2pp-upper-bound} now runs symmetrically: Differentiating \eqref{eq:twonut_reduced} twice as in Proposition \ref{prop:h2pp-evolution} gives the same reaction--diffusion equation $$\partial_t ( h^2 )'' = \Delta ( h^2 )'' - \big( ( h^2 )'' \big)^2 + R,$$ with the sources now split as $R = R_1 + R_2 + R_{\mathrm{res}}$, where $R_1 := - m_0\, \partial_{f_1^2}^2 \big[ h^4 / f_1^4 \big]$, $R_2 := - m_\ell\, \partial_{f_1^2}^2 \big[ h^4 / f_2^4 \big]$, and $R_{\mathrm{res}} := - \sum_{k \geq 3} n_k q_k^2\, \partial_{f_1^2}^2 \big[ h^4 / f_k^4 \big]$.

Lemma \ref{lem:residual-source-bound} holds similarly for our $R_{\textrm{res}}$ here, as the $R_{\textrm{res}}$ involves only the second derivatives.

The proof of Theorem \ref{thm:h2pp-upper-bound} will be modified as follows: at an \emph{interior} spatial maximum of $( h^2 )''$ the integral identity for the ratio holds from \emph{each} nut --- integrate $h^2 / f_1^2$ from $f_1^2 = 0$ for $R_1$, and $h^2 / f_2^2$ from $f_2^2 = 0$ for $R_2$, the second derivative $\partial_{f_2^2}^2 = \partial_{f_1^2}^2$ being the same --- so $R_1 \leq 0$ \emph{and} $R_2 \leq 0$ there. At a maximum on $Q_0$, the one-sided expansion applies to $R_1$, whose unfavorable term $- \tfrac{4 m_0}{3} \big( ( h^2 )'' \big)' ( 0^+ )$ is compensated by $\Delta ( f_1^2 ) \big|_{Q_0} = 2 ( m_0 + 1 ) > \tfrac{4 m_0}{3}$, exactly as in the proof of Theorem \ref{thm:h2pp-upper-bound}; $R_2$ is now of residual type: $f_2^2 = L (t) - f_1^2$ is affine in $f_1^2$, so the completed square of Lemma \ref{lem:residual-source-bound} applies to this block verbatim, and $f_2^2 \big|_{Q_0} = L (t) \geq c\, ( T - t )$ gives $R_2 \leq \beta' / ( T - t )^2$. A maximum on $Q_\ell$ is the mirror case: $\Delta (f_2^2)\big|_{Q_\ell} = 2 ( m_\ell + 1 )$ compensates the $-\tfrac{4 m_\ell}{3}$ term in $R_2$, while $f_1^2 \big|_{Q_\ell} = L (t) \geq c\, ( T - t )$ makes $R_1$ residual. The Riccati comparison for $y (t) := \max_{\widehat{M}} ( h^2 )'' ( \cdot, t )$ is unchanged, so \eqref{eq:h2pp-upper-bound} holds with a constant $\gamma = \gamma ( m_0, m_\ell, n'', c, C_0, g_0 )$, and as in Corollary \ref{cor:bisec-lower-bound} the bisectional curvature satisfies $\mathrm{bisec} \geq - \gamma / ( T - t )$: a Type II limit has nonnegative bisectional curvature.

\emph{Final Step.} Cao's theorem \cite{Cao97} and the Deng--Zhu rigidity theorem \cite{DZ20} now apply word for word as in \S\ref{subsec:no-typeII}: no Type II singularity occurs, and the singularity at $T$ is of Type I.
\end{proof}

By Tian--Zhang \cite{TZ}, exactly as in \eqref{eq:T_max}, the maximal existence time is computed from the affine laws of Lemma \ref{lem:twonut}(iii): set

\[
T_F'' \;:=\; \frac{L (0)}{2 ( m_0 + m_\ell + 2 )} \,,
\]
and, for $k \geq 3$,
\begin{align*}
T_k^0 & \;:=\; \begin{cases} \dfrac{f_k^2 |_{Q_0} (0)}{2 \big( p_k - ( m_0 + 1 )\, q_k \big)} & \text{if } ( m_0 + 1 )\, q_k < p_k \,, \\[4pt] + \infty & \text{otherwise} \,, \end{cases}\\
T_k^\ell & \;:=\; \begin{cases} \dfrac{f_k^2 |_{Q_\ell} (0)}{2 \big( p_k + ( m_\ell + 1 )\, q_k \big)} & \text{if } ( m_\ell + 1 )\, q_k > - p_k \,, \\[4pt] + \infty & \text{otherwise} \,, \end{cases}
\end{align*}
so that $T = \min \big\{ T_F'' , \, \min_{k \geq 3} \{ T_k^0, T_k^\ell \} \big\}$. Since $f_k^2 \big|_{Q_\ell} (t) - f_k^2 \big|_{Q_0} (t) = q_k\, L (t)$ for $k \geq 3$, the Remark following \eqref{eq:T_max} applies verbatim: in the borderline case $T = T_F''$, an $N_k$-factor of $Q_0$ contracts if and only if the $N_k$-factor of $Q_\ell$ does.

\begin{theorem}[singularity models of the two-nut case]\label{thm:twonut_models}
Let $\big(\widehat{M}$, $g (t)\big)$ be as in Theorem \ref{thm:twonut_typeI}, rescaled at the Type I rate as in \S\ref{sec:limits}. According to the first singular time, the pointed Cheeger--Gromov limits are:

\emph{(i) Fiber collapse}, $T = T_F'' < \min_{k \geq 3} \{ T_k^0, T_k^\ell \}$: with arbitrary base points,

\[
\big( \mathbb{CP}^{m_0 + m_\ell + 1} \times \mathbb{C}^{n''} , \; g_{FS} (t) \oplus g_{\mathbb{C}^{n''}} \big) \,, \qquad n'' \;=\; \sum_{k \geq 3} \dim_{\mathbb{C}} N_k \,,
\]

the shrinking Fubini--Study soliton on the fiber times a static flat factor.

\emph{(ii) Contraction at $Q_0$}, $T = \min_{k \geq 3} \{ T_k^0 \} < \min \big\{ T_F'', \min_{k \geq 3} \{ T_k^\ell \} \big\}$: let $\mathcal{I}_0 := \{ k \geq 3 : T_k^0 = T \}$; then $0 < ( m_0 + 1 )\, q_k < p_k$ for every $k \in \mathcal{I}_0$, and with base points on $Q_0$ the limit is

\[
\big( \operatorname{Tot} ( E_{\mathcal{I}_0} ) \times \mathbb{C}^{n''} , \; g_E (t) \oplus g_{\mathbb{C}^{n''}} \big) \,, \qquad E_{\mathcal{I}_0} \;=\; \mathcal{L}_{\mathcal{I}_0}^{\oplus ( m_0 + 1 )} \,, \quad c_1 ( \mathcal{L}_{\mathcal{I}_0} ) \;=\; - \sum_{k \in \mathcal{I}_0} q_k\, a_k \,,
\]
where $E_{\mathcal{I}_0}$ is the restriction of the normal bundle $\mathcal{N}_{Q_0}$ to $\prod_{k \in \mathcal{I}_0} N_k$, $g_E (t)$ is the Dancer--Wang shrinking K\"ahler--Ricci soliton of ansatz form \cite[Theorem 3.36]{DW2011}, and $n'' = m_\ell + \sum_{k \geq 3, \, k \not\in \mathcal{I}_0} \dim_{\mathbb{C}} N_k$: the $\mathbb{CP}^{m_\ell}$-factor of $Q_0$ does not contract, and flattens into $\mathbb{C}^{m_\ell}$.

\emph{(iii) Contraction at $Q_\ell$}: the mirror of (ii) under $( s, q_k, m_0 ) \leftrightarrow ( \ell - s, - q_k, m_\ell )$, with $\mathcal{I}_\infty := \{ k \geq 3 : T_k^\ell = T \}$, $0 < - ( m_\ell + 1 )\, q_k < p_k$ for every $k \in \mathcal{I}_\infty$, and limit $\operatorname{Tot} \big( \mathcal{L}_{\mathcal{I}_\infty}^{\oplus ( m_\ell + 1 )} \big) \times \mathbb{C}^{n''}$, where $\mathcal{L}_{\mathcal{I}_\infty}^{\oplus ( m_\ell + 1 )}$ is the restriction of $\mathcal{N}_{Q_\ell}$ to $\prod_{k \in \mathcal{I}_\infty} N_k$, $c_1 ( \mathcal{L}_{\mathcal{I}_\infty} ) = \sum_{k \in \mathcal{I}_\infty} q_k\, a_k$, and $n'' = m_0 + \sum_{k \geq 3, \, k \not\in \mathcal{I}_\infty} \dim_{\mathbb{C}} N_k$.

\emph{(iv) Borderline}, $T = T_F'' = \min_{k \geq 3} \{ T_k^0, T_k^\ell \}$: let $\mathcal{I} := \{ k \geq 3 : T_k^0 = T \} = \{ k \geq 3 : T_k^\ell = T \}$; then $- \frac{p_k}{m_\ell + 1} < q_k < \frac{p_k}{m_0 + 1}$ for every $k \in \mathcal{I}$, and with arbitrary base points the limit is $\widehat{Z} \times \mathbb{C}^{n''}$, where

\[
\widehat{Z} \;:=\; \widehat{M} \Big|_{\prod_{k \in \mathcal{I}} N_k} \;=\; \mathbb{P} \big( \mathcal{O}^{\oplus ( m_0 + 1 )} \oplus \mathcal{L}_{\mathcal{I}}^{\oplus ( m_\ell + 1 )} \big) \,, \qquad c_1 ( \mathcal{L}_{\mathcal{I}} ) \;=\; \sum_{k \in \mathcal{I}} q_k\, a_k \,,
\]
carrying its unique compact shrinking K\"ahler--Ricci soliton of ansatz form (uniqueness by Tian--Zhu \cite{TZhu}), and $n'' = \sum_{k \geq 3, \, k \not\in \mathcal{I}} \dim_{\mathbb{C}} N_k$. When $\mathcal{I} = \{ 3, \cdots, r \}$, we have $\widehat{Z} = \widehat{M}$ and the flow is the Fano canonical-class case.

\end{theorem}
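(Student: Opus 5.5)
The plan is to reduce each of the four regimes to the corresponding nut--bolt argument of Section \ref{sec:limits}. Throughout, Theorem \ref{thm:twonut_typeI} supplies the Type I bound $|\mathrm{Rm}|\le C/(T-t)$, so the rescaled flows $g_j(t)=K_jg(t_j+K_j^{-1}t)$ with $K_j\asymp (T-t_j)^{-1}$ have bounded curvature on $[t_1,t_2]$. Perelman's non-collapsing theorem and Hamilton's compactness theorem then give a limit, and \cite{EMT, Naber} make it a shrinking K\"ahler--Ricci soliton. The remaining work is to identify its topology and profile functions. The sign conditions come first, from the affine laws of Lemma \ref{lem:twonut}(iii) together with $f_k^2|_{Q_\ell}-f_k^2|_{Q_0}=q_kL(t)$.

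In case (ii), $f_k^2|_{Q_0}(t)=2(p_k-(m_0+1)q_k)(T-t)$ gives $p_k>(m_0+1)q_k$. Since $T<T_F''$, we have $L(T)>0$, and $T<T_k^\ell$ makes $q_kL(T)=f_k^2|_{Q_\ell}(T)>0$, hence $q_k>0$, exactly as in Lemma \ref{lma:contraction_Q0_signs}. Case (iii) follows by the mirror substitution. In case (iv), both boundary affine rates must be positive, which yields $-p_k/(m_\ell+1)<q_k<p_k/(m_0+1)$.

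\textbf{Case (i).} I would repeat Lemma \ref{lma:fiber-collapsing-estimates} with $\widehat{\pi}''$ and $k\ge3$. The $f_k^2$ are bounded above and below, the Li--Yau estimate bounds $|A''|^2$ and the $\mathcal{H}''$-sectional curvatures, and both of these are $O(K_j^{-1})$ after rescaling. The limit therefore splits as $\Sigma\times\mathbb{C}^{n''}$. Since $L(t)=2(m_0+m_\ell+2)(T-t)$, the fiber diameter is $O(\sqrt{T-t})$, so $\Sigma$ is compact and diffeomorphic to $\mathbb{CP}^{m_0+m_\ell+1}$. Bando--Mabuchi and Tian--Zhu then identify it as Fubini--Study.

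\textbf{Cases (ii) and (iii).} I would follow the proof of Theorem \ref{thm:contracting_Q0} with $N''$ replaced by $\mathbb{CP}^{m_\ell}\times\prod_{k\ge3,k\notin\mathcal{I}_0}N_k$. Near $Q_0$, the manifold $\widehat{M}\setminus Q_\ell$ is the rank-$(m_0+1)$ bundle obtained by coning the $S^{2m_0+1}$-fibers over $Q_0$. Over a contractible ball $B''$ it is a pullback, which gives $\Phi_j$ as before. The noncontracting $\mathbb{CP}^{m_\ell}$-block has $f_2^2=L(t)-f_1^2\to L(T)>0$ near $Q_0$, so it belongs to the flattening group and is handled by Lemma \ref{lma:flat}. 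The estimates of Lemma \ref{lma:Cm-estimates}, the Gronwall positivity of Lemma \ref{lma:h-f-positive}, and the gauge decay of Lemma \ref{lma:gauge} carry over verbatim. For the gauge step the key point is that $d\alpha$ now contains $q_2\pi^*\omega_2$, and this is killed because $K_jf_2^2\ge cK_j$. The limiting profile then solves Dancer--Wang's ODE with $0<(m_0+1)q_k<p_k$, which yields \cite[Theorem 3.36]{DW2011}.

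\textbf{Case (iv).} I would follow Theorem \ref{thm:borderline}. The rescaled height $K_jL$ is bounded, and the rescaled arclength $\hat\ell_j$ is pinched between positive constants, using $h^2\le Cf_1^2\le CL$. For $k\in\mathcal{I}$ the products $K_jf_k^2$ are pinched as well. The estimates are run from both nut ends, using Proposition \ref{prop:nut-closing} at each end in the coordinates $f_1^2$ and $f_2^2$ respectively. The limit is an ansatz metric on the compact $\widehat{Z}$, and Tian--Zhu uniqueness identifies it.

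The main obstacle is the uniform lower bound on $\hat\ell_j$ in case (iv), together with the convergence of the reparametrizations $\Phi_j$ on the compact fiber. I would handle this exactly as in the borderline sketch, through the Li--Yau bound on $h$.
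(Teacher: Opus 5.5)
Your proposal is correct and follows the paper's own sketch essentially step for step. Case (i) uses the Type I rescaling, the estimates of Lemma~\ref{lma:fiber-collapsing-estimates} with $k\ge 3$, and the $O(\sqrt{T-t})$ fiber diameter coming from $L(t)=2(m_0+m_\ell+2)(T-t)$. Cases (ii)--(iii) run the proof of Theorem~\ref{thm:contracting_Q0} near the nut, moving the non-contracting $\mathbb{CP}^{m_\ell}$-block (where $f_2^2\to L(T)>0$) into the flattening and gauge-decay group exactly as in Theorem~\ref{thm:contracting_Ql}. Case (iv) uses the two-ended borderline bookkeeping with $\hat\ell_j$ pinched via the Li--Yau bound $h^2\le Cf_1^2\le CL$. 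Your sign derivations from the affine laws and from $f_k^2|_{Q_\ell}-f_k^2|_{Q_0}=q_kL(t)$ also match the paper's, and you additionally make explicit the borderline inequality $-p_k/(m_\ell+1)<q_k<p_k/(m_0+1)$, which the paper states but does not derive.
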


\begin{proof}[Sketch of proof]
All four cases can be proved as in \S\ref{sec:limits} with slight modifications. With Lemma \ref{lem:twonut} in place of the nut--bolt data of \S\ref{subsec:fiber-bisectional} and Proposition \ref{prop:affine-laws}; we indicate the modifications.

\emph{(i)} Lemma \ref{lma:fiber-collapsing-estimates} holds with the sums running over $k \geq 3$, and the Li--Yau estimate (Proposition \ref{prop:li-yau}) applies to $f_1^2$ and to $f_2^2$ alike; by \eqref{eq:twonut_L} and \eqref{eq:twonut_Lrate}, $f_1^2 + f_2^2 = L (t) = 2 ( m_0 + m_\ell + 2 ) ( T - t )$, so the fibers of $\widehat{\pi}''$ have diameter $\leq C \sqrt{T - t}$ and the proof of Theorem \ref{thm:fiber-collapse-model} goes through: the limit is compact in the fiber directions, is a shrinking K\"ahler--Ricci soliton by \cite{EMT, Naber}, and is identified with the Fubini--Study soliton on $\mathbb{CP}^{m_0 + m_\ell + 1}$ by \cite{BM}, \cite{TZhu}, as there.

\emph{(ii)} The sign condition follows from $f_k^2 \big|_{Q_\ell} - f_k^2 \big|_{Q_0} = q_k\, L (t)$: as $t \to T$ the left side tends to $f_k^2 \big|_{Q_\ell} (T) > 0$ (since $T < T_k^\ell$) while $L (T) > 0$ (since $T < T_F''$), so $q_k > 0$ for $k \in \mathcal{I}_0$, and $( m_0 + 1 )\, q_k < p_k$ is the finiteness of $T_k^0$. The proof of Theorem \ref{thm:contracting_Q0} is then run near the nut $Q_0$, using the closing conditions of Proposition \ref{prop:nut-closing} with $m_0$ in place of $m$, together with one modification borrowed from the proof of Theorem \ref{thm:contracting_Ql}: the $\mathbb{CP}^{m_\ell}$-factor of $Q_0$ does not contract --- $f_2^2 \big|_{Q_0} = L (t) \geq L (T) > 0$ --- so its $g_2$-block joins the non-contracting factors, and flattens into $\mathbb{C}^{m_\ell}$ by the Taylor-expansion argument of Lemma \ref{lma:flat}.

\emph{(iii)} is (ii) run from the other end, in the inward coordinate $\ell - s$ and the mirror gauge.

\emph{(iv)} The scale bookkeeping of the proof of Theorem \ref{thm:borderline} carries over: $K_j L = 2 ( m_0 + m_\ell + 2 ) ( 1 - t )$, $K_j f_k^2 \asymp 1$ for $k \in \mathcal{I}$, and the Li--Yau estimate applied to $f_1^2$ (or to $f_2^2$) gives $\hat\ell_j \in [ \frac{1}{C}, C ]$. The only structural change is that the $C^{m,k}$-estimates are run from both ends with the \emph{nut} closing conditions of Proposition \ref{prop:nut-closing}, and the limit profiles define ansatz metrics on the compact total space $\widehat{Z}$; the soliton identification via \cite{EMT, Naber}, Dancer--Wang \cite{DW2011}, and Tian--Zhu \cite{TZhu} is unchanged.
\end{proof}

\section{Appendix}

\begin{proof}[Proof of Lemma \ref{lem:bolt-nut-dichotomy}]
Let $\widehat{\xi}$ denote the (smooth) Killing field extending $\xi$; along the leaf $P_s$ we have
$|\widehat{\xi}\,|_{\widehat{g}} = H(s)$. Also, the geodesics perpendicular to each $P_s$ yield a smooth fiber bundle $\sigma : P \to Q$, $\sigma(p)$ with fibers $S_x := \sigma^{-1}(x) \cong S^{c-1}$, $c := \operatorname{codim} Q \geq 2$. Moreover, by the Fermi-coordinate expansion of a smooth metric around a submanifold
\cite[Ch.~9]{GrayTubes} (cf.\ \cite[\S1]{EschenburgWang}), the rescaled induced
metrics converge:
\begin{equation}\label{eq:roundlimit}
	s^{-2}\, g_s \big|_{S_x} \;\longrightarrow\; \text{the unit round metric on }
	S^{c-1} \qquad (s \to 0^+),
\end{equation}
uniformly on $S_x$.


\emph{Claim: $H(0) = 0$.}

\emph{Proof of the claim:} Suppose that $H(0)>0$ then, since there is a compactification at $0$, $J \neq \varnothing$ and one fixes $j \in J$. Since $\omega_j$ is symplectic and locally exact, for every $\phi \in [0, 2\pi)$ there is a contractible loop $\gamma_\phi$ bounding a disc $D_\phi$ in $N_j$ and such that
 $$q_j \int_{D_\phi} \omega_j = -\phi \ (\mathrm{mod}\ 2\pi) \text{ and } |\gamma_\phi|_{g_j}\leq L_j $$
 where the constant $L_j$ is independent of $\phi$. The $\theta$-horizontal (basic) lift of $\gamma_\phi$ through $p \in P$ moves only in the $\mathcal{H}_j$-directions and, since $d\theta = \sum_i q_i \pi^* \omega_i$, joins $p$ to $e^{i\phi} p$ within the
leaf, where $e^{i\phi}$ represents an action of an element of the group $\mathrm{U}(1)$ (the action is generated by the Killing field $\xi$) on $N_i$. Furthermore, its $g_s$-length is at most $F_j(s)\, L_j$. Hence
\[
d_{\widehat{g}}\big( \Phi(s,p),\; e^{i\phi}\, \Phi(s,p) \big)
\;\leq\; F_j(s)\, L_j \;\longrightarrow\; 0 \qquad (s \to 0^+).
\]
Letting $s \to 0^+$ gives $e^{i\phi} x = x$ for $x = \sigma(p)$. Therefore, the extended circle action fixes $Q$ pointwise and
$\widehat{\xi}\,|_Q = 0$, and by continuity of the smooth field $\widehat{\xi}$,
\[
H(0) \;=\; 
\big| \widehat{\xi}\,(x) \big| \;=\; 0 .
\]
That is a contradiction and the claim is proved.\\


Next, by continuity, 
\[
Q \;\cong\; \prod_{k \notin J} N_k , \qquad
S_x \;\cong\; S_J \;\cong\; S^{c-1}, \qquad c - 1 = 1 + 2 \textstyle\sum_{j \in J} n_j .
\]
If $J = \varnothing$ this reads $Q \cong N$ with $c = 2$ yielding the first case. It remains to show the following.\\ 

\emph{Claim: $J \neq \varnothing$ forces the Hopf structure.}

\emph{Proof of the claim:} We apply \eqref{eq:roundlimit} to the invariant connection metrics
$s^{-2}\big( H(s)^2\, \theta \otimes \theta + \sum_{j \in J} F_j(s)^2 \pi^* g_j \big)$
on $S_J$, we have that the limits $h_0 := \lim_{s\to0} H(s)/s$ and
$a_j := \lim_{s\to0} F_j(s)/s$ exist in $(0,\infty)$ 
the limit metric
$$\widehat{g}_\infty = h_0^2\, \theta \otimes \theta + \sum_{j \in J} a_j^2\, \pi^* g_j$$
is the unit round metric on $S^{c-1}$. The bundle projection
$S_J \to \prod_{j \in J} \big( N_j, a_j^2 g_j \big)$ is then a Riemannian
submersion of the unit round sphere with connected totally geodesic
one-dimensional fibres. 
By the classification of such submersions \cite{Escobales, Ranjan}, the base is isometric to
$\big( \mathbb{CP}^{m}, g_{FS} \big)$, $m = \sum_{j \in J} n_j$, with the
Fubini--Study metric 
and the submersion is equivalent to the Hopf fibration.  Since
$(\mathbb{CP}^m, g_{FS})$ is de Rham irreducible 
$J = \{ j_0 \}$ to be a singleton, and
$(N_{j_0}, a_{j_0}^2 g_{j_0})$ is isometric to
$(\mathbb{CP}^{n_{j_0}}, g_{FS})$. 

Finally, the bundle equivalence with the Hopf fibration
identifies $S_J$ --- the circle bundle over $\mathbb{CP}^{n_{j_0}}$ with Euler
class $q_{j_0} a_{j_0}$ --- with $S^{2 n_{j_0} + 1}$. By the Gysin sequence
\cite[\S 12]{MilnorStasheff}, a circle bundle over $\mathbb{CP}^{n_{j_0}}$ has
simply connected total space iff its Euler class is a generator of
$H^2(\mathbb{CP}^{n_{j_0}}; \mathbb{Z})$, whence $|q_{j_0}| = 1$. 
\end{proof}
\begin{proof} [Proof of Proposition \ref{prop:residual-oneill}]
	We work on the dense open set $M^0$; 
	identities established on $M^0$ extend by continuity across $Q$.
	
	\emph{(i)} It follows directly from Koszul formula for $2g(\nabla_U V, X_k)$.  
	
	\emph{(ii)} The components of $A^{\widehat{\pi}'}_X Y = \mathcal{V}'(\nabla_X Y)$ along $\partial_s$ and $\xi$ are computed verbatim as for the total submersion in \S\ref{subsec:oneill}, with the sums restricted to $k \geq 2$ because $\pi^*g_1(X, Y) = \pi^*\omega_1(X, Y) = 0$; substituting $(F_k^2)' = q_k H + \kappa_k$ splits the $\partial_s$-component as in \eqref{eq:A-total}. The $\mathcal{H}_1$-component vanishes by the Koszul formula for $2g(\nabla_X Y, X_1)$. Thus,
	\begin{equation}\label{eq:A-residual-split}
			A^{\widehat{\pi}'}_X Y \;=\; -\frac{H}{2}\sum_{k \geq 2} q_k\,\pi^*g_k(X,Y)\;\partial_s \;-\; \frac{1}{2}\sum_{k \geq 2} \kappa_k\,\pi^*g_k(X,Y)\;\partial_s \;-\; \frac{H}{2}\sum_{k \geq 2} q_k\,\pi^*\omega_k(X,Y)\;\xi^* \, .
	\end{equation}
\end{proof}

\begin{proof}[Proof of Corollary \ref{cor:norm-A-residual}]
	Recall that the $A'$-tensor has no $\mathcal{H}_1$-component. So the proof is almost verbatim to that of Corollary 2.2 of \cite{FT}. In particular, with respect to metric $g$, every component of $A'$ --- symmetric part and skew part (\ref{eq:A-residual}), and their metric duals (\ref{eq:A-cartesian-mixed})--- is an algebraic combination of the functions $q_k H / f_k^2$, $k \geq 2$; hence, since $| \nabla f_k^2 | = | q_k |\, H$ the result follows.
	
	
\end{proof}

\bibliographystyle{amsplain}
\bibliography{bio}

\end{document}